\newtheorem{definition}{Definition}
\newtheorem{theorem}{Theorem}
\newtheorem{remark}{\textbf{Remark}}
\newcommand{\R}{\mathbb{R}}
\newcommand{\N}{\mathbb{N}}
\newcommand{\A}{\mathscr A}
\renewcommand{\d}{\mathrm{d}}
\newcommand{\W}{\mathscr W}
\newtheorem{pro}{Proposition}
\newtheorem{cor}{Corollary}
\newcommand{\noi}{\noindent}
\begin{document}
\author{Gerardo Ariznabarreta}\address{Departamento de F\'{i}sica Te\'{o}rica II, M\'{e}todos y modelos matem\'{a}ticos, Facultad de F\'{\i}sicas, Universidad Complutense, 28040 -- Madrid, Spain}
\author{Manuel Ma\~{n}as}\address{Departamento de F\'{i}sica Te\'{o}rica II, M\'{e}todos y modelos matem\'{a}ticos, Facultad de F\'{\i}sicas, Universidad Complutense, 28040 -- Madrid, Spain}
\author{Piergiulio Tempesta}\address{Departamento de F\'{i}sica Te\'{o}rica II, M\'{e}todos y modelos matem\'{a}ticos, Facultad de F\'{\i}sicas, Universidad Complutense, 28040 -- Madrid, Spain and Instituto de Ciencias Matem\'aticas, C/ Nicol\'as Cabrera, No 13--15, 28049 Madrid, Spain}

\title{Generalized Sobolev orthogonal polynomials, matrix moment problems and integrable systems}
\date{December 15, 2016}

\maketitle
\begin{abstract}
We introduce a large class of Sobolev bi-orthogonal polynomial sequences arising from a $LU$-factorizable moment matrix and
associated with a suitable measure matrix that characterizes the Sobolev bilinear form.
A theory of deformations of Sobolev bilinear forms is also proposed. We consider both polynomial deformations and a class of
transformations related to the action of linear operators on the
entries of a given bilinear form. Transformation formulae among new and old polynomial sequences are determined.

Finally, integrable hierarchies of evolution equations arising from the factorization of a time deformation of the moment matrix are presented.
\end{abstract}
MSC2010: 33C45, 37L60, 42C05
\tableofcontents

\section{Introduction}

\subsection{Historical background and motivation}
In the last decades, the study of Sobolev orthogonal polynomials has become a field of increasing interest both in Applied Mathematics and Mathematical Physics. The purpose of this article is to extend the notion of Sobolev orthogonality by introducing a theoretical framework allowing to define a new, large class of Sobolev bi-orthogonal polynomial sequences (SBPS).

In order to situate our contribution in the context of the existing literature, we start by mentioning some of the most relevant results of the theory established till now. We focus here only on some aspects of special interest for our research. For a nice review of modern results, historical background and an updated bibliography, the reader is referred to \cite{MXreview}, \cite{Meijer}.

Sobolev orthogonal polynomials were introduced in 1962 by Althammer \cite{Alt}. He proposed the idea of defining a class of polynomials orthogonal with respect to a \textit{deformation} of the Legendre inner product, of the form
\begin{equation}
\langle f,g \rangle_{A}= \int_{-1}^{1} f(x)g(x)d x + \lambda \int_{-1}^{1} f'(x) g'(x) dx \ .
\end{equation}

The polynomials arising from this inner product are called nowadays the Sobolev-Legendre polynomials.

Perhaps the most relevant of the early contributions to the theory came in the 70's with the works \cite{Schafke}, \cite{SW}. Indeed, Sch\"afke and Wolf proposed the following family of inner products

\begin{equation}
\langle f,g \rangle_{SW}= \sum_{j,k=0}^{\infty}\int_{a}^{b} f^{(j)}(x)g^{(k)}(x) v_{j,k}(x)w(x)dx \ ,
\end{equation}
where the weight $w$ and the associated integration interval is intended to be one of the three classical cases of Hermite, Laguerre and Jacobi; also, $v_{j,k}(x)$ are suitable polynomials, symmetric in $j,k$.

Starting from this polynomial deformation of classical measures, and specializing conveniently the functions $v_{j,k}$, Sch\"afke and Wolf were able to define eight families of new Sobolev orthogonal polynomials, and extended all previously known results on Sobolev orthogonal polynomials.

Since the last decade of the previous century there was a resurgence of interest in the field of Sobolev orthogonality,
starting with the seminal paper \cite{Coherent}. In this work, the notion of \textit{coherent pairs}, a fundamental idea which has
triggered many new developments,  was introduced. Let $\{d\mu_{1}, d \mu_{2}\}$ be a pair of Borel measures on the real line
with finite moments. To this pair we associate the inner product
$\langle f,g \rangle_{(\mu_{1}, \mu_{2})}= \int_{a}^{b} f(x)g(x)d \mu_{1} + \lambda \int_{a}^{b} f'(x) g'(x) d\mu_{2}$, with $a,b\in\mathbb{R}$.
Essentially, the pair of measures $\{d\mu_{1}, d \mu_{2}\}$ is said to be a coherent pair whenever the sequence of polynomials associated
with $d\mu_{2}$ can be related in a specific way with the first derivatives of the polynomials of the sequence associated with $d\mu_{1}$.
In \cite{Petronilho} a classification of coherent pairs was given when one of the two involved measures is a  classical one (Hermite, Laguerre, Jacobi or Bessel). In \cite{Meijer2} it was proven that in order for $\{d\mu_{1}, d \mu_{2}\}$ to form a coherent pair, at least one of the two measures has to be classical. This result shows that the classification given in \cite{Petronilho} is actually a complete one.

Besides, a huge amount of results concerning many analytic and algebraic aspects of the theory has been obtained in the last twenty years,
including the relation with differential operators \cite{Koekoek}, \cite{DuranIglesia}, the asymptotic behaviour and the study of zeros of
Sobolev polynomials \cite{MarcMor}, etc.

\subsection{Main results}
In this paper, we generalize significantly the construction of Sch\"afke and Wolf by introducing a large class of not necessarily symmetric
Sobolev bilinear forms $(*,*)_{\W}$. These bilinear forms are defined by means of a matrix of measures $\W$, representing one of the crucial mathematical structures of the present paper. To each measure matrix $\W$, or equivalently to the corresponding bilinear form, we can naturally associate a moment matrix $G_{\W}$.
In our analysis, we shall focus on the class of moment matrices that admit an \textit{$LU$-factorization}. Indeed, for this class one
can construct Sobolev bi-orthogonal polynomial sequences (SBPS).
We shall prove that many algebraic techniques related to the $LU$-factorization, that proved to be very useful in order to obtain algebraic
properties of the standard orthogonal polynomial sequences (OPS) can be extended naturally to our Sobolev setting.


A crucial notion proposed in this paper is that of \textit{additive perturbations} of a measure matrix $\W$ in the Sobolev context. Precisely, we shall study under which conditions, by performing an additive matrix perturbation of $\W$, one can still produce families of SBPS. This approach turns out to be particularly fruitful. Indeed, one can describe on the same footing, and generalize widely, important constructions as the coherent pairs and the standard approach of discrete Sobolev bilinear forms.
Concerning the first aspect, we wish to point out that not only a standard coherent pair can be studied from the perspective of perturbation theory, but it also can be generalized, in terms of the new notion of \textit{$m\times m$ block coherent pair}. The SBPS arising from both standard and block coherent pairs are studied.

When the entries of the measure matrix $\W$ are allowed to depend on $\delta$ distributions, we can encompass in our approach the well-known case
of discrete Sobolev orthogonality. Once we split a Sobolev bilinear function into a continuous part, involving those entries of $\W$
having a continuous support, and a discrete one, involving those having a discrete support ($\delta$ distributions)
\footnote{Some authors call type I Sobolev products those involving continuous supports only and
type II and III those involving  a continuous support while the rest are finite subsets}, we can interpret the
discrete part as an \textit{additive discrete perturbation} of its continuous part. This leads to an interesting
characterization of the SBPS associated to the original measure matrix in terms of quasi-determinantal formulae, involving only the continuous
part of the bilinear function.\\

A related aspect is the possibility of classifying measure matrices in terms of \textit{equivalence classes}: To each class it
belongs a set of measure matrices giving rise to the same moment matrix, and therefore to the same SBPS.
Indeed, the correspondence between measure matrices and moment matrices is not one to one. Therefore, different
Sobolev bilinear forms may lead to the same SBPS. An interesting case arises when inside the same equivalence class
possibly Sobolev and non Sobolev-type measure matrices are present. All this is not surprising, taking into account that the integration by parts procedure (at least in a distributional sense)
comes into play, allowing to define elementary operations leaving a measure matrix into the same class.\\

Due to the relevance of measure matrices in our approach, a natural problem is to develop a deformation theory for these matrices
which allows us to relate the corresponding deformed and non deformed SBPS.

Special attention will be devoted to certain classes of transformations well known in the literature on orthogonal polynomials:
Christoffel's and Geronimo's transformations. The first ones were introduced in 1858 by Christoffel \cite{Christoffel}, and amount to
a polynomial deformation of a given classical measure. Precisely, the standard Christoffel formulae establish connections among families of orthogonal
polynomials, allowing to express a polynomial of a family just in terms of a constant number of polynomials of the other family.
We generalize this approach by introducing Christoffel-Sobolev transformations. These involve a matrix polynomial deformation of the Sobolev
measure matrix $\W$, which can be implemented by means of a right or left action of the deformation on the matrix $\W$.
Once suitable resolvents and their adjoints are defined, then it is possible to connect deformed and non-deformed Sobolev polynomial sequences
(and related Christoffel-Darboux kernels). In addition, quasi-determinantal expressions for the deformed polynomial sequences in terms of
the original ones are obtained.

The second class of deformations we shall generalize is that of Geronimus, which was introduced in \cite{Geronimus} (see also \cite{Golinskii}).
We propose, in our context, the notion of Geronimus-Sobolev transformation of a measure matrix. This very general transformation amounts
to a right or left multiplication of the initial measure matrix $\W$  by the inverse of a matrix polynomial, extended by the addition of a discrete deformation.
Once again, one can obtain explicit formulae connecting deformed and non-deformed polynomials (and Christoffel-Darboux kernels) that are
expressed in compact quasi-determinantal expressions.
\\

The previous cases of polynomial and inverse polynomial-type deformations of the measure matrix are of special interest, but do not exhaust the
range of possible transformations we can perform over $\W$. Another novel aspect of the present work is that, indeed, we broaden the family
of possible deformations by admitting much more general deformations. They are expressed in terms of \textit{linear differential operators}
with polynomial coefficients, this is, operators of the form
$\boldsymbol{L}=\sum_k p_{k}(x)\frac{\d^k}{\d x^k}$, \label{LDE}
acting on the entries of the original bilinear form.  Due to its generality, the theory
of these operator deformations appears to be extremely rich (see also \cite{OPDO}). In this paper, we focused on several aspects
which look of particular interest. Given a couple of linear differential operators of the form given above, it is possible to define a
new class of Sobolev bilinear forms, which under certain technical conditions still possesses an associated moment
matrix $G_{\W}$ which is $LU$-factorizable and consequently, give a SBPS. \\

We mention that an article which in some sense can be related to section 6 of the present one is Ref. \cite{Car3Tr}. In that work,
the authors consider polynomial perturbations of a generic sesquilinear form. The methods used there are specially suited to
polynomial perturbations of a matrix bivariate functional, and therefore include matrix Sobolev bilinear forms.
The present paper focuses explicitly on the Sobolev scenario, from a different point of view.  The fraction of the results of \cite{Car3Tr}
concerning polynomial deformations of sesquilinear forms, in our opinion cannot be translated into our context in a simple or useful way.
For that reason, we have introduced Sections 6.3, 6.4 and 6.5, where polynomial perturbations are treated expressly for the Sobolev (scalar) setting.
It must be underlined that the deformations of the bilinear forms that the present paper considers (Section 7) are certainly more general since
linear differential operator transformations are allowed instead of just polynomial ones.\\

The paper is organized as follows.
In Section 2, we introduce the main notions of our analysis: Measure matrices, Sobolev generalized bilinear forms,
moment matrices and the $LU$-factorization is studied.
In Section 3, we construct the family of Sobolev bi-orthogonal polynomial sequences
arising from $LU$-factorizable moment matrices together with the introduction of their associated second kind functions. Chistoffel-Darboux
and Cauchy kernels associated with these sequences are also defined.
In Section 4, we propose a theory of additive perturbations of measure matrices, which allows us to treat on the same footing
coherent pairs (and a generalization of these) and discrete bilinear forms of Sobolev type.
The crucial idea of equivalence classes of measure matrices is introduced
and developed in Section 5. This idea proves to be of special interest when classical measures are involved in the bilinear form; some attention is devoted to these measures
in order to generalize some known results. A polynomial deformation theory of the measure matrices is proposed in Section 6, which includes the
important case of linear spectral or Darboux-Sobolev transformations. Section 7 is devoted to an extension of our theory of deformations of measure matrices to
the case of linear differential operators. The study of the relation of the present approach with integrable hierarchies of Toda type is presented in the final Appendix.

\section{Algebraic preliminaries}
\subsection{A generalized Sobolev bilinear form}
We shall first introduce the main definitions necessary for our approach.


\begin{definition}
A measure matrix of order $\mathcal{N}$, with $\mathcal{N}\in\mathbb{N}$ is a matrix $\W$ whose entries $\{\d \mu_{i,j}(x)\}_{i,j}$ are Borel measures
and  $\d \mu_{i,j}=0$ $\forall i,j > \mathcal{N}$:
\begin{align*}
 \W(x)&:=\begin{pmatrix}
   \d \mu_{0,0}          & \d \mu_{0,1}           & \dots &\d \mu_{0,\mathcal{N}}           & 0       & \dots \\
   \d \mu_{1,0}          & \d \mu_{1,1}           &\dots  &\d \mu_{1,\mathcal{N}}           & 0       & \dots \\
   \vdots                &        \vdots          & \ddots&    \vdots                       &\vdots   &       \\
  \d \mu_{\mathcal{N},0} & \d \mu_{\mathcal{N},1} & \dots & \d \mu_{\mathcal{N},\mathcal{N}}& 0       & \dots \\
  0                      &      0                 &       &          0                      & 0       & \dots \\
  \vdots                 &     \vdots             &       &          \vdots                 &         & \ddots
\end{pmatrix} &
\d \mu_{i,j}:\,\,\,&\Omega_{i,j}\subseteq \R \longrightarrow \R
\end{align*}
\end{definition}
\begin{definition}
The bilinear form $(*,*;\W): \R[x]\times \R[x]\longrightarrow \R$ associated with $\W$ is defined to be
\begin{align}\label{bilSob}
(x^i,x^j;\W)&:=\sum_{n,r=0}^{\mathcal{N}}\left\langle \frac{\d^{n} x^i}{\d x^n},\frac{\d^{r} x^j}{\d x^r}\right\rangle_{n,r}  &\mbox{where} & &
\left\langle \frac{\d^{n} x^i}{\d x^n},\frac{\d^{r} x^j}{\d x^r}\right\rangle_{n,r}&:=\int_{\Omega_{n,r}}\frac{\d^{n} x^i}{\d x^n}\frac{\d^{r} x^j}{\d x^r} \d \mu_{n,r}(x)
\end{align}
where we assume the condition $|(x^i,x^j;\W)|< \infty $ $\forall i,j\in \N$.
\end{definition}
It is important to notice that the case $\mathcal{N}\longrightarrow \infty$ is also allowed since for given $i,j\in \N$ the
bilinear form $(x^i,x^j;\W)$ will always involve a finite number of terms only.

We wish to extend the domain of the bilinear form \eqref{bilSob} to a more general function space containing $\R[x]$ as a subspace.
\begin{definition}
Let $\Omega:=\bigcup_{i,j=0}^{\mathcal{N}} \Omega_{i,j}$. The function space $\A^{\mathcal{N}}_{\W}(\Omega)$ is defined as
 \begin{align*}
  \A^{\mathcal{N}}_{\W}(\Omega)&:=\left\{f(x)\in C^{\mathcal{N}}(\Omega) \mbox{ such that }
  |(f,f;\W)|:=\left\lvert\sum_{n,r=0}^{\mathcal{N}}\left\langle \frac{\d^n f }{\d x}, \frac{\d^r f }{\d x}\right\rangle_{n,r}\right\lvert<\infty \right\} \ ,
 \end{align*}
 where $C^{k}(\Omega)$ denotes the space of functions possessing $k$ continuous derivatives in $\Omega$.
\end{definition}
\par
We wish to endow the space $\A^{\mathcal{N}}_{\W}(\Omega)$ with a structure of normed vector space, with norm given by $||f||^{2}:=(f,f;W)$. Therefore, jointly with the existence of finite moments, we need also to require positive definiteness:
$\forall f\neq 0, (f,f;W)>0$.
Hereafter we shall tacitly assume that this condition is satisfied.

Observe that, since every continuous bilinear function is bounded, we have that whenever $f(x),g(x) \in \A^{\mathcal{N}}_{\W}(\Omega)$ the pairing $(f,g;\W)$ satisfies $|(f,g;\W)|\leq C ||f|| ||g||$, and therefore
is finite. Consequently, we can introduce the notion of Sobolev bilinear function.
\begin{definition} \label{def3}
For every $f(x),g(x)\in \A^{\mathcal{N}}_{\W}(\Omega)$ we shall call the
non degenerate positive definite bilinear function $(*,*;\W):\A^{\mathcal{N}}_{\W}(\Omega) \times \A^{\mathcal{N}}_{\W}(\Omega) \longrightarrow \R$ defined by
\begin{align} \label{SBF}
(f,h;\W)&:=\sum_{n,r=0}^{\mathcal{N}}\langle f^{(n)},h^{(r)}\rangle_{n,r}  &\mbox{with} & &
f^{(n)}&:=\frac{\d^{n}f(x)}{\d x^n}
\end{align}
the Sobolev bilinear function associated with the measure matrix $\W$.
\end{definition}


Several comments are in order.
\begin{itemize}
\item
Definition \ref{def3} includes as a particular case the standard inner product, with no derivatives involved, which corresponds
to the choice $\mathcal{N}=0$, namely $\d \mu_{i,j}=0 \,\,\,\forall i,j>0$.

\item
Choosing a non symmetric $\W$ leads us to extend naturally the concept of orthogonality to that of bi-orthogonality. Indeed, one could have $(f,h;\W)=0$ while $(h,f;\W)\neq0$.
This situation also occurs in the study of standard matrix orthogonality with respect to a non symmetric matrix measure
(see for example \cite{MOPUC}) or when dealing with scalar bivariate linear functionals (see for example \cite{Car3Tr} ) .

\item If $\W=\W^{\top}$ we obtain a positive definite symmetric bilinear form $(f,h;\W)=(h,f;\W)$ which allows us to define
a standard inner product. Observe that the literature on the subject specially focuses on diagonal $\W$, for which obviously $\W=\W^{\top}$.

\end{itemize}
\begin{remark}
Unlike the  point of view adopted in \cite{Car3Tr}, based on the bivariate linear functional setting, in this paper we have preferred to work with an integral representation of our bilinear form. This representation exists as a direct consequence of the Riesz-Markov-Kakutani theorem \cite{Kakutani}. The reason for this choice is the fact that we wish to develop a theory explicitly related with measure matrices.

\end{remark}

\subsection{The moment matrix}

Our approach to SBPS requires the definition of a suitable moment matrix. Notice that the Hankel-type form of the moment matrix, usual in the non Sobolev context, is expected to be lost or generalized;
according to \cite{HtransF}, the generalized form can be called Hankel--Sobolev matrices.
The associated moment problem will involve more than just one sequence of
integers (for a study of a diagonal $\W$ see \cite{definitedeterminate},\cite{MonSob});
of course, a propaedeutic problem will be to establish under which conditions a matrix can play the role of a suitable Sobolev moment matrix.
Instead, we prefer to proceed in a somewhat different way: we construct a moment matrix suitable for the Sobolev bilinear
function \eqref{SBF}. We start by settling some notation.  \\

Given two non negative integers $m,n$ we will denote by $(m)^{n}$ and $(m)_{n}$ the rising and lower factorial polynomials respectively, i.e.
 \begin{align*}
  (m)^{n}&:=m(m+1)(m+2)\dots(m+(n-1)) \\
  (m)_{n}&:=\begin{cases}
             m(m-1)(m-2)\dots(m-(n-1)) & \qquad n<m\\
             0 & \qquad n\geq m
            \end{cases}
 \end{align*}
 \begin{align*}
  (m)^{0}=(m)_{0}&:=1 &
  (m)^{1}=(m)_{1}&:=m
 \end{align*}

\begin{definition}
We introduce the vectors
\begin{align*}
\chi(x)&:=\begin{pmatrix}
          1 \\
          x \\
          x^2 \\
          x^3 \\
          \vdots \\
          x^k \\
          \vdots \\
         \end{pmatrix} &
         \chi'(x)&:=\begin{pmatrix}
          0 \\
          1 \\
          2x \\
          3x^2 \\
          \vdots \\
          kx^{k-1} \\
          \vdots \\
         \end{pmatrix} &
         \chi''(x)&:=\begin{pmatrix}
          0 \\
          0 \\
          2 \\
          (3)(2)x \\
          \vdots \\
          k(k-1)x^{k-2} \\
          \vdots \\
         \end{pmatrix} &
         & \dots &
         \chi^{(n)}(x)&:=\begin{pmatrix}
          0 \\
          0 \\
          \vdots \\
          (n)_n \\
          \vdots \\
          (k)_nx^{k-n} \\
          \vdots \\
         \end{pmatrix} &
         & \dots & \\
\end{align*}
and the lower semi infinite matrix
\begin{align*}
 \boldsymbol{\chi}(x)& :=\begin{pmatrix}
                        \chi(x) & \chi'(x) & \chi''(x) & \dots & \chi^{(k)}(x) & \dots
                       \end{pmatrix}
\end{align*}

We also define the auxiliary vector
\[\chi^*(x):=\frac{1}{x}\chi \left(\frac{1}{x}\right). \]  \\
\end{definition}
The previous definition allows to deal with polynomials in a simple way. Let $p(x)\in\R[x]$ be a polynomial of degree $k$, i.e. $p(x)=\sum_{l} p_{l} x^{l}$ with $p_l=0$ $\forall l>k$. Let us denote by
$\boldsymbol{p}:=(p_0,p_1,p_2,\dots)$. Consequently, we have
\[
p(x)=\boldsymbol{p}\chi(x)  \hspace{2mm} \text{and} \hspace{2mm} p^{(k)}(x)=\boldsymbol{p}\chi^{(k)}(x).
\]

For each $m\in \mathbb{N}$ (the directed set of natural numbers), we consider the
ring of matrices $\mathbb{M}_m:=\R^{m\times m}$, and its direct
limit $\mathbb{M}_{\infty}:=\lim_{m\to\infty} \mathbb M_m$, i.e. the ring of semi-infinite matrices.
We will denote by $G_\infty$ the group of invertible semi-infinite matrices of $\mathbb{M}_\infty$.
A subgroup of $G_\infty$ is $\mathscr L$, that of lower triangular matrices with the identity matrix along its main diagonal.
Diagonal matrices will be denoted by
$ \mathscr D=\{M \in \mathbb{M}_{\infty} : d_{i,j}= d_i \cdot \delta_{i,j}\} $.
We will also use the notation $E_{i,j}$ for indicating the matrix canonical basis, this is $(E_{i,j})_{l,m}=\delta_{i,l}\delta_{j,m}$. 

\begin{definition}
 The Sobolev moment matrix associated to the measure matrix $\W$ is
 \begin{align} \label{GW}
  G_{\W}& :=\left(\chi,\chi^{\top};\W \right)=\int_{\Omega} \boldsymbol{\chi}\,\,\, \W \,\,\, \boldsymbol{\chi}^{\top}  &
 (G_{\W})_{n,p}&:=(x^n,x^p;\W)
\end{align}
 and its truncations will be denoted as
 \begin{align*}
  G_{\W}^{[k]}&:=\begin{pmatrix}
                  (G_{\W})_{0,0} & (G_{\W})_{0,1} & \dots & (G_{\W})_{0,k-1} \\
                  (G_{\W})_{1,0} & (G_{\W})_{1,1} & \dots & (G_{\W})_{1,k-1} \\
                      \vdots     &    \vdots                    \vdots      \\
                  (G_{\W})_{k-1,0} & (G_{\W})_{k-1,1} & \dots & (G_{\W})_{k-1,k-1}
                 \end{pmatrix}=\int_{\Omega} \boldsymbol{\chi}^{[k]}\,\,\, \W^{[k]} \,\,\, \left(\boldsymbol{\chi}^{[k]}\right)^{\top}
 \end{align*}

\end{definition}
\noi By means of the previous notation, the Sobolev bilinear form of two polynomials $p(x),q(x)\in \R[x]$ can be rewritten as
\begin{align*}
(p,q;\W)=\boldsymbol{p}G_{\W}\boldsymbol{q}^{\top}.
\end{align*}
The positive definiteness condition on the bilinear function is equivalent to that of $G_{\W}$, i.e., every principal minor of $G_{\W}$ must be
greater than zero $\det[G_{\W}^{[k]}]>0$ $\forall k=1,2,\dots$
This condition will be discussed in detail later on.\\

Now we rewrite the moment matrix in a slightly different way, that will be more suitable for our purposes. To this aim, we introduce the derivation matrix
$D\in \mathbb{M}_\infty$  defined by
\begin{align*}
 D&:=\begin{pmatrix}
   0 & 0 & 0 & 0 &\dots \\
   1 & 0 & 0 & 0 &\dots \\
   0 & 2 & 0 & 0 &\dots\\
   0 & 0 & 3 & 0 &\dots\\
   0 & 0 & 0 & 4 &\ddots\\
   \vdots & \vdots & \vdots &\vdots
\end{pmatrix}
\end{align*}
 and its powers $D^k$, whose action is $D \chi(x)=\chi'(x)$, $D^k \chi(x)=\chi^{(k)}(x)$.
We also introduce the shift operator
\begin{align*}
 \Lambda&:=\begin{pmatrix}
   0 & 1 & 0 & 0 &\dots \\
   0 & 0 & 1 & 0 &\dots \\
   0 & 0 & 0 & 1 &\dots\\
   \vdots & \vdots & \vdots &\ddots
\end{pmatrix}
\end{align*} whose action on $\chi$ is $\Lambda \chi(x)=x\chi(x)$, and on a polynomial $p(x)$ is
$xp(x)=\boldsymbol{p}\Lambda \chi(x)$.\\
The shift and derivation matrices satisfy for any natural number $n$
\begin{align*}
 \Lambda D^n-D^n \Lambda:=[\Lambda,D^{n}]=nD^{n-1}
\end{align*}
\begin{definition} We introduce the operator
 \begin{align*}
 \boldsymbol{D}&:= \begin{pmatrix}
                        \mathbb{I} & D & D^2 & \dots & D^k & \dots
                       \end{pmatrix} .
 \end{align*}
\end{definition}
\noi It is immediate to verify that
\[ \boldsymbol{D} \chi(x)= \boldsymbol{\chi}(x) .\]
The following result is a direct consequence of the previous discussion.
\par
Let us denote by $g_{k,r}$ the standard moment matrix associated to the measure $\d \mu_{k,r}$
(notice that $g_{i,j}=0_{\infty \times \infty}$ is a null matrix when $\d \mu_{i,j}=0$ and this is the case $\forall i,j > \mathcal{N}$).
\begin{pro}
The moment matrix admits the following representation
\begin{align}\label{G_W}
 G_{\W}& =
 \boldsymbol{D} \begin{pmatrix}
   g_{0,0} & g_{0,1} & g_{0,2} & g_{0,3} & \dots \\
   g_{1,0} & g_{1,1} & g_{1,2} & g_{1,3} &\dots  \\
   g_{2,0} & g_{2,1} & g_{2,2} & g_{2,3} &\dots  \\
   \vdots       &        \vdots&        \vdots&      \vdots  & \ddots \\
\end{pmatrix}
 \boldsymbol{D}^\top=\sum_{l,r=0}^{\mathcal{N}} D^l g_{l,r} (D^r)^\top \ ,
\end{align}
with its truncations
\begin{align}\label{G_Wk}
 G_{\W}^{[k]}& =
 \boldsymbol{D}^{[k]} \begin{pmatrix}
   g_{0,0} & g_{0,1} & \dots & g_{0,k-1} \\
   g_{1,0} & g_{1,1} & \dots & g_{1,k-1}  \\
   \vdots  & \vdots  &\ddots & \vdots   \\
  g_{k-1,0}&g_{k-1,1}&\dots  & g_{k-1,k-1}
\end{pmatrix}
 \left(\boldsymbol{D}^{[k]}\right)^\top=\sum^{k-1}_{l,r=0} D^l g_{l,r} (D^r)^\top \ .
\end{align}
\end{pro}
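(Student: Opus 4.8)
The plan is to verify the representation \eqref{G_W} by unwinding the definitions and matching entries, since everything here reduces to bookkeeping with the operators $\boldsymbol{D}$, $D$, and the building blocks $g_{k,r}$. First I would recall that by definition $G_{\W}=\int_{\Omega}\boldsymbol{\chi}\,\W\,\boldsymbol{\chi}^{\top}$, where $\boldsymbol{\chi}(x)=\begin{pmatrix}\chi & \chi' & \chi'' & \dots\end{pmatrix}$ collects the successive derivative vectors as columns. The key identity already established in the excerpt is $\boldsymbol{D}\chi(x)=\boldsymbol{\chi}(x)$, together with $D^{k}\chi(x)=\chi^{(k)}(x)$. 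So the natural move is to substitute $\boldsymbol{\chi}=\boldsymbol{D}\chi$ directly, formally writing $\boldsymbol{\chi}\,\W\,\boldsymbol{\chi}^{\top}=\boldsymbol{D}\bigl(\chi\,\W\,\chi^{\top}\bigr)\boldsymbol{D}^{\top}$, and then pushing the integration inside to land on the block matrix with entries $\int_{\Omega}\chi\,\d\mu_{l,r}\,\chi^{\top}=g_{l,r}$.

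The cleaner and more rigorous route, however, is to go entry by entry rather than manipulate semi-infinite products formally. Using the final definition, the $(n,p)$ entry of $G_{\W}$ is $(x^{n},x^{p};\W)=\sum_{l,r=0}^{\mathcal{N}}\langle (x^{n})^{(l)},(x^{p})^{(r)}\rangle_{l,r}=\sum_{l,r=0}^{\mathcal{N}}\int_{\Omega_{l,r}}(x^{n})^{(l)}(x^{p})^{(r)}\,\d\mu_{l,r}$. I would then recognize $(x^{n})^{(l)}=(D^{l}\chi)_{n}$ evaluated in the $n$-th component, so that $\int_{\Omega_{l,r}}(x^{n})^{(l)}(x^{p})^{(r)}\,\d\mu_{l,r}=\bigl(D^{l}g_{l,r}(D^{r})^{\top}\bigr)_{n,p}$, where I use that $g_{l,r}=\int\chi\,\d\mu_{l,r}\,\chi^{\top}$ and that $D^{l}$ acts on the left index while $(D^{r})^{\top}$ acts on the right index. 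Summing over $l,r$ from $0$ to $\mathcal{N}$ reproduces exactly the right-hand side $\sum_{l,r=0}^{\mathcal{N}}D^{l}g_{l,r}(D^{r})^{\top}$, and the outer block form with $\boldsymbol{D}$ and $\boldsymbol{D}^{\top}$ is just a repackaging of this double sum using the definition $\boldsymbol{D}=\begin{pmatrix}\mathbb{I} & D & D^{2} & \dots\end{pmatrix}$. The truncated version \eqref{G_Wk} follows identically, noting that $\boldsymbol{\chi}^{[k]}=\boldsymbol{D}^{[k]}\chi$ and that the truncation simply restricts the indices $n,p$ to the range $0,\dots,k-1$; since $g_{l,r}=0$ for $l,r>\mathcal{N}$, the sum is automatically finite and consistent with the upper limit $\mathcal{N}$ in the untruncated case.

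The main obstacle is not mathematical depth but making the semi-infinite matrix manipulations legitimate: I must be careful that the interchange of $\boldsymbol{D}$ with the integral is justified and that the block multiplication $\boldsymbol{D}(\cdots)\boldsymbol{D}^{\top}$ genuinely collapses to $\sum_{l,r}D^{l}g_{l,r}(D^{r})^{\top}$. This is where the entry-by-entry argument is safest, because each entry involves only a finite sum (the measures vanish for indices exceeding $\mathcal{N}$, and each $(x^{n})^{(l)}$ is a single monomial), so no convergence or reordering issues arise. I would therefore emphasize the finiteness observation already made in the excerpt—that for fixed $i,j$ the bilinear form $(x^{i},x^{j};\W)$ involves only finitely many terms—to guarantee that all the formal operations are in fact finite operations at the level of individual entries, and that the representation holds as an honest equality of semi-infinite matrices.
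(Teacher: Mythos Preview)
Your argument is correct and follows essentially the same route as the paper: expand $G_{\W}$ using $\chi^{(k)}=D^{k}\chi$, recognize the standard moment matrices $g_{l,r}=\int\chi\,\d\mu_{l,r}\,\chi^{\top}$, and repackage the double sum as $\boldsymbol{D}(\cdots)\boldsymbol{D}^{\top}$. The one point you gloss over that the paper makes explicit concerns the truncation \eqref{G_Wk}: the reason the sum can be cut off at $k-1$ (rather than $\mathcal{N}$) and the reason $[\,\cdot\,]^{[k]}$ distributes over the product is that each $D^{l}$ is strictly lower triangular, so $(D^{l})^{[k]}=0$ for $l\geq k$ and $(D^{l}g_{l,r}(D^{r})^{\top})^{[k]}=(D^{l})^{[k]}(g_{l,r})^{[k]}((D^{r})^{\top})^{[k]}$; your remark about $g_{l,r}$ vanishing beyond $\mathcal{N}$ does not by itself yield the $k-1$ upper limit.
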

\begin{proof}
Using the previous definitions, for the expression \ref{G_W} we can write
\begin{align*}
G_{\W}=(\chi,\chi^{\top};\W)=\int_{\Omega} \sum_{k}\sum_{r} \chi^{(k)}(x) \d \mu_{k,r} \left(\chi^{(r)}(x)\right)^{\top}=
\int_{\Omega} \sum_{k} \sum_{r}  D^k \chi (x) \d \mu_{k,r} \left(D^r\chi(x)\right)^{\top}=
\sum_{k} \sum_{r} D^k g_{k,r} \left(D^{r}\right)^{\top}
\end{align*}
while relation \ref{G_Wk} follows from the shape of the $D^l$. Since they are lower
\begin{align*}
 \left(\sum_{l,r=0} D^l g_{l,r} (D^r)^\top \right)^{[k]}=
 \sum_{l,r=0} \left(D^l\right)^{[k]} \left(g_{l,r}\right)^{[k]} \left((D^r)^\top\right)^{[k]} \ ,
\end{align*}
also observe that $\left(D^{l}\right)^{[k]}=0$ $\forall l\geq k$.
\end{proof}

\noi This expression is a generalization of the case of a diagonal $\W$, already studied in \cite{definitedeterminate} \cite{MonSob}.

\section{Sobolev bi-orthogonal polynomial sequences}
\subsection{Main definitions and LU factorization}
To introduce the Sobolev bilinear function we have required a positive definiteness condition, which amounts to having
every principal minor of $G_{\W}$ greater than zero. This requirement  (quasi-definiteness would also be a valid choice) is necessary
in order to use the LU factorization techniques of the moment matrix.

In the subsequent considerations, we shall assume that this condition for the minors of the moment matrix holds.
Although in this paper we give some requirements on the set $\{\d \mu_{ij}\}_{i,j}$
that would assure definiteness of the associated moment matrix, a thorough analysis of this problem remains open.

In \cite{definitedeterminate}, a diagonal measure matrix $\W$ (i.e. $\d \mu_{i,j}=0$ $\forall i\neq j$) was considered.  Choosing every $\d \mu_{i,i}:=\d \mu_i$ as a positive definite measure
makes the resulting Sobolev bilinear form a positive symmetric definite one and therefore a proper inner product.

This result can be easily interpreted in our framework. Observe that according to (\ref{G_W}) the moment matrix for the diagonal case is
\[
G_{\W}=g_{0}+Dg_{1}D^{\top}+D^2g_{2}(D^2)^{\top}+D^3g_{3}(D^3)^{\top}+\dots.
\]
If we introduce the matrix $N:=diag\{1,2,3,\dots\} $, with the aid of (\ref{G_Wk}), the truncation $G_{\W}^{[k]}$ reads
\begin{align*}
 (G_{\W})^{[k]}=(g_{0})^{[k]}+\left(\begin{array}{c|c}
                                       0_{1\times 1}  &  0 \\
                                       \hline
                                           0          & (N g_{1} N)^{[k-1]}
                                      \end{array}\right)
                                     +\left(\begin{array}{c|c}
                                       0_{2\times 2}  &  0 \\
                                       \hline
                                           0          & (N^2 g_{2} N^2)^{[k-2]}
                                      \end{array}\right)+\dots
                                     +\left(\begin{array}{c|c}
                                       0_{k-1\times k-1}  &  0 \\
                                       \hline
                                           0          & (N^{k-1} g_{k-1} N^{k-1})^{[1]}
                                      \end{array}\right)
\end{align*}
The condition that  $\d \mu_i$ be positive definite amounts to say that, given any vector $\boldsymbol{v}=(\boldsymbol{v}_0,\boldsymbol{v}_1,\dots,\boldsymbol{v}_{l-1})$, the associated quadratic form
$\boldsymbol{v}(g_{i})^{[l]}\boldsymbol{v}^{\top}$ satisfies $\boldsymbol{v}(g_{i})^{[l]}\boldsymbol{v}^{\top}>0$ $\forall \boldsymbol{v},l $.
Therefore, in the computation of
$\boldsymbol{v}(G_{\W})^{[k]}\boldsymbol{v}^{\top}$ only the sum of positive terms is involved; as a result
$\boldsymbol{v}(G_{\W})^{[k]}\boldsymbol{v}^{\top}>0$ $\forall \boldsymbol{v},k$, ensuring that $G_{\W}$ is positive definite and in turn LU factorizable.\\


We shall discuss now in the Sobolev context the main algebraic techniques of the present theory: the LU factorization approach
for the moment matrix and the existence of bi-orthogonal sequences of polynomials.

\begin{definition}
We shall say that the moment matrix $G_{\W}$ admits a LU factorization iff $\det \left(G_{\W}^{[k]}\right)\neq 0$ $\forall k=1,2,\dots$; in such a case
there exist two matrices $S_1,S_2\in \mathscr L$ such that
\begin{align}
 G_{\W}&:=S_1^{-1}H \left(S_2^{-1}\right)^{\top} \label{GWLU} \ ,
\end{align}
where $H:=\delta_{r,k}h_k\in \mathscr D$.
\end{definition}
\begin{definition} \label{biorth}
The monic SBPS associated with the LU-factorized moment matrix $G_{\W}$ \eqref{GWLU} are defined to be
\begin{align}
 P_1(x)&:=S_1\chi(x):=\begin{pmatrix}
                       P_{1,0}(x) \\
                       P_{1,1}(x) \\
                       \vdots \\
                       P_{1,k}(x) \\
                       \vdots
                      \end{pmatrix}\ , &
 P_2(x)&:=S_2\chi(x):=\begin{pmatrix}
                       P_{2,0}(x) \\
                       P_{2,1}(x) \\
                       \vdots \\
                       P_{2,k}(x) \\
                       \vdots
                      \end{pmatrix}\ .
\end{align}
\end{definition}
As a well known consequence of the previous definitions expressing our polynomials in terms of the LU factorization matrices, we can write
the following compact relations.
\begin{pro}
The SBPS can be expressed by means of the following quasi-determinantal formulae
\begin{align} \label{P1kP2k}
P_{1,k}(x)&=\Theta_*\left[\begin{array}{c |c}
	               G_{\W}^{[k]}                 & \begin{matrix}  1 \\ x \\ \vdots\\ x^{k-1} \end{matrix}\\
	\hline
	\begin{matrix}(G_{\W})_{k,0} & (G_{\W})_{k,1} & \dots & (G_{\W})_{k,k-1} \end{matrix} & x^k \end{array}\right] \ ,  \\
P_{2,k}(x)&=\Theta_*\left[\begin{array}{c |c}
	         \left(G_{\W}^{\top}\right)^{[k]}   & \begin{matrix}  1 \\ x \\ \vdots\\ x^{k-1} \end{matrix}\\
	\hline
	\begin{matrix}(G_{\W}^{\top})_{k,0} & (G_{\W}^{\top})_{k,1} & \dots & (G_{\W}^{\top})_{k,k-1}\end{matrix} & x^k \end{array}\right]	\ .
\end{align}
\end{pro}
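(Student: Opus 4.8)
The plan is to extract the bi-orthogonality relations from the $LU$-factorization \eqref{GWLU} and then recognize the quasi-determinant as the solution of the resulting linear system. First I would compute, using $(p,q;\W)=\boldsymbol p\,G_{\W}\,\boldsymbol q^\top$ together with $P_1=S_1\chi$ and $P_2=S_2\chi$, the full matrix of pairings
\begin{align*}
(P_1,P_2^\top;\W)=S_1(\chi,\chi^\top;\W)S_2^\top=S_1 G_{\W} S_2^\top=S_1 S_1^{-1}H\big(S_2^{-1}\big)^\top S_2^\top=H.
\end{align*}
Hence $(P_{1,k},P_{2,j};\W)=h_k\delta_{k,j}$. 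Since $H\in\mathscr D$ is diagonal, this single identity yields simultaneously $(P_{1,k},P_{2,j};\W)=0$ for $j<k$ and $(P_{1,j},P_{2,k};\W)=0$ for $j<k$. Because $S_1,S_2\in\mathscr L$, each $P_{1,j}$ and $P_{2,j}$ is monic of degree $j$, so $\{P_{2,j}\}_{j=0}^{k-1}$ and $\{P_{1,j}\}_{j=0}^{k-1}$ are bases of the polynomials of degree at most $k-1$. Consequently the bi-orthogonality relations are equivalent to the scalar conditions $(P_{1,k},x^j;\W)=0$ and $(x^j,P_{2,k};\W)=0$ for $j=0,\dots,k-1$.

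Next I would write $P_{1,k}(x)=x^k+\sum_{l=0}^{k-1}(S_1)_{k,l}x^l$ and impose these orthogonality conditions. Using $(x^l,x^j;\W)=(G_{\W})_{l,j}$, the conditions read, for $j=0,\dots,k-1$,
\begin{align*}
\sum_{l=0}^{k-1}(S_1)_{k,l}(G_{\W})_{l,j}=-(G_{\W})_{k,j},
\end{align*}
that is, the row vector $\boldsymbol a:=\big((S_1)_{k,0},\dots,(S_1)_{k,k-1}\big)$ solves $\boldsymbol a\,G_{\W}^{[k]}=-\big((G_{\W})_{k,0},\dots,(G_{\W})_{k,k-1}\big)$. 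Since $\det G_{\W}^{[k]}\neq0$ by the $LU$-factorizability hypothesis, $G_{\W}^{[k]}$ is invertible and $\boldsymbol a=-\big((G_{\W})_{k,0},\dots,(G_{\W})_{k,k-1}\big)\big(G_{\W}^{[k]}\big)^{-1}$. Substituting back,
\begin{align*}
P_{1,k}(x)=x^k-\big((G_{\W})_{k,0},\dots,(G_{\W})_{k,k-1}\big)\big(G_{\W}^{[k]}\big)^{-1}\begin{pmatrix}1\\x\\\vdots\\x^{k-1}\end{pmatrix},
\end{align*}
which is precisely the Schur-complement value $d-cA^{-1}b$ of the starred lower-right entry of the bordered matrix, i.e. the quasi-determinant $\Theta_*$ appearing in \eqref{P1kP2k}.

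Finally, the formula for $P_{2,k}$ follows by the same argument applied to the transposed moment matrix: writing $P_{2,k}(x)=x^k+\sum_{l=0}^{k-1}(S_2)_{k,l}x^l$ and imposing $(x^i,P_{2,k};\W)=0$ for $i=0,\dots,k-1$ produces a linear system in the entries of $G_{\W}^\top$, since $(x^i,x^l;\W)=(G_{\W})_{i,l}=(G_{\W}^\top)_{l,i}$; solving it reproduces the second quasi-determinant, now bordered by $\big(G_{\W}^\top\big)^{[k]}$ and the row $\big((G_{\W}^\top)_{k,0},\dots,(G_{\W}^\top)_{k,k-1}\big)$. The computation is essentially routine once the bi-orthogonality is in hand; the only points requiring care are the index/transpose bookkeeping that distinguishes the $P_1$ and $P_2$ cases, and matching the sign and corner conventions of $\Theta_*$ so that the Schur complement comes out exactly as stated. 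The invertibility needed to form $\big(G_{\W}^{[k]}\big)^{-1}$ is supplied directly by the factorization hypothesis, so no additional analytic input is required.
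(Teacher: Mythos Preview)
Your proposal is correct and is precisely the standard argument the paper alludes to but omits, saying only that the formulae are ``a well known consequence of the previous definitions expressing our polynomials in terms of the LU factorization matrices.'' Your derivation---extracting bi-orthogonality from $S_1G_{\W}S_2^\top=H$, converting to the conditions $(P_{1,k},x^j;\W)=0$ and $(x^j,P_{2,k};\W)=0$, solving the resulting linear system via the invertibility of $G_{\W}^{[k]}$, and identifying the answer as the Schur complement---is exactly the intended route, with the transpose bookkeeping for $P_{2,k}$ handled correctly.
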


Notice that the definition ensures that
$deg[P_{\alpha,k}]=k\,\,\,\,\alpha=1,2\,\,\,\,\forall k=0,1,\dots$ while the condition on the minors of $G_{\W}$ guarantees that the definition
always makes sense.

Here we have used the notation $\Theta_*[M]$ to denote the \textit{last quasi-determinant} or \textit{Schur complement} of the matrix in brackets. More precisely, we recall that given
$M=\left(\begin{array}{c|c}
   A & B\\
   \hline
   C & D
   \end{array}\right) \in \mathbb{M}_{(n+m)}$ with $A \in \mathbb{M}_{n}$, $\det \left(A\right) \neq 0$ and $D\in \mathbb{M}_{m}$ its last quasi-determinant
or Schur complement with respect to $A$ is given by
\begin{align*}
  \Theta_*\left[\begin{array}{c|c}
   A & B\\
   \hline
   C & D
   \end{array}\right]&:=SC(M):=M/A:=D-CA^{-1}B
\end{align*}
It is worth observing that the block Gauss factorization of $M$ involves the last quasi-determinant
\begin{align*}
 M=
   \begin{pmatrix}
   \mathbb{I}_n & 0\\
   CA^{-1} & \mathbb{I}_m
   \end{pmatrix}
   \begin{pmatrix}
   A & 0\\
   0 & \Theta_*[M]
   \end{pmatrix}
   \begin{pmatrix}
   \mathbb{I}_n & AB^{-1}\\
   0 & \mathbb{I}_m
   \end{pmatrix}.
\end{align*}
From the previous relation one can immediately deduce that
\begin{align*}
 \det \left(\Theta_*\left[\begin{array}{c|c}
   A & B\\
   \hline
   C & D
   \end{array}\right]\right)= \frac{\det\left( M\right)}{\det \left(A\right)}.
\end{align*}
Therefore, whenever $m=1$, $D$ reduces to a scalar $d$ 
, and the quasi-determinants are a ratio of standard determinants
\begin{align*}
 \Theta_*\left[\begin{array}{c|c}
   A & B\\
   \hline
   C & d
   \end{array}\right]= \frac{\det\left( M\right)}{\det \left(A\right)} \ .
\end{align*}
This indeed is the situation we will deal with. 
However, we prefer to use quasi-determinants since the relations we obtain will be ready for further generalizations of the theory
(matrix Sobolev, multivariate Sobolev), where the expressions in terms of determinants would no longer hold.
For further details on the theory of quasi-determinants, see \cite{Olver}.
\par
The following proposition clarifies the notion of bi-orthogonality for SBPS.
\begin{pro}
The monic SBPS $P_{1}$  and $P_2$ are Sobolev-bi-orthogonal, that is, they satisfy the relation
\begin{align*}
 (P_{1,r},P_{2,k};\W)&:=h_r\delta_{r,k}
\end{align*}
with the further properties
\begin{align*}
 (P_{1,l},x^r;\W)&:=\delta_{l,r}h_{r} \,\,\,\,\,\,\forall r\leq l & &\Longrightarrow &
 \sum_{k=0}^{l}\sum_{j=0}^{r} & \left\langle P_{1,l}^{(k)},\frac{\d^j x^r}{\d x^j} \right\rangle_{k,j}= \begin{cases}
                                                                                               0  & \forall r<l \\
                                                                                               h_l & r=l
                                                                                              \end{cases} \\
 (x^r,P_{2,l},;\W)&:=h_r\delta_{r,l} \,\,\,\,\,\,\forall r\leq l & &\Longrightarrow &
 \sum_{k=0}^{r}\sum_{j=0}^{l} & \left\langle \frac{\d^j x^r}{\d x^j},P_{2,l}^{(k)} \right\rangle_{j,k}= \begin{cases}
                                                                                               0  & \forall r<l \\
                                                                                               1 & r=l
                                                                                              \end{cases}
\end{align*}
\end{pro}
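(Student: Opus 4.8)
The plan is to reduce everything to the single matrix identity $S_1\,G_{\W}\,S_2^\top=H$ and then read off its entries. The starting point is the formula $(p,q;\W)=\boldsymbol{p}\,G_{\W}\,\boldsymbol{q}^\top$ together with Definition~\ref{biorth}: since $P_1=S_1\chi$ and $P_2=S_2\chi$, the coefficient vector of $P_{1,r}$ is precisely the $r$-th row of $S_1$ and that of $P_{2,k}$ the $k$-th row of $S_2$ (here the monic normalization, i.e. the unit diagonal of $S_1,S_2\in\mathscr{L}$, is what guarantees $\deg P_{\alpha,k}=k$ and the identification of rows with coefficient vectors). Hence
\[
(P_{1,r},P_{2,k};\W)=\big[S_1\,G_{\W}\,S_2^\top\big]_{r,k}.
\]
First I would substitute the $LU$-factorization \eqref{GWLU}, $G_{\W}=S_1^{-1}H(S_2^{-1})^\top$, to obtain
\[
S_1\,G_{\W}\,S_2^\top=S_1 S_1^{-1}\,H\,(S_2^{-1})^\top S_2^\top=H\,\big(S_2 S_2^{-1}\big)^\top=H,
\]
using that $\mathscr{L}$ is a group (so $S_2^{-1}\in\mathscr{L}$) and that $(S_2^{-1})^\top S_2^\top=(S_2 S_2^{-1})^\top=\mathbb{I}$. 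Since $H=\delta_{r,k}h_k$ is diagonal, the $(r,k)$ entry is $h_r\delta_{r,k}$, which is exactly the asserted bi-orthogonality.

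For the two further properties I would run the same computation but pair $P_{\alpha,l}$ against the monomial $x^r$, whose coefficient vector is the standard basis (row) vector $e_r$. Thus $(P_{1,l},x^r;\W)=[S_1 G_{\W}]_{l,r}$ and $(x^r,P_{2,l};\W)=[G_{\W}S_2^\top]_{r,l}$. The factorization now gives $S_1 G_{\W}=H(S_2^{-1})^\top$ and $G_{\W}S_2^\top=S_1^{-1}H$. The key structural observation is triangularity: $(S_2^{-1})^\top$ is \emph{upper} unitriangular, so $H(S_2^{-1})^\top$ is upper triangular with diagonal entries $h_l$; dually, $S_1^{-1}H$ is \emph{lower} triangular with diagonal entries $h_r$. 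Reading the $(l,r)$ and $(r,l)$ entries under the restriction $r\le l$ then forces the strictly-below-diagonal (resp. strictly-above-diagonal) entries to vanish, leaving the diagonal value $h_l$ at $r=l$. This yields $(P_{1,l},x^r;\W)=h_l\delta_{l,r}$ and $(x^r,P_{2,l};\W)=h_l\delta_{r,l}$ for $r\le l$.

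Finally I would unfold these scalar identities through Definition~\ref{def3}: writing $(P_{1,l},x^r;\W)=\sum_{n,s=0}^{\mathcal N}\langle P_{1,l}^{(n)},(x^r)^{(s)}\rangle_{n,s}$, and noting that $\deg P_{1,l}=l$ forces $P_{1,l}^{(n)}=0$ for $n>l$ while $\tfrac{\d^s x^r}{\d x^s}=0$ for $s>r$, the double sum truncates to $\sum_{k=0}^{l}\sum_{j=0}^{r}$, reproducing the displayed expressions (and symmetrically for the $P_2$ side). I do not expect a genuine obstacle here: the whole argument is a transposition-and-triangularity bookkeeping exercise built on top of the $LU$-factorization. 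The only points demanding care are (i) keeping the transpose conventions straight so that $(S_2^{-1})^\top S_2^\top$ collapses to $\mathbb{I}$ and the correct product comes out upper- versus lower-triangular, and (ii) invoking the monic normalization and the degree count to justify both the row/coefficient identification and the truncation of the derivative sums.
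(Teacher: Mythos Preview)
Your proof is correct and follows exactly the approach the paper indicates: the paper's own proof consists of the single sentence that the relations are a direct consequence of the $LU$ factorization of $G_{\W}$, and you have simply unpacked that assertion in full detail via $S_1 G_{\W} S_2^\top = H$ and the triangularity of $H(S_2^{-1})^\top$ and $S_1^{-1}H$.
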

\begin{proof}
 The previous relations are a direct consequence of the $LU$ factorization of the moment matrix $G_{\W}$.
\end{proof}
\begin{definition}
Let $f(x)=\frac{1}{y-x}$ belong to the subspace $\A_{\W}^{\mathcal{N}}(\Omega)$. Then we introduce the second kind functions
 \begin{align*}
  C_{1,l}(y)&:=\int_{\Omega}
  \sum_{k=0}^{l}\sum_{j=0}^{\mathcal{N}} P_{1,l}^{(k)}(x) \d \mu_{k,j} \left[\frac{\partial^j}{\partial x^j} \left(\frac{1}{y-x}\right)\right]
  =\left(P_{1,l}(x),\frac{1}{y-x} ;\W(x) \right)\ , &   y &\notin \Omega\ ,\\
   C_{2,l}(y)&:=\int_{\Omega}
  \sum_{k=0}^{\mathcal{N}}\sum_{j=0}^{l} \left[\frac{\partial^j}{\partial x^j} \left(\frac{1}{y-x}\right)\right] \d \mu_{j,k} P_{2,l}^{(k)}(x)
  =\left(\frac{1}{y-x},P_{2,l}(x) ;\W(x) \right)\ , &    y &\notin \Omega  \ .
 \end{align*}
\end{definition}

\begin{pro}
The associated Sobolev second kind functions $C_{\alpha}(y)$ admit the following representation in terms of the LU factorization matrices
for all $y$ such that $|y|>max\{|x|,x\in \Omega \}$
\begin{align*}
 C_1(y)&=H(S_2^{-1})^{\top}\chi^*(y):=\begin{pmatrix} \label{C1}
                       C_{1,0}(y) \\
                       C_{1,1}(y) \\
                       \vdots \\
                       C_{1,k}(y) \\
                       \vdots
                      \end{pmatrix}, &
  C_2(y)&=H(S_1^{-1})^{\top}\chi^*(y):=\begin{pmatrix}
                       C_{2,0}(y) \\
                       C_{2,1}(y) \\
                       \vdots \\
                       C_{2,k}(y) \\
                       \vdots
                      \end{pmatrix} \ .
\end{align*}
\end{pro}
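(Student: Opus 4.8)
The plan is to reduce everything to the algebraic identity that expands the Cauchy kernel in the bases $\chi$ and $\chi^*$. First I would observe that for $|y|>\max\{|x|:x\in\Omega\}$ the geometric series gives
\begin{align*}
\frac{1}{y-x}=\sum_{m=0}^{\infty}\frac{x^m}{y^{m+1}}=\chi(x)^\top\chi^*(y),
\end{align*}
since the $m$-th entry of $\chi^*(y)$ is $y^{-(m+1)}$. Differentiating this $j$ times in $x$ and recalling $D^j\chi(x)=\chi^{(j)}(x)$, I would record the companion identity
\begin{align*}
\frac{\partial^j}{\partial x^j}\Big(\frac{1}{y-x}\Big)=\big(\chi^{(j)}(x)\big)^\top\chi^*(y),
\end{align*}
which is exactly the derivative factor appearing in the definition of the second kind functions.

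Next I would substitute this into the definition of $C_{1,l}$. Because the only $y$-dependence is the vector $\chi^*(y)$, constant in $x$, it factors out of the integral and the double sum, leaving
\begin{align*}
C_{1,l}(y)=\Big(\int_\Omega\sum_{k=0}^{l}\sum_{j=0}^{\mathcal{N}}P_{1,l}^{(k)}(x)\,\d\mu_{k,j}\,\big(\chi^{(j)}(x)\big)^\top\Big)\chi^*(y)=\big(P_{1,l},\chi^\top;\W\big)\chi^*(y),
\end{align*}
where $(P_{1,l},\chi^\top;\W)$ is the row vector with $m$-th entry $(P_{1,l},x^m;\W)$. Assembling the components into the column vector $C_1(y)$ and using $P_1=S_1\chi$ together with the bilinearity of $(*,*;\W)$, I would write $C_1(y)=S_1(\chi,\chi^\top;\W)\chi^*(y)=S_1 G_{\W}\chi^*(y)$.

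The conclusion then follows from the $LU$-factorization \eqref{GWLU}: substituting $G_{\W}=S_1^{-1}H(S_2^{-1})^\top$ yields $C_1(y)=S_1 S_1^{-1}H(S_2^{-1})^\top\chi^*(y)=H(S_2^{-1})^\top\chi^*(y)$, as claimed. For $C_2$ I would run the identical argument, now expanding the Cauchy kernel in the \emph{first} slot, which produces $C_2(y)=S_2 G_{\W}^\top\chi^*(y)$; since $H^\top=H$ because $H$ is diagonal, one has $G_{\W}^\top=S_2^{-1}H(S_1^{-1})^\top$, and the factor $S_2 S_2^{-1}$ cancels to give $C_2(y)=H(S_1^{-1})^\top\chi^*(y)$.

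The main obstacle is analytic rather than algebraic: I must justify interchanging the (finite) double sum and the integral against $\W$ with the infinite geometric series, including after differentiating $j\le\mathcal{N}$ times in $x$. This is where the hypothesis $|y|>\max\{|x|:x\in\Omega\}$ is essential, since it is meaningful only when $\Omega$ is bounded, and then guarantees that the series for $\frac{1}{y-x}$ and each of its $x$-derivatives converges uniformly on $\Omega$; this uniform convergence legitimizes the term-by-term differentiation and the exchange with $\int_\Omega\cdots\,\d\mu_{k,j}$. Once this is in place, the remainder is the bookkeeping carried out above.
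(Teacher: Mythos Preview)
Your proof is correct and follows essentially the same approach as the paper: both rely on the geometric series identity $\chi(x)^\top\chi^*(y)=\frac{1}{y-x}$ (and its $x$-derivatives) together with the rewriting $H(S_2^{-1})^\top\chi^*(y)=S_1 G_{\W}\chi^*(y)$ coming from the $LU$-factorization. The paper runs the chain of equalities in the opposite direction---starting from $S_1 G_{\W}\chi^*(y)$ and unpacking the integral definition of $G_{\W}$ to recover the definition of $C_1$---but this is the same argument; your added remarks on uniform convergence justifying the interchange of series and integral are a welcome clarification the paper leaves implicit.
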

\begin{proof}
In order to prove any of the two expressions it is enough to observe that whenever $\forall |x|<|y|$
 \begin{align*}
  \chi(x)^{\top} \cdot \chi(y)^*=\frac{1}{y}\sum_{n=0}^{\infty} \left(\frac{x}{y}\right)^{n}=\frac{1}{y-x} \ .
 \end{align*}
Also, since the given expressions in the proposition can be rewritten as
\begin{align*}
C_1(y)&=S_1 G_{\W} \chi^*(y), &
 \left(C_2(y)\right)^{\top}&=\left(\chi^*(y)\right)^{\top} G_{\W} S_2^{\top} \ ,
\end{align*}
we deduce that, for example for $C_1$
\begin{align*}
 C_1(y)&=&S_1 G_{\W} \chi^*(y)=S_1 \int_{\Omega} \boldsymbol{\chi}\,\,\, \W \,\,\, \boldsymbol{\chi}^{\top} \cdot \chi^*(y) =
 \int_{\Omega} \begin{pmatrix}
                P_1(x) & P_1'(x) & \dots & P_1^{(k)}(x) & \dots
               \end{pmatrix} \W
               \begin{pmatrix}
               \frac{1}{y-x} \\
               \frac{\partial}{\partial x} \left(\frac{1}{y-x}\right) \\
               \vdots \\
               \frac{\partial^j}{\partial x^j} \left(\frac{1}{y-x}\right) \\
               \vdots
               \end{pmatrix}
\end{align*}
and similarly for $C_2(y)$.
\end{proof}

A natural question is to establish the relation between the SBPS (and associated second kind functions)
that arise from a given measure matrix $\W$ and the ones associated with its transposed $\W^{\top}$. A simple answer is provided by the following
\begin{pro}
Let $P_{\W,\alpha}$ and $C_{\W,\alpha}$ with $\alpha=1,2$ denote the SBPS and second kind functions that arise from the measure matrix $\W$ and
$P_{\W^{\top},\alpha}$ and $C_{\W^{\top},\alpha}$ the ones corresponding to $\W^{\top}$. Then we have
\begin{align*}
 P_{\W,1}&=P_{\W^{\top},2}  &  P_{\W,2}&=P_{\W^{\top},1} \\
 C_{\W,1}&=C_{\W^{\top},2}  &  C_{\W,2}&=C_{\W^{\top},1}
\end{align*}

\end{pro}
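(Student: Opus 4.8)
\section*{Proof proposal}

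The plan is to reduce the whole statement to a single structural identity, namely that transposing the measure matrix transposes the moment matrix:
\[
 G_{\W^\top}=\left(G_{\W}\right)^\top .
\]
Granting this, everything follows by transposing the $LU$-factorization \eqref{GWLU} and invoking its uniqueness.

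First I would establish $G_{\W^\top}=\left(G_{\W}\right)^\top$. This can be read off at the level of entries: since the entries of $\W^\top$ are $\d\mu^\top_{a,b}=\d\mu_{b,a}$, relabelling the summation indices $a\leftrightarrow b$ in
$\left(G_{\W}\right)_{n,p}=\sum_{a,b=0}^{\mathcal{N}}\int_\Omega \frac{\d^a x^n}{\d x^a}\frac{\d^b x^p}{\d x^b}\,\d\mu_{a,b}$
gives $\left(G_{\W^\top}\right)_{n,p}=\left(G_{\W}\right)_{p,n}$, which is exactly $\left(\left(G_{\W}\right)^\top\right)_{n,p}$. Equivalently, using the block representation \eqref{G_W}, one has $G_{\W^\top}=\sum_{l,r} D^l g_{r,l}(D^r)^\top$, whereas $\left(G_{\W}\right)^\top=\sum_{l,r} D^r g_{l,r}^\top (D^l)^\top$; the two coincide once one uses that the Hankel blocks are symmetric, $g_{l,r}^\top=g_{l,r}$, and relabels $l\leftrightarrow r$.

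Next I would transpose the factorization itself. Writing $G_{\W}=S_1^{-1}H\left(S_2^{-1}\right)^\top$ and using $H^\top=H$ (as $H\in\mathscr D$) yields $\left(G_{\W}\right)^\top=S_2^{-1}H\left(S_1^{-1}\right)^\top$, and hence, by the first step, $G_{\W^\top}=S_2^{-1}H\left(S_1^{-1}\right)^\top$ with $S_2,S_1\in\mathscr L$ and $H\in\mathscr D$. Since transposition preserves every leading principal minor, $G_{\W^\top}$ is again $LU$-factorizable, and by the uniqueness of the factorization (the unit lower triangular factor and the diagonal are determined by the matrix being factorized) we may identify the $LU$-data attached to $\W^\top$ as $S_{1,\W^\top}=S_2$, $S_{2,\W^\top}=S_1$ and $H_{\W^\top}=H$.

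Finally I would substitute these identifications into the defining formulas. From Definition \ref{biorth}, $P_{\W^\top,1}=S_{1,\W^\top}\chi=S_2\chi=P_{\W,2}$ and, symmetrically, $P_{\W^\top,2}=S_1\chi=P_{\W,1}$. For the second kind functions, their $LU$-representation $C_1=H\left(S_2^{-1}\right)^\top\chi^*$, $C_2=H\left(S_1^{-1}\right)^\top\chi^*$ gives $C_{\W^\top,1}=H_{\W^\top}\left(S_{2,\W^\top}^{-1}\right)^\top\chi^*=H\left(S_1^{-1}\right)^\top\chi^*=C_{\W,2}$, and likewise $C_{\W^\top,2}=C_{\W,1}$, which is precisely the claim. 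The only genuinely delicate point is the appeal to uniqueness of the $LU$-factorization: one must check that the transposed factorization is already in the normalized form required by the definition (factors $S_i\in\mathscr L$, middle factor diagonal), so that the factors can be legitimately matched term by term; once that is in place the remainder is pure bookkeeping.
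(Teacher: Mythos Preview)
Your proposal is correct and follows essentially the same approach as the paper: the paper's proof consists of the single observation that $G_{\W^{\top}}=G_{\W}^{\top}$ together with an appeal to the $LU$-factorization, and you have simply filled in the details of that appeal (transposing the factorization, invoking uniqueness, and reading off the identifications of $S_1,S_2,H$). Your remark about the normalized form of the transposed factorization being the only point requiring care is apt, and your entrywise/block verification of $G_{\W^\top}=(G_{\W})^\top$ is a clean way to spell out what the paper calls ``straightforward.''
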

\begin{proof}
It is straightforward to see that $G_{\W^{\top}}=G_{\W}^{\top}$. The assumption of the $LU$ factorization property for the moment matrix implies
the proposition.
\end{proof}
The previous proposition implies that if $\W=\W^{\top}$ then $P_{\W,1}=P_{\W,2}$ and $C_{\W,1}=C_{\W,2}$,(usually studied case) as expected since
in such a case the $LU$ factorization is indeed a Cholesky factorization.

\subsection{Christoffel-Darboux Kernels}
The Christoffel-Darboux and Cauchy kernels will play a crucial role in the following considerations. We present here their formal definition in our context.
\begin{definition}
We introduce the  Christoffel--Darboux kernel, the Cauchy kernel, and the first and second kind mixed Christoffel--Darboux kernels, given by\\
\begin{itemize}
\item Christoffel--Darboux kernel
\begin{align*}
  K^{[l]}(x,y)&:=\sum_{k=0}^{l-1}P_{2k}(x) h_k^{-1} P_{1k}(y)=[P_2(x)^{\top}]^{[l]} \left(H^{-1}\right)^{[l]} [P_1(y)]^{[l]}
  =\left(\chi(x)^{[l]}\right)^{\top}\left(G^{[l]}\right)^{-1}\chi(y)^{[l]} \ ,
\end{align*}
\item Cauchy CD kernel
\begin{align*}
  Q^{[l]}(x,y)&:=\sum_{k=0}^{l-1}C_{2k}(x) h_k^{-1} C_{1k}(y)=[C_2(x)^{\top}]^{[l]} \left(H^{-1}\right)^{[l]} [C_1(y)]^{[l]}
  =\left(\chi^*(x)^{[l]}\right)^{\top}\left(G^{[l]}\right)\chi^*(y) \ ,
 \end{align*}
\item Mixed 1st CD kernel
\begin{align*}
  \mathcal{K}_1^{[l]}(x,y)&:=\sum_{k=0}^{l-1}C_{2k}(x) h_k^{-1} P_{1k}(y)=[C_2(x)^{\top}]^{[l]} \left(H^{-1}\right)^{[l]} [P_1(y)]^{[l]}
  =\left(\chi(x)^*\right)^{\top}\begin{pmatrix}
                                           \mathbb{I}_{l\times l} \\
                                           \hline
                                           (S_1^{-1})^{[\geq l, j]}(S_1^{-1})^{[l]}
                                          \end{pmatrix} \chi^{[l]}(y) \ ,
\end{align*}
\item Mixed 2nd CD kernel
\begin{align*}
  \mathcal{K}_2^{[l]}(x,y)&:=\sum_{k=0}^{l-1}P_{2k}(x) h_k^{-1} C_{1k}(y)=[P_2(x)^{\top}]^{[l]} \left(H^{-1}\right)^{[l]} [C_1(y)]^{[l]}
  =\left(\chi(x)^{[l]}\right)^{\top}\left(\begin{array}{c|c}
                                           \mathbb{I}_{l\times l} & (S_2^{\top})^{[l]}([S_2^{\top}]^{-1})^{[j,\geq l]}
                                          \end{array}\right) \chi^*(y) \ .
\end{align*}
\end{itemize}
\end{definition}

\begin{remark}
In the previous definition, the expressions of the standard Bezoutian kernels are not present. They would involve only two consecutive
orthogonal polynomials (or second kind functions) instead of all polynomials up to the degree of the kernel.
The lack of this expression is not surprising, since the Bezoutian kernels would correspond to having a three term recurrence relation for the
orthogonal polynomials (and second kind functions), that in principle is missing.
Despite that, all of the expected properties of the CD kernel still hold. This is, the CD Kernel still has the reproducing property,

\begin{align*}
 \left(K^{[l]}(x,z),K^{[l]}(z,y)\right)_{\W}=
 \left(\chi^{[l]}(x)\right)^{\top} \left(G_{\W}^{[l]}\right)^{-1}
 \left[\int_{\Omega} \boldsymbol{\chi}^{[l]}(z) \W(z) \left(\boldsymbol{\chi}^{[l]}(z)\right)^{\top} \right]
 \left(G_{\W}^{[l]}\right)^{-1} \chi^{[l]}(y)=K^{[l]}(x,y)
\end{align*}
and acts as a projector onto the basis of the SBPS. Therefore, given any function $f(x)\in \A^{\mathcal{N}}_{\W}(\Omega)$, one has
\begin{align*}
 \Pi_1^{[l]}[f(y)]&=\left(f(x),K^{[l]}(x,y)\right)_{\W}=\sum_{k}^{l-1} \left[\left(f,P_{2,k}\right)_{\W} h_k^{-1}\right]P_{1,k}(y) \\
 \Pi_2^{[l]}[f(x)]&=\left(K^{[l]}(x,y),f(y)\right)_{\W}=\sum_{k}^{l-1} P_{2,k}(x) \left[h_k^{-1} \left(P_{1,k},f\right)_{\W} \right]
\end{align*}
where we call $\Pi_{\alpha}^{[l]}[f(x)]$ the best
approximation of $f$ (in $(*,*)_{\W}$) in the basis $\{P_{\alpha,l}\}_{k=0}^{(l-1)}$ for $\alpha=\{1,2\}$.
Notice also that when $\W$ is symmetric, only one of the two mixed kernels is needed (no distinction between subindices $1,2$ exists).
\end{remark}
\section{Additive perturbations of the measure matrix}

In this section, we are interested in the following problem: Given the pairing $(G, g)$, where $G$ is a moment matrix whose associated SBPS is known, and $g$ is another matrix, find the SBPS associated to the new
moment matrix $\breve{G}=G+g$.
\par
The same problem, although from a different point of view, was also studied in \cite{Car3Tr}. The results proposed in the present work, when they are equivalent, possess alternative
proofs. At the same time, they are suited for the Sobolev context.


Generally speaking, the solution to this problem leads to interesting cases when we require that $g$ has some special features. Two nontrivial examples
are indeed the cases of \textit{coherent pairs} and of \textit{discrete Sobolev bilinear functions}.

Since their appearance \cite{Coherent}, coherent pairs have been largely investigated in the literature. In this work, we will limit ourselves to show how coherent pairs fit within our framework. Instead, the discrete Sobolev bilinear forms will be of considerable relevance in our subsequent discussion; therefore, we will pay special attention to them.


As a starting point of our analysis, suppose that our moment matrix can be written as $\breve{G}=G+g$. Since we assume that
$G$ has an associated SBPS, then it must be $LU$-factorizable; at the same time, the requirement that the SBPS associated to $\breve{G}$ exists implies that the latter matrix should be
$LU$-factorizable too. Therefore, we deduce the relation
\begin{align}
\breve{S}_1^{-1} \breve{H} \left(\breve{S}_2^{-1}\right)^{\top}=S_1^{-1} H \left(S_{2}^{-1}\right)^{\top}+g.
\end{align}
This motivates the following
\begin{definition}\label{dA} We introduce the matrices
\begin{align*}
 A&:=S_1 g S_2^{\top}  &  M_1&:=\breve{S}_1 S_1^{-1}    &  M_2&:=\breve{S}_2 S_2^{-1}
\end{align*}
\end{definition}

\begin{pro}
The matrices $M_1, M_2$ are the connection matrices between old and new polynomials
\begin{align*}
 M_1 P_1(x)&=\breve{P}_1(x)    &    M_2 P_2(x)&=\breve{P}_2(x)
\end{align*}
and provide an LU factorization of the matrix $H+A$:
\begin{align*}
 M_1^{-1} H \left(M_2^{-1}\right)^{\top}=H+A
\end{align*}
\end{pro}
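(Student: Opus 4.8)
The plan is to reduce both assertions to the defining relations $P_\alpha(x)=S_\alpha\chi(x)$, $\breve P_\alpha(x)=\breve S_\alpha\chi(x)$ (for $\alpha=1,2$, as in Definition \ref{biorth}) together with the two $LU$ factorizations $G_{\W}=S_1^{-1}H(S_2^{-1})^\top$ and $\breve G=\breve S_1^{-1}\breve H(\breve S_2^{-1})^\top$, so that nothing beyond elementary matrix algebra is needed. First I would dispatch the connection-matrix identities: directly from $M_\alpha:=\breve S_\alpha S_\alpha^{-1}$ one has
\begin{align*}
 M_1 P_1(x)&=\breve S_1 S_1^{-1}S_1\chi(x)=\breve S_1\chi(x)=\breve P_1(x), & M_2 P_2(x)&=\breve S_2 S_2^{-1}S_2\chi(x)=\breve S_2\chi(x)=\breve P_2(x),
\end{align*}
which settles the first pair of equalities with no further input.

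For the factorization statement I would start from the compatibility relation obtained by equating the two $LU$ factorizations of $\breve G=G+g$, namely
\begin{align*}
 \breve S_1^{-1}\breve H(\breve S_2^{-1})^\top=S_1^{-1}H(S_2^{-1})^\top+g,
\end{align*}
and conjugate it by multiplying on the left by $S_1$ and on the right by $S_2^\top$. On the right-hand side the middle term collapses via $S_1 S_1^{-1}=\mathbb I$ and the transpose identity $(S_2^{-1})^\top S_2^\top=(S_2 S_2^{-1})^\top=\mathbb I$, leaving $H$, while the remaining term is by definition $A=S_1 g S_2^\top$. On the left-hand side I would regroup $S_1\breve S_1^{-1}=(\breve S_1 S_1^{-1})^{-1}=M_1^{-1}$ and $(\breve S_2^{-1})^\top S_2^\top=(S_2\breve S_2^{-1})^\top=(M_2^{-1})^\top$, which yields
\begin{align*}
 M_1^{-1}\breve H(M_2^{-1})^\top=H+A.
\end{align*}
Since $M_\alpha=\breve S_\alpha S_\alpha^{-1}$ is a product of elements of $\mathscr L$ and hence again lies in $\mathscr L$, the factors $M_1^{-1}\in\mathscr L$ and $(M_2^{-1})^\top$ are respectively unit lower and unit upper triangular, while $\breve H\in\mathscr D$; thus the displayed identity is exactly the $LU$ factorization of $H+A$ announced in the statement, its diagonal part being $\breve H$.

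There is no genuine obstacle here: the whole argument is forced by the definitions. The only point requiring care — and the one most likely to conceal an ordering or transpose slip rather than a real difficulty — is the bookkeeping of transposes, both in collapsing $(S_2^{-1})^\top S_2^\top$ to $\mathbb I$ and in identifying $(\breve S_2^{-1})^\top S_2^\top$ with $(M_2^{-1})^\top$. I would therefore emphasize writing these two reductions explicitly, after which the equality $M_1^{-1}\breve H(M_2^{-1})^\top=H+A$ drops out, and its triangularity/diagonality structure makes the $LU$ interpretation automatic.
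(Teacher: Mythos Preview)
Your proof is correct and follows exactly the approach the paper has in mind: the paper's own proof is the one-line remark that everything follows from the two $LU$ factorizations together with the observation that $M_1,M_2\in\mathscr L$, and you have simply made this explicit. In doing so you also correctly obtain $M_1^{-1}\breve H(M_2^{-1})^\top=H+A$ with diagonal part $\breve H$, which is what the subsequent Proposition~\ref{ASBPSNA} uses; the ``$H$'' printed on the left-hand side of the statement is a typographical slip for $\breve H$.
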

\begin{proof}
The result follows from the requirement that both $\breve{G},G$ admit an LU factorization and from the observation that, by definition, both $M_1, M_2$ are lower uni-triangular.
\end{proof}
This last proposition allows to derive directly the following consequence.
\begin{pro}\label{ASBPSNA}
 The basis change from the old SBPS to the new one is given
\begin{align*}
 \breve{P}_{1,k}(x)&=\Theta_*\left[\begin{array}{c c c c c|c}
	        &         & (H+A)^{[k]} &  &           & \begin{matrix}  P_{1,0}(x) \\ P_{1,1}(x) \\ \vdots\\ P_{1,k-1}(x) \end{matrix}\\
	\hline
	(A)_{k,0} & (A)_{k,1} & \dots   &  & (A)_{k,k-1} & P_{1,k}(x) \end{array}\right], \\
\breve{P}_{2,k}(x)&=\Theta_*\left[\begin{array}{c c c c c|c}
	        &         & (H+A)^{[k]} &  &           & \begin{matrix}  A_{0,k}(x) \\ A_{1,k}(x) \\ \vdots\\ A_{k,k-1}(x) \end{matrix}\\
	\hline
	P_{2,0} & P_{2,1} & \dots   &  & P_{2,k-1} & P_{2,k}(x) \end{array}\right], \\
 \breve{h}_{k}&=\Theta_*\left[\begin{array}{c c c c c|c}
	        &         & (H+A)^{[k]} &  &           & \begin{matrix}  (A)_{0,k} \\ (A)_{1,k} \\ \vdots\\ (A)_{k-1,k} \end{matrix}\\
	\hline
	(A)_{k,0} & (A)_{k,1} & \dots   &  & (A)_{k,k-1} & (H+A)_{k,k} \end{array}\right]
\end{align*}
\end{pro}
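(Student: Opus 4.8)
The plan is to recognize that the new data $(\breve S_1,\breve S_2,\breve H)$ arise from the old data by exactly the same $LU$-factorization mechanism of Section 3, with the monomial vector $\chi(x)$ replaced by the old polynomial vectors $P_1(x),P_2(x)$ and the moment matrix $G_\W$ replaced by $H+A$. Conjugating $\breve G=G+g$ on the left by $S_1$ and on the right by $S_2^\top$ gives
\begin{align*}
 S_1\,\breve G\,S_2^\top=S_1(G+g)S_2^\top=H+A,
\end{align*}
while the preceding proposition furnishes the factorization $H+A=M_1^{-1}\breve H(M_2^{-1})^\top$ with $M_1,M_2\in\mathscr L$ and $\breve H$ diagonal. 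In other words $H+A$ is precisely the Gram matrix of the old polynomials $\{P_{1,k}\},\{P_{2,k}\}$ under the bilinear form attached to $\breve G$. Since $\breve P_1=M_1 P_1$ and $\breve P_2=M_2 P_2$, the triple $(M_1,M_2,\breve H)$ stands to $H+A$ and to the bases $\{P_{1,k}\},\{P_{2,k}\}$ exactly as $(S_1,S_2,H)$ stands to $G_\W$ and to $\{x^k\}$.

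Consequently I would transfer the quasi-determinantal Proposition giving $P_{1,k},P_{2,k}$ verbatim under the replacements $G_\W\mapsto H+A$, $\chi(x)\mapsto P_1(x)$ (resp. $P_2(x)$) and $S_\alpha\mapsto M_\alpha$. Concretely, I would either invoke that proposition directly for the matrix $H+A$, or verify the identities by hand: the Schur complement $\Theta_*$ expands $\breve P_{1,k}$ as $P_{1,k}$ minus a combination of $P_{1,0},\dots,P_{1,k-1}$, so $\breve P_{1,k}=M_1 P_1$ is automatically of the correct unitriangular shape, and the orthogonality $(\breve P_{1,k},P_{2,s};\W)=0$ for $s<k$ follows from the Cramer structure of the Schur complement — the inner factor $\big((H+A)^{[k]}\big)^{-1}$ cancels against the border rows and columns of $H+A$, reproducing the $\delta_{s,r}$. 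The same computation applied to $\breve G^\top$, using $H+A^\top=M_2^{-1}\breve H(M_1^{-1})^\top$, yields the formula for $\breve P_{2,k}$ with the roles of rows and columns (and of $P_1,P_2$) interchanged, and $\breve h_k=\Theta_*\big[(H+A)^{[k+1]}\big]$ gives the last identity. Existence and the invertibility of $(H+A)^{[k]}$ are guaranteed by $\det(H+A)^{[k]}=\det\breve G^{[k]}\neq 0$, since $S_1,S_2$ are unitriangular.

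Finally I would simplify the borders. Because $H$ is diagonal, $(H+A)_{k,r}=A_{k,r}$ whenever $r\neq k$, so the bottom row and the last column of each bordered matrix may be written with the entries of $A$ alone; only the corner of the $\breve h_k$ quasi-determinant retains the diagonal contribution, $(H+A)_{k,k}=h_k+A_{k,k}$. This produces exactly the stated formulae.

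The main obstacle is organizational rather than analytic: one must set up the correct dictionary, recognizing that the new polynomials are naturally expanded in the old polynomial basis (not in monomials) and that $H+A$ is the relevant effective moment matrix, and then keep the transpose conventions straight so that the first family is expanded in $\{P_{1,k}\}$ while tested against $\{P_{2,k}\}$, and conversely for the second. Once this dictionary is fixed no new estimate is required, and the three identities become immediate specializations of the Section 3 quasi-determinantal formulae.
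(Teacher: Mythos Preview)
Your proposal is correct and follows exactly the approach the paper intends: the paper states only that the result is a direct consequence of the preceding proposition (the $LU$-factorization $M_1^{-1}\breve H(M_2^{-1})^\top=H+A$ and the connection formulae $\breve P_\alpha=M_\alpha P_\alpha$), and you have spelled out precisely this, transferring the quasi-determinantal formulae of Proposition~2 under the dictionary $G_\W\mapsto H+A$, $\chi\mapsto P_\alpha$, $S_\alpha\mapsto M_\alpha$, and then simplifying the borders using that $H$ is diagonal. Your observation that $\det(H+A)^{[k]}=\det\breve G^{[k]}\neq 0$ (via unitriangularity of $S_1,S_2$) is a useful detail the paper leaves implicit.
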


We shall use this result and focus now on three cases. Firstly we will deal with the situation where $G$ and $g$ are the moment matrices
associated to a pair of related classical measures. Secondly we will consider the case when $g=\lambda Dg_2D^{\top}$ and $G=g_1$
where $g_1,g_2$ are the moment matrices associated to a couple of measures that form a coherent pair.
Finally we will study the case where $g$ is associated to a discrete Sobolev bilinear function.\\

\subsection{A first relation with classical OPS}
It is a well known fact that classical orthogonal polynomials can be regarded as a very specific case of SOPS.
As we are about to see, a consequence of this is that the previous relations become almost trivial when choosing the right measures.


If we denote the classical measures by $u_{\gamma}$, where $\gamma$ refers to the parameters that define them, they are
\begin{itemize}
\item Hermite $u(x)=e^{-x^2},\,\,\,x\in \mathbb{R}$ ; ($\gamma=\{ \varnothing \}$).
\item Laguerre $u_{\alpha}(x)=x^{\alpha} e^{-x},\,\,\,\alpha >-1,\,\,\,x\in \mathbb{R}_+$ ; ($\gamma=\{ \alpha \}$).
\item Jacobi $u_{\alpha,\beta}(x)=(1-x)^{\alpha}(1+x)^{\beta},\,\,\,\alpha,\beta > -1,\,\,\,x\in(-1,1)$ ; ($\gamma=\{ \alpha,\beta \}$).
\end{itemize}
We will use $P_{\gamma}(x)=S_{\gamma} \chi(x)$ to denote the monic orthogonal polynomials $\{P_{\gamma,n} \}_n$ associated to each of them in terms of the
LU factorization matrices $S_{\gamma}$ of the corresponding moment matrix $g_{\gamma}$.\\
There are many ways to characterize classical measures; the one that is suited for our purposes is to express them in terms of a Pearson differential equation:
\begin{align*}
 p_2(x)\frac{\d u_{\gamma}}{\d x}&= p_{1,\gamma}(x) u_{\gamma} &
 p_2^k(x) u_{\gamma}&= u_{\gamma+k} &
&\mbox{where $deg[p_2]\leq 2$ and $deg[p_{1,\gamma}]=1$}.
\end{align*}
\begin{itemize}
 \item Hermite $p_{1}=-2x$, $p_2=1$.
 \item Laguerre $p_{1,\alpha}=(\alpha-x)$, $p_2=x$.
 \item Jacobi $p_{1,\alpha,\beta}=-[(\alpha-\beta)+(\alpha+\beta)x]$, $p_2=1-x^{2}$.
\end{itemize}

This equation is relevant in the discussion of many properties of the associated OPS. In particular, it implies that
$P_{(\gamma+1),n}(x)=\frac{P'_{\gamma,n+1}(x)}{n+1}$, which in matrix form gives the crucial relation $D=S_{\gamma} D S_{\gamma+1}^{-1}$.\\

As a simple example, consider the following Sobolev inner product
\begin{align*}
(f,h)&=\int f(x) h(x) u_{\gamma}(x) \d x + \lambda \int f'(x) h'(x) u_{\gamma+1}(x) \d x  &  \lambda&>0,
\end{align*}
that we wish to interpret as an additive perturbation $\breve{G}=G+g$ with the identifications
$G=g_{\gamma}$ and $g=\lambda Dg_{\gamma+1} D^{\top}$. The crucial relation
$D=S_{\gamma} D S_{\gamma+1}^{-1}$ implies for $A$ the particularly simple form
\begin{align*}
 A=\lambda S_{\gamma} D S_{\gamma+1}^{-1} H_{\gamma+1} \left( S_{\gamma} D S_{\gamma+1}^{-1} \right)^{\top}=\lambda D H_{\gamma+1} D^{\top}=
 \lambda \begin{pmatrix}
  0 &                    &                  &       &                    &\\
    &1^{2}h_{\gamma+1,0} &                  &       &                    &\\
    &                    &2^2h_{\gamma+1,1} &       &                    & \\
    &                    &                  &\ddots &                    & \\
    &                    &                  &       &k^2h_{\gamma+1,k-1} &    \\
    &                    &                  &       &                    & \ddots
 \end{pmatrix},
\end{align*}
which makes the quasi-determinantal expressions in Proposition \ref{ASBPSNA} almost trivial.
\begin{cor}
The SBPS $\breve P_k$ and norms $\breve{h}_k$ for the following inner product
\begin{align*}
(f,h)&=\int f(x) h(x) u_{\gamma}(x) \d x + \lambda \int f'(x) h'(x) u_{\gamma+1}(x) \d x & \lambda&>0
\end{align*}
are given by
\begin{align*}
 \breve P_k(x)&=P_{\gamma,k}(x)    &   \breve{h}_{k}&=h_{\gamma,k}+\lambda k^2 h_{\gamma+1,k-1}
\end{align*}
\end{cor}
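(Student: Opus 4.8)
The plan is to exploit the fact that the perturbation matrix $A$ has already been shown to be diagonal, so that the quasi-determinantal formulae of Proposition \ref{ASBPSNA} collapse onto their diagonal entries. First I would record the structural inputs that hold in this setting: the moment matrices $g_\gamma$ and $g_{\gamma+1}$ of the classical measures are symmetric and positive definite, so $G=g_\gamma$ is symmetric, and $g=\lambda D g_{\gamma+1}D^\top$ is symmetric as well. Consequently $\breve G=G+g$ is symmetric, whence (by the proposition treating the case $\W=\W^\top$) we have $S_1=S_2=S_\gamma$, $P_1=P_2=P_\gamma$ and $\breve P_1=\breve P_2=:\breve P$, so it suffices to handle a single sequence.

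Next I would invoke the identity $A=\lambda D H_{\gamma+1}D^\top$ displayed above, which follows from the classical relation $D=S_\gamma D S_{\gamma+1}^{-1}$ together with $S_{\gamma+1}g_{\gamma+1}S_{\gamma+1}^\top=H_{\gamma+1}$. The only feature I need to extract is that $A$ is \emph{diagonal}, with $(A)_{k,k}=\lambda k^2 h_{\gamma+1,k-1}$ and $(A)_{0,0}=0$; in particular every coupling entry $(A)_{k,j}$ and $(A)_{j,k}$ with $j<k$ vanishes, and $H+A$ is again diagonal with $(H+A)_{k,k}=h_{\gamma,k}+\lambda k^2 h_{\gamma+1,k-1}$.

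The computation then reduces to reading Proposition \ref{ASBPSNA} in this diagonal situation, where each Schur complement equals its lower-right corner minus a correction term linear in the vanishing coupling entries of $A$. Concretely, for $\breve P_{1,k}$ the bottom row $((A)_{k,0},\dots,(A)_{k,k-1})$ of the block matrix is zero, so the correction drops out and $\breve P_{1,k}(x)=P_{1,k}(x)=P_{\gamma,k}(x)$; for $\breve P_{2,k}$ the top-right column $((A)_{0,k},\dots,(A)_{k-1,k})$ is zero, giving $\breve P_{2,k}(x)=P_{\gamma,k}(x)$; and for $\breve h_k$ both vanish, so $\breve h_k=(H+A)_{k,k}=h_{\gamma,k}+\lambda k^2 h_{\gamma+1,k-1}$. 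Equivalently, since $H+A$ is itself diagonal its $LU$ factorization is trivial, forcing the connection matrices $M_1=M_2=\mathbb I$ and $\breve H=H+A$, which returns $\breve P=M_1 P_\gamma=P_\gamma$ and the same norms at once.

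I do not anticipate a genuine obstacle here: the whole difficulty has been front-loaded into the structural identity $D=S_\gamma D S_{\gamma+1}^{-1}$ for classical families, which is precisely what makes $A$ diagonal. Once diagonality of $A$ is secured, the corollary is an immediate specialization of the general additive-perturbation machinery, the only care needed being the index shift $k\mapsto k-1$ inherited from the derivation matrix $D$.
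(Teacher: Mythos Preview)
Your proposal is correct and follows exactly the paper's approach: the paper derives the diagonal form $A=\lambda D H_{\gamma+1}D^{\top}$ and then simply notes that this ``makes the quasi-determinantal expressions in Proposition \ref{ASBPSNA} almost trivial,'' stating the corollary without further detail. Your write-up just makes that triviality explicit by observing that the off-diagonal coupling entries of $A$ vanish, so the Schur complements reduce to their lower-right corners (equivalently, $M_1=M_2=\mathbb{I}$ and $\breve H=H+A$).
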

\noi For future reference we shall also discuss here a couple of additional properties of classical OPS.
They will be useful below, in relation with the study of equivalence classes of measure matrices.

\noi \textit{i}) It is almost straightforward to see that
\begin{align*}
 p_2^k(x) u_{\gamma}\mid_{\partial \Omega}&= u_{\gamma+k}\mid_{\partial \Omega}=0 &
 &\forall k\geq1 \ .
\end{align*}
In the previous relation, one can allow for even smaller values of $k$, depending on the value of $\gamma$. However, to avoid
the worst case  $\gamma=-1$, taking $k\geq1$ will be sufficient in all situations.

\noi \textit{ii}) A less trivial property is expressed by the following
\begin{pro}\label{ClSSW}
The measure $u_{\gamma+k}$ satisfies the relations 
\begin{align*}
 \frac{\d^r}{\d x^r}\left( p_2^k u_{\gamma} \right)&=\varphi_{k,r}(x)u_{\gamma} &  & 0\leq r\leq k \\
 \varphi_{k,r}u_{\gamma}\mid_{\partial \Omega}&=0                               &  &0\leq r \leq (k-1)
\end{align*}
where $\varphi_{k,r}(x)$ is a suitable polynomial.
\end{pro}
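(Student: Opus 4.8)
The plan is to prove the first relation by induction on $r$ while keeping careful track of the power of $p_2$ dividing each $\varphi_{k,r}$, and then to deduce the boundary relation as an immediate consequence of property \textit{i}). The key observation is that the surviving power of $p_2$ is exactly what is needed to make each $\varphi_{k,r}$ polynomial and, separately, to force the boundary terms to vanish.

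First I would establish the sharper inductive claim that, for $0\leq r\leq k$,
\[
\varphi_{k,r}=p_2^{\,k-r}\,q_{k,r}
\]
for a suitable polynomial $q_{k,r}$. The base case $r=0$ is immediate, since $\frac{\d^0}{\d x^0}\big(p_2^k u_\gamma\big)=p_2^k u_\gamma$, so that $\varphi_{k,0}=p_2^k$ and $q_{k,0}=1$. For the inductive step I differentiate once more and substitute the Pearson relation $u_\gamma'=\frac{p_{1,\gamma}}{p_2}u_\gamma$, obtaining
\[
\frac{\d^{r+1}}{\d x^{r+1}}\big(p_2^k u_\gamma\big)=\Big(\varphi_{k,r}'+\varphi_{k,r}\frac{p_{1,\gamma}}{p_2}\Big)u_\gamma.
\]
Inserting $\varphi_{k,r}=p_2^{\,k-r}q_{k,r}$ and expanding, the factor $p_2^{\,k-r-1}$ comes out and one reads off
\[
\varphi_{k,r+1}=p_2^{\,k-r-1}\Big[p_2\,q_{k,r}'+\big((k-r)p_2'+p_{1,\gamma}\big)q_{k,r}\Big]=:p_2^{\,k-r-1}\,q_{k,r+1}.
\]
Since $\deg p_2\leq 2$ and $\deg p_{1,\gamma}=1$, the bracket is manifestly a polynomial; hence $q_{k,r+1}$ is a polynomial and the claim propagates up to $r=k$. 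In particular every $\varphi_{k,r}$ is a polynomial, which is precisely the first asserted relation.

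The boundary relation then follows directly. For $0\leq r\leq k-1$ one has $k-r\geq1$, so the factorization just proved gives
\[
\varphi_{k,r}\,u_\gamma=q_{k,r}\,\big(p_2^{\,k-r}u_\gamma\big)=q_{k,r}\,u_{\gamma+(k-r)}.
\]
By property \textit{i}), $u_{\gamma+j}\mid_{\partial\Omega}=0$ for every $j\geq1$, and since $q_{k,r}$ is a polynomial it remains finite at the endpoints (or, in the Hermite and Laguerre cases, is dominated by the decay of $u_{\gamma+(k-r)}$ at infinity). Therefore $\varphi_{k,r}u_\gamma\mid_{\partial\Omega}=0$ for all $0\leq r\leq k-1$.

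The only delicate point is the bookkeeping of the power of $p_2$: the surviving factor $p_2^{\,k-r}$ is exactly what both cancels the denominator $p_2$ introduced at each differentiation through the Pearson relation — guaranteeing that $\varphi_{k,r}$ stays polynomial all the way up to $r=k$ — and supplies the power $\geq1$ needed to rewrite $\varphi_{k,r}u_\gamma$ as $q_{k,r}u_{\gamma+(k-r)}$ when $r\leq k-1$. I expect the main obstacle to be purely computational, namely verifying the recursion $q_{k,r+1}=p_2 q_{k,r}'+\big((k-r)p_2'+p_{1,\gamma}\big)q_{k,r}$, rather than conceptual.
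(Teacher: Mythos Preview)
Your proof is correct and is essentially the same argument as the paper's: the paper packages your recursion as a first-order operator $\mathcal{O}_j:=p_2\frac{\d}{\d x}+[jp_2'+p_{1,\gamma}]$ and writes $\varphi_{k,r}=\mathcal{O}_{k-(r-1)}\circ\cdots\circ\mathcal{O}_k[1]\,p_2^{\,k-r}$, so your $q_{k,r}$ is exactly this composition applied to $1$, and your inductive step $q_{k,r+1}=p_2 q_{k,r}'+((k-r)p_2'+p_{1,\gamma})q_{k,r}$ is precisely $q_{k,r+1}=\mathcal{O}_{k-r}[q_{k,r}]$. The boundary conclusion via property \textit{i}) is also identical.
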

\begin{proof}
Let $Q$ be a $\mathcal{C}^{k}$ function; and taking into account the Pearson equation it is easy to see that
 \begin{align*}
  \frac{\d}{\d x}\left[Qp_2^k u_{\gamma}\right]=Q'p_2^ku_{\gamma}+Qkp'_2 p_2^{k-1}u_{\gamma}+Qp_2^{k}\frac{p_{1,\gamma}}{p_2}u_{\gamma}
  =\mathcal{O}_k\left[Q \right]p_2^{k-1}u_{\gamma} \ ,
 \end{align*}
where $\mathcal{O}_k:=p_2\frac{\d}{\d x}+[kp'_2+p_{1,\gamma}]$ is a first order linear differential operator. Differentiating the previous relation we have
 \begin{align*}
  \frac{\d^2}{\d x^2}\left[ Q p_2^k u_{\gamma} \right]=\frac{\d}{\d x}\left[\mathcal{O}_k[Q]p_2^{k-1} u_{\gamma}\right] \ .
  =\mathcal{O}_{k-1}\circ \mathcal{O}_k [Q] p_2^{k-2} u_{\gamma}
 \end{align*}
 Therefore, differentiating $r$ times one gets
 \begin{align*}
  \frac{\d^r}{\d x^r}\left[Q p_2^k u_{\gamma} \right]=
  \mathcal{O}_{k-(r-1)}\circ \mathcal{O}_{k-(r-2)}\dots \circ \mathcal{O}_{k} [Q] p_2^{k-r} u_{\gamma}:=\mathcal{O}_{k}^{k-(r-1)}[Q] p_2^{k-r} u_{\gamma}
 \end{align*}
 Notice that we have defined the operator $\mathcal{O}_k^{j}[f]$, $\forall j\leq k$, but in order to make our notation a bit more compact let
 us add to this definition the case $\mathcal{O}_k^{k+1}[f]:=f$ as the identity operator,
 this way for $ 0 \leq r \leq k $
 \begin{align*}
  \varphi_{k,r}:=\mathcal{O}_{k}^{k-(r-1)}[1] p_2^{k-r}
 \end{align*}
 (note that according to the definition of $\mathcal{O}_k^{k+1}[f]:=f$ we would have $\varphi_{k,0}=p_2^{k}$)
 and now from \noi \textit{i}) the proposition is proven.

\end{proof}
\subsection{Coherent Pairs}
We are interested in obtaining the SBPS associated to the inner product
\begin{align} \label{CP}
 (f,h)_c&:=\int f(x) h(x) \d \mu_1(x) + \lambda \int f'(x) h'(x) \d \mu_2(x)  & \lambda&>0,
\end{align}
where $\d \mu_1(x)$ and $\d \mu_2(x)$ form a \textit{coherent pair of measures}. This inner product, in terms of moment matrices reads
\begin{align*}
 \breve{G}=g_1+\lambda D g_2 D^{\top}
\end{align*}
and therefore can be studied from the additive perturbation approach.
Let us introduce some notation for the moment matrices, their factorization and corresponding OPS. For each of the two involved measures we will denote:
\begin{align*}
 \d \mu_1(x) \longrightarrow g_1=S^{-1}H \left(S^{-1}\right)^{T} \longrightarrow P(x)=S \chi(x) \\
 \d \mu_2(x) \longrightarrow g_2=Z^{-1}K \left(Z^{-1}\right)^{T} \longrightarrow Q(x)=Z \chi(x) \ .
\end{align*}
One of the possible characterizations of a coherent pair is given in terms of a relation between the OPS associated to each of the measures. Precisely,
it is said that $\d \mu_1(x)$ and $\d \mu_2(x)$ form a coherent pair if there exist some non zero constants $\{r_k\}_{k=1}^{\infty}$ such that
\begin{align*}
 Q_k(x)&=\frac{1}{k+1}P_{k+1}'(x)-\frac{r_k}{k}P_k'(x)        &      \forall k=1,2,\dots
\end{align*}
It is worth pointing out that the coefficient that goes with $P_{k+1}'(x)$ is chosen according to the fact that we wish to generate monic orthogonal
polynomials, while both the sign and coefficient that go with $r_k P_k'(x) $ are selected for convenience.
To interpret this construction as an additive perturbation and using the notation presented in definition \ref{dA} we have
\begin{align*}
A=\lambda \left(S D Z^{-1}\right) K \left(S D Z^{-1} \right)^{\top}
\end{align*}
We introduce the lower matrix $R^{-1}$ according to the formulae
\begin{align*}
S D Z^{-1}&=\begin{pmatrix}
            0      & 0     & 0  &  \dots      \\
            1      & 0     & 0  &  \dots      \\
            *      & 2     & 0  &   \dots     \\
            *      & *     & 3  &    0     \\
            \vdots &\vdots &    & \ddots
           \end{pmatrix}=
           \begin{pmatrix}
            0                    & 0                   & 0                    &  \dots      \\
            (R^{-1})_{0,0}       & 0                   & 0                    &  \dots      \\
            (R^{-1})_{1,0}       & (R^{-1})_{1,1}      & 0                    &   \dots     \\
            (R^{-1})_{2,0}       &(R^{-1})_{2,1}       & (R^{-1})_{2,2}       &    \ddots    \\
                \vdots           & \vdots              &   \vdots             &
           \end{pmatrix}, &
  R^{-1}:=  \begin{pmatrix}
            1                  & 0                 & 0  &  \dots      \\
            (R^{-1})_{1,0}     & 2                 & 0  &  \dots      \\
            (R^{-1})_{2,0}     &(R^{-1})_{2,1}     & 3  &   \dots     \\
            \vdots &\vdots &    & \ddots
           \end{pmatrix}
\end{align*}
This last definition is motivated by the fact that it allows to write the truncations of $A$ as
\begin{align*}
 A^{[k]}=\left(\begin{array}{c|c}
          0 &  \boldsymbol{0} \\
          \hline
          \boldsymbol{0}^{\top} & \lambda \left(R^{-1}K \left(R^{-1}\right)^{T} \right)^{[k-1]}
         \end{array}\right)=
         \left(\begin{array}{c|c}
          0 &  \boldsymbol{0}\\
          \hline
          \boldsymbol{0}^{\top} & \lambda \left(R^{[k-1]}\right)^{-1} K^{[k-1]} \left(\left(R^{[k-1]}\right)^{-1}\right)^{T}
         \end{array}\right)
\end{align*}

We have used here $\boldsymbol{0}$ for a row of zeroes. The second equality holds due to the lower
triangular shape of $R$. By means of Proposition \ref{ASBPSNA}, we deduce the following expression for the SBPS:
\begin{align*}
 \breve{P}_k&=P_k(x)
 -\lambda \begin{pmatrix}
 \left(R^{-1}K \left(R^{-1}\right)^{T} \right)^{[k]}_{k-1,0} & \dots &
 \left(R^{-1}K \left(R^{-1}\right)^{T} \right)^{[k]}_{k-1,k-2}
         \end{pmatrix}
\left[\left(R^{-1}K \left(R^{-1}\right)^{T} \right)^{[k-1]}+\tilde{H}^{[k-1]}\right]^{-1}
\begin{pmatrix}
 P_1(x) \\ P_2(x) \\ \vdots \\ P_{k-1}(x)
\end{pmatrix}
\end{align*}


Here $\tilde{H}^{[k-1]}:=diag \{H_1,H_2,\dots,H_{k-2}\}$. This is a general result that would be valid for any inner product of the form \eqref{CP}. In order to simplify it, we will use the fact that we are working with coherent pairs in order to find a simple expression
for $\left(R^{-1}K R^{-T} \right)^{[k]}$. To this aim, remember that
\begin{align*}
 SDZ^{-1}Q(x)&=P'(x) & &\Rightarrow  &  R^{-1} \begin{pmatrix}
                                                Q_0 \\ Q_1 \\ Q_2 \\ \vdots
                                               \end{pmatrix}&=\begin{pmatrix}
                                                P'_1 \\ P'_2 \\ P'_3 \\ \vdots
                                               \end{pmatrix}  & &\Rightarrow  &
                                               \begin{pmatrix}
                                                Q_0 \\ Q_1 \\ Q_2 \\ \vdots
                                               \end{pmatrix}&=R \begin{pmatrix}
                                                P'_1 \\ P'_2 \\ P'_3 \\ \vdots
                                               \end{pmatrix}
\end{align*}
At the same time, due to coherence property, we know that $R$ has a particularly simple lower bi-diagonal shape
\begin{align*}
 R=\begin{pmatrix}
    1              &                &               &         \\
    -\frac{r_1}{1} &   \frac{1}{2}  &               &         \\
                   & -\frac{r_2}{2} &  \frac{1}{3}  &         \\
                   &                & -\frac{r_3}{3}& \ddots  \\
                   &                &               & \ddots  \\
   \end{pmatrix}
\end{align*}
It is  now easy to see that after introducing the matrices
\begin{align*}
 r&:=\begin{pmatrix}
    0              &                &               &         \\
    r_1            &   0            &               &         \\
                   & r_2            &  0            &         \\
                   &                & r_3          & \ddots  \\
                   &                &               & \ddots  \\
   \end{pmatrix} &
  N &:=\begin{pmatrix}
    1              &                &               &         \\
                   &   2            &               &         \\
                   &                &  3            &         \\
                   &                &               & \ddots  \\
   \end{pmatrix}
\end{align*}
one obtains
\begin{align*}
 RN&=\left(\mathbb{I}-r\right) &  &\Longrightarrow  &
 R^{-1}=N\left(\mathbb{I}-r\right)^{-1}=N\left(\mathbb{I}+r+r^{2}+\dots \right) &  &\Longrightarrow  &
 \left(R^{[k]}\right)^{-1}=N^{[k]}\left(\mathbb{I}^{[k]}+r^{[k]}+\dots+ (r^{k-1})^{[k]}\right)
\end{align*}
Therefore
\begin{align*}
 \lambda \left(R^{-1}K \left(R^{-1}\right)^{T} \right)^{[k]}=
 \lambda N^{[k]}\left(\mathbb{I}^{[k]}+r^{[k]}+(r^{2})^{[k]}+\dots+ (r^{k-1})^{[k]}\right) K^{[k]}
                \left(\mathbb{I}^{[k]}+r^{[k]}+(r^{2})^{[k]}+\dots+ (r^{k-1})^{[k]}\right)^{\top} N^{[k]}
\end{align*}
which finally implies that the $\breve{P}_k$ depend only on the first $k-1$ parameters $\{r_1,r_2,\dots,r_{k-1}\}$ that
characterized the coherence and the norms of the original polynomials. For instance, consider
\begin{align*}
 \lambda \left(R^{-1}K \left(R^{-1}\right)^{T} \right)^{[3]}=
 \lambda \begin{pmatrix}
           K_0 &  2r_1 K_0                 & 3 r_2 r_1 K_0               \\
      2r_1 K_0 & 2^2(r_1^{2}K_0+K_1)       & 2\cdot3(r_1^2r_2K_0+r_2K_1) \\
  3 r_2 r_1 K_0&2\cdot3(r_1^2r_2K_0+r_2K_1)& 3^2(r_1^2r_2^2K_0+r_2^2K_1+K_2)
         \end{pmatrix}
\end{align*}
which yields
\begin{align*}
 \breve{P}_0&=P_0 \\
 \breve{P}_1&=P_1 \\
 \breve{P}_2&=P_2-\lambda (2r_1K_0)[\lambda K_0+H_1]^{-1}P_1\\
 \breve{P}_3&=P_3-\lambda \begin{pmatrix}
                           3 r_2 r_1 K_0&2\cdot3(r_1^2r_2K_0+r_2K_1)
                          \end{pmatrix}
                          \begin{pmatrix}
                           K_0+H_1 &  2r_1 K_0 \\
                           2r_1 K_0 & 2^2(r_1^{2}K_0+K_1)+H_2
                          \end{pmatrix}^{-1}
                         \begin{pmatrix}
                          P_1 \\ P_2
                         \end{pmatrix}
\end{align*}

Observe that the previous nice expressions for the Sobolev polynomials are just a consequence of the lower bi--diagonal structure of $R$
(which came from the characterization of the coherent pair $\{\d\mu_1,\d\mu_2 \}$ in terms
of their associated OPS).

A possible generalization of the notion of coherent pairs can be obtained by considering a bi--$m \times m$ block diagonal $R$ and proceeding in the same way. This suggests the following
\begin{definition}
 We shall say that $\{\d\mu_1,\d\mu_2 \}$ form a $m \times m$ block coherent pair if their associated OPS are related as follows
\begin{align*}
 \begin{pmatrix}
  Q_0 \\ Q_1 \\ \vdots \\ Q_{m-1}
 \end{pmatrix}&=\left(R_{m}\right)_{[0][0]}
\begin{pmatrix}
  P'_1 \\ P'_2 \\ \vdots \\ P'_{m}
 \end{pmatrix}, &
 \begin{pmatrix}
  Q_{km} \\ Q_{km+1} \\ \vdots \\ Q_{km+m-1}
 \end{pmatrix}&=\left(R_{m}\right)_{[k][k-1]}
\begin{pmatrix}
  P'_{(k-1)m+1} \\ P'_{(k-1)m+2} \\ \vdots \\ P'_{(k-1)m+m}
 \end{pmatrix}+
 \left(R_{m}\right)_{[k][k]}
\begin{pmatrix}
  P'_{km+1} \\ P'_{km+2} \\ \vdots \\ P'_{km+m}
 \end{pmatrix} &
 & \forall k \geq 1
\end{align*}
where $\left(R_{m}\right)_{[k][k-1]},\left(R_{m}\right)_{[k][k]} \in \mathbb{M}_m$ and
\begin{align*}
\left(R_{m}\right)_{[k][k]}=\begin{pmatrix}
  \frac{1}{km+1}   &                &               &         \\
    *              & \frac{1}{km+2} &               &         \\
      \vdots       &    \vdots      &  \ddots       &         \\
       *           &      *         &  \dots        & \frac{1}{(k+1)m}
   \end{pmatrix} \ .
\end{align*}
\end{definition}
Note that the case $m=1$ reproduces just the standard concept of coherent pairs that we treated before. The case $m=2$ contains as a particular case
the symmetrically coherent pairs since
\begin{align*}
 \begin{pmatrix}
  Q_{2k} \\ Q_{2k+1}
 \end{pmatrix}&=
 \begin{pmatrix}
  * & 0 \\
  0 & *
 \end{pmatrix}
\begin{pmatrix}
  P'_{2k-1} \\ P'_{2k}
 \end{pmatrix}+
 \begin{pmatrix}
  \frac{1}{2k+1} & 0 \\
  0              & \frac{1}{2k+1}
 \end{pmatrix}
\begin{pmatrix}
  P'_{2k+1} \\ P'_{2k+2}
 \end{pmatrix}
\end{align*}
The way to proceed for a general $m$ would follow the same steps as the case $m=1$. Firstly define
\begin{align*}
 N_m &:=\begin{pmatrix}
       \left(R_{m}\right)_{[0][0]}^{-1}   &                                  &          \\
                                          &\left(R_{m}\right)_{[1][1]}^{-1}  &          \\
                                          &                                  & \ddots
      \end{pmatrix}, &
 r_m &:=\begin{pmatrix}
       0                                  &                                  &          \\
     \left(r_m\right)_{[1][0]}            &    0                             &          \\
                                          &  \left(r_m\right)_{[2][1]}       &   0        \\
                                          &                                  &  \ddots
      \end{pmatrix}, &
			\end{align*}
\[			
\left(r_m\right)_{[k][k-1]}=-\left(R_m\right)_{[k][k-1]}\left(R_m\right)_{[k-1][k-1]}^{-1} \ .
\]
Therefore $R_m N_m= \mathbb{I}-r_m$ and taking the inverse of its truncations one obtains
\begin{align*}
  \left(R_m^{[km]}\right)^{-1}=N_m^{[km]}\left(\mathbb{I}^{[km]}+r_m^{[km]}+(r_m^{2})^{[km]}+\dots+ (r_m^{k-1})^{[km]}\right) \ ,
\end{align*}
which would allow us to write the associated SOPS only in terms of the entries of the matrices that characterized the
$m \times m$ block coherent pair.

An open problem is to construct examples of $m \times m$ block coherent pairs.
An illustrative example is offered by the previously mentioned symmetrically coherent pair case, which has $m=2$. Let us take
$k=2$. In this case we have
\begin{align*}
\left(R_2^{[2\cdot 2]}\right)^{-1}&=
\begin{pmatrix}
 1   &    &    &    \\
     & 2  &    &    \\
     &    & 3  &    \\
     &    &    & 4
\end{pmatrix}\left[ \mathbb{I}_{4\times 4}+
\begin{pmatrix}
 0   &   0   &  0  &  0 \\
 0   &   0   &  0  &  0 \\
 r_2 &   0   &  0  &  0 \\
 0   &   r_3 &  0  &  0
 \end{pmatrix}\right] \\
&\Longrightarrow
 \lambda \left(R^{-1}K \left(R^{-1}\right)^{T} \right)^{[4]}=
 \lambda \begin{pmatrix}
           K_0 &  0                        & 3  K_0r_2        &  0           \\
      0        & 4K_1                      & 0                &   8K_1r_3             \\
  3  K_0r_2    & 0                         & 9(K_2+K_0r_2^{2})&    0             \\
  0            &  8K_1r_3                  &     0            &   16(K_3+K_1r_3^{2})
         \end{pmatrix}
\end{align*}
whence we deduce
\begin{align*}
 \breve{P}_0&=P_0  & \breve{P}_1&=P_1   & \breve{P}_2&=P_2
\end{align*}
\begin{align*}
\breve{P}_3&=P_3-\lambda \begin{pmatrix}3K_0r_2 & 0 \end{pmatrix}
\left[\begin{pmatrix}
       K_0  &  0  \\
       0    &  4K_1
      \end{pmatrix}+
      \begin{pmatrix}
       H_1  &  0  \\
       0    &  H_2
      \end{pmatrix}
 \right]^{-1}\begin{pmatrix}P_1 \\ P_2 \end{pmatrix}=P_3-\lambda \beta_1 P_1 \\
 \breve{P}_4&=P_4-\lambda \begin{pmatrix}0 & 8K_1r_3 & 0 \end{pmatrix}
\left[\begin{pmatrix}
       K_0     &  0    &  3K_0r_2 \\
       0       &  4K_1 & 0   \\
       3K_0r_2 & 0     & 9(K_2+k_0r_2^2)
      \end{pmatrix}+
      \begin{pmatrix}
       H_1  &  0   &  0 \\
       0    &  H_2 &  0 \\
       0    &      &  H_3
      \end{pmatrix}
 \right]^{-1}\begin{pmatrix}P_1 \\ P_2 \\ P_3 \end{pmatrix}=P_4-\lambda \beta_2 P_2
\end{align*}
where the $\beta$'s are given in terms of $K,H,r$. A thorough treatment of this approach is beyond the scope of this work and will be studied elsewhere.

\subsection{Discrete Sobolev bilinear forms}
The above definition of the Sobolev bilinear function has been proposed in full generality, i.e.
without any reference to the explicit expressions of the entries $\d \mu_{i,j}$ in $\W$.
A particularly interesting case is obtained when the entries are allowed to be Dirac's $\delta$ distributions.

In this perspective, we shall call the part of the Sobolev bilinear function involving a continuous support
the \textit{continuous part} of the bilinear function, and that involving a discrete support its \textit{discrete part}.
Thus, once we split a Sobolev bilinear function into its continuous and discrete parts, we can consider the former as an additive
perturbation of the latter. According to this philosophy, given a set of nodes and their
multiplicities $\{x_i,n_i,m_i \}_{i=1}^{s}$ let us study the following Sobolev bilinear function
\begin{align*}
 \left(f,h \right)_{\breve{\W}}&:=
 \left(f,h \right)_{\W}+\sum_{i=1}^{s} \sum_{k=0}^{n_i-1} \sum_{j=0}^{m_i-1} \xi^{(i)}_{k,j} h^{(k)}(x_i) f^{(j)}(x_i) &
 & \Longrightarrow   &  \breve{G}=G+g
\end{align*}
Notice that the function space on which this Sobolev bilinear form is defined will be
$\A^{\breve{\mathcal{N}}}_{\breve{\W}}(\breve{\Omega})\subseteq \A^{\mathcal{N}}_{\W}({\Omega})$
where $\breve{\Omega}=\Omega \bigcup_i x_i$ and $\breve{N}=max\big\{N,\{(n_i-1)\}_i,\{(m_i-1)\}_i \big\}$.
In order to see how the matrix $A$ looks like in this case, we propose the following
\begin{definition}
Given a function $f\in \A^{\breve{\mathcal{N}}}_{\breve{\W}}(\breve{\Omega})$, we introduce the vectors
\begin{align*}
 N[f(x)]&:=\left(f(x_1),f'(x_1),\dots,f^{(n_1-1)},f(x_2),f'(x_2),\dots,f^{(n_2-1)},\dots,f(x_s),f'(x_s),\dots,f^{(n_s-1)} \right) \\
 M[f(x)]&:=\left(f(x_1),f'(x_1),\dots,f^{(m_1-1)},f(x_2),f'(x_2),\dots,f^{(m_2-1)},\dots,f(x_s),f'(x_s),\dots,f^{(m_s-1)} \right)
\end{align*}
and the following matrix $\Xi\in \sum_i n_i \times \sum_i m_i$
\begin{align*}
\Xi&:=\begin{pmatrix}
       \xi^{(1)} &         &       &    \\
                 &\xi^{(2)}&       &    \\
                 &         &\ddots &    \\
                 &         &       &\xi^{(2)}
      \end{pmatrix} &
\xi^{(i)}&:= \begin{pmatrix}
            \xi^{(i)}_{0,0} & \xi^{(i)}_{0,1} &  \dots &  \xi^{(i)}_{0,m_i-1} \\
            \xi^{(i)}_{1,0} &               &        &                    \\
            \vdots        &               &        &                     \\
           \xi^{(i)}_{n_i-1}&               &        & \xi^{(i)}_{n_i-1,m_i-1}
           \end{pmatrix}
\end{align*}
\end{definition}
\begin{pro}
Given an additive perturbation of a discrete Sobolev type form, the matrix $A$ can be written in terms of the old polynomials as
\begin{align*}
 A^{[k]}=N[P_1^{[k]}]\left(\Xi\right) M[P_2^{[k]}]^{\top} \ .
 \end{align*}
\end{pro}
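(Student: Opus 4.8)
The plan is to start from the definition $A = S_1 g S_2^\top$ (Definition \ref{dA}), where $g$ is the moment matrix of the purely discrete part of the bilinear function, and to turn this into the claimed factorization by making the rank structure of $g$ explicit. First I would record that, because $S_1,S_2\in\mathscr{L}$ are lower uni-triangular, the truncation factorizes as $A^{[k]} = S_1^{[k]}\, g^{[k]}\, (S_2^{[k]})^\top$, exactly as in the proof of the representation \eqref{G_Wk}; this reduces everything to a finite $k\times k$ computation and lets me work with truncations throughout.

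Next I would compute $g$ explicitly. The discrete part only sees the values of a polynomial and its derivatives at the finite node set $\{x_i\}$, so evaluating $(x^n,x^p)$ in the discrete pairing and using $\chi^{(r)}(x)=D^r\chi(x)$, I obtain
\begin{align*}
 g = \sum_{i=1}^{s}\sum_{k=0}^{n_i-1}\sum_{j=0}^{m_i-1} \xi^{(i)}_{k,j}\,\chi^{(j)}(x_i)\,\chi^{(k)}(x_i)^\top,
\end{align*}
which, after collecting the derivative--evaluation vectors $\chi^{(r)}(x_i)$ for a fixed node into blocks, is precisely a product of a node-evaluation matrix, the block-diagonal weight $\Xi$, and a second node-evaluation matrix transposed. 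This is the key structural observation: the discrete $g$ is a finite-rank object built from $\Xi$ sandwiched between columns of the form $\chi^{(r)}(x_i)$.

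Then I would conjugate by the factorization matrices. Since each $S_\alpha$ is a constant matrix and $P_\alpha = S_\alpha\chi$, differentiation commutes with $S_\alpha$ and gives $S_\alpha\chi^{(r)}(x_i)=P_\alpha^{(r)}(x_i)$; applying $S_1^{[k]}(\cdot)(S_2^{[k]})^\top$ to the expression for $g$ therefore replaces every evaluation vector of monomials by the corresponding evaluation vector of the SBPS. Reading off the columns $P_1^{(r)}(x_i)$ and $P_2^{(r)}(x_i)$ as the rows of $N[P_1^{[k]}]$ and $M[P_2^{[k]}]$, and recognizing the block-diagonal assembly over the nodes as the matrix $\Xi$, yields the stated formula $A^{[k]} = N[P_1^{[k]}]\,\Xi\,M[P_2^{[k]}]^\top$.

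The main obstacle I anticipate is purely the index bookkeeping: one must match the double multi-index $(i,r)$ (node, order of derivative) to the row/column block structure of $\Xi$, keep the two derivative ranges $n_i$ and $m_i$ attached to the correct argument of the bilinear form, and make sure the transpose lands on the right factor. None of this is deep, but it is where an error is most likely to creep in; the truncation argument and the identity $S_\alpha\chi^{(r)}=P_\alpha^{(r)}$ are the only genuinely load-bearing ingredients.
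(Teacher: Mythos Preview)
Your proposal is correct and follows essentially the same route as the paper's own proof: the paper records exactly the three ingredients you isolate, namely $g=N[\chi]\,\Xi\,M[\chi]^{\top}$, the truncation identity $A^{[k]}=S_1^{[k]}g^{[k]}(S_2^{[k]})^{\top}$ coming from the lower uni-triangularity of $S_1,S_2$, and the relations $S_\alpha^{[k]}N[\chi]=N[P_\alpha^{[k]}]$ (and the $M$-analogue) obtained from $S_\alpha\chi^{(r)}=P_\alpha^{(r)}$. Your only extra work is writing out $g$ as an explicit rank-one-type sum before repackaging it in block form, which the paper skips; the substantive content and the load-bearing identities are identical.
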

\begin{proof}
The proposition follows easily from the relations
\begin{align*}
 g&=N[\chi] \left( \Xi \right) M[\chi]^{\top} & A^{[k]}&=S_{1}^{[k]} g^{[k]} \left( S_{2}^{[k]}\right)^{\top}  &
 S_{1}^{[k]}N[\chi]&=N[P_1^{[k]}]  &  S_{2}^{[k]}M[\chi]&=N[P_2^{[k]}]
\end{align*}
\end{proof}
It is  useful to define the following $\sum_i n_i \times \sum_i m_i$ matrix, suitable for the discrete Sobolev problem at hand, whose entries are the derivatives of the
CD Kernel evaluated at the points $\{x_i\}$ up to $\{(n_i-1),(m_i-1)\}$ times.
\begin{definition}\label{mathKK}
We introduce the CD matrix
\begin{align*}
\mathbb{K}^{[k]}&:=\left(M[P_2^{[k]}]\right)^{\top}\left(H^{[k]}\right)^{-1} \left( N[P_1^{[k]}] \right)=
\begin{pmatrix}
 \mathbb{K}^{[k]}_{[1][1]}  & \mathbb{K}^{[k]}_{[1][2]} &  \dots &  \mathbb{K}^{[k]}_{[1][s]} \\
 \mathbb{K}^{[k]}_{[2][1]}  & \mathbb{K}^{[k]}_{[2][2]} &  \dots &  \mathbb{K}^{[k]}_{[2][s]} \\
                            &                           &        &                            \\
 \mathbb{K}^{[k]}_{[s][1]}  & \mathbb{K}^{[k]}_{[s][2]} &  \dots &  \mathbb{K}^{[k]}_{[s][s]}
\end{pmatrix} \\
where\\
\mathbb{K}^{[k]}_{[i][j]}&:=
\begin{pmatrix}
 \left(K^{[k]}(x_i,x_j)\right)^{(0,0)} &  \left(K^{[k]}(x_i,x_j)\right)^{(0,1)} &  \dots & \left(K^{[k]}(x_i,x_j)\right)^{(0,n_j-1)} \\
 \left(K^{[k]}(x_i,x_j)\right)^{(1,0)} &  \left(K^{[k]}(x_i,x_j)\right)^{(1,1)} &  \dots & \left(K^{[k]}(x_i,x_j)\right)^{(1,n_j-1)} \\
                                       &                                        &        &                                           \\
 \left(K^{[k]}(x_i,x_j)\right)^{(m_i-1,0)} &  \left(K^{[k]}(x_i,x_j)\right)^{(m_i-1,1)} &  \dots & \left(K^{[k]}(x_i,x_j)\right)^{(m_i-1,n_j-1)}  \\
\end{pmatrix} \ .
\end{align*}
Here we have used the notation $\left(K^{[k]}(x_i,x_j)\right)^{(t,d)}:=\frac{\partial^{t+d} K^{[k]}(x,y)}{\partial x^t \partial y^d}|_{(x,y)=(x_i,x_j)}$
\end{definition}
The previous definitions and analysis allow us to state the main result of this section.
\begin{pro}
The discrete part of a Sobolev bilinear function is as an additive perturbation of its continuous counterpart.
Also, the SBPS associated with the Discrete$+$Continuous part can be represented in terms of the
following quasi-determinantal formulas involving only the continuous part of the Sobolev bilinear function.
\begin{align} \label{P12k}
 \breve{P}_{1,k}(x)&=\left( \begin{array}{c|c}
                 \mathbb{I}+\mathbb{K}^{[k]}\Xi & M[K^{[k]}(\cdot,x)]^{\top} \\
                 \hline
                 N[P_{1,k}] \Xi                      & P_{1,k}(x)
                \end{array} \right)\ , &
 \breve{P}_{2,k}(x)&=\left( \begin{array}{c|c}
                 \mathbb{I}+\Xi\mathbb{K}^{[k]} & \Xi  M[P_{2,k}]^{\top} \\
                 \hline
                 N[K^{[k]}(x,\cdot)]                      & P_{2,k}(x)
                \end{array} \right)\ .
\end{align}
Here the expression $M[K^{[k]}(\cdot,x)]$ $(N[K^{[k]}(x,\cdot)])$ stands for the action of the operator $M$ (respectively $N$), on the first (second) variable of $K$.
Alternatively the previous formulas can be rewritten in terms of the original polynomials as follows
\begin{align}
 \breve{P}_{1,k}(x)&=\label{yes}
 \left(\begin{array}{c|c} -N[P_{1,k}] \Xi \left(\mathbb{I}+\mathbb{K}^{[k]}\Xi\right)^{-1}\left(M\left[(P_2^{[k])^{\top}}\right]\right)^{\top}\left(H^{[k]}\right)^{-1}& 1 \end{array} \right)
 \begin{pmatrix}P_{1}^{[k]}(x)\\ \hline P_{1,k}(x) \end{pmatrix}\ ,\\
 \breve{P}_{2,k}(x)&=\label{no}
 \left(\begin{array}{c|c} \left( P_{2}^{[k]}(x) \right)^{\top} & P_{2,k}(x) \end{array}\right)
 \begin{pmatrix}  -\left(H^{[k]}\right)^{-1}N[P_1^{[k]}] \left(\mathbb{I}+\Xi\mathbb{K}^{[k]}\right)^{-1}\Xi M[P_{2,k}]^{\top}\\ \hline 1 \end{pmatrix}\ .
\end{align}
\end{pro}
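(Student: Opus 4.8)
The first assertion is essentially definitional. Splitting $(\cdot,\cdot)_{\breve{\W}}$ into its continuous part $(\cdot,\cdot)_{\W}$ and the finite sum over the nodes $\{x_i\}$, and evaluating on monomials, the discrete piece produces precisely the moment matrix $g=N[\chi]\,\Xi\,M[\chi]^{\top}$ identified in the Proposition preceding Definition \ref{mathKK}, so that $\breve{G}=G+g$; this is the announced additive perturbation. Hence the whole machinery of Proposition \ref{ASBPSNA} applies, and I would take that as the starting point.

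The plan for the quasi-determinantal formulas is to rewrite the Schur-complement expression of Proposition \ref{ASBPSNA} using the factorized shape of $A$. Expanding the last quasi-determinant gives
\[
\breve{P}_{1,k}(x)=P_{1,k}(x)-\big(A_{k,0},\dots,A_{k,k-1}\big)\,\big[(H+A)^{[k]}\big]^{-1}\,P_1^{[k]}(x),
\]
with $P_1^{[k]}(x)=(P_{1,0}(x),\dots,P_{1,k-1}(x))^{\top}$. Substituting $A^{[k]}=N[P_1^{[k]}]\,\Xi\,M[P_2^{[k]}]^{\top}$ and noting that the truncated $k$-th row is $(A_{k,0},\dots,A_{k,k-1})=N[P_{1,k}]\,\Xi\,M[P_2^{[k]}]^{\top}$, the correction term becomes $N[P_{1,k}]\,\Xi\,M[P_2^{[k]}]^{\top}\,\big[H^{[k]}+N[P_1^{[k]}]\Xi M[P_2^{[k]}]^{\top}\big]^{-1}\,P_1^{[k]}(x)$.

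The key algebraic step is a push-through (Woodbury-type) identity that transports the inverse from the large $k\times k$ block to the smaller node-indexed block. Concretely I would establish
\[
M[P_2^{[k]}]^{\top}\,\big[H^{[k]}+N[P_1^{[k]}]\Xi M[P_2^{[k]}]^{\top}\big]^{-1}=\big(\mathbb{I}+\mathbb{K}^{[k]}\Xi\big)^{-1}\,M[P_2^{[k]}]^{\top}\,(H^{[k]})^{-1},
\]
where $\mathbb{K}^{[k]}=M[P_2^{[k]}]^{\top}(H^{[k]})^{-1}N[P_1^{[k]}]$ as in Definition \ref{mathKK}. One verifies it by right-multiplying by the bracketed matrix: the right-hand side then collapses to $M[P_2^{[k]}]^{\top}$ after using $\mathbb{K}^{[k]}\Xi M[P_2^{[k]}]^{\top}=M[P_2^{[k]}]^{\top}(H^{[k]})^{-1}N[P_1^{[k]}]\Xi M[P_2^{[k]}]^{\top}$ together with $(\mathbb{I}+\mathbb{K}^{[k]}\Xi)^{-1}(\mathbb{I}+\mathbb{K}^{[k]}\Xi)=\mathbb{I}$. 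The invertibility of $\mathbb{I}+\mathbb{K}^{[k]}\Xi$ is guaranteed by the $LU$-factorizability of $\breve{G}$, equivalently the invertibility of $(H+A)^{[k]}$. This immediately yields \eqref{yes}; the companion \eqref{no} follows identically after transposing and interchanging the roles $1\leftrightarrow 2$, which is legitimate by the $\W\leftrightarrow\W^{\top}$ symmetry proved earlier.

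Finally, to pass from \eqref{yes} to the kernel form \eqref{P12k}, I would expand the Christoffel--Darboux kernel as $K^{[k]}(z,x)=(P_2^{[k]}(z))^{\top}(H^{[k]})^{-1}P_1^{[k]}(x)$, apply the linear operator $M$ in the first variable $z$, and transpose, obtaining $M[K^{[k]}(\cdot,x)]^{\top}=M[P_2^{[k]}]^{\top}(H^{[k]})^{-1}P_1^{[k]}(x)$. Since $\mathbb{I}+\mathbb{K}^{[k]}\Xi$ and the row $N[P_{1,k}]\Xi$ are exactly the blocks appearing in the Schur complement, this identifies \eqref{yes} with the last quasi-determinant \eqref{P12k}. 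I expect the main obstacle to be purely bookkeeping: keeping the left/right placement and the rectangular block sizes ($\sum_i n_i$ versus $\sum_i m_i$ versus $k$) consistent throughout the push-through identity, since the matrices involved are genuinely non-square and the order of multiplication may not be permuted.
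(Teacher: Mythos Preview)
Your argument is correct and lands on the same formulas as the paper, but the order and the mechanism differ slightly. The paper first computes the full Woodbury inverse of $(H+A)^{[k]}$ by expanding $(\mathbb{I}+A^{[k]}(H^{[k]})^{-1})^{-1}$ as a Neumann series, resumming to obtain
\[
\big[(H+A)^{[k]}\big]^{-1}=(H^{[k]})^{-1}-(H^{[k]})^{-1}N[P_1^{[k]}]\,\Xi\,(\mathbb{I}+\mathbb{K}^{[k]}\Xi)^{-1}M[P_2^{[k]}]^{\top}(H^{[k]})^{-1},
\]
and then substitutes into Proposition~\ref{ASBPSNA} to get \eqref{P12k} first, with \eqref{yes}--\eqref{no} obtained afterwards by expanding the quasi-determinants and the kernel. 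You instead go straight from the expanded Schur complement to \eqref{yes} via the push-through identity $M[P_2^{[k]}]^{\top}(H^{[k]}+A^{[k]})^{-1}=(\mathbb{I}+\mathbb{K}^{[k]}\Xi)^{-1}M[P_2^{[k]}]^{\top}(H^{[k]})^{-1}$, verified by a one-line multiplication, and only then repackage into \eqref{P12k}. Your route has the advantage of bypassing the formal Neumann series altogether (the paper has to add a remark acknowledging that when the series fails to converge no SBPS exists), relying only on the invertibility of $\mathbb{I}+\mathbb{K}^{[k]}\Xi$, which as you note is equivalent to the $LU$-factorizability of $\breve G$. Both arguments are the same Woodbury computation at heart; yours is just arranged a bit more cleanly.
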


\begin{proof}
Let us write the expression of the inverse of the matrix $\left(H+A\right)^{[k]}$. By using Definition \ref{mathKK}, one can check
the equalities
 \begin{align*}
 &\left[\left(H+A\right)^{[k]}\right]^{-1}=(H^{[k]})^{-1} \left[\left(\mathbb{I}+AH^{-1}\right)^{[k]}\right]^{-1}=
 (H^{[k]})^{-1} \left(\mathbb{I}+N[P_1^{[k]}]\Xi M[P_2^{[k]}]^{\top} (H^{[k]})^{-1} \right)^{-1} \\
 &=(H^{[k]})^{-1}\left(\mathbb{I}-N[P_1^{[k]}]\Xi M[P_2^{[k]}]^{\top} (H^{[k]})^{-1}+
 N[P_1^{[k]}]\Xi M[P_2^{[k]}]^{\top} (H^{[k]})^{-1}N[P_1^{[k]}]\Xi M[P_2^{[k]}]^{\top} (H^{[k]})^{-1}-\dots \right) \\
 &=(H^{[k]})^{-1}-(H^{[k]})^{-1}N[P_1^{[k]}]\Xi \left(\mathbb{I}-\mathbb{K}^{[k]}\Xi+(\mathbb{K}^{[k]}\Xi)^2-\dots \right)M[P_2^{[k]}]^{\top} (H^{[k]})^{-1} \ .
 \end{align*}
Consequently, we get the following expression, assuming that the formal series converges
 \begin{align} \label{convseries}
  \left[\left(H+A\right)^{[k]}\right]^{-1}=(H^{[k]})^{-1}-
  (H^{[k]})^{-1}N[P_1^{[k]}] \Xi \left( \mathbb{I}+\mathbb{K}^{[k]}\Xi \right)^{-1} M[P_2^{[k]}]^{\top}(H^{[k]})^{-1} \ .
 \end{align}
To prove the second statement, observe that
\begin{align*}
 \begin{pmatrix}
  A_{k,0} & A_{k,1} & \dots & A_{k,k-1}
 \end{pmatrix}&=N[P_{1,k}]\Xi M[P_2^{[k]}]^{\top}  &
 \begin{pmatrix}
  A_{0,k} \\ A_{1,k} \\ \vdots \\ A_{k-1,k}
 \end{pmatrix}&=N[P_{1}^{[k]}]\Xi M[P_{2,k}]^{\top}
\end{align*}
Once we substitute these expressions in the quasi-determinantal formulae given in proposition \ref{ASBPSNA}, we obtain the
relations \eqref{P12k}. The expressions \eqref{yes} and \eqref{no} follow from those in \eqref{P12k} by just expanding the quasi-determinants and
the CD kernels.
\end{proof}
\begin{remark}
Whenever the convergence of the series \eqref{convseries} is not fulfilled, no orthogonal polynomial sequences arises. This implies that the LU-factorization assumption for the moment matrix was not satisfied in the specific example considered
\end{remark}


Let us define the following polynomial, which will be useful in dealing with the additive discrete part of a bilinear Sobolev function.
\begin{definition} We define the auxiliary polynomial
\begin{align} \label{Wxx}
 W(x)&:=\prod_{i=1}^{s}(x-x_i)^{max\{n_i,m_i\}}
\end{align}
\end{definition}
The auxiliary polynomial \eqref{Wxx} is the keystone for the following result in concordance with \cite{NtermR} and slightly
generalizing \cite{MomentPrb}.
\begin{pro}
Given a non-Sobolev inner product $\langle *,* \rangle$, consider the bilinear form
\begin{align*}
 \left(f,h \right)_{\breve{\W}}&:=
 \langle f,h \rangle +\sum_{i=1}^{s} \sum_{k=0}^{n_i-1} \sum_{j=0}^{m_i-1} \xi^{(i)}_{k,j} h^{(k)}(x_i) f^{(j)}(x_i)
\end{align*}
obtained by adding a discrete Sobolev part to the original standard inner product. Then,
the SBPS associated with the new bilinear function $\left(*,* \right)_{\breve{\W}}$ satisfies a
$\big(2\left[\deg W(x) \right]+1\big)$-term recurrence relation, which in matrix form reads
\begin{align*}
 R_{\alpha} \breve{P}_{\alpha}(x)&=W(x)\breve{P}_{\alpha}    &       \alpha&=1,2 \ .
\end{align*}
Here $R_{\alpha}$ are $\big(2\left[\deg W(x) \right]+1\big)$ banded matrices, related to each other,
$R_1=\breve{H}R_2^{\top} \breve{H}^{-1}$, and can be written as
\begin{align*}
 R_{\alpha}&=M_{\alpha} W(J) M_{\alpha}^{-1} \ .
\end{align*}
This expression involves the connection matrices $M_{\alpha}P=\breve{P}_{\alpha}$, whose rows, according to \eqref{yes}, \eqref{no} read
\begin{align*}
 \left(\begin{array}{cccc|c} (M_1)_{k,0} & (M_1)_{k,1} & \dots & (M_1)_{k,k-1} & (M_1)_{k,k}  \end{array}\right)&=
  \left(\begin{array}{c|c} -N[P_{k}] \Xi \left(\mathbb{I}+\mathbb{K}^{[k]}\Xi\right)^{-1}\left(M\left[(P^{[k])^{\top}}\right]\right)^{\top}\left(H^{[k]}\right)^{-1}& 1 \end{array} \right)
 \\
 \left(\begin{array}{cccc|c} (M_2)_{k,0} & (M_2)_{k,1} & \dots & (M_2)_{k,k-1} & (M_2)_{k,k}  \end{array}\right)&=
 \begin{pmatrix}  -\left(H^{[k]}\right)^{-1}N[P^{[k]}] \left(\mathbb{I}+\Xi\mathbb{K}^{[k]}\right)^{-1}\Xi M[P_{2}]^{\top}\\ \hline 1 \end{pmatrix}^{\top}
\end{align*}
and the Jacobi matrix $J:=S \Lambda S^{-1}$
of the non perturbed initial inner product $\langle * ,* \rangle$ (responsible for their three term recurrence relation
$J P(x)=x P(x) $).
\end{pro}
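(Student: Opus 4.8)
The plan is to establish the recurrence relation by exploiting the structure of the connection matrices $M_\alpha$ together with the three-term recurrence of the unperturbed polynomials, encoded in the Jacobi matrix $J$. The central observation is that the auxiliary polynomial $W(x)$ was chosen precisely so that multiplication by $W$ interacts well with the discrete perturbation: since $W$ vanishes to sufficiently high order at each node $x_i$ (namely to order $\max\{n_i,m_i\}$), the products $W(x)\breve P_\alpha(x)$ will kill the discrete contributions coming from the vectors $N[\cdot]$ and $M[\cdot]$ evaluated at the nodes. I would therefore first record the key algebraic fact that $W(J)$ is a $\bigl(2\deg W+1\bigr)$-banded matrix, which follows immediately because $J$ is tridiagonal (its banded structure comes from the three-term recurrence $JP(x)=xP(x)$) and $W$ is a polynomial of degree $\deg W$, so $W(J)$ has bandwidth $\deg W$ on each side.

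The main computation is to verify the identity $R_\alpha \breve P_\alpha(x)=W(x)\breve P_\alpha$ with $R_\alpha:=M_\alpha W(J) M_\alpha^{-1}$. Starting from $M_\alpha P=\breve P_\alpha$, I would write
\begin{align*}
 R_\alpha \breve P_\alpha(x)&=M_\alpha W(J) M_\alpha^{-1} M_\alpha P(x)=M_\alpha W(J) P(x).
\end{align*}
Now the key step is that $W(J)P(x)=W(x)P(x)$, which holds because $J$ satisfies $JP(x)=xP(x)$ and hence $W(J)P(x)=W(x)P(x)$ for any polynomial $W$ (applying the functional calculus entrywise to the recurrence). Substituting gives
\begin{align*}
 R_\alpha \breve P_\alpha(x)&=M_\alpha W(x) P(x)=W(x) M_\alpha P(x)=W(x)\breve P_\alpha(x),
\end{align*}
as claimed. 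This part is essentially formal once the relation $W(J)P(x)=W(x)P(x)$ is in place.

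\textbf{The genuinely hard part} is to show that $R_\alpha$ is actually banded with bandwidth $2\deg W+1$, rather than merely lower-Hessenberg-plus-correction. A priori $M_\alpha$ is lower unitriangular (full) and $M_\alpha^{-1}$ likewise, so conjugating the banded matrix $W(J)$ by $M_\alpha$ could destroy the band structure entirely. The resolution must use the specific rank-structured form of $M_\alpha$ dictated by equations \eqref{yes} and \eqref{no}: each row of $M_\alpha$ differs from the identity only through a low-rank discrete correction built from the finitely many node vectors $N[\cdot],M[\cdot]$ and the inverse CD-matrix factor $(\mathbb I+\mathbb K^{[k]}\Xi)^{-1}$. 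I would argue that because $W$ annihilates the discrete data at the nodes — concretely $W^{(j)}(x_i)=0$ for all relevant orders $j<\max\{n_i,m_i\}$ by \eqref{Wxx} — the off-band entries of $R_\alpha$ telescope to zero. This is the place where one must carefully track how the derivatives entering $N[P]$ and $M[P]$ are controlled by the vanishing orders of $W$, and it is where the choice of the exponent $\max\{n_i,m_i\}$ in \eqref{Wxx} is indispensable.

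Finally, to obtain the relation $R_1=\breve H R_2^\top \breve H^{-1}$ between the two banded matrices, I would use the bi-orthogonality normalization $(\breve P_{1,r},\breve P_{2,k};\breve{\W})=\breve h_r\delta_{r,k}$ established earlier, which in matrix form reads $\breve S_1 \breve G_{\breve{\W}} \breve S_2^\top=\breve H$; multiplying the recurrence by $\breve P_{2}$ (respectively $\breve P_1$) under the perturbed bilinear form and using that $W(x)$ acts symmetrically as a scalar multiplier then yields $R_1\breve H=\breve H R_2^\top$ after transposition, which is the stated symmetry.
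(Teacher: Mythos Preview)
Your verification of the recurrence identity $R_\alpha\breve P_\alpha=W(x)\breve P_\alpha$ via $W(J)P(x)=W(x)P(x)$ is correct and matches the paper. The issue is with how you handle the bandedness of $R_\alpha$, which you flag as ``the genuinely hard part'' and attempt to prove by tracking the low-rank structure of the rows of $M_\alpha$ and arguing that off-band entries ``telescope to zero''. This is both vague and unnecessary, and you never actually carry it out.

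The paper's argument is much cleaner and in fact you already have all its ingredients, just not assembled. The vanishing of $W$ at the nodes to order $\max\{n_i,m_i\}$ is not used to control the entries of $M_\alpha W(J)M_\alpha^{-1}$ directly; it is used to show that $(Wf,h)_{\breve\W}=\langle Wf,h\rangle=\langle f,Wh\rangle=(f,Wh)_{\breve\W}$, i.e.\ the discrete part is annihilated on \emph{either} side when one factor carries $W$. This immediately gives the moment-matrix symmetry $W(\Lambda)\breve G=\breve G\,W(\Lambda^\top)$. Plugging in the $LU$ factorization $\breve G=\breve S_1^{-1}\breve H(\breve S_2^{-1})^\top$ yields $R_1\breve H=\breve H R_2^\top$ with $R_\alpha=\breve S_\alpha W(\Lambda)\breve S_\alpha^{-1}$. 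Now bandedness is automatic: since $\breve S_1$ is lower unitriangular, $R_1$ has at most $\deg W$ superdiagonals; since $R_1=\breve H R_2^\top\breve H^{-1}$ and $R_2$ likewise has at most $\deg W$ superdiagonals, $R_1$ also has at most $\deg W$ subdiagonals. The identity $R_\alpha=M_\alpha W(J)M_\alpha^{-1}$ then follows by inserting $S^{-1}S$ and recognizing $J=S\Lambda S^{-1}$, $M_\alpha=\breve S_\alpha S^{-1}$.

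In short: you put the node-vanishing of $W$ to work at the wrong place. It should be used once, at the level of the bilinear form, to produce the symmetry $W(\Lambda)\breve G=\breve G W(\Lambda^\top)$; then \emph{both} the bandedness and the relation $R_1=\breve H R_2^\top\breve H^{-1}$ drop out of the $LU$ factorization simultaneously, with no need to analyse the internal structure of $M_\alpha$.
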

\begin{proof}
It is  straightforward to see that
\begin{align*}
(Wf,h)_{\breve{\W}}=\langle Wf,h \rangle=\langle f, W h \rangle=(f,Wh)_{\breve{\W}} \ .
\end{align*}
Thus, the moment matrix satisfies
\begin{align*}
 W(\Lambda) \breve{G}=\breve{G}  W(\Lambda^{\top}) \ .
\end{align*}
Taking into account the $LU$ factorization of $\breve{G}$ and the definitions for the connection matrices the proposition follows.
\end{proof}

\section{Equivalence classes of measure matrices}\label{Eq}

A natural question arising from the theory previously developed is the following. Consider two measure matrices $\W_1(\Omega) \neq \W_2(\Omega)$ over the same $\Omega$. Assume that the equality $G_{\W_1}=G_{\W_2} $ holds, or equivalently $(p,q;\W_1)=(p,q;\W_2)$ $\forall p,q \in \R[x]$. Notice that, despite sharing the same moment matrix, and hence the
same SBPS, in principle $\A^{\mathcal{N}_1}_{\W_1}(\Omega)\neq \A^{\mathcal{N}_2}_{\W_2}(\Omega)$. 
At the same time,
$\R[x]\in \A^{\mathcal{N}_1}_{\W_1}(\Omega) \cap \A^{\mathcal{N}_2}_{\W_2}(\Omega)$ and for every $f,g$ in this intersection, the
equality $(f,g;\W_1)=(f,g;\W_2)$ will hold. These considerations suggest to introduce the notion of equivalence class of measure.

\begin{definition}
 We shall say that two measure matrices $\W_{a}$ and $\W_{b}$ are equivalent, and we write $\W_{a}\sim \W_{b}$, if
 $(p,q;\W_{a})=(p,q;\W_{b})$ for every $p,q\in \R[x]$. We shall denote by $[\W_{a}]=\{\W_{b} \setminus \W_{b}\sim \W_{a} \}$ the equivalence class of measure matrices equivalent to a given matrix $\W_{a}$. Two matrices belonging to the same equivalence class
will be said similar.
\end{definition}
In other words, equivalent measure matrices share the same moment matrix. We will use the symbol $G_{[\W_a]}$ to denote the common moment matrix of a given equivalent class.

In this section we will address the equivalence problem, by showing how elements of the same matrix class are related. To this aim, we have to study preliminarily how a measure matrix changes under integration by parts manipulations.
Let us focus on the $(i,j)$-th entry of a given measure matrix and take it to be an absolutely continuous measure, this is,
$\d \mu_{i,j}(x)=\omega_{i,j}(x)\d x$.
We adopt the notation $ I\omega_{i,j}:=\mu_{i,j}$ for the anti-derivative or primitive of the absolutely continuous measure $\d \mu_{i,j}$.

Two possibilities arise.
\begin{itemize}
\item If $\omega_{i,j}\in \mathcal{C}^{1}(\Omega_{i,j})$, 
\begin{align*}
 \noi \int_{\Omega_{i,j}} \chi^{(i)} \omega_{i,j} \left(\chi^{(j)}\right)^{\top} \d x=
 \begin{cases}
 \int_{\Omega_{i,j}} \chi^{(i-1)} \left[\delta \omega_{i,j}-\frac{\d \omega_{i,j}}{\d x} \right] \left(\chi^{(j)}\right)^{\top} \d x -
 \int_{\Omega_{i,j}} \chi^{(i-1)} \omega_{i,j} \left(\chi^{(j+1)}\right)^{\top} \d x \\
 \int_{\Omega_{i,j}} \chi^{(i)} \left[\delta \omega_{i,j}-\frac{\d \omega_{i,j}}{\d x} \right] \left(\chi^{(j-1)}\right)^{\top} \d x -
 \int_{\Omega_{i,j}} \chi^{(i+1)} \omega_{i,j} \left(\chi^{(j-1)}\right)^{\top} \d x
  \end{cases}
\end{align*}
\item For the primitive $\mu_{i,j}$,
\begin{align*}
  \int_{\Omega_{i,j}} \chi^{(i)} \d \mu_{i,j} \left(\chi^{(j)}\right)^{\top} =
  \int_{\Omega_{i,j}} \chi^{(i)} \delta I\omega_{i,j} \left(\chi^{(j)}\right)^{\top} \d x-
  \int_{\Omega_{i,j}} \chi^{(i+1)} I\omega_{i,j} \left(\chi^{(j)}\right)^{\top} \d x-
  \int_{\Omega_{i,j}} \chi^{(i)} I\omega_{i,j} \left(\chi^{(j+1)}\right)^{\top} \d x
\end{align*}

\end{itemize}
where we have introduced the operator ``$\delta$'' that turns the continuous measure into a discrete one on the boundary of its support
\begin{align*}
\int_{\Omega_{i,j}} \delta \omega_{i,j}(x) f(x)\d x&:=\left(\omega_{i,j}(x)f(x)\right)\biggr\rvert_{\partial \Omega_{i,j}}
\end{align*}
Therefore, we have found the relations among similar measure matrices that arise throughout integrations by parts manipulations.
\begin{pro} The following elementary transformations characterize an equivalent class of measure matrices:
\begin{align}\label{llech}
 \begin{pmatrix}
   \d \mu_{i-1,j-1}  & \d \mu_{i-1,j} & \d \mu_{i-1,j+1}   \\
   \d \mu_{i,j-1}    & \boldsymbol{\omega}_{i,j}\d x   & \d \mu_{i,j+1}   \\
   \d \mu_{i+1,j-1}  & \d \mu_{i+1,j} & \d \mu_{i+1,j+1}
 \end{pmatrix}\sim
 \begin{cases}
 \begin{pmatrix}
  \d \mu_{i-1,j-1} & \left(\d \mu_{i-1,j}-\left[\frac{\d \boldsymbol{\omega}_{i,j}}{\d x} \right]\d x\right) & \left(\d \mu_{i-1,j+1}-\boldsymbol{\omega}_{i,j}\d x\right)  \\
   \d \mu_{i,j-1}    & 0              & \d \mu_{i,j+1}   \\
   \d \mu_{i+1,j-1}  & \d \mu_{i+1,j} & \d \mu_{i+1,j+1}
 \end{pmatrix}+
 \begin{pmatrix}
    0 & \delta \boldsymbol{\omega}_{i,j}\d x & 0   \\
    0 & 0  & 0   \\
    0 & 0 & 0
 \end{pmatrix}\\
 \begin{pmatrix}
  \d \mu_{i-1,j-1} & \d \mu_{i-1,j} & \d \mu_{i-1,j+1}   \\
  \left(\d \mu_{i,j-1}-\left[\frac{\d \boldsymbol{\omega}_{i,j}}{\d x} \right]\d x\right) & 0 & \d \mu_{i,j+1}   \\
  \left(\d \mu_{i+1,j-1}-\boldsymbol{\omega}_{i,j}\d x\right) & \d \mu_{i+1,j} & \d \mu_{i+1,j+1}
 \end{pmatrix}+
 \begin{pmatrix}
   0 & 0 & 0   \\
   \delta \boldsymbol{\omega}_{i,j}\d x  & 0 & 0   \\
   0 & 0 & 0
 \end{pmatrix}\\
 \begin{pmatrix}
   \d \mu_{i-1,j-1} & \d \mu_{i-1,j} & \d \mu_{i-1,j+1}   \\
   \d \mu_{i,j-1} & 0  & \left(\d \mu_{i,j+1}-I\boldsymbol{\omega}_{i,j}\d x\right)   \\
   \d \mu_{i+1,j-1} & \left(\d \mu_{i+1,j}-I\boldsymbol{\omega}_{i,j}\d x\right) & \d \mu_{i+1,j+1}
 \end{pmatrix}+
 \begin{pmatrix}
    0 & 0 & 0    \\
    0 & \delta I\boldsymbol{\omega}_{i,j}\d x  & 0   \\
    0 & 0 & 0
 \end{pmatrix}
 \end{cases}
\end{align}
\end{pro}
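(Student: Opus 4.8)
The plan is to reduce the claim to the invariance of the moment matrix. By the definition of equivalence, $\W_a\sim\W_b$ holds precisely when $G_{\W_a}=G_{\W_b}$, and by the block representation \eqref{G_W} of the Sobolev moment matrix one may write $G_{\W}=\sum_{i,j=0}^{\mathcal{N}}\int_{\Omega_{i,j}}\chi^{(i)}\,\d\mu_{i,j}\,(\chi^{(j)})^{\top}$, so that each entry $\d\mu_{i,j}$ contributes exactly one block $\int_{\Omega_{i,j}}\chi^{(i)}\,\d\mu_{i,j}\,(\chi^{(j)})^{\top}$ to $G_{\W}$. Since each of the three proposed transformations alters only the entries inside a fixed $3\times3$ window centred at $(i,j)$, it suffices to check that the sum of the block contributions coming from that window is left unchanged; all other blocks are untouched and cancel trivially.

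First I would treat the first transformation. Writing $\chi^{(i)}=\tfrac{\d}{\d x}\chi^{(i-1)}$ and integrating by parts in the block $\int_{\Omega_{i,j}}\chi^{(i)}\omega_{i,j}(\chi^{(j)})^{\top}\d x$ yields exactly the first integration-by-parts identity displayed above, namely
\[
\int_{\Omega_{i,j}}\chi^{(i)}\omega_{i,j}(\chi^{(j)})^{\top}\d x=\int_{\Omega_{i,j}}\chi^{(i-1)}\Big[\delta\omega_{i,j}-\tfrac{\d\omega_{i,j}}{\d x}\Big](\chi^{(j)})^{\top}\d x-\int_{\Omega_{i,j}}\chi^{(i-1)}\omega_{i,j}(\chi^{(j+1)})^{\top}\d x.
\]
Reading the right-hand side as block contributions, the first term adds $[\delta\omega_{i,j}-\tfrac{\d\omega_{i,j}}{\d x}]\d x$ to the $(i-1,j)$ entry and the second adds $-\omega_{i,j}\d x$ to the $(i-1,j+1)$ entry, while the $(i,j)$ entry is emptied. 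Splitting $[\delta\omega_{i,j}-\tfrac{\d\omega_{i,j}}{\d x}]\d x$ into its absolutely continuous piece $-\tfrac{\d\omega_{i,j}}{\d x}\d x$ and its boundary piece $\delta\omega_{i,j}\d x$ reproduces precisely the continuous matrix plus the additive $\delta$-matrix of the first case. The second case is obtained by the mirror manipulation, writing $\chi^{(j)}=\tfrac{\d}{\d x}\chi^{(j-1)}$ and integrating by parts on the $\chi^{(j)}$ factor, which deposits the two new contributions at the $(i,j-1)$ and $(i+1,j-1)$ entries. The third case uses Stieltjes integration by parts against the primitive $I\omega_{i,j}$, transferring a differential simultaneously onto $\chi^{(i)}$ and $\chi^{(j)}$; this gives the displayed primitive identity and deposits $-I\omega_{i,j}\d x$ at the $(i,j+1)$ and $(i+1,j)$ entries together with the boundary term $\delta I\omega_{i,j}\d x$ at $(i,j)$.

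In each case the manipulation is an exact equality of matrix-valued integrals, so summing over the window leaves $G_{\W}$ unchanged; hence the transformed measure matrix shares the moment matrix of the original, is therefore equivalent to it, and $(p,q;\W_a)=(p,q;\W_b)$ holds for all $p,q\in\R[x]$, so the three moves furnish elementary equivalence-preserving transformations. The step requiring genuine care is the boundary term: it must not be discarded but recorded as a bona fide discrete ($\delta$) contribution supported on $\partial\Omega_{i,j}$, which is exactly what the additive $\delta$-matrices on the right-hand sides encode; this is the mechanism by which a purely continuous entry can migrate, under the equivalence, into a matrix carrying discrete support. The regularity hypothesis $\omega_{i,j}\in\mathcal{C}^1(\Omega_{i,j})$ in the first two cases is what legitimises the appearance of $\tfrac{\d\omega_{i,j}}{\d x}$ and the boundary evaluation, whereas the primitive form of the third case only needs $\d\mu_{i,j}$ to admit the anti-derivative $I\omega_{i,j}$ with finite boundary values, and is thus available even when the entry is not absolutely continuous.
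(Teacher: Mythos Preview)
Your proposal is correct and follows essentially the same approach as the paper. The paper does not give a separate proof environment for this proposition; instead it derives the three integration-by-parts identities immediately before the statement and then records the proposition as the resulting relations among similar measure matrices. Your write-up makes explicit exactly what the paper leaves implicit: that equivalence is checked at the level of the moment matrix $G_{\W}$, that only the $3\times3$ window of block contributions is affected, and that the boundary evaluations must be kept as discrete $\delta$-contributions rather than discarded.
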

Iterations of these transformations are obviously allowed. Notice the split between the continuous and discrete parts.
The previous transformations can be performed over every entry $i,j$ in the measure matrix as long as $\omega_{i,j}$
can be derived or integrated. This leads to a huge amount of equivalent matrices in $[\W_{a}]$.

\begin{align*}
 \begin{pmatrix}
   &* &* &*  &*        &*  &*  &*  & * &      \\
   &* &* &*  &*        &*  &*  & * &   &  \\
   &* &* &*  &*        &*  &*  &   &   &  \\
   &* &* &*  &*        & * &   &   &   &  \\
   &* &* &*  &\bigstar & * &*  &*  & * &  \\
   &* &* &*  &*        & * &*  &*  & * &  \\
   &* &* &   &*        & * &*  &*  &*  &  \\
   &* &  &   &*        & * &*  &*  &*  &  \\
   &  &  &   &*        & * &*  &*  &*  &  \\
   &  &  &   &*        & * &*  &*  &*  &
 \end{pmatrix}
\end{align*}

Upper (lower) anti-diagonal terms come from taking derivatives (integrals) of the entry in the star location. \\

Each iteration produces a discrete term. Once gathered together in a matrix, these terms will define an additive discrete perturbation of the measure matrix.

According to the previous discussion, if we can obtain the SBPS associated to the continuous part, the SBPS associated to the
whole bilinear form can also be obtained with the aid of the CD kernels of the continuous part. However, under certain conditions imposed
on the $\omega_{i,j}$ one can get rid of the discrete part.\\
\begin{definition}\label{domeg}
 Let us denote by $\tilde{\omega}_k$ any weight with finite moments on $\Omega$ having the following property	
 \begin{align} \label{omegaprop}
         \delta \tilde{\omega}_{k}^{(t)}&=0  &
         t&=0,1,2,\dots,(k-1)
 \end{align}
\end{definition}
According to proposition \ref{ClSSW}, the classical measure $u_{\gamma+k}$ is a particular example of $\tilde{\omega}_k$.
\begin{pro}\label{tildet}
Let $\W$ be a $(\mathcal{N}+1) \times (\mathcal{N}+1)$ measure matrix such that $\d \mu_{i,j}=\omega_{i,j}\d x$
and each $\omega_{i,j}$ is a function of class $\mathcal{C}^{|i-j|}$.
\begin{itemize}
\item If $\W=\W^{\top}$ then $\W$ is similar to the sum of a diagonal measure matrix and a discrete measure matrix.\\
\item If $\W=\W^{\top}$ and additionally each entry $\omega_{i,j}$ can has the property of $\tilde{\omega}_{|i-j|} $ $\forall i,j$,
as in \eqref{omegaprop} then $\W$ is similar to a diagonal measure matrix.
\end{itemize}
\end{pro}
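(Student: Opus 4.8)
The plan is to prove both statements by using the elementary integration-by-parts transformations \eqref{llech} to progressively push the off-diagonal mass of $\W$ towards the main diagonal, while keeping track of the discrete (boundary) terms that each such move generates. I would organize the reduction as a downward induction on the \emph{bandwidth} $b:=\max\{|i-j| : \omega_{i,j}\neq 0\}$, which starts at $b=\mathcal{N}$. The goal at each stage is to produce a similar measure matrix whose continuous part has bandwidth $b-1$, together with an additional discrete contribution supported on $\partial\Omega$.

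For the inductive step with $b\geq 2$ I would treat the two triangles symmetrically: to each subdiagonal entry $(i,j)$ with $i-j=b$ I apply the first transformation of \eqref{llech}, and to the mirror superdiagonal entry $(j,i)$ the second one. A direct check shows these two moves are transposes of each other: the deposits land at the transposed positions $(i-1,j)$/$(j,i-1)$ with weight $-\omega_{i,j}'\,\d x$ and $(i-1,j+1)$/$(j+1,i-1)$ with weight $-\omega_{i,j}\,\d x$, and the boundary terms sit at transposed positions as well. Hence symmetry is preserved, and since $b\geq 2$ all deposits lie at distance $b-1$ or $b-2\geq 0$ from the diagonal, so the bandwidth strictly drops. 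Each such step differentiates the relevant weight once; an entry originally at distance $d=|i-j|$ is differentiated at most $d$ times before it reaches the diagonal, which is exactly why the hypothesis $\omega_{i,j}\in\mathcal{C}^{|i-j|}$ is the right amount of regularity to license every integration by parts in its cascade.

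The innermost band $b=1$ is where the symmetry does the decisive work. Applying the first transformation of \eqref{llech} to the subdiagonal entry $(j+1,j)$ deposits $-\omega_{j+1,j}\,\d x$ at the superdiagonal slot $(j,j+1)$; since $\W=\W^{\top}$ gives $\omega_{j,j+1}=\omega_{j+1,j}$, the superdiagonal entry is cancelled automatically, while $-\omega_{j+1,j}'\,\d x$ together with a boundary term are deposited on the diagonal $(j,j)$. Thus processing only the subdiagonal at $b=1$ clears the whole band and leaves a purely diagonal continuous part plus accumulated discrete terms. This establishes the first bullet: $\W$ is similar to the sum of a diagonal measure matrix and a discrete one.

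For the second bullet I would trace the discrete terms. Every boundary term created along the cascade of the $(i,j)$ entry is of the form $\delta\,\omega_{i,j}^{(t)}$, where $t$ counts the derivatives already taken and ranges over $t=0,1,\dots,|i-j|-1$. The hypothesis that each $\omega_{i,j}$ enjoys the property \eqref{omegaprop} of $\tilde\omega_{|i-j|}$, namely $\delta\,\tilde\omega_k^{(t)}=0$ for $t=0,\dots,k-1$ with $k=|i-j|$, says precisely that all of these boundary terms vanish. Hence no discrete contribution survives and $\W$ is similar to a diagonal measure matrix, the classical weights $u_{\gamma+k}$ of Proposition \ref{ClSSW} being the model examples. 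The main obstacle I expect is the bookkeeping: one must verify that the symmetric pairing of transformations genuinely preserves $\W=\W^{\top}$ at every intermediate step, so that the $b=1$ cancellation can be invoked, and that the derivative count along each cascade never exceeds $|i-j|$, so that the regularity and vanishing-boundary hypotheses are used at exactly the orders at which they are available.
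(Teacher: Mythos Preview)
Your proof is correct, but it organizes the reduction along a different inductive axis than the paper does. The paper peels off the \emph{outermost frame}: at each stage it applies the first relation of \eqref{llech} to every entry of the last row and the second relation to every entry of the last column, which clears both (leaving only the diagonal corner $\omega_{\mathcal{N},\mathcal{N}}$) and pushes all continuous mass into the upper-left $\mathcal{N}\times\mathcal{N}$ block while preserving symmetry; repeating $\mathcal{N}$ times yields the diagonal-plus-discrete decomposition, and the $\mathcal{N}=3$ example after the proof is computed this way. You instead reduce the \emph{bandwidth}, clearing the outermost anti-diagonals symmetrically for $b\ge 2$ and exploiting the cancellation $\omega_{j,j+1}-\omega_{j+1,j}=0$ at $b=1$. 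Both schemes decrease the relevant index by one per step and preserve symmetry, but the intermediate matrices differ. The advantage of your route is that the derivative bookkeeping is transparent: a contribution from the original $\omega_{i,j}$ arriving at distance $b$ carries at most $|i-j|-b$ derivatives, so the $\mathcal{C}^{|i-j|}$ hypothesis is visibly sharp and every boundary term produced is $\delta\,\omega_{i,j}^{(t)}$ with $t\le |i-j|-1$, making the second bullet immediate from \eqref{omegaprop}. The paper's frame-peeling is easier to display explicitly but leaves the matching of derivative orders to the $\mathcal{C}^{|i-j|}$ hypothesis more implicit.
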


\begin{proof}
We shall formulate an inductive procedure to prove the first statement of the proposition. Given any $(\mathcal{N}+1)\times(\mathcal{N}+1)$ symmetric measure matrix $\W$
\begin{align*}
 \W=\begin{pmatrix}
  \omega_{0,0}        & \omega_{1,0}     & \omega_{2,0}  &  \dots & \omega_{\mathcal{N}-1,0}    &  \omega_{\mathcal{N},0}     \\
  \omega_{1,0}    & \omega_{1,1}         & \omega_{2,1}  &  \dots & \omega_{\mathcal{N}-1,1}    &  \omega_{\mathcal{N},1} \\
  \omega_{2,0}    & \omega_{2,1}     & \omega_{2,2}      &  \dots &                           &                       \\
                      &                      &                   &        &   \vdots                  &  \vdots                \\
       \vdots         &      \vdots          &    \vdots         & \ddots &                           &                        \\
\omega_{\mathcal{N}-1,0}&\omega_{\mathcal{N}-1,1}&    \dots          &        &  \omega_{\mathcal{N}-1,\mathcal{N}-1}         &\omega_{\mathcal{N},\mathcal{N}-1}  \\
  \omega_{\mathcal{N},0}    &\omega_{\mathcal{N},1}  &    \dots          &        &\omega_{\mathcal{N},\mathcal{N}-1}       &  \omega_{\mathcal{N},\mathcal{N}}
 \end{pmatrix}\d x
\end{align*}
one can use the first similarity relation stated in \eqref{llech} for each entry of the last row of $\W$ and the second one for each entry
of its last column. In this way, one obtains
\begin{align*}
 \W &\sim \begin{pmatrix}
  \omega_{0,0}        & \omega_{1,0}     & \omega_{2,0}  &  \dots &\omega_{\mathcal{N}-1,0}-\omega_{\mathcal{N},0}^{'}    &  0                   \\
  \omega_{1,0}    & \omega_{1,1}         & \omega_{2,1}  &  \dots &\omega_{\mathcal{N}-1,1}-\omega_{\mathcal{N},0}-\omega_{\mathcal{N},1}^{'}    &  0                    \\
  (\omega_2)_{2,0}    & (\omega_1)_{2,1}     & \omega_{2,2}      &  \dots &                           &                       \\
                      &                      &                   &        &   \vdots                  &  \vdots                \\
       \vdots         &      \vdots          &    \vdots         & \ddots &                           &                        \\
\omega_{\mathcal{N}-1,0}-\omega_{\mathcal{N},0}^{'}&\omega_{\mathcal{N}-1,1}-\omega_{\mathcal{N},0}-\omega_{\mathcal{N},1}^{'}&    \dots     &        &\omega_{\mathcal{N}-1,\mathcal{N}-1}-2\omega_{\mathcal{N},\mathcal{N}-2}-\omega_{\mathcal{N},\mathcal{N}-1}^{'}         &         0              \\
  0                   &    0             &    \dots          &             &      0                    &  \omega_{\mathcal{N},\mathcal{N}}
 \end{pmatrix}\d x\\ \\ &+
 \begin{pmatrix}
  0    & 0     & 0  &  \dots & \delta\omega_{\mathcal{N},0}   &  0                   \\
  0    & 0     & 0  &  \dots &\delta\omega_{\mathcal{N},1}     &  0                    \\
  0    & 0     & 0  &  \dots &                           &                       \\
                      &                      &                   &        &   \vdots                  &  \vdots                \\
       \vdots         &      \vdots          &    \vdots         & \ddots &                           &                        \\
\delta\omega_{\mathcal{N},0} &\delta\omega_{\mathcal{N},1}  &    \dots          &        &\delta\omega_{\mathcal{N},\mathcal{N}-1}        &         0              \\
  0                   &    0             &    \dots          &             &      0                    &  0
 \end{pmatrix}\d x
\end{align*}
This new, equivalent measure matrix is still symmetric. Therefore,
the whole procedure can be repeated up to $\mathcal{N}$ times, until the diagonal form is achieved, jointly with the discrete terms that will
appear each time.\\
The second statement of the proposition is just a corollary of the first one since the definition \ref{domeg} is suited to make the discrete
terms disappear.
\end{proof}

Let us consider the example $\mathcal{N}=3$.

\begin{align*}
 &\begin{pmatrix}
  \omega_{0,0} & \omega_{1,0} & \omega_{2,0} & \omega_{3,0}\\
  \omega_{1,0} & \omega_{1,1} & \omega_{2,1} & \omega_{3,1}\\
  \omega_{2,0} & \omega_{2,1} & \omega_{2,2} & \omega_{3,2} \\
  \omega_{3,0} & \omega_{3,1} & \omega_{3,2} & \omega_{3,3}
 \end{pmatrix}\sim \\
 &\begin{pmatrix}
  \omega_{0,0}-\omega_{1,0}^{'}+\omega_{2,0}^{''}+\omega_{3,0}^{'''} & 0 & 0 & 0\\
  0 & \omega_{1,1}-\omega_{2,1}^{'}+\omega_{3,1}^{''}-2\omega_{2,0}+3\omega_{3,0}^{'} &  0 & 0\\
  0 & 0 & \omega_{2,2}-\omega_{2,3}^{'}-2\omega_{3,1} & 0 \\
  0 & 0 & 0 & \omega_{3,3}
 \end{pmatrix} \\
 &+\begin{pmatrix}
  \delta[\omega_{1,0}-\omega_{2,0}'-\omega_{3,0}''] & \delta[\omega_{2,0}-\omega_{3,0}'] &  \delta\omega_{3,0} & 0\\
  \delta[\omega_{2,0}-\omega_{3,0}'] & \delta[\omega_{2,1}-\omega_{3,0}-\omega_{3,1}'] &   \delta\omega_{3,1} & 0\\
   \delta\omega_{3,0} &  \delta\omega_{3,1} &  \delta\omega_{3,2} & 0 \\
  0 & 0 & 0 & 0
 \end{pmatrix}
\end{align*}

\subsection{Sobolev inner products involving classical measures}
When dealing with classical measures $u_{\gamma}$, the construction of equivalence classes of measure matrices appears
to be particularly simple and neat. The reason resides in the possibility of generating equivalence classes
without having to deal with any discrete parts (boundary terms). We summarize this properties in the next
\begin{pro}\label{canana}
Let
\begin{align*}
 \W=\begin{pmatrix}
     \omega^0_1  & 0  & \dots   &   \\
     0           & 0  &   &   \\
     \vdots      &    &\ddots   &   \\
                 &    &   &
    \end{pmatrix}u_{\gamma+0}\d x+
    \begin{pmatrix}
     0           & \omega^1_2 & 0     & \dots   \\
     \omega^1_2  & \omega^1_1 &   &   \\
     0           &            & 0   &   \\
      \vdots     &            &    & \ddots
    \end{pmatrix}u_{\gamma+1}\d x+\dots+
    \begin{pmatrix}
     0             & \dots       & 0     & \omega^n_{n+1}  \\
     \vdots        & \ddots      &       &  \omega^n_{n} \\
     0             &             &  0    & \vdots   \\
     \omega^n_{n+1}&\omega^n_{n} & \dots & \omega^n_{1}
    \end{pmatrix}u_{\gamma+n}\d x
\end{align*}
be a measure matrix such that each $\{\omega_j^{r}u_{\gamma+r}\}_{j=1}^{r+1}$ is of type $\tilde{\omega}_r$ $\forall r=0,1,\dots,n$. Then if
$\W$ determines a $SOPS$, then there exist
linear differential operator $\mathbf{F}$ and constants $\{\alpha_{k,j},\beta_{k,j}\}$ such that
\begin{align*}
 \left(f,h;\W \right)&=\langle \mathbf{F}[f]h,u_{\gamma} \rangle=
 \langle f\mathbf{F}[h],u_{\gamma} \rangle  & \Longrightarrow &  &
 \mathbf{F}[P_{\W,k}]&=\sum_{j=k}^{r}\alpha_{k,j}P_{\gamma,j} \\
 \left(\mathbf{F}[f],h;\W \right)&=\left(f,\mathbf{F}[h];\W \right) & \Longrightarrow &  &
 \mathbf{F}[P_{\W,k}]&=\sum_{j=k-r}^{k+r}\beta_{k,j}P_{\W,j}
\end{align*}
\end{pro}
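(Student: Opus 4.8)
The plan is to collapse the Sobolev form onto the single classical weight $u_\gamma$ at the cost of a linear differential operator, and then to read off both identities from orthogonality. Writing $(f,h;\W)=\sum_{i,j}\int f^{(i)}h^{(j)}\,\omega_{i,j}\,u_{\gamma+\max(i,j)}\,\d x$, where each $\omega_{i,j}u_{\gamma+\max(i,j)}$ is of type $\tilde\omega_{\max(i,j)}$, I would integrate every summand by parts $j$ times so as to remove all derivatives from $h$. The boundary term produced at the $s$-th step carries a derivative of $\omega_{i,j}u_{\gamma+\max(i,j)}$ of order at most $s-1\le j-1\le \max(i,j)-1$, so the property $\delta\tilde\omega_{k}^{(t)}=0$ for $t\le k-1$ from Definition \ref{domeg} makes every boundary contribution vanish. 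I would then convert the measure using $u_{\gamma+k}=p_2^k u_\gamma$ and Proposition \ref{ClSSW}: taking $Q=f^{(i)}\omega_{i,j}$ and $c=j\le\max(i,j)$ in its proof gives $\frac{\d^{j}}{\d x^{j}}\big[f^{(i)}\omega_{i,j}\,p_2^{\max(i,j)}u_\gamma\big]=\mathcal{O}_{\max(i,j)}^{\,\max(i,j)-j+1}\big[f^{(i)}\omega_{i,j}\big]\,p_2^{\,\max(i,j)-j}u_\gamma$, so each integrated summand equals $\int h\,\big(\text{a differential operator in }f\big)\,u_\gamma\,\d x$ with polynomial coefficients. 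Summing over $(i,j)$ defines $\mathbf{F}$ with $(f,h;\W)=\langle \mathbf{F}[f]\,h,u_\gamma\rangle$, and the companion identity $\langle f\,\mathbf{F}[h],u_\gamma\rangle$ follows at once from $\W=\W^{\top}$ via $(f,h;\W)=(h,f;\W)$. The step I expect to be the main obstacle is precisely the vanishing of all boundary terms: it requires careful Leibniz bookkeeping over every $(i,j)$, together with checking that polynomiality of the $\omega_{i,j}$ is preserved so that $\mathbf{F}$ genuinely has polynomial coefficients.

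For the first displayed identity I would use the bi-orthogonality of the sequence: for $m<k$ one has $0=(P_{\W,k},x^{m};\W)=\langle \mathbf{F}[P_{\W,k}],x^{m}\rangle$, the pairing now being the ordinary $u_\gamma$ inner product. Thus $\mathbf{F}[P_{\W,k}]$ is $u_\gamma$-orthogonal to every polynomial of degree below $k$, hence lies in the span of $\{P_{\gamma,j}\}_{j\ge k}$. Since $\mathbf{F}$ raises polynomial degree by a fixed amount $r$, the image $\mathbf{F}[P_{\W,k}]$ has degree at most $k+r$, and the expansion truncates to the finite combination $\mathbf{F}[P_{\W,k}]=\sum_{j=k}^{k+r}\alpha_{k,j}P_{\gamma,j}$ asserted in the statement.

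For the second identity, assume in addition that $\mathbf{F}$ is symmetric for the Sobolev form, $(\mathbf{F}[f],h;\W)=(f,\mathbf{F}[h];\W)$. Expanding $\mathbf{F}[P_{\W,k}]=\sum_j\beta_{k,j}P_{\W,j}$ and pairing with $P_{\W,j}$ yields $\beta_{k,j}h_j=(\mathbf{F}[P_{\W,k}],P_{\W,j};\W)=(P_{\W,k},\mathbf{F}[P_{\W,j}];\W)$. As $\deg\mathbf{F}[P_{\W,j}]\le j+r$ and $P_{\W,k}$ is orthogonal to all polynomials of degree below $k$, the right-hand side vanishes whenever $j<k-r$; combined with $j\le k+r$ from the degree estimate, this confines the expansion to $k-r\le j\le k+r$ and produces the banded relation $\mathbf{F}[P_{\W,k}]=\sum_{j=k-r}^{k+r}\beta_{k,j}P_{\W,j}$.
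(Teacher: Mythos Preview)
Your construction of $\mathbf{F}$ by integrating every $h^{(j)}$ away is sound and close in spirit to the paper's, though the paper first invokes Proposition~\ref{tildet} to pass to an equivalent \emph{diagonal} measure matrix $\operatorname{diag}(v_0 u_\gamma,\dots,v_n u_{\gamma+n})$ before integrating by parts; this reduces the Leibniz bookkeeping to the terms $\langle f^{(r)}h^{(r)},v_r u_{\gamma+r}\rangle$ only and yields the explicit formula $\mathbf{F}=\sum_{r}(-1)^r\sum_{j}\binom{r}{j}\mathcal{O}_r^{j+1}[v_r]\,p_2^{j}\,\frac{\d^{r+j}}{\d x^{r+j}}$. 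Your direct approach over all $(i,j)$ also works, and the boundary-term argument you sketch is correct.

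There is one genuine gap: you treat the Sobolev symmetry $(\mathbf{F}[f],h;\W)=(f,\mathbf{F}[h];\W)$ as an additional hypothesis, but it is part of the claim and must be derived. It follows in one line from what you already have: applying $(p,q;\W)=\langle p,\mathbf{F}[q]\rangle_{u_\gamma}$ to the left side and $(p,q;\W)=\langle \mathbf{F}[p],q\rangle_{u_\gamma}$ to the right, both equal $\langle \mathbf{F}[f],\mathbf{F}[h]\rangle_{u_\gamma}$. The paper expresses this as the matrix identity $FG_\W=F g_\gamma F^\top=G_\W F^\top$.

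For the two expansion identities your direct orthogonality arguments are correct and more elementary than the paper's route, which instead passes through the $LU$ factorizations of $G_\W$ and $g_\gamma$ to show that $U:=S_\W F S_\gamma^{-1}$ is upper triangular with finitely many superdiagonals (first relation) and that $J_F:=S_\W F S_\W^{-1}$ satisfies $J_F=H_\W J_F^\top H_\W^{-1}$, hence is $(2r{+}1)$-banded (second relation). Your argument buys simplicity; the paper's buys an explicit matrix realisation consistent with its moment-matrix framework.
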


\begin{proof}
Since the selected measure matrix $\W$ satisfies the conditions in
 proposition \ref{tildet}, using also proposition \ref{ClSSW} it is not hard to see that
 \begin{align*}
  \W&\sim \begin{pmatrix}
          v_0 u_{\gamma} &                &       &       \\
                         &v_1 u_{\gamma+1}&       &       \\
                         &                &\ddots &       \\
                         &                &       & v_n u_{\gamma+n}
         \end{pmatrix}&   &\Longrightarrow   &
         \left(f,h;\W\right)&=\sum_{r=0}^{n}\langle f^{(r)}h^{(r)},v_ru_{\gamma+r} \rangle
 \end{align*}
 where the $\{v_r\}_{r=0}^{n}$ are functions that depend on the $\omega$ and their derivatives and
 $v_r u_{\gamma+r}$ are of type $\tilde{\omega}_r$ due to the conditions that the proposition imposes on the $\omega^r_j u_{\gamma+r}$.
 Using proposition \ref{ClSSW} (in which the operator $\mathcal{O}_{r}^{j}$ was defined)
 for the $r$-th term of the sum, the following chain of equalities follow
 \begin{align*}
 \langle f^{(r)}h^{(r)},v_ru_{\gamma+r} \rangle=
 (-1)^r \sum_{j=0}^{r} \begin{pmatrix}r \\j\end{pmatrix}\langle h^{(0)} (f^{(r)})^{(j)},\left(v_r u_{\gamma+r} \right)^{(r-j)}\rangle=
 (-1)^r \sum_{j=0}^{r} \begin{pmatrix}r \\j\end{pmatrix}\langle h^{(0)} \left(\mathcal{O}_r^{j+1}[v_r]p_2^{j}\right)f^{(r+j)},u_{\gamma}\rangle
 \end{align*}
 this has to be added for each $r$, after doing so the differential operator $\mathbf{F}$ can be finally defined as
 \begin{align*}
  \mathbf{F}:=\sum_{r=0}^{n}(-1)^r \sum_{j=0}^{r} \begin{pmatrix}r \\j\end{pmatrix}
  \left(\mathcal{O}_r^{j+1}[v_r](x)p_2^{j}(x) \right)\frac{\d^{r+j}}{\d x^{r+j}} \ .
 \end{align*}
Consequently, $\left(f,h;\W\right)=\langle h \mathbf{F}[f],u_{\gamma}\rangle $. By acting on $h$ instead of $f$, we also get $\left(f,h;\W\right)=\langle \mathbf{F}[h] f,u_{\gamma}\rangle$. As we already
 know, this equalities can be translated to relations between the moment matrices
 \begin{align*}
 \left(f,h;\W \right)&=\langle \mathbf{F}[f]h,u_{\gamma} \rangle=
 \langle f\mathbf{F}[h],u_{\gamma} \rangle  & \Longrightarrow &  &
 G_{\W}=Fg_{\gamma}=g_{\gamma}F^{\top}
 \end{align*}
 where the matricial representation of $\mathbf{F}$ is,
 $F:=\sum_{r=0}^{n}(-1)^r \sum_{j=0}^{r} \begin{pmatrix}r \\j\end{pmatrix}D^{r+j}
  \left(\mathcal{O}_r^{j+1}[v_r](\Lambda)p_2^{j}(\Lambda) \right)$. Now if $\W$ gives a SOPS then
  $G_\W$ must be LU factorizable, i.e.,
  \begin{align*}
   G_{\W}&=Fg_{\gamma}   & \Longrightarrow &  &
   U&:=S_{\W}F S_{\gamma}^{-1}=H_{\W}\left(S_{\gamma} S_{\W}^{-1}\right)^{\top}H_{\gamma}^{-1} & \Longrightarrow & &
   U P_{\gamma}&=\mathbf{F}[P_{\W}]
  \end{align*}
  The second set of equations imposes  an upper triangular form to $U$,  with a finite number $r$ of non vanishing super-diagonal terms only,
  that will depend on the differential operator.\\
  Multiplying the relation between moment matrices by $F$ and $F^{\top}$ and LU factorizing once more one obtains
  \begin{align*}
   FG_{\W}&=G_{\W}F^{\top}   & \Longrightarrow &  &
   J_{F}&:=S_{\W}F S_{\W}^{-1}=H_{\W}J_F^{\top}H_{\W}^{-1} & \Longrightarrow & &
   J_{F} P_{\W}&=\mathbf{F}[P_{\W}]
  \end{align*}
  This time, the second set of relations imposes a $2r+1$ diagonal structure to $J_F$ ($2r$ non vanishing diagonals above and below the main one).
\end{proof}

Some comments are in order.
\begin{itemize}
 \item The initial condition $\{\omega_j^{r}u_{\gamma+r}\}_{j=1}^{r+1}$ being of type $\tilde{\omega}_r$ is not so restrictive since
 $u_{\gamma+r}$ already is of type $\tilde{\omega}_r$. So the $\omega_j^{r}$ just must not spoil this property.
 \item A particularly simple example is to consider $\omega_j^{r}=0$ $\forall j \neq 1$ in which case $\omega^{r}_1=v_r$. Taking
 now $v_r=p_2^{n-r}\lambda_r$ with $\lambda_r>0$ $\forall r=0,1,\dots,n$ one is left with the following inner product and corresponding linear differential operator
 \begin{align*}
  \left(f,h;\W\right)&=\sum_{r=0}^{n}\lambda_r \langle f^{(r)} h^{(r)}, u_{\gamma+n} \rangle= \langle \mathbf{F}[f]h,u_{\gamma}\rangle  &
  \mathbf{F}=\sum_{r=0}^{n}(-1)^r \lambda_r \sum_{j=0}^{r} \begin{pmatrix}r \\j\end{pmatrix}
  \varphi_{n,r-j}(x)\frac{\d^{r+j}}{\d x^{r+j}}
 \end{align*}
 Although with small differences (the starting inner product's measure and the way the Pearson equation is used),
 this example is in agreement with the main ideas in \cite{GenMarc} and \cite{LagMarc}.
\end{itemize}

\section{Polynomial deformations of the measure matrix}

As we have seen, moment matrices arising from a diagonal $\W$ with positive definite measures (also symmetric $\W$ reducible to a diagonal shape)
are examples of Sobolev $LU$-factorizable moment matrices. In this section, we investigate deformations of a given factorizable case,
with the idea of exploring the possibility of new factorizable ones.
The deformations of the measure matrix we are interested in can be understood as deformations of the moment matrix,
which naturally translate into transformations of the associated bilinear form.
These transformations of the bilinear form are expressed in terms of linear differential operators
acting on each of its entries, but before studying the general case, we will start with the more simple and usual case of
deformations involving polynomials.\\

On one hand, in the standard case (corresponding to $\omega_{n,r}=0 \,\,\,\forall n,r>0$, which gives $(f,h;\W)=\langle f,h \rangle_{\omega_{0,0}}$)
we have the symmetry $\langle xf,h \rangle_{\omega_{0,0}}=\langle f,xh \rangle_{\omega_{0,0}}=\langle f,h \rangle_{x\omega_{0,0}}$.
This symmetry  is responsible, for instance, for the three term recurrence relation of the OPS, or the Hankel shape of the moment matrix.

On the other hand, given a measure matrix $\W$, in general $(xf,h;\W)\neq (f,xh;\W)\neq (f,h;x\W)$.
However, we can equivalently say that there exist new measure matrices $\W_2,\W_3$
such that $(xf,h;\W_1)=(f,h;\W_2)$ and $(f,xh;\W_1)=(f,h;\W_3)$. While multiplication of any of
the entries of the standard inner product by a polynomial produces another standard inner product, instead, the same operation in any
of the entries of a Sobolev-type bilinear function deforms the initial $\W$ giving a different one, probably spoiling the symmetries of
$\W$ if it had any.
\begin{theorem}\label{te1}
The operator $\mathcal{X}$ of multiplication by $x$,
 \begin{align*}
  \mathcal{X}:=\begin{pmatrix}
   x & 1 & 0 & 0 & \dots\\
   0 & x & 2 & 0 &\dots\\
   0 & 0 & x & 3 &\dots\\
   0 & 0 & 0 & x &\dots\\
  \vdots&\vdots& \vdots& \vdots & \ddots
\end{pmatrix}
 \end{align*}
once applied to any of the entries of a Sobolev bilinear function, provides the following deformation of the measure matrix
\footnote{Being the initial moment matrix $G_{\W}$ a $LU$-factorizable moment matrix does not imply the new moment matrix
$G_{\mathcal{X}\W}$ to be $LU$-factorizable as well.}
 \begin{align*}
  (xf,h;\W)&=(f,h;\mathcal{X}\W)  &   \Lambda G_{\W}&= G_{\mathcal{X}\W} \\
  (f,xh;\W)&=(f,h;\W (\mathcal{X})^{\top})  &   G_{\W} \Lambda^{\top}&= G_{\W (\mathcal{X})^{\top}}
 \end{align*}
\end{theorem}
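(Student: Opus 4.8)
The plan is to reduce both lines of the statement to a single pointwise matrix identity relating the operator $\mathcal{X}$ to the shift $\Lambda$. I read $\mathcal{X}\W$ as the ordinary row-by-column product of the matrix $\mathcal{X}$ with the matrix of measures $\W$, so that
\begin{align*}
(\mathcal{X}\W)_{i,j}&:=\sum_{k}(\mathcal{X})_{i,k}\,\d\mu_{k,j}=x\,\d\mu_{i,j}+(i+1)\,\d\mu_{i+1,j},
\end{align*}
which is again a measure matrix (each entry is a Borel measure). The whole theorem then follows from
\begin{align*}
\boldsymbol{\chi}(x)\,\mathcal{X}&=\Lambda\,\boldsymbol{\chi}(x),
\end{align*}
an identity of semi-infinite matrices holding for every fixed $x$.

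First I would verify this identity column by column. The $m$-th column of $\boldsymbol{\chi}$ is $\chi^{(m)}$, and since $\mathcal{X}$ is upper bidiagonal its $m$-th column picks out exactly $x\,\chi^{(m)}+m\,\chi^{(m-1)}$; thus the $i$-th entry of the $m$-th column of $\boldsymbol{\chi}\mathcal{X}$ equals $x\frac{\d^m}{\d x^m}x^i+m\frac{\d^{m-1}}{\d x^{m-1}}x^i$. On the other hand the $i$-th entry of the $m$-th column of $\Lambda\,\boldsymbol{\chi}$ is $(\chi^{(m)})_{i+1}=\frac{\d^m}{\d x^m}x^{i+1}$. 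The Leibniz rule applied to $x^{i+1}=x\cdot x^i$ gives
\begin{align*}
\frac{\d^m}{\d x^m}\big(x\cdot x^i\big)&=x\,\frac{\d^m}{\d x^m}x^i+m\,\frac{\d^{m-1}}{\d x^{m-1}}x^i,
\end{align*}
so the two entries coincide and the identity holds. (Equivalently, one may obtain it from the commutator $[\Lambda,D^m]=mD^{m-1}$ recorded just before the definition of $\boldsymbol{D}$.)

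With this in hand the moment-matrix equalities are immediate from the integral representation $G_{\W}=\int_{\Omega}\boldsymbol{\chi}\,\W\,\boldsymbol{\chi}^{\top}$. Using associativity of the (entrywise finite) products and pulling the constant matrix $\Lambda$ out of the integral,
\begin{align*}
G_{\mathcal{X}\W}&=\int_{\Omega}\boldsymbol{\chi}\,(\mathcal{X}\W)\,\boldsymbol{\chi}^{\top}=\int_{\Omega}(\boldsymbol{\chi}\,\mathcal{X})\,\W\,\boldsymbol{\chi}^{\top}=\int_{\Omega}(\Lambda\,\boldsymbol{\chi})\,\W\,\boldsymbol{\chi}^{\top}=\Lambda\,G_{\W}.
\end{align*}
Transposing the key identity yields $\mathcal{X}^{\top}\boldsymbol{\chi}^{\top}=\boldsymbol{\chi}^{\top}\Lambda^{\top}$, and the same manipulation on the right factor gives $G_{\W(\mathcal{X})^{\top}}=\int_{\Omega}\boldsymbol{\chi}\,\W\,\boldsymbol{\chi}^{\top}\Lambda^{\top}=G_{\W}\Lambda^{\top}$. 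Finally the bilinear-form statements follow by testing against coefficient vectors: since $xf$ has coefficient vector $\boldsymbol{p}\Lambda$ and $(p,q;\W)=\boldsymbol{p}G_{\W}\boldsymbol{q}^{\top}$, one gets $(xf,h;\W)=\boldsymbol{p}\,\Lambda G_{\W}\,\boldsymbol{q}^{\top}=\boldsymbol{p}\,G_{\mathcal{X}\W}\,\boldsymbol{q}^{\top}=(f,h;\mathcal{X}\W)$, and symmetrically for $(f,xh;\W)$. Alternatively one can prove the bilinear identity directly by writing $(xf)^{(n)}=xf^{(n)}+nf^{(n-1)}$ inside each $\langle\,\cdot\,,\,\cdot\,\rangle_{n,r}$ and reindexing $n\mapsto n+1$ in the second term, which reproduces exactly the entries of $\mathcal{X}\W$.

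The computations here are light; the only points demanding care are formal rather than deep. One must check that the semi-infinite products $\boldsymbol{\chi}\mathcal{X}$, $\mathcal{X}\W$ and $\Lambda G_{\W}$ are well defined and associative --- which holds because $\boldsymbol{\chi}$ is lower triangular and $\mathcal{X}$ bidiagonal, so every matrix entry is a finite sum --- and that interchanging the constant matrix $\Lambda$ with the integral is legitimate, again guaranteed by the fact that each entry of $G_{\W}$ involves only finitely many $\d\mu_{n,r}$. I expect the main (still minor) obstacle to be the bookkeeping in identifying the new measure matrix $\mathcal{X}\W$ correctly, together with the caveat --- noted in the footnote --- that $G_{\mathcal{X}\W}$ need not be $LU$-factorizable, so no claim about an associated SBPS is being made here.
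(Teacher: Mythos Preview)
Your proof is correct and takes essentially the same approach as the paper. The paper works with the factorization $\boldsymbol{\chi}=\boldsymbol{D}\chi$ and pulls $\Lambda$ through $\boldsymbol{D}$ via the commutator $[\Lambda,D^m]=mD^{m-1}$, arriving at the block bidiagonal matrix that becomes $\mathcal{X}$ once $\Lambda\chi=x\chi$ is used; you instead prove the equivalent identity $\boldsymbol{\chi}\,\mathcal{X}=\Lambda\,\boldsymbol{\chi}$ directly from Leibniz (and you yourself note the equivalence with the commutator). The remaining steps --- inserting this into $G_{\W}=\int\boldsymbol{\chi}\,\W\,\boldsymbol{\chi}^{\top}$ and reading off the bilinear-form identities --- are the same in both.
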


\begin{proof}
Using the definition of the moment matrix and taking into account the commutation relations between $D^k$ and $\Lambda$, we get
\begin{align*}
 \Lambda G_{\W}& = \Lambda \boldsymbol{D} \left( \int_{\Omega} \chi(x) \W \chi(x)^\top  \right) \boldsymbol{D}^\top=
 \begin{pmatrix}
                        \Lambda\mathbb{I} & \Lambda D & \Lambda D^2 & \dots & \Lambda D^k & \dots
                       \end{pmatrix}
        \left( \int_{\Omega} \chi(x) \W \chi(x)^\top  \right) \boldsymbol{D}^\top=\\
        & \begin{pmatrix}
                        \Lambda & D\Lambda+\mathbb{I} &  D^2 \Lambda+2D & \dots & D^k\Lambda+kD^{k-1} & \dots
                       \end{pmatrix}
        \left( \int_{\Omega} \chi(x) \W \chi(x)^\top  \right) \boldsymbol{D}^\top= \\
        &\boldsymbol{D}  \begin{pmatrix}
   \Lambda & \mathbb{I} & 0 & 0 & \dots\\
   0 & \Lambda & 2\mathbb{I} & 0 &\dots\\
   0 & 0 & \Lambda & 3\mathbb{I} &\dots\\
   0 & 0 &  & \Lambda &\dots\\
  \vdots&\vdots& \vdots& \vdots & \ddots
\end{pmatrix}
                   \left( \int_{\Omega} \chi(x) \W \chi(x)^\top  \right) \boldsymbol{D}^\top=
      \boldsymbol{D}
                   \left( \int_{\Omega} \chi(x)
                   \begin{pmatrix}
   x & 1 & 0 & 0 & \dots\\
   0 & x & 2 & 0 &\dots\\
   0 & 0 & x & 3 &\dots\\
   0 & 0 &  & x &\dots\\
  \vdots&\vdots& \vdots& \vdots & \ddots
\end{pmatrix}
                   \W \chi(x)^\top  \right) \boldsymbol{D}^\top \ .
\end{align*}
\end{proof}
We can generalize the previous argument. First, let us compute the powers of $\mathcal{X}$.
Then one can observe that $\mathcal{X}^k $ is an upper triangular banded matrix, whose entries for $n=1,2,\dots$ are
\begin{align*}
 (\mathcal{X}^k)_{(n-1),(n-1)+i}&=\begin{pmatrix}
                          k \\
                          i
                         \end{pmatrix} (n)^{i} x^{k-i} &
                        0\leq i\leq k \\
 (\mathcal{X}^k)_{(n-1),(n-1)+i}&=0   & i > k
\end{align*}
In addition, due to the bilinearity of the function, we obtain the following
\begin{pro}\label{pro2}
Given two real polynomials $P(x)$ and $Q(x)$, the relations
\begin{align*}
 (P(x)f,Q(x)h;\W)& =(f,h; P(\mathcal{X})\W [Q(\mathcal{X})]^{\top})  &
  P(\Lambda)G_{\W}\left(Q(\Lambda)^{\top}\right)&= G_{P(\mathcal{X})\W \left( Q(\mathcal{X})^{\top}\right)}
\end{align*}
hold. If $deg\{P(x)\}=k$, then $P(\mathcal{X})$ is an upper triangular matrix whose entries are
\begin{align*}
 (P(\mathcal{X}))_{(n-1),(n-1)+i}& =\begin{cases} \frac{(n)^{i}}{i!} \frac{\d^i p(x)}{\d x^i} & 0 \leq i\leq k \\
                                      0  & i > k  \end{cases}  &
 P(\mathcal{X})&=\begin{pmatrix}
                 P(x) & P'(x) & P''(x) & P'''(x) & \dots &     \\
                      & P(x)  & 2P'(x) & 3P''(x) & \dots &     \\
                      &       & P(x)   & 3P'(x)  & \dots &     \\
                      &       &        & P(x)    & \dots &     \\
                      &       &        &         &\ddots &     \\
                      &       &        &         &       &      
                \end{pmatrix} \ .
\end{align*}
Thus, if $\W$ is a $(\mathcal{N}+1) \times (\mathcal{N}+1)$ measure matrix, then $P(\mathcal{X})\W [Q(\mathcal{X})]^{\top}$ will still
be a $(\mathcal{N}+1) \times (\mathcal{N}+1)$ measure matrix.
\end{pro}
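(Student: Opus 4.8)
The plan is to derive both displayed identities from Theorem \ref{te1} by iteration plus bilinearity, then to compute the entries of $P(\mathcal{X})$ by a single binomial expansion, and finally to read off the preservation of size from the triangular shape. First I would establish the monomial case. Applying Theorem \ref{te1} repeatedly to the first argument gives $(x^a f,h;\W)=(f,h;\mathcal{X}^a\W)$ by induction on $a$: the base case is Theorem \ref{te1} itself, and in the step one writes $x^{a+1}f=x(x^a f)$, peels off one factor of $x$ to turn $\W$ into $\mathcal{X}\W$, and invokes the inductive hypothesis with the new measure matrix $\mathcal{X}\W$, so that the $\mathcal{X}$'s accumulate on the left. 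The same argument on the second argument yields $(f,x^b h;\W)=(f,h;\W(\mathcal{X}^\top)^b)$, and since $(\mathcal{X}^\top)^b=(\mathcal{X}^b)^\top$ the two combine into $(x^a f,x^b h;\W)=(f,h;\mathcal{X}^a\W(\mathcal{X}^b)^\top)$. Writing $P(x)=\sum_a p_a x^a$ and $Q(x)=\sum_b q_b x^b$ and using bilinearity of $(*,*;\W)$ then produces the first identity, while the parallel iteration of $\Lambda G_{\W}=G_{\mathcal{X}\W}$ and $G_{\W}\Lambda^\top=G_{\W(\mathcal{X})^\top}$ gives $P(\Lambda)G_{\W}(Q(\Lambda)^\top)=G_{P(\mathcal{X})\W(Q(\mathcal{X})^\top)}$.

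Next I would compute the entries of $P(\mathcal{X})$. The cleanest route is to write $\mathcal{X}=x\mathbb{I}+\mathcal{D}$, where $\mathcal{D}$ is the super-diagonal matrix with entries $1,2,3,\dots$; since $x\mathbb{I}$ is scalar it commutes with $\mathcal{D}$, so the binomial theorem gives $\mathcal{X}^k=\sum_{i=0}^k\binom{k}{i}x^{k-i}\mathcal{D}^i$, and one checks $(\mathcal{D}^i)_{(n-1),(n-1)+i}=(n)^i$, recovering the already-recorded formula $(\mathcal{X}^k)_{(n-1),(n-1)+i}=\binom{k}{i}(n)^i x^{k-i}$. Summing against the coefficients of $P$ then yields
\begin{align*}
(P(\mathcal{X}))_{(n-1),(n-1)+i}=\sum_{k\geq i}p_k\binom{k}{i}(n)^i x^{k-i}=\frac{(n)^i}{i!}\sum_{k\geq i}p_k\frac{k!}{(k-i)!}x^{k-i}=\frac{(n)^i}{i!}\frac{\d^i P(x)}{\d x^i},
\end{align*}
which is exactly the claimed expression; it vanishes for $i>\deg P$ because the corresponding derivative of $P$ does. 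This confirms that $P(\mathcal{X})$ is upper triangular and banded with bandwidth $\deg P$.

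Finally, the claim about the size rests on triangularity alone. Since $P(\mathcal{X})$ and $Q(\mathcal{X})$ are upper triangular, $(P(\mathcal{X}))_{i,k}$ is nonzero only for $k\geq i$ and $(Q(\mathcal{X}))_{j,l}$ only for $l\geq j$. In
\begin{align*}
\left(P(\mathcal{X})\W[Q(\mathcal{X})]^\top\right)_{i,j}=\sum_{k,l}(P(\mathcal{X}))_{i,k}\,(\W)_{k,l}\,(Q(\mathcal{X}))_{j,l},
\end{align*}
the factor $(\W)_{k,l}=\d\mu_{k,l}$ forces $k,l\leq\mathcal{N}$, whence $i\leq k\leq\mathcal{N}$ and $j\leq l\leq\mathcal{N}$, so every entry with $i>\mathcal{N}$ or $j>\mathcal{N}$ vanishes. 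Each surviving entry is a finite sum of polynomial multiples of the Borel measures $\d\mu_{k,l}$ and is therefore again a Borel measure, so $P(\mathcal{X})\W[Q(\mathcal{X})]^\top$ is a $(\mathcal{N}+1)\times(\mathcal{N}+1)$ measure matrix.

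I expect no serious obstacle: the content is purely algebraic and inherits everything from Theorem \ref{te1} and the commutation relation $[\Lambda,D^n]=nD^{n-1}$ used there. The only point requiring care is the bookkeeping in the iteration, namely ensuring that when a factor of $x$ is transferred it acts on the measure matrix currently in place, so that the powers of $\mathcal{X}$ accumulate on the correct side, and that the transpose in the second slot is handled via the identity $(\mathcal{X}^\top)^b=(\mathcal{X}^b)^\top$ rather than being mishandled as an operator applied in the wrong order.
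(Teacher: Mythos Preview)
Your proposal is correct and follows essentially the same route as the paper: the text preceding the proposition explicitly computes the entries of $\mathcal{X}^k$ and then invokes bilinearity to pass from Theorem~\ref{te1} to the general polynomial case, which is exactly your iteration-plus-linearity argument. Your binomial expansion via $\mathcal{X}=x\mathbb{I}+\mathcal{D}$ and your explicit verification of the size claim are more detailed than what the paper writes out, but the underlying approach is the same.
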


The interest of the latter proposition relies on the fact that, although in principle there is no reason why
$G_{P(\mathcal{X})\W \left( Q(\mathcal{X})^{\top}\right)}$ should be $LU$-factorizable if $G_{\W}$ is so, there will be important cases, that we
we are about to study, where equations like the one in the right hand side of the proposition
will lead to relations between the SBPS associated to the deformed and non deformed measure matrices. Therefore, this proposition will be
keystone in order to
study a special case where the standard three term recurrence relation holds and to
generalize the concept of Darboux transformations \cite{GerDarb} to the Sobolev context.

\subsection{A special case where the standard three term recurrence relation holds}

As we have already pointed out, given an arbitrary measure matrix $\W$, in general  $(xf,h;\W)\neq(f,xh;\W)$. However, if we impose some
additional symmetry on $\W$, or we specialize it conveniently, we may get the desired equality.
\begin{definition}
We introduce the set of matrices
\begin{align*}
\W_x:=\{\W \setminus \mathcal{X}\W \sim \W \mathcal{X}^{\top}\}.
 \end{align*}
\end{definition}
\begin{theorem}
If $\W \in \W_x$ then $G_{\W}$ is Hankel and the associated SOPS satisfy the \textbf{standard} three term recurrence relation
\begin{align*}
  xP_n&=J_{n,n-1}P_{n-1}+J_{n,n}P_n+P_{n+1}  &  J_{n,n-1}&=\frac{h_{n}}{h_{n-1}} & J_{n,n}&=S_{n,n-1}-S_{n+1,n}
 \end{align*}
\end{theorem}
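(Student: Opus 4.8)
The plan is to reduce the defining condition $\W\in\W_x$ to a single matrix identity for the moment matrix and then run the classical Hankel argument. First I would invoke Theorem \ref{te1}: since equivalent measure matrices share the same moment matrix, the relation $\mathcal{X}\W\sim\W\mathcal{X}^{\top}$ is equivalent to $G_{\mathcal{X}\W}=G_{\W\mathcal{X}^{\top}}$, and Theorem \ref{te1} identifies the two sides as $\Lambda G_{\W}$ and $G_{\W}\Lambda^{\top}$ respectively. Hence $\W\in\W_x$ is equivalent to $\Lambda G_{\W}=G_{\W}\Lambda^{\top}$. Read entrywise this says $(G_{\W})_{n+1,m}=(G_{\W})_{n,m+1}$, so $(G_{\W})_{n,m}$ depends only on $n+m$: precisely the Hankel property. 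In particular $G_{\W}$ is symmetric, so by the proposition relating $\W$ and $\W^{\top}$ the two families coincide, $P_1=P_2=:P$, the $LU$ factorization is a Cholesky one with $S_1=S_2=:S$, and $G_{\W}=S^{-1}H(S^{-1})^{\top}$.

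Next I would produce the recurrence operator. Since $P=S\chi$ and $\Lambda\chi=x\chi$, we have $xP=S\Lambda S^{-1}P$, so setting $J:=S\Lambda S^{-1}$ gives $xP_n=\sum_m J_{n,m}P_m$; the task reduces to showing $J$ is tridiagonal with the stated entries. Because $S$ and $S^{-1}$ are lower unitriangular and $\Lambda$ is the first-superdiagonal shift, $\Lambda S^{-1}$ is lower Hessenberg ($(\Lambda S^{-1})_{n,m}=(S^{-1})_{n+1,m}=0$ for $m>n+1$), and left multiplication by the lower-triangular $S$ keeps it lower Hessenberg, so $J_{n,m}=0$ for $m>n+1$. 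A direct computation of the single allowed superdiagonal gives $J_{n,n+1}=S_{n,n}(S^{-1})_{n+1,n+1}=1$, which already fixes the monic top coefficient.

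The symmetric half of the tridiagonality comes from the Hankel identity itself. Substituting the Cholesky factorization into $\Lambda G_{\W}=G_{\W}\Lambda^{\top}$ and conjugating by $S$ yields $JH=HJ^{\top}$, equivalently $J_{n,m}h_m=h_n J_{m,n}$ for all $n,m$. Combined with $J_{n,m}=0$ for $m>n+1$, this forces $J_{m,n}=0$ for $m>n+1$ as well, so $J$ is tridiagonal. The subdiagonal coefficient then follows from the same relation at $m=n-1$, namely $J_{n,n-1}h_{n-1}=h_nJ_{n-1,n}=h_n$, giving $J_{n,n-1}=h_n/h_{n-1}$; and the diagonal coefficient follows by comparing the $(n,n)$ entries of $JS=S\Lambda$, where $(S\Lambda)_{n,n}=S_{n,n-1}$ while $(JS)_{n,n}=J_{n,n}+S_{n+1,n}$ (using $S_{n-1,n}=0$, $S_{n,n}=1$, $J_{n,n+1}=1$), yielding $J_{n,n}=S_{n,n-1}-S_{n+1,n}$.

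I expect the only genuinely delicate point to be the bookkeeping that turns $\mathcal{X}\W\sim\W\mathcal{X}^{\top}$ into $\Lambda G_{\W}=G_{\W}\Lambda^{\top}$ cleanly, since it hinges on reading the equivalence relation as equality of moment matrices and then applying both halves of Theorem \ref{te1}; once that Hankel identity is secured, the remaining steps are standard triangular and Hessenberg manipulations and pose no real obstacle.
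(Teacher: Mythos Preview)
Your proposal is correct and follows essentially the same route as the paper: reduce $\mathcal{X}\W\sim\W\mathcal{X}^{\top}$ via Theorem~\ref{te1} to $\Lambda G_{\W}=G_{\W}\Lambda^{\top}$, read off the Hankel property, and then build the tridiagonal Jacobi matrix $J=S\Lambda S^{-1}$. The paper's own proof is much terser---it simply asserts that the symmetry yields Hankel shape and the ``well known'' tridiagonal $J$---so your explicit Hessenberg/$JH=HJ^{\top}$ bookkeeping and the entrywise computations of $J_{n,n\pm1}$, $J_{n,n}$ are a faithful expansion of what the paper leaves implicit.
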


\begin{proof}
 The condition $\mathcal{X}\W \sim \W \mathcal{X}^{\top}$,
 due to Theorem \ref{te1} is equivalent to
 $\Lambda G_{\W}=G_{\mathcal{X}\W}=G_{\W\mathcal{X}^{\top}}=G_{\W} \Lambda^{\top}$. This symmetry of the moment matrix leads
 to its Hankel shape and allows to construct the well known tri-diagonal Jacobi matrix ($J:=S\Lambda S^{-1}$) with its entries in terms of the elements of $S,h$.
 Note also that if $\W\in \W_x$ then $\mathcal{X}\W \in \W_x$ as well.
\end{proof}

\begin{theorem}
$\W_x$ is not an empty set.
\end{theorem}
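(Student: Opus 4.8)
The plan is to prove nonemptiness by exhibiting the simplest possible member of $\W_x$, namely the standard non-Sobolev inner product corresponding to the choice $\mathcal{N}=0$. First I would recall, from Theorem~\ref{te1}, that $G_{\mathcal{X}\W}=\Lambda G_{\W}$ and $G_{\W\mathcal{X}^{\top}}=G_{\W}\Lambda^{\top}$, so that---since two measure matrices are equivalent precisely when their moment matrices coincide---the membership condition $\mathcal{X}\W\sim\W\mathcal{X}^{\top}$ defining $\W_x$ is equivalent to the single matrix identity $\Lambda G_{\W}=G_{\W}\Lambda^{\top}$, i.e.\ to $G_{\W}$ being Hankel. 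It therefore suffices to produce one measure matrix whose moment matrix is Hankel.

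Second, I would take any positive-definite Borel measure $\d\mu$ on $\R$ possessing finite moments of every order, and define $\W$ to be the measure matrix all of whose entries vanish except the top-left one, $\d\mu_{0,0}:=\d\mu$. As remarked after Definition~\ref{def3}, this is exactly the case $\mathcal{N}=0$, and the associated bilinear form reduces to the ordinary inner product $(f,h;\W)=\int_{\Omega} f(x)h(x)\,\d\mu(x)$. For such a form the symmetry under multiplication by $x$ is immediate, since $(xf,h;\W)=\int_{\Omega} xf(x)h(x)\,\d\mu(x)=(f,xh;\W)$ for all $f,h\in\R[x]$.

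Finally, I would combine these two observations. Using the identities $(xf,h;\W)=(f,h;\mathcal{X}\W)$ and $(f,xh;\W)=(f,h;\W\mathcal{X}^{\top})$ of Theorem~\ref{te1}, the symmetry just displayed reads $(f,h;\mathcal{X}\W)=(f,h;\W\mathcal{X}^{\top})$ for all $f,h\in\R[x]$, which is precisely the defining property $\mathcal{X}\W\sim\W\mathcal{X}^{\top}$. Hence this $\W$ belongs to $\W_x$, and $\W_x\neq\varnothing$. The positive definiteness of $\d\mu$ guarantees that $G_{\W}=g_{0,0}$ is a genuine positive-definite, hence $LU$-factorizable, moment matrix, so the example is admissible within the standing hypotheses.

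There is essentially no hard step here: the only point to watch is that the equivalence relation $\sim$ concerns equality of \emph{moment} matrices rather than of the measure matrices themselves, so that the trivial scalar case already qualifies even though $\mathcal{X}\W$ and $\W\mathcal{X}^{\top}$ need not coincide entrywise. If one instead wanted a genuinely Sobolev witness with $\mathcal{N}\ge 1$, the natural candidates are the diagonal classical-measure matrices of Section~\ref{Eq}, where the Pearson structure relates the successive $u_{\gamma+r}$; verifying the Hankel condition there would be the only real computation, but it is not needed for mere nonemptiness.
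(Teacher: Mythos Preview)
Your argument is correct: the $\mathcal{N}=0$ case $\W=E_{0,0}\,\d\mu$ with a positive-definite $\d\mu$ does belong to $\W_x$, since its moment matrix is the ordinary Hankel moment matrix of $\d\mu$, and you have correctly identified that $\mathcal{X}\W\sim\W\mathcal{X}^{\top}$ is equivalent to $\Lambda G_{\W}=G_{\W}\Lambda^{\top}$. This suffices for the bare statement of nonemptiness.

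The paper's proof, however, does more: it exhibits an explicit \emph{family} of measure matrices in $\W_x$ for every $\mathcal{N}\ge 0$, namely the anti-triangular matrix with entries $\binom{n}{j}\d\mu_n$ along the $n$-th anti-diagonal, obtained by solving the pointwise equation $\mathcal{X}\W=\W\mathcal{X}^{\top}$. This yields the Sobolev inner product $(f,h;\W)=\sum_{n=0}^{\mathcal{N}}\int_{\Omega_n}(fh)^{(n)}\,\d\mu_n$. Your example is precisely the $\mathcal{N}=0$ member of this family. The payoff of the paper's version is that it shows $\W_x$ contains \emph{genuinely Sobolev} elements (with derivatives actually present), which is the substantive point in a section devoted to Sobolev orthogonality; the trivial witness, while logically sufficient, leaves open whether the class $\W_x$ is interesting beyond the classical case. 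You anticipated this in your final paragraph, and the paper's explicit family is exactly the computation you alluded to but did not carry out.
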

\begin{proof}
We give here the following counterexample
 \begin{align}\label{Wx}
\W=\begin{pmatrix}
   \d\mu_{0}                                    & \begin{pmatrix}1 \\ 0 \end{pmatrix}\d\mu_{1} & \begin{pmatrix}2 \\ 0 \end{pmatrix}\d\mu_{2} & \begin{pmatrix}3 \\ 0 \end{pmatrix}\d\mu_{3} &\dots & \begin{pmatrix}\mathcal{N} \\ 0 \end{pmatrix}\d\mu_{\mathcal{N}}\\
   \begin{pmatrix}1 \\ 1 \end{pmatrix}\d\mu_{1} & \begin{pmatrix}2 \\ 1 \end{pmatrix}\d\mu_{2} & \begin{pmatrix}3 \\ 1 \end{pmatrix}\d\mu_{3} &                                              &      &  0   \\
   \begin{pmatrix}2 \\ 2 \end{pmatrix}\d\mu_{2} & \begin{pmatrix}3 \\ 2 \end{pmatrix}\d\mu_{3} &                                              &                                              &      &     \\
   \begin{pmatrix}3 \\ 3 \end{pmatrix}\d\mu_{3} &                                              &                                              &                                              &      &   \\
   \vdots                                       &\begin{pmatrix}\mathcal{N} \\\mathcal{N}-1 \end{pmatrix}\d\mu_{\mathcal{N}} &                                              &                                              &      &     \\
   \begin{pmatrix}\mathcal{N} \\ \mathcal{N} \end{pmatrix}\d\mu_{\mathcal{N}} &      0                                       &                                              &                                              &      &
\end{pmatrix}\in \W_x
 \end{align}
which is obtained by imposing $\mathcal{X}\W=\W \mathcal{X}^{\top}$ and yiels the following Sobolev inner product
\begin{align*}
 (f,h;\W)=\sum_{n=0}^{\mathcal{N}} \int_{\Omega_n} (fh)^{(n)} \d \mu_n(x)
\end{align*}
\end{proof}
For a SOPS associated with a measure matrix in $\W_x$, all of the results of the standard theory of orthogonal polynomial sequences hold: Three term
recurrence relation, Christoffel-Darboux formulae, the existence of $\tau$-functions, of associated integrable hierarchies, etc.. All of these properties are indeed a non-trivial consequence of
the symmetry $\Lambda G_{\W}=G_{\W} \Lambda^{\top}$.

A natural question is the relation between the previous result and the classical Favard theorem.  Essentially, Favard's theorem assures that given a set of polynomials,
satisfying certain initial conditions and a standard three term recurrence relation, there exists a measure $\mu$ with respect to which the set of polynomials is actually an OPS.
The SOPS associated to a $\W_x$ indeed satisfy the hypotheses of Favard's theorem.
Therefore, from both results we deduce that there must exist a measure $\d\mu$ such that $\d\mu E_{00}\sim \W_x$.
(Remember that similar measure matrices shared both the moment matrix and the orthogonal polynomial sequence).

Let us consider a particular case of the given counterexample. Let us take $\mathcal{N}=1$ and $\Omega_n:=[x_1,x_2]$ for $n=0,1$ and,
using the iterations of Proposition \ref{llech} it is not hard to see that (at least in the function spaces where
the corresponding integration by parts makes sense)
\begin{align*}
 \begin{pmatrix}
  \d \mu_0   &  \d \mu_{1} \\
  \d \mu_{1} &  0
 \end{pmatrix}\sim
 \begin{pmatrix}
  \d \mu_0+(\d \mu_1)' &  0 \\
  0 &  0
 \end{pmatrix}+
 \begin{pmatrix}
  \delta \d \mu_{1}    &  0\\
  0          &  0
 \end{pmatrix}= \left[\d \mu_0+(\d \mu_1)'+\delta \d \mu_{1} \right] E_{00}
\end{align*}

\subsection{Darboux--Sobolev tranformations and quasi-recurrence relations}\label{Conflict}

In the next three sections we will proceed, with the aid of proposition \ref{pro2}
to deform the measure matrix by means of a (right or left) multiplication by a polynomial in
$\mathcal{X}$ or its inverse. Subsequently, we shall study the relation between the new and old SBPS associated to the deformed and non deformed
measure matrices, respectively. The reason for the name of these deformations is that whenever $\W=E_{0,0}\omega$ (the ``standard" case),
then our deformations reduce to the ``standard" Darboux transformations or linear spectral transformations. As already noticed in the
introduction, this section adapts and completes, for this particular Sobolev scalar case, the more general results given in Ref. \cite{Car3Tr}.

\subsection{Christoffel--Sobolev transformations}

Let us introduce the polynomial $R(x):=\prod_{i=1}^{d}(x-r_i)^{m_i}$  of degree $\sum_{i=1}^{d}m_i=M$.
\begin{definition}
The right and left Christoffel--Sobolev deformed measure matrices and moment matrices are
 \begin{align*}
  \hat{\W}_L& :=R(\mathcal{X})\W  & \hat{\W}_R& :=\W [R(\mathcal{X})]^{\top} \\
  R(\Lambda)G_{\W}&=G_{\hat{\W}_L}:=\hat{G}_L  &
  G_{\W}[R(\Lambda)]^{\top}&=G_{\hat{\W}_R}:=\hat{G}_R
 \end{align*}
The resolvents and adjoint resolvents are defined as
\begin{align*}
 (\hat{\omega}_L)&:=(\hat{S}_{L1})R(\Lambda)S_1^{-1}  &   (\hat{\Omega}_L)&:=S_2 (\hat{S}_{L2})^{-1} \\
 (\hat{\omega}_R)&:=(\hat{S}_{R2})R(\Lambda)S_2^{-1}  &   (\hat{\Omega}_R)&:=S_1 (\hat{S}_{R1})^{-1}
\end{align*}
\end{definition}

\begin{pro}
 The resolvents are related to the adjoint resolvents by the formulae
 \begin{align*}
  (\hat{\omega}_L)&=(\hat{H}_L)(\hat{\Omega}_L)^{\top}H^{-1} &  (\hat{\omega}_R)&=(\hat{H}_R)(\hat{\Omega}_R)^{\top}H^{-1}
 \end{align*}
 and have the following $(M+1)$ diagonal structure
\begin{align*}
 \hat{\omega}=\begin{pmatrix}
  \hat{\omega}_{0,0} & \hat{\omega}_{0,1} &     \dots         &  \hat{\omega}_{0,(M-1)} &  \hat{\omega}_{0,M}      &    0                 &                          &                      &        \\
           0         & \hat{\omega}_{1,1} &                   &                         & \hat{\omega}_{1,M}       &\hat{\omega}_{1,(M+1)}&        0                 &  \dots               &        \\
           0         &         0          & \ddots            &                         &                          &                      &     \ddots               &                      &        \\
                     &                    &                   &\hat{\omega}_{k,k}       &                          &                      &   \hat{\omega}_{k,k+M-1} &\hat{\omega}_{k,k+M}&   0    \\
                     &                    &                   &                         &     \ddots               &                      &                          &        &   \ddots
\end{pmatrix} \ ,
\end{align*}
where $\hat{\omega}_{k,k+M}=1$ and $\hat{\omega}_{k,k}=\frac{\hat{h}_k}{h_k}$.
\end{pro}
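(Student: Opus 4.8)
The plan is to read off both assertions from the two alternative expressions for the resolvent, which encode complementary information: the factorization identity forces lower-triangularity of the adjoint resolvent (hence upper-triangularity of $\hat{\omega}$ together with the value of its main diagonal), while the definition via the multiplication operator $R(\mathcal{X})$ forces the finite upper bandwidth and normalizes the extreme diagonal.

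First I would establish the resolvent--adjoint resolvent formulae. Substituting the factorizations $G_{\W}=S_1^{-1}H(S_2^{-1})^\top$ and $\hat{G}_L=\hat{S}_{L1}^{-1}\hat{H}_L(\hat{S}_{L2}^{-1})^\top$ into the defining identity $\hat{G}_L=R(\Lambda)G_{\W}$ and multiplying on the left by $\hat{S}_{L1}$ gives
\begin{align*}
\hat{H}_L(\hat{S}_{L2}^{-1})^\top=\hat{S}_{L1}R(\Lambda)S_1^{-1}H(S_2^{-1})^\top=(\hat{\omega}_L)H(S_2^{-1})^\top.
\end{align*}
Multiplying on the right by $S_2^\top$ and recognizing $(\hat{S}_{L2}^{-1})^\top S_2^\top=(S_2\hat{S}_{L2}^{-1})^\top=(\hat{\Omega}_L)^\top$ yields $\hat{H}_L(\hat{\Omega}_L)^\top=(\hat{\omega}_L)H$, which is exactly $(\hat{\omega}_L)=(\hat{H}_L)(\hat{\Omega}_L)^\top H^{-1}$. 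The right-hand identity follows by transposing $\hat{G}_R=G_{\W}[R(\Lambda)]^\top$, which puts it in the very same form with the subscripts $1$ and $2$ interchanged, so no separate computation is needed.

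Next I would extract the structure. Since $\hat{\Omega}_L=S_2\hat{S}_{L2}^{-1}$ is a product of matrices in $\mathscr L$ it again lies in $\mathscr L$, so $(\hat{\Omega}_L)^\top$ is upper uni-triangular; multiplying by the diagonal matrices $\hat{H}_L$ and $H^{-1}$ preserves upper-triangularity and fixes the main diagonal as $\hat{\omega}_{k,k}=\hat{h}_k/h_k$. To obtain the upper bandwidth I would instead use the definition $\hat{\omega}_L=\hat{S}_{L1}R(\Lambda)S_1^{-1}$ and act on $P_1(x)=S_1\chi(x)$. Since $\Lambda\chi(x)=x\chi(x)$ gives $R(\Lambda)\chi(x)=R(x)\chi(x)$, this produces $\hat{\omega}_L P_1(x)=R(x)\hat{P}_{L1}(x)$ with $\hat{P}_{L1}(x):=\hat{S}_{L1}\chi(x)$, i.e. in row $k$, $\sum_{j\ge k}(\hat{\omega}_L)_{k,j}P_{1,j}(x)=R(x)\hat{P}_{L1,k}(x)$. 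The right-hand side is monic of degree $k+M$, while the $\{P_{1,j}\}$ form a basis with $\deg P_{1,j}=j$; comparing highest-degree terms (which cannot cancel) forces $(\hat{\omega}_L)_{k,j}=0$ for $j>k+M$ and $(\hat{\omega}_L)_{k,k+M}=1$. This is precisely the displayed $(M+1)$-diagonal shape, and the right case is identical through $\hat{\omega}_R P_2(x)=R(x)\hat{P}_{R2}(x)$.

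The only subtle point, and the step I expect to be the main obstacle, is that neither representation of $\hat{\omega}$ alone gives the full picture: upper-triangularity and the value $\hat{h}_k/h_k$ on the diagonal come from the factorization identity through the adjoint resolvent, whereas the finite bandwidth and the normalization $\hat{\omega}_{k,k+M}=1$ come from the definition through the degree-counting argument. The proof therefore hinges on playing the two expressions against each other rather than manipulating a single one, and one must be careful that the degree comparison is legitimate because the monic polynomials $P_{1,j}$ are linearly independent with distinct degrees.
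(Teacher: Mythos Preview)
Your proof is correct and follows exactly the approach the paper indicates: the paper's entire proof is the one sentence ``The previous relations follow from a $LU$-factorization of the expressions defining the Darboux--Sobolev deformed moment matrices,'' and you have simply unpacked this in full, deriving the resolvent--adjoint resolvent identity by substituting the two factorizations into $\hat{G}_L=R(\Lambda)G_{\W}$ and then reading off the band structure from the two complementary representations of $\hat{\omega}$. Your degree-counting argument for the bandwidth and the normalization $\hat{\omega}_{k,k+M}=1$ is a clean way to make explicit what the paper leaves implicit.
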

\begin{proof}
The previous relations follow from a $LU$-factorization of the expressions defining the Darboux--Sobolev deformed moment matrices.
\end{proof}
\noi Let us establish now some connection formulae relating deformed to non-deformed polynomials. They are based on the notion of resolvent, as clarified by the following
\begin{pro}
Deformed and non deformed polynomials are related by the resolvents
 \begin{align*}
  (\hat{\omega}_L)P_1(x)&=R(x)(\hat{P}_{L1})(x)  &  (\hat{\Omega}_L)(\hat{P}_{L2})(x)=P_2(x) \\
  (\hat{\omega}_R)P_2(x)&=R(x)(\hat{P}_{R2})(x)  &  (\hat{\Omega}_R)(\hat{P}_{R1})(x)=P_1(x)
 \end{align*}
while transformed and non transformed Christoffel--Darboux kernels are related as follows
\begin{align*}
 &K^{[n+1]}(x,y)=R(y)\hat{K}_L^{[n+1]}(x,y)-\\
 &\begin{pmatrix}(\hat{P}_{L2})_{n+1-M} & \dots & (\hat{P}_{L2})_{n} \end{pmatrix}
 \begin{pmatrix}
  (\hat{h}_L)_{n+1-M}^{-1} &       &                \\
                       &\ddots &                \\
                       &       & (\hat{h}_L)_{n}^{-1}
 \end{pmatrix}
 \begin{pmatrix}
  (\hat{\omega}_L)_{n+1-M,n+1} &       &        0        \\
         \vdots            &\ddots &                \\
  (\hat{\omega}_L)_{n,n+1}     & \dots      & (\hat{\omega}_L)_{n,n+M}
 \end{pmatrix}
\begin{pmatrix} (P_1)_{n+1}(y)\\ \vdots \\ (P_1)_{n+m}(y) \end{pmatrix} \\
&K^{[n+1]}(y,x)=R(y)\hat{K}_R^{[n+1]}(y,x)-\\
 &\begin{pmatrix}(\hat{P}_{R1})_{n+1-M} & \dots & (\hat{P}_{R1})_{n} \end{pmatrix}
 \begin{pmatrix}
  (\hat{h}_R)_{n+1-M}^{-1} &       &                \\
                       &\ddots &                \\
                       &       & (\hat{h}_R)_{n}^{-1}
 \end{pmatrix}
 \begin{pmatrix}
  (\hat{\omega}_R)_{n+1-M,n+1} &       &        0        \\
         \vdots            &\ddots &                \\
  (\hat{\omega}_R)_{n,n+1}     & \dots      & (\hat{\omega}_R)_{n,n+M}
 \end{pmatrix}
\begin{pmatrix} (P_2)_{n+1}(y)\\ \vdots \\ (P_2)_{n+m}(y) \end{pmatrix}
\end{align*}
\end{pro}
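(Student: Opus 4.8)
The plan is to handle the two families of identities separately: the resolvent connection formulae for the polynomials, which are essentially immediate, and then the Christoffel--Darboux kernel relations, which genuinely use the $(M+1)$-diagonal band structure established in the previous proposition.

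For the polynomial relations I would start from the single identity $R(\Lambda)\chi(x)=R(x)\chi(x)$, valid because $\Lambda\chi(x)=x\chi(x)$. Using $P_1=S_1\chi$ together with the definition $\hat{\omega}_L=\hat{S}_{L1}R(\Lambda)S_1^{-1}$,
\[
\hat{\omega}_L P_1(x)=\hat{S}_{L1}R(\Lambda)S_1^{-1}S_1\chi(x)=\hat{S}_{L1}R(\Lambda)\chi(x)=R(x)\hat{S}_{L1}\chi(x)=R(x)\hat{P}_{L1}(x),
\]
and likewise $\hat{\Omega}_L\hat{P}_{L2}(x)=S_2\hat{S}_{L2}^{-1}\hat{S}_{L2}\chi(x)=S_2\chi(x)=P_2(x)$. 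The two right-hand relations follow in exactly the same way with the roles of $S_1,S_2$ interchanged.

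For the kernel relations I would first rewrite both kernels in closed matrix form using the truncation projector $\Delta_n:=\sum_{k=0}^{n}E_{k,k}$, so that $K^{[n+1]}(x,y)=P_2(x)^{\top}\Delta_n H^{-1}P_1(y)$, and similarly for $\hat{K}_L^{[n+1]}$. Set $U:=(\hat{\Omega}_L)^{\top}$. Two facts from the preceding proposition are needed: the relation $\hat{\omega}_L=\hat{H}_L U H^{-1}$, which yields $\hat{H}_L^{-1}\hat{\omega}_L=U H^{-1}$ and, since $U=\hat{H}_L^{-1}\hat{\omega}_L H$ is a diagonal conjugate of $\hat{\omega}_L$, shows that $U$ inherits its $(M+1)$-diagonal upper band; and the connection formula $R(y)\hat{P}_{L1}(y)=\hat{\omega}_L P_1(y)$ just proved. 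Multiplying the deformed kernel by $R(y)$ and substituting gives
\[
R(y)\hat{K}_L^{[n+1]}(x,y)=\hat{P}_{L2}(x)^{\top}\Delta_n\hat{H}_L^{-1}\hat{\omega}_L P_1(y)=\hat{P}_{L2}(x)^{\top}\Delta_n U H^{-1}P_1(y).
\]
On the other hand, from $P_2=\hat{\Omega}_L\hat{P}_{L2}$ one has $P_2(x)^{\top}=\hat{P}_{L2}(x)^{\top}U$, hence $K^{[n+1]}(x,y)=\hat{P}_{L2}(x)^{\top}U\Delta_n H^{-1}P_1(y)$. Subtracting,
\[
R(y)\hat{K}_L^{[n+1]}(x,y)-K^{[n+1]}(x,y)=\hat{P}_{L2}(x)^{\top}[\Delta_n,U]H^{-1}P_1(y),
\]
so the whole computation collapses onto the commutator $[\Delta_n,U]=\Delta_n U-U\Delta_n$.

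The final step is combinatorial. Since $U$ is upper triangular with band $M$, the entry $[\Delta_n,U]_{ij}$ equals $U_{ij}$ exactly when $i\le n<j\le i+M$ and vanishes otherwise; thus $[\Delta_n,U]$ is supported on the $M\times M$ corner block with rows $i=n+1-M,\dots,n$ and columns $j=n+1,\dots,n+M$, and within that block the constraint $j\le i+M$ produces precisely the upper-right zeros displayed in the stated matrix. Translating $U_{ij}h_j^{-1}=(\hat{h}_L)_i^{-1}(\hat{\omega}_L)_{ij}$ through the diagonal factors turns $\hat{P}_{L2}(x)^{\top}[\Delta_n,U]H^{-1}P_1(y)$ into exactly the row-vector/diagonal/banded-matrix/column-vector expression claimed. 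The relation for $\hat{K}_R$ then follows by the identical argument with $1\leftrightarrow2$ interchanged, or more economically by observing that the right transformation of $\W$ is the left transformation of $\W^{\top}$ (since $\hat{G}_R^{\top}=R(\Lambda)G_{\W}^{\top}=R(\Lambda)G_{\W^{\top}}$) combined with the symmetry $P_{\W,1}=P_{\W^{\top},2}$, $P_{\W,2}=P_{\W^{\top},1}$ established earlier. The main obstacle is bookkeeping: correctly transferring the band from $\hat{\omega}_L$ to $U$ and matching the commutator's corner block, including its internal triangular zero pattern, to the explicit index ranges $n+1-M,\dots,n$ and $n+1,\dots,n+M$ appearing in the formula.
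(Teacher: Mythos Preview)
Your argument is correct and is essentially the same approach as the paper's, just carried out in full detail: the paper simply records the identity
\[
\bigl[(\hat{P}_{L2})^{\top}(x)\,(\hat{\Omega}_L)^{\top}\bigr]H^{-1}P_1(y)
\;=\;
(\hat{P}_{L2})^{\top}(x)\,(\hat{H}_L)^{-1}\bigl[(\hat{\omega}_L)P_1(y)\bigr]
\]
and tells the reader to ``make it explicit'' after truncation, which is precisely what your commutator $[\Delta_n,U]$ computation does. Your use of the projector $\Delta_n$ and the identification of the surviving corner block via the band condition $i\le n<j\le i+M$ is a tidy way to package that step; the paper leaves this bookkeeping to the reader.
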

\begin{proof}
 The first set of relations follow directly by using the definition of the resolvents, and taking into account their action on the SBPS. The second set of
 relations follow by making explicit the equalities
 \[
\left[(\hat{P}_{L2})^{\top}(x) (\hat{\Omega}_L)^{\top}\right]H^{-1}P_1(y)=(\hat{P}_{L2})^{\top}(x)(\hat{H}_L)^{-1}\left[(\hat{\omega}_L)P_1(y) \right]
\]
 for the first one and
 \[
\left[(\hat{P}_{R1})^{\top}(x) (\hat{\Omega}_R)^{\top}\right]H^{-1}P_2(y)=(\hat{P}_{R1})^{\top}(x)(\hat{H}_R)^{-1}\left[(\hat{\omega}_L)P_2(y) \right]
\]
 for the second one.
\end{proof}

\noi Let us introduce a vector of ``germs" of a function near the points $r_i$, having multiplicities $m_i$.

\begin{definition}
Given a function $f(x)$  and a set $r:=\{(r_i,m_i)\}_{i=1}^{d} $ of points $r_i\in\mathbb{R}$  with associated multiplicities $m_i\in\mathbb{N}$, we define the vector of germs $\Pi_r[f]:\mathcal{F}(x)\longrightarrow \mathbb{R}^{\sum m_i}$ as
\begin{align*}
 \Pi_r[f]&:= \left( \frac{f^{(0)}(r_1)}{0!} , \frac{f^{(1)}(r_1)}{1!} , \dots , \frac{f^{(m_1-1)}(r_1)}{(m_1-1)!} ;
                             \frac{f^{(0)}(r_2)}{0!} , \frac{f^{(1)}(r_2)}{1!} , \dots , \frac{f^{(m_2-1)}(r_2)}{(m_2-1)!}; \dots ;
                             \frac{f^{(0)}(r_d)}{0!} , \dots , \frac{f^{(m_d-1)}(r_d)}{(m_d-1)!}
              \right) \ .
\end{align*}
\end{definition}
Now we can state an useful result.
\begin{pro}
The Christoffel transformed polynomials and their norms are given in terms of the original ones by means of the relations
\begin{align*}
 (\hat{P}_{1L})_n(x)&=\frac{1}{R(x)}\Theta_*\left(\begin{array}{c|c}
                      \Pi_r \begin{bmatrix} (P_1)_n \\ (P_1)_{n+1} \\ \vdots \\ (P_1)_{n+M-1} \end{bmatrix}    &  \begin{matrix} (P_1)_n(x) \\ (P_1)_{n+1}(x) \\ \vdots \\ (P_1)_{n+M-1}(x) \end{matrix}    \\
                      \hline
                      \Pi_r[(P_1)_{n+M}]                                      & (P_1)_{n+M}(x)
                            \end{array}\right) \ , &
   \frac{(\hat{P}_{2L})_n(x)}{(\hat{h}_L)_n}&=
 \Theta_*\left(\begin{array}{c|c}
  \Pi_r\begin{bmatrix}  (P_1)_{n+1} \\ \vdots \\ (P_1)_{n+M}   \end{bmatrix} & \begin{matrix} 0 \\ \vdots \\ 1  \end{matrix} \\
  \hline
 \Pi_r[K^{[n+1]}(x,\cdot)]   &  0
 \end{array}\right) \\
  \frac{(\hat{h}_L)_n}{h_n}=&\Theta_*\left(\begin{array}{c|c}
                      \Pi_r \begin{bmatrix} (P_1)_n \\ (P_1)_{n+1} \\ \vdots \\ (P_1)_{n+M-1} \end{bmatrix}    &  \begin{matrix} 1 \\ 0 \\ \vdots \\ 0 \end{matrix}    \\
                      \hline
                      \Pi_r[(P_1)_{n+M}]                                      & 0
                            \end{array}\right) \ ,
\end{align*}

\begin{align*}
 (\hat{P}_{2R})_n(x)&=\frac{1}{R(x)}\Theta_*\left(\begin{array}{c|c}
                      \Pi_r \begin{bmatrix} (P_2)_n \\ (P_2)_{n+1} \\ \vdots \\ (P_2)_{n+M-1} \end{bmatrix}    &  \begin{matrix} (P_2)_n(x) \\ (P_2)_{n+1}(x) \\ \vdots \\ (P_2)_{n+M-1}(x) \end{matrix}    \\
                      \hline
                      \Pi_r[(P_2)_{n+M}]                                      & (P_2)_{n+M}(x)
                            \end{array}\right) \ , &
   \frac{(\hat{P}_{1R})_n(x)}{(\hat{h}_R)_n}&=
 \Theta_*\left(\begin{array}{c|c}
  \Pi_r\begin{bmatrix}  (P_2)_{n+1} \\ \vdots \\ (P_2)_{n+M}   \end{bmatrix} & \begin{matrix} 0 \\ \vdots \\ 1  \end{matrix} \\
  \hline
 \Pi_r[K^{[n+1]}(\cdot,x)]   &  0
 \end{array}\right) \ , \\
  \frac{(\hat{h}_R)_n}{h_n}=&\Theta_*\left(\begin{array}{c|c}
                      \Pi_r \begin{bmatrix} (P_2)_n \\ (P_2)_{n+1} \\ \vdots \\ (P_2)_{n+M-1} \end{bmatrix}    &  \begin{matrix} 1 \\ 0 \\ \vdots \\ 0 \end{matrix}    \\
                      \hline
                      \Pi_r[(P_2)_{n+M}]                                      & 0
                            \end{array}\right) \ .
\end{align*}

\end{pro}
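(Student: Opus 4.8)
The plan is to obtain all six identities from the connection formulae of the preceding proposition, $(\hat{\omega}_L)P_1(x)=R(x)(\hat P_{L1})(x)$, $(\hat{\omega}_R)P_2(x)=R(x)(\hat P_{R2})(x)$, together with the $(M+1)$-diagonal band structure of the resolvents (with $(\hat{\omega})_{k,k+M}=1$ and $(\hat{\omega})_{k,k}=(\hat h)_k/h_k$). The single extra ingredient is an elementary divisibility remark: since $R(x)=\prod_i(x-r_i)^{m_i}$, the polynomial $R(x)(\hat P_{L1})_n(x)$ has a zero of order at least $m_i$ at each node $r_i$, so every germ recorded by $\Pi_r$ annihilates it, i.e. $\Pi_r\!\left[R\,(\hat P_{L1})_n\right]=0$. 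The whole argument is carried out for the left transform; the right-hand formulae follow by symmetry.

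First I would treat the first-kind polynomial and its norm. Reading off row $n$ of the connection formula and using $(\hat{\omega}_L)_{n,n+M}=1$ gives
\[
R(x)(\hat P_{L1})_n(x)=(P_1)_{n+M}(x)+\sum_{j=0}^{M-1}(\hat{\omega}_L)_{n,n+j}(P_1)_{n+j}(x).
\]
Applying $\Pi_r$ and invoking $\Pi_r[R(\hat P_{L1})_n]=0$ yields an $M\times M$ linear system for the band entries $(\hat{\omega}_L)_{n,n},\dots,(\hat{\omega}_L)_{n,n+M-1}$, whose coefficient matrix is precisely the matrix $A$ with rows $\Pi_r[(P_1)_n],\dots,\Pi_r[(P_1)_{n+M-1}]$. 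Solving for the band entries and substituting back reproduces $R(x)(\hat P_{L1})_n(x)$ as the stated Schur complement, because $\Theta_*[\,\cdot\,]=D-CA^{-1}B$; the first component of the solution is $(\hat{\omega}_L)_{n,n}=(\hat h_L)_n/h_n$, which is exactly the quasi-determinant given for the norm. This portion is routine linear algebra.

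The substantive step is the second-kind polynomial. I would set $\tilde Q(x)$ equal to the claimed Schur complement and first expand it: writing $K^{[n+1]}(x,y)=\sum_{k=0}^{n}(P_2)_k(x)\,h_k^{-1}(P_1)_k(y)$ shows that $\tilde Q=\sum_{k=0}^{n}c_k(P_2)_k$ with $c_kh_k=-\Pi_r[(P_1)_k]\,A_+^{-1}e_M$, where $A_+$ is the matrix with rows $\Pi_r[(P_1)_{n+1}],\dots,\Pi_r[(P_1)_{n+M}]$. Thus $\tilde Q$ has degree $\le n$, and by bi-orthogonality it suffices to verify $\big((\hat P_{L1})_m,\tilde Q;\hat{\W}_L\big)=\delta_{m,n}$ for $m\le n$, which forces $\tilde Q=(\hat P_{L2})_n/(\hat h_L)_n$. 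Using the deformation identity $(f,h;\hat{\W}_L)=(Rf,h;\W)$ (the specialization $P=R,\,Q=1$ of Proposition \ref{pro2}) and the row-$m$ connection formula reduces each pairing to
\[
\big((\hat P_{L1})_m,\tilde Q;\hat{\W}_L\big)=-\sum_{j:\,m+j\le n}(\hat{\omega}_L)_{m,m+j}\,\Pi_r[(P_1)_{m+j}]\,A_+^{-1}e_M,
\]
where the defining relation $\Pi_r[(P_1)_{n+l}]\,A_+^{-1}=e_l^{\top}$ holds by construction of $A_+$.

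I expect the genuine obstacle to be the range $n-M<m<n$, where the sum over $j$ is truncated and the germ relation $\sum_{j=0}^{M}(\hat{\omega}_L)_{m,m+j}\Pi_r[(P_1)_{m+j}]=0$ cannot be applied wholesale (for $m\le n-M$ the full sum appears and vanishes outright). The resolution I would use is to replace the truncated head of the sum by minus its tail via that relation; with $p=n-m\ge 1$ the tail involves only $\Pi_r[(P_1)_{n+1}],\dots,\Pi_r[(P_1)_{n+M-p}]$, i.e. rows $1,\dots,M-p$ of $A_+$, strictly above the last. Since $\Pi_r[(P_1)_{n+l}]A_+^{-1}e_M=\delta_{l,M}$ and $l\le M-p<M$, every surviving term vanishes, giving orthogonality for all $m<n$; the case $m=n$ collapses to $(\hat{\omega}_L)_{n,n}\Pi_r[(P_1)_n]A_+^{-1}e_M=-1$, producing the normalisation $1$. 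Finally, the three right-handed formulae follow either by rerunning the argument with $(f,h;\hat{\W}_R)=(f,Rh;\W)$ and the resolvent $(\hat{\omega}_R)$, or more economically by the $\W\leftrightarrow\W^{\top}$ symmetry established earlier, under which $\hat{\W}_R=\big(R(\mathcal{X})\W^{\top}\big)^{\top}$ is the left transform of $\W^{\top}$ with the labels $1$ and $2$ interchanged.
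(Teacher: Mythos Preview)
Your treatment of $(\hat P_{1L})_n$ and $(\hat h_L)_n/h_n$ is identical to the paper's: row $n$ of the connection formula, apply $\Pi_r$, solve the $M\times M$ system, read off the Schur complement and its first diagonal entry.

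For $(\hat P_{2L})_n$ you take a genuinely different route. The paper does not verify bi-orthogonality directly; instead it invokes the CD-kernel relation from the preceding proposition,
\[
K^{[n+1]}(x,y)=R(y)\hat K_L^{[n+1]}(x,y)-(\text{resolvent block})\begin{pmatrix}(P_1)_{n+1}(y)\\\vdots\\(P_1)_{n+M}(y)\end{pmatrix},
\]
applies $\Pi_r$ in the $y$ variable (which kills the $R(y)\hat K_L^{[n+1]}$ term), and solves the resulting system for the column $((\hat P_{2L})_{n+1-M},\dots,(\hat P_{2L})_n)^\top$ up to the $\hat h$ factors. This yields the quasi-determinantal formula in one stroke, without any case analysis on $m$.

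Your approach is more elementary in that it avoids the kernel connection formula altogether, trading it for a direct orthogonality check. The cost is the splitting into the ranges $m\le n-M$, $n-M<m<n$, and $m=n$, with the head/tail substitution in the middle range. This is correct and self-contained, but the paper's method is shorter and exploits machinery already in place. Both arguments use the same underlying linear-algebraic content (invertibility of the germ matrix and the band structure of $\hat\omega_L$); they just organise it differently. Your symmetry remark for the right-hand formulae matches the paper's one-line dismissal of that case.
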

\begin{proof}
We shall focus on the proof of the left-type deformation; the right-type one follows in a completely analogous way. Selecting the $n$-th component of the connection formula one gets
\begin{align*}
 \begin{pmatrix}(\hat{\omega}_L)_{n,n} & (\hat{\omega}_L)_{n,n+1} & \dots   & (\hat{\omega}_L)_{n,n+M-1} &     1  \end{pmatrix}
 \begin{pmatrix} (P_1)_{n}(x)    \\ (P_1)_{n+1}(x)    \\ \vdots \\ (P_1)_{n+M-1}(x)     \\ (P_1)_{n+M}(x)\end{pmatrix}=R(x)(\hat{P}_{1L})_n(x) \ .
\end{align*}
Evaluating now in the zeroes of $R(x)$ it is easy to see that
\begin{align*}
 \begin{pmatrix}(\hat{\omega}_L)_{n,n} & (\hat{\omega}_L)_{n,n+1} & \dots   & (\hat{\omega}_L)_{n,n+M-1} &     1  \end{pmatrix}
 \Pi\begin{bmatrix} (P_1)_{n}    \\ (P_1)_{n+1}    \\ \vdots \\ (P_1)_{n+M-1}     \\ (P_1)_{n+M}\end{bmatrix}= \begin{pmatrix} 0 & 0 & \dots & 0 \end{pmatrix}
\end{align*}
Therefore,
\begin{align*}
\begin{pmatrix}(\hat{\omega}_L)_{n,n} & (\hat{\omega}_L)_{n,n+1} & \dots   & (\hat{\omega}_L)_{n,n+M-1} \end{pmatrix}
\Pi_r\begin{bmatrix} (P_1)_{n}    \\ (P_1)_{n+1}    \\ \vdots \\ (P_1)_{n+M-1}  \end{bmatrix}&=
-\Pi_r[(P_1)_{n+M}]\ ,\\
\end{align*}
i.e
\begin{align*}
\begin{pmatrix}(\hat{\omega}_L)_{n,n} & (\hat{\omega}_L)_{n,n+1} & \dots   & (\hat{\omega}_L)_{n,n+M-1} \end{pmatrix}&=
-\Pi_r[(P_1)_{n+M}] \left(\Pi_r\begin{bmatrix} (P_1)_{n}    \\ (P_1)_{n+1}    \\ \vdots \\ (P_1)_{n+M-1}  \end{bmatrix}\right)^{-1} \ ,
\end{align*}
from which the result for $\hat{P}_{1L}$ and $\hat{h}_L$ follow. In order to obtain the result for $\hat{P}_{2L}$,
it is sufficient to start from the equation that relates the CD-Kernels, and to use the same procedure of evaluation on the zeroes of $R(x)$.
\end{proof}

\begin{definition}
We introduce the $(2M+1)$ banded matrices
 \begin{align*}
  (\hat{\omega}_L)(\hat{\Omega}_R)&:=\hat{J}_{1LR} \ , &
  (\hat{\omega}_R)(\hat{\Omega}_L)&:=\hat{J}_{2RL} \ .
 \end{align*}

\end{definition}
We point out that a generalization of the notion of recurrence relation can be realized by allowing an intertwining of SBPS associated with different measure matrices instead of the same one. In this case we shall talk of a \textit{quasi-recurrence relation}.
\begin{pro}
The right and left deformed SBPS satisfy the following $(2M+1)$ quasi--recurrence relation
 \begin{align*}
  \hat{J}_{1LR}(\hat{P}_{R1})(x)&=R(x)(\hat{P}_{L1})(x) \ ,\\
  \hat{J}_{2RL}(\hat{P}_{L2})(x)&=R(x)(\hat{P}_{R2})(x) \ ,
 \end{align*}
with
\begin{align*}
 \hat{J}_{1LR}=\hat{H}_L\left[\hat{J}_{2RL}\right]^{\top}\hat{H}_R^{-1}
\end{align*}
\end{pro}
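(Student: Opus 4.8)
The plan is to derive both quasi-recurrence relations purely by composing the connection formulae already established for the resolvents and the adjoint resolvents, and then to obtain the symmetry relation $\hat{J}_{1LR}=\hat{H}_L[\hat{J}_{2RL}]^{\top}\hat{H}_R^{-1}$ from the identities tying the resolvents to their adjoints. No new analytic input is needed; the whole argument is algebraic, so the work lies in keeping careful track of transposes and of the diagonal factors $H,\hat{H}_L,\hat{H}_R$.

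First I would prove the two recurrences. Unfolding the definition $\hat{J}_{1LR}:=(\hat{\omega}_L)(\hat{\Omega}_R)$ and letting it act on $(\hat{P}_{R1})(x)$, I would use the adjoint-resolvent action $(\hat{\Omega}_R)(\hat{P}_{R1})(x)=P_1(x)$ followed by the resolvent action $(\hat{\omega}_L)P_1(x)=R(x)(\hat{P}_{L1})(x)$, giving at once
\[
\hat{J}_{1LR}(\hat{P}_{R1})(x)=(\hat{\omega}_L)\bigl[(\hat{\Omega}_R)(\hat{P}_{R1})(x)\bigr]=(\hat{\omega}_L)P_1(x)=R(x)(\hat{P}_{L1})(x).
\]
The second recurrence follows from the mirror computation: from $\hat{J}_{2RL}:=(\hat{\omega}_R)(\hat{\Omega}_L)$, the identities $(\hat{\Omega}_L)(\hat{P}_{L2})(x)=P_2(x)$ and $(\hat{\omega}_R)P_2(x)=R(x)(\hat{P}_{R2})(x)$ yield $\hat{J}_{2RL}(\hat{P}_{L2})(x)=R(x)(\hat{P}_{R2})(x)$.

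Next I would establish the symmetry relation. Substituting $(\hat{\omega}_L)=(\hat{H}_L)(\hat{\Omega}_L)^{\top}H^{-1}$ into the definition of $\hat{J}_{1LR}$ gives $\hat{J}_{1LR}=\hat{H}_L(\hat{\Omega}_L)^{\top}H^{-1}(\hat{\Omega}_R)$. On the other side, transposing $\hat{J}_{2RL}=(\hat{\omega}_R)(\hat{\Omega}_L)$ and inserting $(\hat{\omega}_R)=(\hat{H}_R)(\hat{\Omega}_R)^{\top}H^{-1}$, I would use that $H$ and $\hat{H}_R$ are diagonal (hence equal to their transposes) to get $(\hat{\omega}_R)^{\top}=H^{-1}(\hat{\Omega}_R)\hat{H}_R$ and therefore $[\hat{J}_{2RL}]^{\top}=(\hat{\Omega}_L)^{\top}H^{-1}(\hat{\Omega}_R)\hat{H}_R$. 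Multiplying on the left by $\hat{H}_L$ and on the right by $\hat{H}_R^{-1}$ cancels the trailing $\hat{H}_R$ and reproduces exactly $\hat{H}_L(\hat{\Omega}_L)^{\top}H^{-1}(\hat{\Omega}_R)$, which is the expression for $\hat{J}_{1LR}$ obtained above; this proves the claimed identity.

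Finally, the assertion that $\hat{J}_{1LR}$ and $\hat{J}_{2RL}$ are $(2M+1)$ banded is not an extra hypothesis but a consequence of the previously proven $(M+1)$ diagonal structure of the resolvents: $(\hat{\omega}_L)$ is upper $(M+1)$ banded while $(\hat{\Omega}_R)$ is lower $(M+1)$ banded (the latter because $(\hat{\Omega}_R)^{\top}=\hat{H}_R^{-1}(\hat{\omega}_R)H$ inherits the upper $(M+1)$ banded shape of $(\hat{\omega}_R)$), and the product of such matrices occupies only the $2M+1$ central diagonals. I do not expect any serious obstacle here; the single point demanding care is the consistent handling of the transpositions together with the diagonality of $H,\hat{H}_L,\hat{H}_R$, since it is precisely the commutation of these diagonal factors through the transposed adjoint resolvents that makes the two sides of the symmetry relation coincide.
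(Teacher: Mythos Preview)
Your proposal is correct and follows exactly the natural approach implicit in the paper: the paper states this proposition without giving an explicit proof, treating it as an immediate consequence of the connection formulae $(\hat{\omega}_L)P_1=R(x)\hat{P}_{L1}$, $(\hat{\Omega}_R)\hat{P}_{R1}=P_1$ (and their mirror versions) together with the resolvent/adjoint relations $(\hat{\omega}_L)=\hat{H}_L(\hat{\Omega}_L)^{\top}H^{-1}$, $(\hat{\omega}_R)=\hat{H}_R(\hat{\Omega}_R)^{\top}H^{-1}$. Your derivations of the two recurrences by composition and of the symmetry identity by substituting and transposing are precisely what is intended, and your justification of the $(2M+1)$-banded structure from the $(M+1)$-banded shapes of $\hat{\omega}$ and $\hat{\Omega}$ is the right observation (the paper simply asserts this bandedness in the definition preceding the proposition).
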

Observe that if $\W \in \W_{x}$, then there would be no distinction between $L$ or $R$ sequences. In addition, if we choose $R(x)$ to be a  polynomial of degree one, then $\hat{\omega} \cdot \hat{\Omega}$ is a $2(1)+1$-diagonal matrix and the standard three term recurrence relation is recovered.

\subsection{Geronimus-Sobolev transformations}
Let us now focus on the Geronimus transformation. To this aim, a polynomial
$Q(x):=\prod_{i=1}^{s}(x-q_i)^{n_i}=Q_0+Q_1x+\dots+Q_{N-1}x^{N-1}+x^N$ of degree $\sum_{i=1}^{s}n_i=N$ is needed in order to define
the left and right transformed measure matrices. We introduce the following auxiliary matrix, related to the polynomial $Q(x)$:
\begin{align*}
 \mathbf{Q}:=\begin{pmatrix}
              Q_1    & Q_2    & Q_3    &\dots   & Q_{N-1} & 1     &  0  &\dots \\
              Q_2    & Q_3    &\dots   &Q_{N-1} & 1       &  0    &\dots&      \\
              Q_3    &\dots   &Q_{N-1} &   1    &  0      &\dots  &     &       \\
              \dots  &Q_{N-1} & 1      &  0     &\dots    &       &     &       \\
              Q_{N-1}&   1    &  0     &\dots   &         &       &     &       \\
                 1   &  0     &\dots   &        &         &       &     &     \\
               0     &\dots   &        &        &         &       &     &
             \end{pmatrix} \ .
\end{align*}

\begin{definition}
 As long as $\{ q_i\}_i\cap \Omega=\varnothing$
 the Geronimus Sobolev deformed measure matrices are defined to be
 \begin{align*}
  \check{\W}_L& :=\left[Q(\mathcal{X})\right]^{-1}\W+\sum_{i=1}^{s}\xi^{(i)}\delta(x-q_i)\d x \ ,
  & \check{\Omega}_L:=\Omega\cup \{q_i\}_i \\
  \check{\W}_R& :=\W\left[Q(\mathcal{X}^{\top})\right]^{-1}+\sum_{i=1}^{s}\xi^{(i)}\delta(x-q_i)\d x \ ,
  & \check{\Omega}_R:=\Omega\cup \{q_i\}_i
 \end{align*}
 where $\xi^{(i)}$ are the $n_i \times n_i$ matrices of free parameters
\begin{align*}
  \xi^{(i)}&:=\begin{pmatrix}
              \frac{\xi^{(i)}_{0,0}}{0!0!} & \frac{\xi^{(i)}_{0,1}}{0!1!} & \dots & \frac{\xi^{(i)}_{0,n_i-1}}{(n_i-1)!(n_i-1)!} \\
              \frac{\xi^{(i)}_{1,0}}{1!0!} &  \ddots                      &       &             \\
              \vdots                       &                              & \ddots&            \\
              \frac{\xi^{(i)}_{n_i-1,0}}{(n_i-1)!0!} &                    &       & \frac{\xi^{(i)}_{n_i-1,n_i-1}}{(n_i-1)!(n_i-1)!}
             \end{pmatrix} \ ,  &
  \mathring{\xi}^{(i)}:=\begin{pmatrix}
              \xi^{(i)}_{0,0} & \xi^{(i)}_{0,1} & \dots & \xi^{(i)}_{0,n_i-1} \\
              \xi^{(i)}_{1,0} &  \ddots                      &       &             \\
              \vdots                       &                              & \ddots&            \\
              \xi^{(i)}_{n_i-1,0} &                    &       & \xi^{(i)}_{n_i-1,n_i-1}
             \end{pmatrix} \ .
 \end{align*}
 \end{definition}
\begin{pro}
The transformed  measure matrices and associated moment matrices are related to the original ones by the formulae
 \begin{align*}
  \W& :=Q(\mathcal{X})\check{\W}_L \ , & \W& :=\check{\W}_RQ(\mathcal{X}^{\top}) \ , \\
  G_{\W}&=\left(Q(\Lambda) \right)G_{\check{\W}_L} \ , &
  G_{\W}&=G_{\check{\W}_R}\left(Q(\Lambda) \right)^{\top} \ .
 \end{align*}
\end{pro}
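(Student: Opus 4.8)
The plan is to reduce everything to the single bilinear-form identity $(f,h;Q(\mathcal{X})\check{\W}_L)=(f,h;\W)$ for all polynomials $f,h$, since by Theorem \ref{te1} and Proposition \ref{pro2} this is equivalent to the asserted moment-matrix relation $G_{\W}=Q(\Lambda)G_{\check{\W}_L}$, while the right-hand versions will follow by transposition. First I would substitute the definition $\check{\W}_L=[Q(\mathcal{X})]^{-1}\W+\sum_{i}\xi^{(i)}\delta(x-q_i)\,\d x$ and use bilinearity to split $Q(\mathcal{X})\check{\W}_L$ into a continuous contribution and a discrete one, then treat each separately.

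For the continuous contribution I would invoke Proposition \ref{pro2} in the form $(Q(x)f,h;\,\cdot\,)=(f,h;Q(\mathcal{X})\,\cdot\,)$, applied to the measure matrix $[Q(\mathcal{X})]^{-1}\W$; this gives $(f,h;Q(\mathcal{X})[Q(\mathcal{X})]^{-1}\W)=(f,h;\W)$, so the continuous piece reproduces $\W$ exactly. Here the hypothesis $\{q_i\}_i\cap\Omega=\varnothing$ is exactly what guarantees that dividing the original measures by $Q$ introduces no singularity on the support $\Omega$, so all integrals involved stay finite and the formal manipulation is legitimate.

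The crux is to show that the discrete piece $Q(\mathcal{X})\sum_{i}\xi^{(i)}\delta(x-q_i)\,\d x$ is annihilated. Translating into the bilinear form, left multiplication by $Q(\mathcal{X})$ amounts to replacing $f$ by $Q(x)f$, so the discrete contribution becomes a sum of terms proportional to the germs $[Qf]^{(j)}(q_i)$ with $0\le j\le n_i-1$. By the Leibniz rule, $[Qf]^{(j)}(q_i)=\sum_{l=0}^{j}\binom{j}{l}Q^{(l)}(q_i)f^{(j-l)}(q_i)$; since $q_i$ is a zero of $Q(x)=\prod_{i}(x-q_i)^{n_i}$ of order $n_i$, one has $Q^{(l)}(q_i)=0$ for every $l<n_i$, and in particular for every $l\le j\le n_i-1$. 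Hence each germ $[Qf]^{(j)}(q_i)$ with $j\le n_i-1$ vanishes and the whole discrete term dies. This is precisely why the multiplicities $n_i$ in $Q$ must be matched to the orders of the discrete masses $\xi^{(i)}$: they are chosen large enough to swallow every discrete derivative.

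Combining the two pieces yields $(f,h;Q(\mathcal{X})\check{\W}_L)=(f,h;\W)$, hence $\W=Q(\mathcal{X})\check{\W}_L$ and therefore $G_{\W}=Q(\Lambda)G_{\check{\W}_L}$ via Proposition \ref{pro2}. The right-sided identities $\W=\check{\W}_RQ(\mathcal{X}^{\top})$ and $G_{\W}=G_{\check{\W}_R}(Q(\Lambda))^{\top}$ follow verbatim after transposing, with $Q(\mathcal{X}^{\top})$ now acting on the $h$-argument and the identical multiplicity argument killing the germs $[Qh]^{(k)}(q_i)$ for $0\le k\le n_i-1$. The main obstacle I anticipate is not analytic but purely a matter of bookkeeping: making precise the action of the upper-triangular operator $Q(\mathcal{X})$ on a matrix of $\delta$-distributions and confirming that the derivative indices it couples are exactly those, $0\le j\le n_i-1$, on which the Leibniz expansion forces vanishing.
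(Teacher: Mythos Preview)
Your argument is correct and is precisely the computation the paper has in mind; the paper in fact states this proposition without proof, treating it as an immediate consequence of the definition of $\check{\W}_L,\check{\W}_R$ together with Proposition \ref{pro2}. Your splitting into the continuous piece $Q(\mathcal{X})[Q(\mathcal{X})]^{-1}\W=\W$ and the discrete piece, followed by the Leibniz-rule observation that $(Qf)^{(j)}(q_i)=0$ for $0\le j\le n_i-1$ because $q_i$ is a root of $Q$ of exact multiplicity $n_i$, is the intended content, and your remark that the size $n_i\times n_i$ of the mass matrix $\xi^{(i)}$ is calibrated to the multiplicity is the right conceptual point.
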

The latter proposition and the assumption that the transformed moment matrices are $LU$-factorizable
motivate the definition of the resolvents in terms of the following matrices.
\begin{definition}
We introduce the matrices
\begin{align*}
 (\check{\omega}_L)&:=\check{H}_L \left(\check{S}_{1L}^{-1}\right)^{\top}Q(\Lambda^{\top})S_1^{\top}H^{-1}=\check{S}_{2L}S_{2}^{-1} \ ,\\
 (\check{\omega}_R)&:=\check{H}_R \left(\check{S}_{2R}^{-1}\right)^{\top}Q(\Lambda^{\top})S_2^{\top}H^{-1}=\check{S}_{1R}S_{1}^{-1} \ .
\end{align*}
\end{definition}
The r.h.s. follow from the LU factorization of the transformed and non transformed moment matrices. It is
not difficult to see that these equalities also imply that the resolvents are lower uni-triangular matrices with only $N$ non-vanishing diagonals beneath the main one. Precisely:
\begin{align*}
 \hat{\omega}=\begin{pmatrix}
  \check{\omega}_{0,0} &         0            &                   &                         &                          &                      &                          &                      \\
  \check{\omega}_{1,0} &\check{\omega}_{1,1}  &                   &                         &                          &                      &                          &                      \\
      \vdots           &    \vdots            & \ddots            &                         &                          &                      &                          &                      \\
  \check{\omega}_{N,0} & \check{\omega}_{N,1} &                   &\check{\omega}_{N,N}     &                          &                      &                          &                     \\
         0             &\check{\omega}_{N+1,1}&                   &\check{\omega}_{N+1,N}   & \check{\omega}_{N+1,N+1} &                      &                          &                      \\
                       &                      & \ddots            &                         &                          &   \ddots             &                          &                       \\
                       &                      &                   & \check{\omega}_{k,k-N}  &                          &                      &\check{\omega}_{k,k}      &                      \\
                       &                      &                   &                         &     \ddots               &                      &                          &    \ddots                          \\
\end{pmatrix} \ ,
\end{align*}
where $\check{\omega}_{k,k-N}=\frac{\check{h}_k}{h_{k-N}}$ $\forall k>N$ and $\check{\omega}_{k,k}=1$.

\begin{pro}
The Geronimus-Sobolev deformed polynomials and the associated second kind functions are related to the non transformed ones according to the formulae
\begin{align*}
  \check{\omega}_LP_2(x)&=\check{P}_{2L}(x) & &\Longrightarrow&
  \check{\omega}_L C_2(x)&=Q(x)\check{C}_{2L}(x)-\check{H}_L\left(\check{S}_{1L}^{-1}\right)^{\top}\mathbf{Q}\chi(x) \\
  \check{\omega}_RP_1(x)&=\check{P}_{1R}(x) & &\Longrightarrow&
  \check{\omega}_R C_1(x)&=Q(x)\check{C}_{1R}(x)-\check{H}_R\left(\check{S}_{2R}^{-1}\right)^{\top}\mathbf{Q}\chi(x)
 \end{align*}
\end{pro}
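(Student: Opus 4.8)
The plan is to separate the two polynomial identities, which are immediate, from the two second-kind-function identities, which carry all the content, and in each case to exploit the fact that the resolvents admit \emph{two} equivalent expressions: one relating the old and new factorization matrices, and one arising directly from the $LU$ factorization.

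First I would dispose of the polynomial relations. Using the form $\check{\omega}_L=\check{S}_{2L}S_2^{-1}$ together with $P_2(x)=S_2\chi(x)$ and $\check{P}_{2L}(x)=\check{S}_{2L}\chi(x)$, one gets at once
\[
\check{\omega}_L P_2(x)=\check{S}_{2L}S_2^{-1}S_2\chi(x)=\check{S}_{2L}\chi(x)=\check{P}_{2L}(x),
\]
and verbatim $\check{\omega}_R P_1(x)=\check{S}_{1R}S_1^{-1}S_1\chi(x)=\check{P}_{1R}(x)$. For the second kind functions I would instead insert the other form $\check{\omega}_L=\check{H}_L(\check{S}_{1L}^{-1})^{\top}Q(\Lambda^{\top})S_1^{\top}H^{-1}$ into $\check{\omega}_L C_2(x)$ and use $C_2(x)=H(S_1^{-1})^{\top}\chi^*(x)$. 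The cancellations $H^{-1}H=\mathbb{I}$ and $S_1^{\top}(S_1^{-1})^{\top}=\mathbb{I}$ collapse the product to
\[
\check{\omega}_L C_2(x)=\check{H}_L(\check{S}_{1L}^{-1})^{\top}Q(\Lambda^{\top})\chi^*(x),
\]
so everything reduces to understanding the action of $Q(\Lambda^{\top})$ on the reciprocal vector $\chi^*(x)$.

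The key lemma, which I expect to be the main obstacle, is the identity
\[
Q(\Lambda^{\top})\chi^*(x)=Q(x)\chi^*(x)-\mathbf{Q}\chi(x).
\]
To establish it I would compute $(\Lambda^{\top})^k\chi^*(x)$ componentwise. Since $\Lambda^{\top}$ shifts downwards and $(\chi^*)_n=x^{-(n+1)}$, one finds
\[
(\Lambda^{\top})^k\chi^*(x)=x^k\chi^*(x)-\sum_{i=0}^{k-1}x^{k-i-1}e_i,
\]
where $e_i$ is the $i$-th coordinate vector; the subtracted vector repairs exactly the first $k$ entries that the shift pushes out of range. Multiplying by the coefficients of $Q$ (with $Q_N=1$) and summing, the leading terms reassemble $Q(x)\chi^*(x)$, while reorganizing the double sum of the boundary corrections gives a vector whose $i$-th entry is $\sum_{m=0}^{N-i-1}Q_{i+m+1}x^{m}$. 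By the definition of $\mathbf{Q}$ this is precisely $(\mathbf{Q}\chi(x))_i$. The only delicate point is this bookkeeping: matching the staircase pattern of the entries of $\mathbf{Q}$ to the truncated boundary sums produced by the powers of the lower shift.

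Finally I would substitute the lemma back and recognize the deformed second kind function $\check{C}_{2L}(x)=\check{H}_L(\check{S}_{1L}^{-1})^{\top}\chi^*(x)$, obtaining
\[
\check{\omega}_L C_2(x)=Q(x)\check{H}_L(\check{S}_{1L}^{-1})^{\top}\chi^*(x)-\check{H}_L(\check{S}_{1L}^{-1})^{\top}\mathbf{Q}\chi(x)=Q(x)\check{C}_{2L}(x)-\check{H}_L(\check{S}_{1L}^{-1})^{\top}\mathbf{Q}\chi(x),
\]
which is the asserted formula. The right-deformed identity follows by the same computation with the roles of the indices $1$ and $2$, of $L$ and $R$, and of $S_1$ and $S_2$ interchanged, using $\check{\omega}_R=\check{H}_R(\check{S}_{2R}^{-1})^{\top}Q(\Lambda^{\top})S_2^{\top}H^{-1}$ and $C_1(x)=H(S_2^{-1})^{\top}\chi^*(x)$.
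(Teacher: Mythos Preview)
Your proof is correct, but it follows a genuinely different route from the paper's. The paper establishes the second-kind-function identity by working with the \emph{bilinear form} definition of $C_1$ and $\check{C}_{1R}$: it computes $\check{\omega}_R C_1(y)-Q(y)\check{C}_{1R}(y)$ as a difference of pairings, uses the measure-matrix relation $\W=\check{\W}_R Q(\mathcal{X}^{\top})$ to collapse this into $\big(\check{P}_{1R}(x),\frac{Q(x)-Q(y)}{y-x};\check{\W}_R\big)$, and then invokes the identity $\chi(x)^{\top}\mathbf{Q}\chi(y)=-\frac{Q(x)-Q(y)}{y-x}$ to identify the residual term. You instead bypass the bilinear form entirely and work purely with the matrix representation $C_2=H(S_1^{-1})^{\top}\chi^*$, reducing everything to your key lemma $Q(\Lambda^{\top})\chi^*(x)=Q(x)\chi^*(x)-\mathbf{Q}\chi(x)$. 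The two key identities are of course equivalent (pair yours with $\chi(z)^{\top}$ and use $\chi(z)^{\top}\chi^*(x)=\frac{1}{x-z}$), but the arguments have different flavours: yours is shorter and strictly algebraic, while the paper's approach makes the origin of the $\mathbf{Q}$ correction more transparent as the divided difference of $Q$ and, since it never passes through the series expansion of $\chi^*$, holds directly for all $y\notin\Omega$ rather than initially only for $|y|$ large.
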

\begin{proof}
 On the one hand the connection formulae for the polynomials follows straightforward remembering their definition in terms
 of the factorization matrices and from the definition of $\check{\omega}$. On the other hand the connection formulae for
 the second kind functions is a consequence of the former as we are about to prove.
 Let us prove it for the Right transformation, since the proof for the Left transformation needs of the same ideas.
 Firstly let us make the following definition and give a result that is easily verified
 \begin{align*}
 \Delta Q(x,y)&:=Q(x)-Q(y) & \chi(x)^{\top}\mathbf{Q} \chi(y)=-\frac{\Delta Q(x,y)}{y-x}
 \end{align*}
 Using this result the next chain of equalities can be followed
 \begin{align*}
 &\check{\omega}_R C_1(y)-Q(y)\check{C}_{1R}(y)=
 \check{\omega}_R\left(P_1(x),\frac{1}{y-x};\W \right)-Q(y)\left(\check{P}_{1R}(x),\frac{1}{y-x};\check{\W}_R \right)\\
 &=\left(\check{P}_{1R}(x),\frac{1}{y-x};\W \right)-Q(y)\left(\check{P}_{1R}(x),\frac{1}{y-x};\check{\W}_R \right)
 =\left(\check{P}_{1R}(x),\frac{1}{y-x};\check{\W}_R\left[Q(\mathcal{X}^{\top})-Q(y)\right] \right)\\
 &=\int_{\Omega}
 \begin{pmatrix} \check{P}^{(0)}_{1R}(x) & \check{P}^{(1)}_{1R}(x) & \dots & \check{P}^{(k)}_{1R}(x) & \dots \end{pmatrix}\check{\W}_R
 \begin{pmatrix}
  \Delta Q(x,y)                                 &                                       &              &\\
  \frac{\partial }{\partial x} \Delta Q(x,y)                 &\Delta Q(x,y)                          &              &  \\
  \frac{\partial^2}{\partial x^2} \Delta Q(x,y)             &2\frac{\partial}{\partial x} \Delta Q(x,y)         & \Delta Q(x,y)&    \\
  \vdots                                        &  \vdots                               &            &\ddots
 \end{pmatrix}
 \begin{pmatrix}
 \frac{1}{y-x} \\ \frac{\partial }{\partial x}\frac{1}{y-x} \\ \vdots \\ \frac{\partial^k }{\partial x^k}\frac{1}{y-x} \\ \vdots
 \end{pmatrix}\\
 &=\left(\check{P}_{1R}(x),\frac{\Delta Q(x,y)}{y-x};\check{\W}_R \right)=
 -\big(\check{P}_{1R}(x),\chi(x)^{\top};\check{\W}_R \big)\mathbf{Q} \chi(y)=
 -\check{H}_R\left(\check{S}_{2R}^{-1}\right)^{\top}\mathbf{Q}\chi(y)
 \end{align*}
\end{proof}

\noi Let us now study the deformations of Christoffel--Darboux kernels.
\begin{pro}
The deformed Christoffel--Darboux kernels are related to the original ones by means of the formulae
 \begin{align*}
  &\check{K}_R^{[k]}(x,y)=Q(x)K^{[k]}(x,y)- \begin{pmatrix}(\check{P}_{2R})_{k}(x) & \dots & (\check{P}_{2R})_{k+N-1}(x) \end{pmatrix}\cdot\\
&\cdot\begin{pmatrix}
  (\check{h}_R)_{k}^{-1}   &         &     \\
                      & \ddots  &     \\
                      &         &(\check{h}_R)_{k+N-1}^{-1}
 \end{pmatrix}
  \begin{pmatrix}
  (\check{\omega}_R)_{k,k-N}   &    \dots    &  (\check{\omega}_R)_{k,k-1}  \\
                               & \ddots      &    \vdots \\
                               &             &(\check{\omega}_R)_{k+N-1,k-1}
 \end{pmatrix}
 \begin{pmatrix} (P_1)_{k-N}(y) \\ (P_1)_{k+1-N}(y) \\ \vdots \\ (P_1)_{k-1}(y) \end{pmatrix} \ ,
\end{align*}
\begin{align*}
  &\check{K}_L^{[k]}(x,y)=Q(y)K^{[k]}(x,y)-\begin{pmatrix}(\check{P}_{1L})_{k}(x) & \dots & (\check{P}_{1L})_{k+N-1}(x) \end{pmatrix}\cdot\\
	&\cdot\begin{pmatrix}
  (\check{h}_L)_{k}^{-1}   &         &     \\
                      & \ddots  &     \\
                      &         &(\check{h}_L)_{k+N-1}^{-1}
 \end{pmatrix}
  \begin{pmatrix}
  (\check{\omega}_L)_{k,k-N}   &    \dots    &  (\check{\omega}_L)_{k,k-1}  \\
                               & \ddots      &    \vdots \\
                               &             &(\check{\omega}_L)_{k+N-1,k-1}
 \end{pmatrix}
 \begin{pmatrix} (P_2)_{k-N}(x) \\ (P_2)_{k+1-N}(x) \\ \vdots \\ (P_2)_{k-1}(x) \end{pmatrix} \ .
\end{align*}
\noi Similarly, the mixed kernels $\forall k \geq N$ are related as follows
 \begin{align*}
  &Q(x)\mathcal{K}_{2}^{[k]}(x,y) -\begin{pmatrix}(\check{P}_{2R})_{k}(x) & \dots & (\check{P}_{2R})_{k+N-1}(x) \end{pmatrix} \cdot\\
&\cdot\begin{pmatrix}
  (\check{h}_R)_{k}^{-1}  &         &     \\
                 & \ddots  &     \\
                 &         &(\check{h}_R)_{k+N-1}^{-1}
 \end{pmatrix}
  \begin{pmatrix}
  (\check{\omega}_R)_{k,k-N}   &    \dots    &  (\check{\omega}_R)_{k,k-1}  \\
                           & \ddots      &     \\
                           &             &(\check{\omega}_R)_{k+N-1,k-1}
 \end{pmatrix}
 \begin{pmatrix} (C_1)_{k-N}(y) \\ (C_1)_{k+1-N}(y) \\ \vdots \\ (C_1)_{k-1}(y)  \end{pmatrix} \\
 &=Q(y)\check{\mathcal{K}}_{2R}^{[k]}(x,y)-\left(\chi^{[N]}(x)\right)^{\top}\mathbf{Q}\chi^{[N]}(y) \ ,\\
  &Q(y)\mathcal{K}_{1}^{[k]}(x,y)-\begin{pmatrix}(\check{P}_{1L})_{k}(y) & \dots & (\check{P}_{1L})_{k+N-1}(y) \end{pmatrix} \cdot\\
	&\cdot\begin{pmatrix}
  (\check{h}_L)_{k}^{-1}  &         &     \\
                 & \ddots  &     \\
                 &         &(\check{h}_L)_{k+N-1}^{-1}
 \end{pmatrix}
  \begin{pmatrix}
  (\check{\omega}_L)_{k,k-N}   &    \dots    &  (\check{\omega}_L)_{k,k-1}  \\
                           & \ddots      &     \\
                           &             &(\check{\omega}_L)_{k+N-1,k-1}
 \end{pmatrix}
 \begin{pmatrix} (C_2)_{k-N}(x) \\ (C_2)_{k+1-N}(x) \\ \vdots \\ (C_2)_{k-1}(x)  \end{pmatrix} \\
 &=Q(x)\check{\mathcal{K}}_{1L}^{[k]}(x,y)-\left(\chi^{[N]}(y)\right)^{\top}\mathbf{Q}\chi^{[N]}(x) \ .
 \end{align*}
\end{pro}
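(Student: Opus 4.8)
The plan is to prove the four identities for the right-type transformation; the left-type ones then follow by the identical computation run from the other triangular factor (or from the $\W\leftrightarrow\W^{\top}$ symmetry combined with the interchange of left and right deformations). The engine of the whole argument is a resolvent identity dual to $\check{\omega}_RP_1=\check{P}_{1R}$, now for the second-kind factor. Transposing $G_{\W}=G_{\check{\W}_R}Q(\Lambda)^{\top}$ gives $S_2^{-1}H(S_1^{-1})^{\top}=Q(\Lambda)\check{S}_{2R}^{-1}\check{H}_R(\check{S}_{1R}^{-1})^{\top}$; right-multiplying by $\check{S}_{1R}^{\top}\check{H}_R^{-1}$ and using $(S_1^{-1})^{\top}\check{S}_{1R}^{\top}=\check{\omega}_R^{\top}$ yields $Q(\Lambda)=S_2^{-1}H\check{\omega}_R^{\top}\check{H}_R^{-1}\check{S}_{2R}$. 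Applying both sides to $\chi$, using $Q(\Lambda)\chi=Q(x)\chi$, and transposing, one obtains
\[
Q(x)\,P_2(x)^{\top}H^{-1}=\check{P}_{2R}(x)^{\top}\check{H}_R^{-1}\check{\omega}_R .
\]

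For the Christoffel--Darboux kernel, let $\pi_k$ be the diagonal projector onto the first $k$ coordinates, so that $K^{[k]}(x,y)=P_2(x)^{\top}\pi_kH^{-1}P_1(y)$ and $\check{K}_R^{[k]}(x,y)=\check{P}_{2R}(x)^{\top}\pi_k\check{H}_R^{-1}\check{P}_{1R}(y)$, and $\pi_k$ commutes with the diagonal matrices $H,\check{H}_R$. Substituting the displayed identity into $Q(x)K^{[k]}$ and $\check{P}_{1R}=\check{\omega}_RP_1$ into $\check{K}_R^{[k]}$, both expressions acquire the common prefactor $\check{P}_{2R}(x)^{\top}\check{H}_R^{-1}$ and differ only by the placement of $\pi_k$ relative to $\check{\omega}_R$:
\[
\check{K}_R^{[k]}(x,y)-Q(x)K^{[k]}(x,y)=\check{P}_{2R}(x)^{\top}\check{H}_R^{-1}\bigl(\pi_k\check{\omega}_R-\check{\omega}_R\pi_k\bigr)P_1(y).
\]
Because $\check{\omega}_R$ is lower triangular with $\check{\omega}_{k,k}=1$ and exactly $N$ nonzero subdiagonals, the commutator $\pi_k\check{\omega}_R-\check{\omega}_R\pi_k$ is supported precisely on rows $k,\dots,k+N-1$ and columns $k-N,\dots,k-1$, with entries $-(\check{\omega}_R)_{ij}$; reading off this finite block reproduces verbatim the correction term, signs and index ranges included. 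The $\check{K}_L$ formula is the mirror image, produced from $\check{\omega}_LP_2=\check{P}_{2L}$ and the dual identity $Q(y)\,P_1(y)^{\top}H^{-1}=\check{P}_{1L}(y)^{\top}\check{H}_L^{-1}\check{\omega}_L$.

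For the mixed kernels the same computation is performed with one polynomial factor replaced by a second-kind function, so the connection formula $\check{\omega}_RC_1(y)=Q(y)\check{C}_{1R}(y)-\check{H}_R(\check{S}_{2R}^{-1})^{\top}\mathbf{Q}\chi(y)$ replaces $\check{P}_{1R}=\check{\omega}_RP_1$. Tracking the extra term, $Q(y)\check{\mathcal{K}}_{2R}^{[k]}(x,y)$ decomposes as the same commutator expression (now with $C_1$ in place of $P_1$) plus the boundary contribution $\check{P}_{2R}(x)^{\top}\pi_k(\check{S}_{2R}^{-1})^{\top}\mathbf{Q}\chi(y)$. Since $\mathbf{Q}$ is supported in the top-left $N\times N$ corner and $\check{S}_{2R}$ is lower uni-triangular --- so that truncation commutes with its inversion --- one checks for $k\ge N$ that $\check{S}_{2R}^{\top}\pi_k(\check{S}_{2R}^{-1})^{\top}\mathbf{Q}=\mathbf{Q}$, which collapses the boundary term to $(\chi^{[N]}(x))^{\top}\mathbf{Q}\chi^{[N]}(y)$; rearranging gives the stated identity, and $\mathcal{K}_1$ follows by the left--right mirror.

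The routine part is the resolvent bookkeeping of the first paragraph. The genuine obstacle is the careful handling of the two truncation effects: first, verifying that $\pi_k\check{\omega}_R-\check{\omega}_R\pi_k$ has exactly the claimed banded support, so that the finite matrix product in the statement emerges with the correct rows, columns and sign; and second, establishing the collapse $\check{S}_{2R}^{\top}\pi_k(\check{S}_{2R}^{-1})^{\top}\mathbf{Q}=\mathbf{Q}$ for $k\ge N$, which is exactly what converts the semi-infinite boundary term into the finite bilinear form $(\chi^{[N]})^{\top}\mathbf{Q}\chi^{[N]}$.
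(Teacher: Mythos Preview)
Your proof is correct and follows the same route the paper has in mind: its own proof is the single line ``these expressions are a direct consequence of the connection formulae,'' meaning exactly the pair $\check\omega_R P_1=\check P_{1R}$ and its dual $Q(x)\,P_2(x)^{\top}H^{-1}=\check P_{2R}(x)^{\top}\check H_R^{-1}\check\omega_R$, together with the second-kind formula for $C_1$. Your commutator packaging $\pi_k\check\omega_R-\check\omega_R\pi_k$ and the verification that $\check S_{2R}^{\top}\pi_k(\check S_{2R}^{-1})^{\top}\mathbf{Q}=\mathbf{Q}$ for $k\ge N$ make explicit the truncation bookkeeping that the paper leaves implicit, but the underlying mechanism is identical.
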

\begin{proof}
\noi These expressions are a direct consequence of the connection formulae.
\end{proof}

\noi  We shall also introduce a couple of useful matrices, which will be relevant in the subsequent discussion.
\begin{definition}
Let
\begin{align*}
Q_i(x)&:=\frac{Q(x)}{(x-q_i)^{n_i}}    &
\eta_{n_i \times n_i}&:=\begin{pmatrix}
          0 &   0        & \dots & 1   \\
          0 &   0        &  1    & 0    \\
      \vdots    & \ddots &       & \vdots     \\
          1     &       &       & 0
        \end{pmatrix}_{n_i \times n_i} &  i&=1,2,\dots,s
\end{align*}
We define the $N \times N$ matrices
 \begin{align*}
  \Xi_L&:=\begin{pmatrix}
         \Xi_{L1} &   0   & \dots & 0   \\
          0    &\Xi_{L2}  &  0    &     \\
               &       & \ddots&      \\
               &       &       &\Xi_{Ls}
        \end{pmatrix} \ , &
  \Xi_R&:=\begin{pmatrix}
         \Xi_{R1} &   0   & \dots & 0   \\
          0    &\Xi_ {R2} &  0    &     \\
               &       & \ddots&      \\
               &       &       &\Xi_{Rs}
        \end{pmatrix} \ ,
 \end{align*}
where
\begin{align*}
  \Xi_{Rj}&:=\left(\mathring{\xi}^{(j)}\right) \left( \eta_{n_j \times n_j} \right)
        \begin{pmatrix}
         \frac{Q_j^{(0)}(q_j)}{0!} &\frac{Q_j^{(1)}(q_j)}{1!}& \dots &\frac{Q_j^{(n_j-2)}(q_j)}{(n_j-2)!}     & \frac{Q_j^{(n_j-1)}(q_j)}{(n_j-1)!}   \\
                       &\frac{Q_j^{(0)}(q_j)}{0!}&       &              &\frac{Q_j^{(n_j-2)}(q_j)}{(n_j-2)!}\\
                       &             &\ddots &              & \vdots    \\
                       &             &       &\ddots        &\frac{Q_j^{(1)}(q_j)}{1!}      \\
                       &             &       &              &\frac{Q_j^{(0)}(q_j)}{0!}
        \end{pmatrix} \\
\end{align*}
\noi and
\begin{align*}
  \Xi_{Lj}&:=\left(\mathring{\xi}^{(j)}\right)^{\top} \left( \eta_{n_j \times n_j} \right)
        \begin{pmatrix}
         \frac{Q_j^{(0)}(q_j)}{0!} &\frac{Q_j^{(1)}(q_j)}{1!}& \dots &\frac{Q_j^{(n_j-2)}(q_j)}{(n_j-2)!}     & \frac{Q_j^{(n_j-1)}(q_j)}{(n_j-1)!}   \\
                       &\frac{Q_j^{(0)}(q_j)}{0!}&       &              &\frac{Q_j^{(n_j-2)}(q_j)}{(n_j-2)!}\\
                       &             &\ddots &              & \vdots    \\
                       &             &       &\ddots        &\frac{Q_j^{(1)}(q_j)}{1!}      \\
                       &             &       &              &\frac{Q_j^{(0)}(q_j)}{0!}
        \end{pmatrix} \ .
 \end{align*}

\end{definition}
\noi We define a couple of matrices useful in the discussion of transformed Genonimus-Sobolev polynomials.
\begin{definition}
We introduce the $N\times N$ matrices
 \begin{align*}
  \mathring{\Pi}_R&:=\left(
 \Pi_q\begin{bmatrix} (C_1)_{0} \\ \vdots \\ (C_1)_{N-1} \end{bmatrix}-
 \Pi_q\begin{bmatrix}(P_1)_{0} \\ \vdots \\ (P_1)_{N-1} \end{bmatrix}\Xi_R
 \right) \left(\mathbf{Q}^{[N]}\Pi_q[\chi^{[N]}] \right)^{-1}  \\
 \mathring{\Pi}_L&:=\left(
 \Pi_q\begin{bmatrix} (C_2)_{0} \\ \vdots \\ (C_2)_{N-1} \end{bmatrix}-
 \Pi_q\begin{bmatrix}(P_2)_{0} \\ \vdots \\ (P_2)_{N-1} \end{bmatrix}\Xi_L
 \right) \left(\mathbf{Q}^{[N]}\Pi_q[\chi^{[N]}] \right)^{-1} \ ,
 \end{align*}
Where $\Pi_q[f]$ is the vector of germs associated to the set $q:=\{q_i,n_i\}$.
\end{definition}
\noi An interesting characterization of the class of Geronimus-type transformed polynomials can be obtained in terms of quasi-determinants,
as clarified by the following \\
\begin{pro}
Geronimus Sobolev transformed polynomials are expressed $\forall k\geq N$  in terms of the original polynomials via the formulae
\begin{align*}
 (\check{P}_{1R})_{k}&= \Theta_*\left(\begin{array}{c|c}
 \Pi_q\begin{bmatrix} (C_1)_{k-N} \\ \vdots \\ (C_1)_{k-1} \end{bmatrix}-
 \Pi_q\begin{bmatrix}(P_1)_{k-N} \\ \vdots \\ (P_1)_{k-1} \end{bmatrix}\Xi_R
 &  \begin{matrix}(P_1)_{k-N} \\ \vdots \\ (P_1)_{k-1} \end{matrix} \\
 \hline
 \Pi_q[(C_1)_k]-\Pi_q[(P_1)_k]\Xi_R & (P_1)_k(x)
 \end{array}\right) \ , \\
 \frac{(\check{P}_{2R})_k(x)}{(\check{h}_R)_k}&=
 \Theta_*\left(\begin{array}{c|c}
  \Pi_q\begin{bmatrix} (C_1)_{k-N} \\ \vdots \\ (C_1)_{k-1} \end{bmatrix}-
 \Pi_q\begin{bmatrix}(P_1)_{k-N} \\ \vdots \\ (P_1)_{k-1} \end{bmatrix}\Xi_R
  & \begin{matrix} 1 \\ \vdots \\ 0  \end{matrix}  \ , \\
  \hline
 Q(x)\left( \Pi_q[\mathcal{K}_2^{[k]}(x,\cdot)]-\Pi_q[K^{[k]}(x,\cdot)]\Xi _R\right)+\left(\chi^{[N]}(x)\right)^{\top}\mathbf{Q}\Pi_q[\chi^{[N]}]  &  0
 \end{array}\right) \ , \\
 (\check{h}_R)_{k}(x)&=h_{k-N}\Theta_*\left(\begin{array}{c|c}
 \Pi_q\begin{bmatrix} (C_1)_{k-N} \\ \vdots \\ (C_1)_{k-1} \end{bmatrix}-\Pi_q\begin{bmatrix}(P_1)_{k-N} \\  (P_1)_{k+1-N}\\ \vdots \\ (P_1)_{k-1} \end{bmatrix}\Xi_R &  \begin{matrix}1 \\ 0\\ \vdots \\ 0 \end{matrix}\\
 \hline
 \Pi_q[(C_1)_k]-\Pi_q[(P_1)_k]\Xi_R & 0
 \end{array}\right) \ ,
\end{align*}

\begin{align*}
 (\check{P}_{2L})_{k}&= \Theta_*\left(\begin{array}{c|c}
 \Pi_q\begin{bmatrix} (C_2)_{k-N} \\ \vdots \\ (C_2)_{k-1} \end{bmatrix}-\Pi_q\begin{bmatrix}(P_2)_{k-N} \\ \vdots \\ (P_2)_{k-1} \end{bmatrix}\Xi_L &  \begin{matrix}(P_2)_{k-N} \\ \vdots \\ (P_2)_{k-1} \end{matrix}\\
 \hline
 \Pi_q[(C_2)_k]-\Pi_q[(P_2)_k]\Xi_L & (P_2)_k(x)
 \end{array}\right) \ , \\
 \frac{(\check{P}_{1L})_k(x)}{(\check{h}_L)_k}&=
 \Theta_*\left(\begin{array}{c|c}
  \Pi_q\begin{bmatrix} (C_2)_{k-N} \\ \vdots \\ (C_2)_{k-1} \end{bmatrix}-
 \Pi_q\begin{bmatrix}(P_2)_{k-N} \\ \vdots \\ (P_2)_{k-1} \end{bmatrix}\Xi_L
  & \begin{matrix} 1 \\ \vdots \\ 0  \end{matrix} \\
  \hline
 Q(x)\left( \Pi_q[\mathcal{K}_1^{[k]}(\cdot,x)]-\Pi_q[K^{[k]}(\cdot,x)]\Xi _L\right)+\left(\chi^{[N]}(x)\right)^{\top}\mathbf{Q}\Pi_q[\chi^{[N]}]  &  0
 \end{array}\right) \ , \\
 \check{h}_{Lk}(x)&=h_{k-N}\Theta_*\left(\begin{array}{c|c}
 \Pi_q\begin{bmatrix} (C_2)_{k-N} \\ \vdots \\ (C_2)_{k-1} \end{bmatrix}-\Pi_q\begin{bmatrix}(P_2)_{k-N} \\  (P_2)_{k+1-N}\\ \vdots \\ (P_2)_{k-1} \end{bmatrix}\Xi_L &  \begin{matrix}1 \\ 0\\ \vdots \\ 0 \end{matrix}\\
 \hline
 \Pi_q[(C_2)_k]-\Pi_q[(P_2)_k]\Xi_L & 0
 \end{array}\right) \ .
\end{align*}
For $k<N$ the following expressions hold
\begin{align*}
 (\check{P}_{1R})_{k}(x)&= \Theta_*\left(\begin{array}{c|c}
 \mathring{\Pi}_R^{[k]}
 &  \begin{matrix}(P_1)_{0}(x) \\ \vdots \\ (P_1)_{k-1}(x) \end{matrix} \\
 \hline
 \begin{matrix}\left(\mathring{\Pi}_R\right)_{k,0} & \dots & \left(\mathring{\Pi}_R\right)_{k,k-1}  \end{matrix}  &  (P_1)_{k}(x)
 \end{array}\right) \ ,\\
(\check{P}_{2R})_{k}(x)&= \Theta_*\left(\begin{array}{c|c}
 \left(\mathring{\Pi}_R^{\top}\right)^{[k]}
 &  \begin{matrix}1 \\ \vdots \\ x^{k-1} \end{matrix} \\
 \hline
 \begin{matrix}\left(\mathring{\Pi}_R^{\top}\right)_{k,0} & \dots & \left(\mathring{\Pi}_R^{\top}\right)_{k,k-1}  \end{matrix}  &  x^k
 \end{array}\right) \\
 (\check{h}_{R})_{k}&= -\Theta_*\left(\begin{array}{c|c}
 \mathring{\Pi}_R^{[k]}
 &  \begin{matrix}\left(\mathring{\Pi}_R\right)_{0,k} \\ \vdots \\ \left(\mathring{\Pi}_R\right)_{k-1,k} \end{matrix} \\
 \hline
 \begin{matrix}\left(\mathring{\Pi}_R\right)_{k,0} & \dots & \left(\mathring{\Pi}_R\right)_{k,k-1}  \end{matrix}  &  \left(\mathring{\Pi}_R\right)_{k,k}
 \end{array}\right) \ ,
\end{align*}

\begin{align*}
 (\check{P}_{2L})_{k}(x)&= \Theta_*\left(\begin{array}{c|c}
 \mathring{\Pi}_L^{[k]}
 &  \begin{matrix}(P_2)_{0}(x) \\ \vdots \\ (P_2)_{k-1}(x) \end{matrix} \\
 \hline
 \begin{matrix}\left(\mathring{\Pi}_L\right)_{k,0} & \dots & \left(\mathring{\Pi}_L\right)_{k,k-1}  \end{matrix}  &  (P_2)_{k}(x)
 \end{array}\right)\ ,\\
 (\check{P}_{1L})_{k}(x)&= \Theta_*\left(\begin{array}{c|c}
 \left(\mathring{\Pi}_L^{\top}\right)^{[k]}
 &  \begin{matrix}1 \\ \vdots \\ x^{k-1} \end{matrix} \\
 \hline
 \begin{matrix}\left(\mathring{\Pi}_L^{\top}\right)_{k,0} & \dots & \left(\mathring{\Pi}_L^{\top}\right)_{k,k-1}  \end{matrix}  &  x^k
 \end{array}\right) \ , \\
 (\check{h}_{L})_{k}&= -\Theta_*\left(\begin{array}{c|c}
 \mathring{\Pi}_L^{[k]}
 &  \begin{matrix}\left(\mathring{\Pi}_L\right)_{0,k} \\ \vdots \\ \left(\mathring{\Pi}_L\right)_{k-1,k} \end{matrix} \\
 \hline
 \begin{matrix}\left(\mathring{\Pi}_L\right)_{k,0} & \dots & \left(\mathring{\Pi}_L\right)_{k,k-1}  \end{matrix}  &  \left(\mathring{\Pi}_L\right)_{k,k}
 \end{array}\right)\ .
\end{align*}
\end{pro}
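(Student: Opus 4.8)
The plan is to derive everything from the two connection formulae already established, namely $\check{\omega}_R P_1(x)=\check{P}_{1R}(x)$ together with its second-kind counterpart, and to exploit the banded structure of the resolvent: by the preceding proposition $\check{\omega}_R$ is lower unitriangular with exactly $N$ non-vanishing subdiagonals and unit main diagonal, so for each fixed row $k\geq N$ the $k$-th component of $\check{\omega}_R P_1=\check{P}_{1R}$ reads $(\check{P}_{1R})_k(x)=(P_1)_k(x)+\sum_{j=k-N}^{k-1}(\check{\omega}_R)_{k,j}(P_1)_j(x)$, leaving exactly $N$ unknown entries $(\check{\omega}_R)_{k,k-N},\dots,(\check{\omega}_R)_{k,k-1}$. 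The whole proposition will follow once these $N$ numbers are pinned down as a ratio of determinants, i.e.\ as a quasi-determinant.

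First I would fix these entries using the second-kind connection formula $\check{\omega}_R C_1(x)=Q(x)\check{C}_{1R}(x)-\check{H}_R(\check{S}_{2R}^{-1})^{\top}\mathbf{Q}\chi(x)$. Applying the germ operator $\Pi_q$ at the nodes $\{q_i\}$ kills the term $Q(x)\check{C}_{1R}(x)$: since $Q$ vanishes to order $n_i$ at $q_i$ while $\Pi_q$ only samples derivatives up to order $n_i-1$, every such germ is zero. What survives is the $\mathbf{Q}\chi(x)$ term together with the contribution of the discrete masses carried by $\check{\W}_R$; the matrices $\mathring{\xi}^{(i)}$, $\eta_{n_i\times n_i}$ and the values $Q_j^{(t)}(q_j)$ were assembled into $\Xi_R$ precisely so that this surviving data organizes into the combination $\Pi_q[(C_1)_j]-\Pi_q[(P_1)_j]\Xi_R$. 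The row-$k$ equation then becomes a square $N\times N$ linear system in the unknown resolvent entries, whose coefficient matrix is the block $\Pi_q[(C_1)_{k-N},\dots,(C_1)_{k-1}]-\Pi_q[(P_1)_{k-N},\dots,(P_1)_{k-1}]\Xi_R$ and whose right-hand side is $-(\Pi_q[(C_1)_k]-\Pi_q[(P_1)_k]\Xi_R)$. Solving by Cramer's rule and substituting back into the expression for $(\check{P}_{1R})_k(x)$ gives exactly the stated quasi-determinant; restricting to the distinguished column produces $(\check h_R)_k$, while feeding the same germ evaluation into the kernel connection formula of the previous proposition yields the formula for $(\check{P}_{2R})_k/(\check h_R)_k$. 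The left-type formulae follow verbatim after transposition, replacing $\Xi_R$ by $\Xi_L$ and the relevant mixed kernels $\mathcal{K}_2$ by $\mathcal{K}_1$.

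The case $k<N$ must be treated separately because the resolvent row can no longer reach $N$ full subdiagonals---the indices $k-N,\dots$ would run negative---so the uniform banded argument degenerates. Here I would instead package the germ data of the first $N$ objects into the matrix $\mathring{\Pi}_R=(\Pi_q[(C_1)_0,\dots,(C_1)_{N-1}]-\Pi_q[(P_1)_0,\dots,(P_1)_{N-1}]\Xi_R)(\mathbf{Q}^{[N]}\Pi_q[\chi^{[N]}])^{-1}$, whose very definition encodes the solvability of the low-degree system; the truncations $\mathring{\Pi}_R^{[k]}$ then play the role that $(H+A)^{[k]}$ played in the additive-perturbation proposition, and the low-degree deformed polynomials and norms are read off directly as quasi-determinants of these truncations. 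The invertibility of $\mathbf{Q}^{[N]}\Pi_q[\chi^{[N]}]$, a confluent Vandermonde-type matrix, is guaranteed by the distinctness of the nodes $q_i$ and is what makes $\mathring{\Pi}_R$ well defined.

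The main obstacle I expect is the bookkeeping of the middle step: showing rigorously that the surviving $\mathbf{Q}\chi$ term plus the discrete-mass contributions to $\check{C}_{1R}$ collapse into exactly the combination $\Pi_q[C_1]-\Pi_q[P_1]\Xi_R$, with the correct factorials and the correct pairing of $\mathring{\xi}^{(i)}$ against the $Q_j^{(t)}(q_j)$. This is the point at which the precise definitions of $\Xi_R$, $\eta_{n_i\times n_i}$ and $\mathbf{Q}$ are used in an essential way, and verifying that the $k\geq N$ and $k<N$ regimes glue consistently---in particular that both reduce to the classical Geronimus formulae in the scalar case $\W=E_{0,0}\omega$---requires some care.
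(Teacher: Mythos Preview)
Your overall strategy matches the paper's: use the banded resolvent $\check{\omega}_R$ together with the second-kind connection formula, apply the germ operator $\Pi_q$, and solve the resulting $N\times N$ system. The $k<N$ treatment via $\mathring{\Pi}_R$ is also the right idea, and the paper indeed obtains it from the $LU$-factorization $\mathring{\Pi}_R=-(\check{\omega}_R^{-1})^{[N]}\check{H}_R^{[N]}((\check{S}_{2R}^{-1})^{\top})^{[N]}$ of the first $N$ rows of the master equation.

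However, your key step contains a concrete error. You write that ``$\Pi_q$ kills the term $Q(x)\check{C}_{1R}(x)$ since $Q$ vanishes to order $n_i$ at $q_i$''. This is false: the transformed second-kind function $\check{C}_{1R}(y)$ has \emph{poles} at each $q_j$, coming from the discrete part $\sum_i\xi^{(i)}\delta(x-q_i)$ of $\check{\W}_R$, so $Q(y)\check{C}_{1R}(y)$ is regular but \emph{nonzero} near $q_j$. The paper computes this explicitly: the discrete contribution to $(\check{C}_{1R})_k(y)$ is a finite sum in powers of $(y-q_j)^{-1}$, and multiplying by $Q(y)$ and Taylor-expanding yields
\[
\Pi_q\bigl[Q(\check{C}_{1R})_k\bigr]=\Pi_q\bigl[(\check{P}_{1R})_k\bigr]\Xi_R .
\]
This is the precise mechanism by which $\Xi_R$ enters. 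One then uses the \emph{polynomial} connection formula $\check{P}_{1R}=\check{\omega}_R P_1$ to rewrite the right side as $\check{\omega}_R\Pi_q[(P_1)_k]\Xi_R$, which moves to the left and produces the combination $\Pi_q[C_1]-\Pi_q[P_1]\Xi_R$ you were aiming for. What remains on the right is $-\check{H}_R(\check{S}_{2R}^{-1})^{\top}\mathbf{Q}\,\Pi_q[\chi]$; this term does not ``organize into'' the left-hand combination as you suggest, but rather \emph{vanishes row-by-row} for $k\geq N$ because $\mathbf{Q}$ has only its first $N$ rows nonzero and $(\check{S}_{2R}^{-1})^{\top}$ is upper triangular. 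That is what gives the homogeneous $N\times N$ system you correctly wrote down. So the bookkeeping you flagged as the main obstacle is not merely delicate: your current sketch of it is wrong in the two places above, and both are essential to getting the quasi-determinant formulae.
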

\begin{proof}
\noi We shall focus on the case of right transformations. We start looking at the Geronimus transformed second kind functions
\begin{align*}
 &(\check{C}_{1R})_{k}(y)=\left((\check{P}_{1R})_k,\frac{1}{y-x} \right)_{\check{\W}_R}=
 \int \begin{pmatrix}
       (\check{P}_{1R})_k & (\check{P}_{1R})_k^{'} & \dots
      \end{pmatrix}\W \left[Q(\mathcal{X}^{\top})\right]^{-1}
      \begin{pmatrix}
       \frac{1}{y-x} \\ \frac{\partial}{\partial x} \left( \frac{1}{y-x}\right) \\ \vdots
      \end{pmatrix} \\
&+
 \sum_{j=1}^{s}\begin{pmatrix}\frac{(\check{P}_{1R})^{(0)}_k(q_j)}{0!} & \frac{(\check{P}_{1R})^{(1)}_k(q_j)}{1!} & \dots & \frac{(\check{P}_{1R})^{(n_j-1)}_k(q_j)}{(n_j-1)!} \end{pmatrix}
 \begin{pmatrix}
         \xi_{0,0}^{(j)}     &\xi_{0,1}^{(j)}    & \dots &\xi_{0,2}^{(j)}     & \xi_{0,n_j-1}^{(j)}   \\
         \xi_{1,0}^{(j)}     &\xi_{1,1}^{(j)}    &       &              &\xi_{1,n_j-1}^{(j)} \\
            \vdots     &             &\ddots &              & \vdots    \\
   \xi_{n_j-1,n_j-1}^{(j)}   &             &       &\ddots        &\xi_{n_j-1,n_j-1}^{(j)}      \\
        \end{pmatrix} \left(\eta \right)_{n_j \times n_j}
 \begin{pmatrix} \left(\frac{1}{y-q_j}\right)^{n_j} \\ \left(\frac{1}{y-q_j}\right)^{n_j-1} \\ \vdots \\ \frac{1}{y-q_j} \end{pmatrix}\ .
\end{align*}
Therefore, multiplying the previous expression by $Q(y)$ and letting $y\rightarrow q_j$, we obtain the Taylor expansion
\begin{align*}
 Q(y)(\check{C}_{1R})_{k}(y)&=
 \begin{pmatrix}\frac{(\check{P}_{1R})^{(0)}_k(q_j)}{0!} & \frac{(\check{P}_{1R})^{(1)}_k(q_j)}{1!} & \dots & \frac{(\check{P}_{1R})^{(n_j-1)}_k(q_j)}{(n_j-1)!} \end{pmatrix}
 \Xi_{Rj}
 \begin{pmatrix} 1 \\ (y-q_j) \\ \vdots \\ (y-q_j)^{n_j-1} \end{pmatrix}+O(y-q_j)^{n_j} \ .
\end{align*}
The previous reasoning can be repeated for each $j$. Consequently, collecting all the information in the same matrix we can write the relation
\begin{align*}
 \Pi_q[Q(\check{C}_{1R})_k]=\Pi_q[(\check{P}_{1R})_k]\Xi_R \ .
\end{align*}
By using the connection formula for the second kind functions, and applying $\Pi_q$  to both sides, if we also take into account the previous relation, we get the equations
\begin{align*}
 \check{\omega}_R\Pi_q[C_1]&=\Pi_q[Q\check{C}_{1R}]-\check{H}_R\left(\check{S}_{2R}^{-1}\right)^{\top}\mathbf{Q}\Pi_q[\chi(x)] \ ,\\
 \check{\omega}_R\Pi_q[C_1]&=\Pi_q[\check{P}_{1R}]\Xi_R-\check{H}_R\left(\check{S}_{2R}^{-1}\right)^{\top}\mathbf{Q}\Pi_q[\chi(x)] \ .
\end{align*}
Rearranging terms and using the connection formula for the polynomials, we arrive at the expression
\begin{align*}
 \check{\omega}_R\left(\Pi_q[C_1]-\Pi_q[P_1]\Xi_R \right)&=-\check{H}_R\left(\check{S}_{2R}^{-1}\right)^{\top}\mathbf{Q}\Pi_q[\chi(x)] \ .
\end{align*}

\noi This result can be made more explicit once written in the form
\begin{align*}
 \begin{pmatrix}(\check{\omega}_R)_{k,k-N} & \dots & (\check{\omega}_R)_{k,k-1} & 1 \end{pmatrix}
 \left(\Pi_q\begin{bmatrix} (C_1)_{k-N} \\ (C_1)_{k-N+1} \\ \vdots \\ (C_1)_{k} \end{bmatrix}-
 \Pi_q\begin{bmatrix} (P_1)_{k-N} \\ (P_1)_{k-N+1} \\ \vdots \\ (P_1)_{k} \end{bmatrix}\Xi_R \right)&=0,     &    &\forall k\geq N \ ,
\end{align*}
whence, the expression for the first right-family and their norms follows straightforwardly. In order to obtain the expression for the second
right-family, a similar approach can be used, based now on the relations between CD kernels and their mixed versions. 
For $k<N$ the expression for both families and norms is a consequence of the following $LU$-factorization of the matrix $\mathring{\Pi}_R$
\begin{align*}
 \left(\check{\omega}_R\right)^{[N]}
 \left(\Pi_q\begin{bmatrix} (C_1)_{0} \\ (C_1)_{1} \\ \vdots \\ (C_1)_{N} \end{bmatrix}-
 \Pi_q\begin{bmatrix} (P_1)_{0} \\ (P_1)_{1} \\ \vdots \\ (P_1)_{N} \end{bmatrix}\Xi_R \right)&=
 -\check{H}_R^{[N]}\left(\left(\check{S}_{2R}^{-1}\right)^{\top}\right)^{[N]}\mathbf{Q}^{[N]}\Pi[\chi^{[N]}(x)]  & \Longrightarrow\\
\mathring{\Pi}_R&=-\left(\check{\omega}_R^{-1}\right)^{[N]}\check{H}_R^{[N]}\left(\left(\check{S}_{2R}^{-1}\right)^{\top}\right)^{[N]} \ .
\end{align*}
The proof for the case of the left deformation is completely analogous and is left to the reader.\\

\end{proof}
We shall conclude this section with an observation on the recurrence relations for Geronimus-type polynomials arising from our transformation approach.
\begin{definition}
 Let us define the following matrices
\begin{align*}
 \check{J}_{1RL}:=&\check{S}_{1R}Q(\Lambda)\check{S}_{1L}^{-1}   &
 \check{J}_{2LR}:=&\check{S}_{2L}Q(\Lambda)\check{S}_{2R}^{-1} \ .
\end{align*}
 \end{definition}

\begin{pro}
The matrices $\check{J}_{1RL}$ and $\check{J}_{2LR}$ possess a $2N+1$ diagonal structure and are related to each other according to the formulae
\begin{align*}
 \check{J}_{1RL} \check{H}_L=\check{H}_R \check{J}_{2LR}^{\top} \ .
\end{align*}
These induce a left and right $2N+1$ term recurrence relation involving the Geronimus transformed polynomials:
\begin{align*}
 \check{J}_{1RL}\check{P}_{1L}&=Q(x)\check{P}_{1R} &
 \check{J}_{2LR}\check{P}_{2R}&=Q(x)\check{P}_{2L} \ .
\end{align*}
\end{pro}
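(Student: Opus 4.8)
The plan is to reduce the whole statement to the two factorization identities $G_{\W}=Q(\Lambda)G_{\check{\W}_L}$ and $G_{\W}=G_{\check{\W}_R}Q(\Lambda)^{\top}$ established in the preceding proposition, together with the elementary identity $Q(\Lambda)\chi(x)=Q(x)\chi(x)$ (which holds because $\Lambda\chi=x\chi$). I would treat the two recurrence relations first, as they are immediate. Since $\check{P}_{\alpha L}=\check{S}_{\alpha L}\chi$ and $\check{P}_{\alpha R}=\check{S}_{\alpha R}\chi$, a direct telescoping gives
\begin{align*}
 \check{J}_{1RL}\check{P}_{1L}=\check{S}_{1R}Q(\Lambda)\check{S}_{1L}^{-1}\check{S}_{1L}\chi=\check{S}_{1R}Q(\Lambda)\chi=\check{S}_{1R}Q(x)\chi=Q(x)\check{P}_{1R} \ ,
\end{align*}
and symmetrically $\check{J}_{2LR}\check{P}_{2R}=Q(x)\check{P}_{2L}$. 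No further input is needed here.

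Next I would establish the intertwining relation by substituting the two moment-matrix identities into the definitions of the $\check{J}$'s and collapsing the triangular factors. Writing $Q(\Lambda)=G_{\W}G_{\check{\W}_L}^{-1}$ and using $G_{\check{\W}_L}^{-1}=\check{S}_{2L}^{\top}\check{H}_L^{-1}\check{S}_{1L}$, the trailing $\check{S}_{1L}\check{S}_{1L}^{-1}$ cancels and I obtain $\check{J}_{1RL}=\check{S}_{1R}G_{\W}\check{S}_{2L}^{\top}\check{H}_L^{-1}$, hence $\check{J}_{1RL}\check{H}_L=\check{S}_{1R}G_{\W}\check{S}_{2L}^{\top}$. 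Dually, inserting $Q(\Lambda)^{\top}=G_{\check{\W}_R}^{-1}G_{\W}$ into $\check{J}_{2LR}^{\top}$ and using $G_{\check{\W}_R}^{-1}=\check{S}_{2R}^{\top}\check{H}_R^{-1}\check{S}_{1R}$ collapses the leading $(\check{S}_{2R}^{-1})^{\top}\check{S}_{2R}^{\top}$ and gives $\check{H}_R\check{J}_{2LR}^{\top}=\check{S}_{1R}G_{\W}\check{S}_{2L}^{\top}$. The two right-hand sides agree, which is precisely $\check{J}_{1RL}\check{H}_L=\check{H}_R\check{J}_{2LR}^{\top}$.

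Finally I would treat the band structure, which I expect to be the only delicate point. The \emph{upper} band is a degree count: reading the $k$-th row of $\check{J}_{1RL}\check{P}_{1L}=Q(x)\check{P}_{1R}$ and using that the $\check{P}_{1L,m}$ are monic of exact degree $m$ (so linearly independent) forces $(\check{J}_{1RL})_{k,m}=0$ for $m>k+N$; the identical argument applied to $\check{J}_{2LR}\check{P}_{2R}=Q(x)\check{P}_{2L}$ shows that $\check{J}_{2LR}$ also has at most $N$ nonvanishing super-diagonals. The \emph{lower} band is where the work lies: it cannot be read off the explicit product $\check{S}_{1R}Q(\Lambda)\check{S}_{1L}^{-1}$, which is generically full below the main diagonal because $\check{S}_{1R}$ and $\check{S}_{1L}^{-1}$ are full lower-triangular. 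Here I would invoke the intertwining relation just proved: transposing it and using that $\check{H}_L,\check{H}_R$ are diagonal yields $\check{J}_{1RL}^{\top}=\check{H}_L^{-1}\check{J}_{2LR}\check{H}_R$, so $\check{J}_{1RL}^{\top}$ inherits the super-diagonal bound of $\check{J}_{2LR}$, i.e. $\check{J}_{1RL}$ has at most $N$ sub-diagonals; the symmetric identity $\check{J}_{2LR}=\check{H}_L\check{J}_{1RL}^{\top}\check{H}_R^{-1}$ gives the corresponding bound for $\check{J}_{2LR}$. Combining the super- and sub-diagonal bounds gives the $2N+1$ diagonal structure for both matrices. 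The main obstacle is exactly this transfer from the upper to the lower band, since it is invisible in the triangular product and forces one to route the lower-band estimate through the transpose symmetry encoded in $\check{J}_{1RL}\check{H}_L=\check{H}_R\check{J}_{2LR}^{\top}$.
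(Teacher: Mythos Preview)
Your proof is correct and follows essentially the same route as the paper, which condenses everything into the single identity $Q(\Lambda)\check{G}_L=\check{G}_RQ(\Lambda)^{\top}$ (obtained by equating the two expressions for $G_{\W}$) and then reads off the intertwining relation and band structure from the $LU$ factorizations of $\check{G}_L,\check{G}_R$. Your derivation of $\check{J}_{1RL}\check{H}_L=\check{H}_R\check{J}_{2LR}^{\top}$ via the common value $\check{S}_{1R}G_{\W}\check{S}_{2L}^{\top}$ is a cosmetic variant of this, and your explicit handling of the $2N+1$ band structure (degree count for the super-diagonals, then the transpose symmetry $\check{J}_{1RL}^{\top}=\check{H}_L^{-1}\check{J}_{2LR}\check{H}_R$ for the sub-diagonals) spells out precisely what the paper leaves implicit in its one-line proof.
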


\begin{proof}
The proposition is a consequence of the relation
\begin{align*}
 Q(\Lambda) \check{G}_L&=\check{G}_R Q(\Lambda^{\top})
\end{align*}
combined with a $LU$-factorization of the moment matrices.
\end{proof}

\subsection{Sobolev--linear spectral transformations}
After the previous discussion concerning both Christoffel and Geronimus Sobolev transformations, the successive composition of the last two follows straightforwardly.
For this reason, proofs will be summarized or omitted in case they provide no new insight.

We start with the selection of two (co-prime) polynomials in order to deform an initial $\W(\Omega)$. Let these be
$R(x):=\prod_{i=1}^{d}(x-r_i)^{m_i}$  of degree $\sum_{i=1}^{d}m_i=M$, and
$Q(x):=\prod_{i=1}^{s}(x-q_i)^{n_i}$ of degree $\sum_{i=1}^{s}n_i=N$, where again we require that
$\{q_i\}\cap \Omega =\varnothing$ in order to define what we understand
for Sobolev linear spectral transformations.
\begin{definition}
 The Sobolev linear spectral deformed measure matrices are defined to be the composition of both a Geronimus and Christoffel
 transformation
 \begin{align*}
 \tilde{\W}_{RL}:=\widehat{(\check{\W}_R)}_L& =R(\mathcal{X})\W\left[Q(\mathcal{X}^{\top})\right]^{-1}
 +\sum_{i=1}^{s}R(\mathcal{X})\xi^{(i)}\delta(x-q_i)  \\
 \tilde{\W}_{LR}:=\widehat{(\check{\W}_L)}_R&=\left[Q(\mathcal{X})\right]^{-1}\W R(\mathcal{X^{\top}}) +\sum_{i=1}^{s}\xi^{(i)}R(\mathcal{X^{\top}})\delta(x-q_i)
 \end{align*}
\end{definition}
\noi Therefore transformed and non transformed moment matrices are related according to the formulae
\begin{align*}
 R(\Lambda)G_{\W}&=G_{\tilde{\W}_{RL}} Q(\Lambda^\top)  & Q(\Lambda)G_{\W}&=G_{\tilde{\W}_{LR}} R(\Lambda^\top) \ .
\end{align*}
After performing a $LU$-factorization of the moment matrices we are led to the following expressions.
\begin{definition}
The resolvents and adjoint resolvents are defined as
\begin{align*}
 (\tilde{\omega}_{RL})&:=(\tilde{S}_{RL1})R(\Lambda)S_1^{-1}  &   (\tilde{\Omega}_{RL})&:=S_2 Q(\Lambda) (\tilde{S}_{RL2})^{-1} \\
 (\tilde{\omega}_{LR})&:=(\tilde{S}_{LR2})R(\Lambda)S_2^{-1}  &   (\tilde{\Omega}_{LR})&:=S_1 Q(\Lambda) (\tilde{S}_{LR1})^{-1}
\end{align*}
and are related as follows
\begin{align*}
 (\tilde{\omega}_{RL})&=(\tilde{H}_{RL})(\tilde{\Omega}_{RL})^{\top} H^{-1 } &
 (\tilde{\omega}_{LR})&=(\tilde{H}_{LR})(\tilde{\Omega}_{LR})^{\top} H^{-1 }
\end{align*}
\end{definition}
The last relation induces a $N+M+1$ diagonal structure for them. For example $\tilde{\omega}$ has only non zero terms along
the main diagonal together with $N$ sub-diagonals and $M$ super-diagonals. It also follows that
$\tilde{\omega}_{k,k-N}=\frac{\tilde{h}_{k}}{h_{k-N}}$ and $\tilde{\omega}_{k,k+M}=1$.

\begin{pro}
The Sobolev linear spectral deformed polynomials and the associated second kind functions are related to the non
transformed ones according to the formulae
\begin{align*}
  \tilde{\omega}_{RL}P_1(x)&=R(x)\tilde{P}_{1RL}(x) &
  \tilde{\omega}_{RL} C_1(x)&=Q(x)\tilde{C}_{1RL}(x)-\tilde{H}_{RL}\left(\tilde{S}_{2RL}^{-1}\right)^{\top}\mathbf{Q}\chi(x) \\
  \tilde{\omega}_{LR}P_2(x)&=R(x)\tilde{P}_{2LR}(x) &
  \tilde{\omega}_{LR} C_2(x)&=Q(x)\tilde{C}_{2LR}(x)-\tilde{H}_{LR}\left(\tilde{S}_{1LR}^{-1}\right)^{\top}\mathbf{Q}\chi(x) \\
 \end{align*}
\end{pro}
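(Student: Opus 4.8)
The plan is to take the two polynomial identities as immediate consequences of the resolvent definitions, and to concentrate the real work on the two second-kind-function identities, which I obtain from the integral representation of the second kind functions together with the moment matrix intertwining relations stated just above the proposition. For the polynomials, since $\Lambda\chi(x)=x\chi(x)$ we have $R(\Lambda)\chi(x)=R(x)\chi(x)$, so using $P_1=S_1\chi$ and $\tilde{\omega}_{RL}=\tilde{S}_{RL1}R(\Lambda)S_1^{-1}$ one finds $\tilde{\omega}_{RL}P_1=\tilde{S}_{RL1}R(\Lambda)\chi=R(x)\tilde{S}_{RL1}\chi=R(x)\tilde{P}_{1RL}$, and the identity $\tilde{\omega}_{LR}P_2=R(x)\tilde{P}_{2LR}$ comes out verbatim with $1\leftrightarrow 2$.

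For the second kind functions I would use the representation $C_1=S_1 G_{\W}\chi^{*}$ established in the proof of the proposition on second kind functions, and its analogue $\tilde{C}_{1RL}=\tilde{S}_{RL1}G_{\tilde{\W}_{RL}}\chi^{*}$ for the transformed system, both valid for $|y|$ larger than every point of the support, now including the nodes $q_i$. Then
\begin{align*}
\tilde{\omega}_{RL}C_1(y)&=\tilde{S}_{RL1}R(\Lambda)S_1^{-1}S_1G_{\W}\chi^{*}(y)=\tilde{S}_{RL1}\,R(\Lambda)G_{\W}\,\chi^{*}(y)=\tilde{S}_{RL1}\,G_{\tilde{\W}_{RL}}\,Q(\Lambda^{\top})\,\chi^{*}(y),
\end{align*}
the last step being the intertwining relation $R(\Lambda)G_{\W}=G_{\tilde{\W}_{RL}}Q(\Lambda^{\top})$. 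The key step is the vector identity
\begin{align*}
Q(\Lambda^{\top})\chi^{*}(y)&=Q(y)\chi^{*}(y)-\mathbf{Q}\chi(y),
\end{align*}
which I prove componentwise: since $\chi^{*}(y)_k=y^{-(k+1)}$ and $(\Lambda^{\top})^j$ is the down-shift by $j$, the $k$-th component of $Q(\Lambda^{\top})\chi^{*}(y)$ equals $\sum_{j=0}^{\min(k,N)}Q_j y^{-(k-j+1)}$; for $k\geq N$ this sums to $Q(y)y^{-(k+1)}$, while for $k<N$ the missing tail $\sum_{l=k+1}^{N}Q_l y^{\,l-k-1}$ is precisely $(\mathbf{Q}\chi(y))_k$ by the explicit shape of $\mathbf{Q}$. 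This is the vector form of the scalar relation $\chi(x)^{\top}\mathbf{Q}\chi(y)=-\Delta Q(x,y)/(y-x)$ already exploited in the Geronimus--Sobolev proof.

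Substituting the identity and then invoking the $LU$ factorization $G_{\tilde{\W}_{RL}}=\tilde{S}_{RL1}^{-1}\tilde{H}_{RL}(\tilde{S}_{RL2}^{-1})^{\top}$, so that $\tilde{S}_{RL1}G_{\tilde{\W}_{RL}}=\tilde{H}_{RL}(\tilde{S}_{RL2}^{-1})^{\top}$, yields
\begin{align*}
\tilde{\omega}_{RL}C_1(y)&=Q(y)\,\tilde{S}_{RL1}G_{\tilde{\W}_{RL}}\chi^{*}(y)-\tilde{S}_{RL1}G_{\tilde{\W}_{RL}}\mathbf{Q}\chi(y)=Q(y)\tilde{C}_{1RL}(y)-\tilde{H}_{RL}\bigl(\tilde{S}_{RL2}^{-1}\bigr)^{\top}\mathbf{Q}\chi(y),
\end{align*}
which is the claimed formula with $\tilde{S}_{2RL}=\tilde{S}_{RL2}$. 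The $LR$ identities I would then deduce from the $RL$ ones by the duality $\W\leftrightarrow\W^{\top}$: transposing the definition of $\tilde{\W}_{RL}$ shows $(\tilde{\W}_{RL}[\W])^{\top}=\tilde{\W}_{LR}[\W^{\top}]$ (with $\xi^{(i)}\mapsto(\xi^{(i)})^{\top}$), whence $\tilde{\omega}_{LR}[\W]=\tilde{\omega}_{RL}[\W^{\top}]$, $C_{\W,2}=C_{\W^{\top},1}$, $\tilde{H}_{LR}[\W]=\tilde{H}_{RL}[\W^{\top}]$ and $\tilde{S}_{LR1}[\W]=\tilde{S}_{RL2}[\W^{\top}]$; applying the $RL$ result to $\W^{\top}$ and translating back through the proposition $P_{\W,1}=P_{\W^{\top},2}$, $C_{\W,1}=C_{\W^{\top},2}$ produces exactly $\tilde{\omega}_{LR}C_2=Q\tilde{C}_{2LR}-\tilde{H}_{LR}(\tilde{S}_{1LR}^{-1})^{\top}\mathbf{Q}\chi$.

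The hard part will be the careful bookkeeping of the discrete (germ) contribution rather than the algebra above: one must verify that the intertwining $R(\Lambda)G_{\W}=G_{\tilde{\W}_{RL}}Q(\Lambda^{\top})$ survives the inclusion of the point masses $\sum_iR(\mathcal{X})\xi^{(i)}\delta(x-q_i)$ — the vanishing of $Q$ at the nodes $q_i$ is what annihilates the spurious term — and that the representation $\tilde{C}_{1RL}=\tilde{S}_{RL1}G_{\tilde{\W}_{RL}}\chi^{*}$ remains legitimate in the presence of those masses, together with the $k<N$ boundary terms of the key identity matching the rows of $\mathbf{Q}$. Everything else is the mechanical chain displayed above.
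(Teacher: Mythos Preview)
Your proof is correct. The paper actually omits a proof for this proposition, remarking that it follows by composing the Christoffel and Geronimus arguments already given; so the relevant comparison is with the paper's Geronimus--Sobolev proof. There the route is different from yours: the paper works through the bilinear-form representation $C_1(y)=(P_1(x),\frac{1}{y-x};\W)$, first applying the already-proved polynomial connection formula and then manipulating the measure matrix via $\check{\W}_R[Q(\mathcal{X}^{\top})-Q(y)]$ together with the scalar identity $\chi(x)^{\top}\mathbf{Q}\chi(y)=-\Delta Q(x,y)/(y-x)$. You instead stay entirely on the matrix side, using $C_1=S_1G_{\W}\chi^{*}$, the moment-matrix intertwining, and the clean vector identity $Q(\Lambda^{\top})\chi^{*}(y)=Q(y)\chi^{*}(y)-\mathbf{Q}\chi(y)$; this is a bit more direct and avoids handling the Sobolev bilinear form or the $\mathcal{X}$-calculus, while the paper's route keeps the deformed measure matrix visible throughout. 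Your closing worry about the discrete masses is not a real obstacle: the intertwining $R(\Lambda)G_{\W}=G_{\tilde{\W}_{RL}}Q(\Lambda^{\top})$ is stated in the paper as an established relation just before the proposition (it is exactly the point of the definition of $\tilde{\W}_{RL}$, the $Q(\mathcal{X}^{\top})$ killing the $\delta$-terms), and the representation $\tilde{C}_{1RL}=\tilde{S}_{RL1}G_{\tilde{\W}_{RL}}\chi^{*}$ holds by the same geometric-series argument as before once $|y|$ exceeds the support and the nodes $q_i$.
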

\noi Let us use the notation
\begin{align*}
 A=\left(\begin{array}{c|c}
   A^{[k]}  &  A^{[k,\geq k]} \\
   \hline
   A^{[\geq k,k]}& A^{[\geq k]}
  \end{array}\right)
\end{align*}
\noi in order to state the following

\begin{definition}
We define the $(N+M)\times(N+M)$ matrices
\begin{align*}
  \left(\Upsilon_{RL}\right)_k&:=
  \left(\begin{array}{c|c}
   0_{M\times N}  &  -\left(\tilde{h}_{RL}\tilde{\omega}_{RL}\right)^{[k,\geq k]} \\
   \hline
   \left(\tilde{h}_{RL}\tilde{\omega}_{RL}\right)^{[\geq k,k]}& 0_{N\times M}
  \end{array}\right)\ ,  &
  \left(\Upsilon_{LR}\right)_k&:=
  \left(\begin{array}{c|c}
   0_{M\times N}   &  -\left(\tilde{h}_{LR}\tilde{\omega}_{LR}\right)^{[k,\geq k]} \\
   \hline
   \left(\tilde{h}_{LR}\tilde{\omega}_{LR}\right)^{[\geq k,k]}& 0_{N\times M}
  \end{array}\right)\ .
 \end{align*}

\end{definition}

\begin{pro}
The deformed Christoffel--Darboux kernels are related to the original ones by means of the formulae
 \begin{align*}
  &R(y)\tilde{K}_{RL}^{[k]}(x,y)=Q(x)K^{[k]}(x,y)-
  \begin{pmatrix}(\check{P}_{2RL})_{k-M}(x) & \dots & (\check{P}_{2RL})_{k+N-1}(x) \end{pmatrix}
  \left(\Upsilon_{RL}\right)_k
 \begin{pmatrix} (P_1)_{k-N}(y) \\ (P_1)_{k+1-N}(y) \\ \vdots \\ (P_1)_{k+M-1}(y) \end{pmatrix} \\
  &R(y)\tilde{K}_{LR}^{[k]}(y,x)=Q(x)K^{[k]}(y,x)-
  \begin{pmatrix}(\check{P}_{1LR})_{k-M}(x) & \dots & (\check{P}_{1LR})_{k+N-1}(x) \end{pmatrix}
  \left(\Upsilon_{LR}\right)_k
 \begin{pmatrix} (P_2)_{k-N}(y) \\ (P_2)_{k+1-N}(y) \\ \vdots \\ (P_2)_{k+M-1}(y) \end{pmatrix}
\end{align*}
\noi Similarly, the mixed kernels are related by means of the formulae

\begin{align*}
  &Q(y)\tilde{\mathcal{K}}_{2RL}^{[k]}(x,y)=Q(x)\mathcal{K}_2^{[k]}(x,y)-
  \begin{pmatrix}(\check{P}_{2RL})_{k-M}(x) & \dots & (\check{P}_{2RL})_{k+N-1}(x) \end{pmatrix}
  \left(\Upsilon_{RL}\right)_k
 \begin{pmatrix} (C_1)_{k-N}(y) \\ (C_1)_{k+1-N}(y) \\ \vdots \\ (C_1)_{k+M-1}(y) \end{pmatrix}\\
&+\left(\chi^{[N]}(x)\right)^{\top}\mathbf{Q}\chi^{[N]}(y)\\
  &Q(y)\tilde{\mathcal{K}}_{1LR}^{[k]}(y,x)=Q(x)\mathcal{K}_1^{[k]}(y,x)-
  \begin{pmatrix}(\check{P}_{1LR})_{k-M}(x) & \dots & (\check{P}_{1LR})_{k+N-1}(x) \end{pmatrix}
  \left(\Upsilon_{LR}\right)_k
 \begin{pmatrix} (C_2)_{k-N}(y) \\ (C_2)_{k+1-N}(y) \\ \vdots \\ (C_2)_{k+M-1}(y) \end{pmatrix}\\
&+\left(\chi^{[N]}(x)\right)^{\top}\mathbf{Q}\chi^{[N]}(y)
\end{align*}
\end{pro}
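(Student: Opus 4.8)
The plan is to derive every identity from the factorized representations of the kernels, the connection formulae of the preceding proposition, and the resolvent/adjoint-resolvent relation $(\tilde\omega_{RL})=(\tilde H_{RL})(\tilde\Omega_{RL})^{\top}H^{-1}$. I would begin from $K^{[k]}(x,y)=[P_2(x)^{\top}]^{[k]}(H^{-1})^{[k]}[P_1(y)]^{[k]}$ and its transformed counterpart $\tilde K_{RL}^{[k]}(x,y)=[\tilde P_{2RL}(x)^{\top}]^{[k]}(\tilde H_{RL}^{-1})^{[k]}[\tilde P_{1RL}(y)]^{[k]}$. From the definition of $(\tilde\Omega_{RL})$ together with $Q(\Lambda)\chi(x)=Q(x)\chi(x)$ one gets $(\tilde\Omega_{RL})\tilde P_{2RL}(x)=Q(x)P_2(x)$, hence $Q(x)P_2(x)^{\top}=\tilde P_{2RL}(x)^{\top}(\tilde\Omega_{RL})^{\top}$. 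Combining this with $(\tilde\Omega_{RL})^{\top}H^{-1}=(\tilde H_{RL})^{-1}(\tilde\omega_{RL})$ and the polynomial connection $(\tilde\omega_{RL})P_1(y)=R(y)\tilde P_{1RL}(y)$ yields, at the level of semi-infinite sums, the clean identity $Q(x)K(x,y)=R(y)\tilde K_{RL}(x,y)$. The whole content of the proposition is then the truncation bookkeeping that corrects this identity at level $[k]$.

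The key step is to isolate the boundary contributions produced when the banded operators $\tilde\omega_{RL}$ and $\tilde\Omega_{RL}$ cross the truncation window. Writing $[(\tilde\omega_{RL})P_1(y)]^{[k]}=(\tilde\omega_{RL})^{[k]}[P_1(y)]^{[k]}+(\tilde\omega_{RL})^{[k,\geq k]}[P_1(y)]^{[\geq k]}$, the $M$ superdiagonals of $\tilde\omega_{RL}$ generate an overhang involving $(P_1)_k,\dots,(P_1)_{k+M-1}$ paired with the rows $(\tilde P_{2RL})_{k-M},\dots,(\tilde P_{2RL})_{k-1}$; dually, replacing $Q(x)P_2(x)^{\top}$ inside the truncated product introduces a second overhang from the $N$ subdiagonals of $\tilde\Omega_{RL}$, involving $(\tilde P_{2RL})_k,\dots,(\tilde P_{2RL})_{k+N-1}$ paired with $(P_1)_{k-N},\dots,(P_1)_{k-1}$. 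These two overhangs carry opposite signs and, once weighted by $\tilde H_{RL}$, assemble exactly into the anti-block-diagonal matrix $(\Upsilon_{RL})_k$, giving the stated relation for $\tilde K_{RL}^{[k]}$. The relation for $\tilde K_{LR}^{[k]}$ follows in the identical manner from the $LR$-connection formulae, equivalently by the $\W\leftrightarrow\W^{\top}$ symmetry exchanging the two families.

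For the mixed kernels I would repeat the computation verbatim with $C_1$ in place of $P_1$, the only difference being the inhomogeneous term in the second-kind connection formula, $(\tilde\omega_{RL})C_1(y)=Q(y)\tilde C_{1RL}(y)-(\tilde H_{RL})(\tilde S_{2RL}^{-1})^{\top}\mathbf{Q}\chi(y)$. Feeding this into $[\tilde P_{2RL}^{\top}]^{[k]}(\tilde H_{RL}^{-1})^{[k]}$ and using $\tilde P_{2RL}(x)^{\top}(\tilde S_{2RL}^{-1})^{\top}=\chi(x)^{\top}$ collapses the inhomogeneous piece to $(\chi^{[N]}(x))^{\top}\mathbf{Q}\chi^{[N]}(y)$, since $\mathbf{Q}$ couples only the first $N$ components; this is precisely the additive term in the mixed-kernel formulae, while the banded overhang contributes the same $(\Upsilon_{RL})_k$ correction as before.

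The main obstacle I anticipate is purely the index bookkeeping in the truncation step: one must check that the two overhang blocks have the correct sizes ($M\times M$ and $N\times N$), that their row/column ranges match $(\tilde P_{2RL})_{k-M}\dots(\tilde P_{2RL})_{k+N-1}$ and $(P_1)_{k-N}\dots(P_1)_{k+M-1}$, and that the signs and $\tilde H_{RL}$-weights are packaged exactly as in the definition of $(\Upsilon_{RL})_k$. This demands careful tracking of the band structure ($N$ subdiagonals and $M$ superdiagonals of $\tilde\omega_{RL}$, and the transposed pattern for $\tilde\Omega_{RL}$), but introduces no idea beyond the connection formulae already established.
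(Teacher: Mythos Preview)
Your approach is correct and is exactly what the paper does: its entire proof reads ``These expressions are a direct consequence of the connection formulae,'' and you have spelled out precisely that argument---expand both kernels in their factorized form, insert the resolvent/adjoint-resolvent identities $(\tilde\Omega_{RL})\tilde P_{2RL}=Q(x)P_2$ and $(\tilde\omega_{RL})P_1=R(y)\tilde P_{1RL}$ (respectively the inhomogeneous second-kind version for the mixed kernels), and collect the two banded overhang blocks produced by truncation into $(\Upsilon_{RL})_k$. Your identification of the extra $(\chi^{[N]}(x))^{\top}\mathbf{Q}\chi^{[N]}(y)$ term via $\tilde P_{2RL}^{\top}(\tilde S_{2RL}^{-1})^{\top}=\chi^{\top}$ is also the right mechanism, so nothing is missing beyond the routine index check you already flagged.
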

\noi Since in the linear spectral type transformations two polynomials are involved, the presence of two vectors of germs is expected.
As was done previously, we denote by
$\Pi_r[f]$ the one related to the set $r:=\{r_i,m_i\}_{i=1}^{d}$ and by $\Pi_q[f]$ the one related to $q:=\{q_i,n_i\}_{i=1}^{s}$.

\begin{pro}
Sobolev linear spectral transformed polynomials are expressed $\forall k\geq N$  in terms of the original polynomials via the formulae
\begin{align*}
 (\tilde{P}_{1RL})_{k}(x)&= \frac{1}{R(x)}
 \Theta_*\left(\begin{array}{c|c}
 \Pi_r\begin{bmatrix} (P_1)_{k-N} \\ \vdots \\ (P_1)_{k+M-1} \end{bmatrix},
 \Pi_q\begin{bmatrix} (C_1)_{k-N} \\ \vdots \\ (C_1)_{k+M-1} \end{bmatrix}-
 \Pi_q\begin{bmatrix}(P_1)_{k-N} \\ \vdots \\ (P_1)_{k+M-1} \end{bmatrix}\Xi_R
 &  \begin{matrix}(P_1)_{k-N} \\ \vdots \\ (P_1)_{k+M-1} \end{matrix} \\
 \hline
 \Pi_r[(P_{1})_{k+M}],\Pi_q[(C_1)_{k+M}]-\Pi_q[(P_1)_{k+M}]\Xi_R & (P_1)_{k+M}(x)
 \end{array}\right) \ , \\
 \frac{(\tilde{P}_{2RL})_k(x)}{(\tilde{h}_{RL})_k}&=
 \Theta_*\left(\begin{array}{c|c}
 \Pi_r\begin{bmatrix} (P_1)_{k-N} \\ \vdots \\ (P_1)_{k+M-1} \end{bmatrix},
 \Pi_q\begin{bmatrix} (C_1)_{k-N} \\ \vdots \\ (C_1)_{k+M-1} \end{bmatrix}-
 \Pi_q\begin{bmatrix}(P_1)_{k-N} \\ \vdots \\ (P_1)_{k+M-1} \end{bmatrix}\Xi_R
 & \begin{matrix} 1 \\ 0 \\ \vdots \\ 0 \end{matrix} \\
  \hline
 Q(x)\Pi_r[K^{[k]}(x,\cdot)],
 Q(x)\left( \Pi_q[\mathcal{K}_2^{[k]}(x,\cdot)]-\Pi_q[K^{[k]}(x,\cdot)]\Xi _R\right)+\left(\chi^{[N]}(x)\right)^{\top}\mathbf{Q}\Pi_q[\chi^{[N]}]  &  0
 \end{array} \right) \ , \\
 (\tilde{h}_{RL})_{k}(x)&=h_{k-N}
 \Theta_*\left(\begin{array}{c|c}
 \Pi_r\begin{bmatrix} (P_1)_{k-N} \\ \vdots \\ (P_1)_{k+M-1} \end{bmatrix},
 \Pi_q\begin{bmatrix} (C_1)_{k-N} \\ \vdots \\ (C_1)_{k+M-1} \end{bmatrix}-
 \Pi_q\begin{bmatrix}(P_1)_{k-N} \\ \vdots \\ (P_1)_{k+M-1} \end{bmatrix}\Xi_R
 &  \begin{matrix}1   \\ 0   \\ \vdots \\ 0 \end{matrix} \\
 \hline
 \Pi_r[(P_{1})_{k+M}],\Pi_q[(C_1)_{k+M}]-\Pi_q[(P_1)_{k+M}]\Xi_R & 0
 \end{array}\right) \ ,
\end{align*}

\begin{align*}
 (\tilde{P}_{2LR})_{k}(x)&= \frac{1}{R(x)}
 \Theta_*\left(\begin{array}{c|c}
 \Pi_r\begin{bmatrix} (P_2)_{k-N} \\ \vdots \\ (P_2)_{k+M-1} \end{bmatrix},
 \Pi_q\begin{bmatrix} (C_2)_{k-N} \\ \vdots \\ (C_2)_{k+M-1} \end{bmatrix}-
 \Pi_q\begin{bmatrix}(P_2)_{k-N} \\ \vdots \\ (P_2)_{k+M-1} \end{bmatrix}\Xi_L
 &  \begin{matrix}(P_2)_{k-N} \\ \vdots \\ (P_2)_{k+M-1} \end{matrix} \\
 \hline
 \Pi_r[(P_{2})_{k+M}],\Pi_q[(C_2)_{k+M}]-\Pi_q[(P_2)_{k+M}]\Xi_L & (P_2)_{k+M}(x)
 \end{array}\right) \ , \\
 \frac{(\tilde{P}_{1LR})_k(x)}{(\tilde{h}_{LR})_k}&=
 \Theta_*\left(\begin{array}{c|c}
 \Pi_r\begin{bmatrix} (P_2)_{k-N} \\ \vdots \\ (P_2)_{k+M-1} \end{bmatrix},
 \Pi_q\begin{bmatrix} (C_2)_{k-N} \\ \vdots \\ (C_2)_{k+M-1} \end{bmatrix}-
 \Pi_q\begin{bmatrix}(P_2)_{k-N} \\ \vdots \\ (P_2)_{k+M-1} \end{bmatrix}\Xi_L
 & \begin{matrix} 1 \\ 0 \\ \vdots \\ 0 \end{matrix} \\
  \hline
 Q(x)\Pi_r[K^{[k]}(\cdot,x)],
 Q(x)\left( \Pi_q[\mathcal{K}_1^{[k]}(\cdot,x)]-\Pi_q[K^{[k]}(\cdot,x)]\Xi _L\right)+\left(\chi^{[N]}(x)\right)^{\top}\mathbf{Q}\Pi_q[\chi^{[N]}]  &  0
 \end{array} \right) \ , \\
 (\tilde{h}_{LR})_{k}(x)&=h_{k-N}
 \Theta_*\left(\begin{array}{c|c}
 \Pi_r\begin{bmatrix} (P_2)_{k-N} \\ \vdots \\ (P_2)_{k+M-1} \end{bmatrix},
 \Pi_q\begin{bmatrix} (C_2)_{k-N} \\ \vdots \\ (C_2)_{k+M-1} \end{bmatrix}-
 \Pi_q\begin{bmatrix}(P_2)_{k-N} \\ \vdots \\ (P_2)_{k+M-1} \end{bmatrix}\Xi_L
 &  \begin{matrix}1   \\ 0   \\ \vdots \\ 0 \end{matrix} \\
 \hline
 \Pi_r[(P_{2})_{k+M}],\Pi_q[(C_2)_{k+M}]-\Pi_q[(P_2)_{k+M}]\Xi_L & 0
 \end{array}\right) \ .
\end{align*}

\noi For $k<N$ the following expressions hold
\begin{align*}
 (\tilde{P}_{1RL})_{k}(x)&= \frac{1}{R(x)}\Theta_*\left(\begin{array}{c|c}
 \mathring{\Pi}_{RL}^{[k]}
 &  \begin{matrix}(P_1)_{0}(x) \\ \vdots \\ (P_1)_{k+N-1}(x) \end{matrix} \\
 \hline
 \begin{matrix}\left(\mathring{\Pi}_{RL}\right)_{k,0} & \dots & \left(\mathring{\Pi}_{RL}\right)_{k,k-1}  \end{matrix}  &  (P_1)_{k+N}(x)
 \end{array}\right)\ ,   \\
 (\tilde{P}_{2RL})_{k}(x)&= \Theta_*\left(\begin{array}{c|c}
 \left(\mathring{\Pi}_{RL}^{\top}\right)^{[k]}
 &  \begin{matrix} 1 \\ x \\ \vdots \\ x^{k-1} \end{matrix} \\
 \hline
 \begin{matrix}\left(\mathring{\Pi}_{RL}^{\top}\right)_{k,0} & \dots & \left(\mathring{\Pi}_{RL}^{\top}\right)_{k,k-1}  \end{matrix}  &  x^k
 \end{array}\right)\ , \\
 (\tilde{h}_{RL})_{k}&= -\Theta_*\left(\begin{array}{c|c}
 \mathring{\Pi}_{RL}^{[k]}
 &  \begin{matrix}\left(\mathring{\Pi}_{RL}\right)_{0,k} \\ \vdots \\ \left(\mathring{\Pi}_{RL}\right)_{k-1,k} \end{matrix} \\
 \hline
 \begin{matrix}\left(\mathring{\Pi}_{RL}\right)_{k,0} & \dots & \left(\mathring{\Pi}_{RL}\right)_{k,k-1}  \end{matrix}  &  \left(\mathring{\Pi}_{RL}\right)_{k,k}
 \end{array}\right)\ ,
\end{align*}
\noi and
\begin{align*}
 (\tilde{P}_{2LR})_{k}(x)&= \frac{1}{R(x)}\Theta_*\left(\begin{array}{c|c}
 \mathring{\Pi}_{LR}^{[k]}
 &  \begin{matrix}(P_2)_{0}(x) \\ \vdots \\ (P_2)_{k+N-1}(x) \end{matrix} \\
 \hline
 \begin{matrix}\left(\mathring{\Pi}_{LR}\right)_{k,0} & \dots & \left(\mathring{\Pi}_{LR}\right)_{k,k-1}  \end{matrix}  &  (P_2)_{k+N}(x)
 \end{array}\right)\ ,   \\
 (\tilde{P}_{1LR})_{k}(x)&= \Theta_*\left(\begin{array}{c|c}
 \left(\mathring{\Pi}_{LR}^{\top}\right)^{[k]}
 &  \begin{matrix} 1 \\ x \\ \vdots \\ x^{k-1} \end{matrix} \\
 \hline
 \begin{matrix}\left(\mathring{\Pi}_{LR}^{\top}\right)_{k,0} & \dots & \left(\mathring{\Pi}_{LR}^{\top}\right)_{k,k-1}  \end{matrix}  &  x^k
 \end{array}\right)\ , \\
 (\tilde{h}_{LR})_{k}&= -\Theta_*\left(\begin{array}{c|c}
 \mathring{\Pi}_{LR}^{[k]}
 &  \begin{matrix}\left(\mathring{\Pi}_{LR}\right)_{0,k} \\ \vdots \\ \left(\mathring{\Pi}_{LR}\right)_{k-1,k} \end{matrix} \\
 \hline
 \begin{matrix}\left(\mathring{\Pi}_{LR}\right)_{k,0} & \dots & \left(\mathring{\Pi}_{LR}\right)_{k,k-1}  \end{matrix}  &  \left(\mathring{\Pi}_{LR}\right)_{k,k}
 \end{array}\right)\ ,
\end{align*}
\noi where the $(N+M)\times (N+M)$ matrices $\mathring{\Pi}$ are defined by
 \begin{align*}
  \mathring{\Pi}_{RL}&:=
  \left[\Pi_r\begin{bmatrix}(P_1)_0 \\ \vdots \\ (P_1)_{N+M-1} \end{bmatrix},
  \left(
 \Pi_q\begin{bmatrix} (C_1)_{0} \\ \vdots \\ (C_1)_{N-1} \end{bmatrix}-
 \Pi_q\begin{bmatrix}(P_1)_{0} \\ \vdots \\ (P_1)_{N-1} \end{bmatrix}\Xi_R
 \right) \left(\mathbf{Q}^{[N]}\Pi_q[\chi^{[N]}] \right)^{-1}\right]\ , \\
 \mathring{\Pi}_{LR}&:=
  \left[\Pi_r\begin{bmatrix}(P_2)_0 \\ \vdots \\ (P_2)_{N+M-1} \end{bmatrix},
  \left(
 \Pi_q\begin{bmatrix} (C_2)_{0} \\ \vdots \\ (C_2)_{N-1} \end{bmatrix}-
 \Pi_q\begin{bmatrix}(P_2)_{0} \\ \vdots \\ (P_2)_{N-1} \end{bmatrix}\Xi_L
 \right) \left(\mathbf{Q}^{[N]}\Pi_q[\chi^{[N]}] \right)^{-1}\right]\ .
 \end{align*}
\end{pro}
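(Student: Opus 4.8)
The plan is to superpose the two arguments already carried out for the pure Christoffel--Sobolev and the pure Geronimus--Sobolev transformations, since by definition $\tilde{\W}_{RL}=\widehat{(\check{\W}_R)}_L$ is the Christoffel deformation (by $R$) of the Geronimus deformation (by $Q$) of $\W$. The starting point is the pair of connection formulae established just above: the polynomial relation $\tilde{\omega}_{RL}P_1(x)=R(x)\tilde{P}_{1RL}(x)$ together with $\tilde{\omega}_{RL} C_1(x)=Q(x)\tilde{C}_{1RL}(x)-\tilde{H}_{RL}(\tilde{S}_{2RL}^{-1})^{\top}\mathbf{Q}\chi(x)$. Recall that $\tilde{\omega}_{RL}$ is banded with $M$ super-diagonals and $N$ sub-diagonals, so for $k\geq N$ its $k$-th row has nonzero entries $(\tilde{\omega}_{RL})_{k,j}$ only for $k-N\leq j\leq k+M$, with leading entry $(\tilde{\omega}_{RL})_{k,k+M}=1$; hence each row carries exactly $N+M$ unknown off-leading coefficients.

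First I would extract the $M$ conditions coming from $R$. Selecting the $k$-th component of the polynomial connection formula and applying the germ functional $\Pi_r$ (evaluation at each zero $r_i$ of $R$ together with derivatives up to order $m_i-1$) kills the right-hand side and produces $M$ scalar equations relating the row of $\tilde{\omega}_{RL}$ to $\Pi_r[(P_1)_{k-N}],\dots,\Pi_r[(P_1)_{k+M}]$, exactly as in the Christoffel proof. Next I would extract the $N$ conditions coming from $Q$, reusing the Geronimus trick: multiplying the second-kind connection formula by $Q(y)$ and letting $y\to q_j$ yields, through a Taylor expansion, the identity $\Pi_q[Q\tilde{C}_{1RL}]=\Pi_q[\tilde{P}_{1RL}]\Xi_R$; substituting this back and invoking the polynomial connection formula removes all reference to the deformed quantities and leaves $\tilde{\omega}_{RL}(\Pi_q[C_1]-\Pi_q[P_1]\Xi_R)=-\tilde{H}_{RL}(\tilde{S}_{2RL}^{-1})^{\top}\mathbf{Q}\Pi_q[\chi(x)]$. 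For $k\geq N$ the right-hand side vanishes in the relevant block, supplying the remaining $N$ scalar conditions.

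Assembling the $M$ germ-at-$r$ conditions and the $N$ germ-at-$q$ conditions gives a square $(N+M)\times(N+M)$ system: writing the augmented array $\left(\begin{smallmatrix} A & B \\ C & D\end{smallmatrix}\right)$ whose rows index the polynomials $(P_1)_{k-N},\dots,(P_1)_{k+M}$ and whose first $N+M$ columns are the two germ blocks appearing in the statement, the unknown part of the resolvent row equals $-CA^{-1}$. Then $R(x)\tilde{P}_{1RL}(x)=\sum_j(\tilde{\omega}_{RL})_{k,j}(P_1)_j(x)=D-CA^{-1}B=\Theta_*\left(\begin{smallmatrix} A & B \\ C & D\end{smallmatrix}\right)$, which after dividing by $R(x)$ is the claimed formula for $(\tilde{P}_{1RL})_k$; reading off the normalization column yields $(\tilde{h}_{RL})_k$. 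The expression for $(\tilde{P}_{2RL})_k$ follows by the identical manipulation applied to the mixed-kernel connection formula established above, and the $LR$ family is obtained by the symmetry $\W\leftrightarrow\W^{\top}$ (equivalently $\Xi_R\leftrightarrow\Xi_L$).

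The remaining case $k<N$ is where the naive bookkeeping breaks, since the resolvent row would need columns of negative index and the $Q$-relation no longer has a vanishing right-hand side. Here I would instead read the truncated identity $(\tilde{\omega}_{RL})^{[N]}(\Pi_q[C_1]-\Pi_q[P_1]\Xi_R)=-\tilde{H}_{RL}^{[N]}((\tilde{S}_{2RL}^{-1})^{\top})^{[N]}\mathbf{Q}^{[N]}\Pi_q[\chi^{[N]}]$ as an $LU$-factorization of $\mathring{\Pi}_{RL}$; combined with the $\Pi_r$ block this identifies $\mathring{\Pi}_{RL}$ with $-(\tilde{\omega}_{RL}^{-1})^{[N+M]}\tilde{H}_{RL}^{[N+M]}((\tilde{S}_{2RL}^{-1})^{\top})^{[N+M]}$, whence the quasi-determinantal formulae in terms of $\mathring{\Pi}_{RL}$ follow by expanding the Schur complement. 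The main obstacle is the careful index accounting in fusing the two evaluation systems: one must check that the $M$ conditions from $R$ and the $N$ conditions from $Q$ are independent and together exactly pin down the $N+M$ coefficients per row, that the two blocks align with the prescribed columns of the quasi-determinants, and that no spurious contribution from the discrete part $\Xi_R$ survives the cancellation forced by $\Pi_q[Q\tilde{C}_{1RL}]=\Pi_q[\tilde{P}_{1RL}]\Xi_R$.
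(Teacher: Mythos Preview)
Your proposal is correct and follows essentially the same approach as the paper, which in fact omits a detailed proof and simply states that the linear spectral case is the composition of the Christoffel and Geronimus arguments already given. You have spelled out precisely that composition: the $M$ conditions from applying $\Pi_r$ to the polynomial connection formula, the $N$ conditions from the second-kind relation combined with the identity $\Pi_q[Q\tilde{C}_{1RL}]=\Pi_q[\tilde{P}_{1RL}]\Xi_R$, and the $LU$-interpretation of $\mathring{\Pi}_{RL}$ for $k<N$; this is exactly what the paper intends, with more detail than it supplies.
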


\section{Deformations arising from the action of linear differential operators}
In this Sobolev context, where derivatives are ubiquitous, the polynomial deformation theory seems to be missing something. For that reason, in this
section we will now discuss a different, more general class of deformations, obtained when a differential operator acts on one of the entries of
the bilinear form. Although a general theory like the one for Darboux--Sobolev deformations is not available yet, some steps and results
in that direction, together with some easy examples, can be proposed. To address this question, let us start with the derivative operator. We have
\begin{align*}
 D G_{\W}&= D \boldsymbol{D} \left( \int_{\Omega} \chi(x) \W \chi(x)^\top  \right) \boldsymbol{D}^\top= \boldsymbol{D} \begin{pmatrix}
   0 & 0 & 0 & 0 &\dots \\
   \mathbb{I} & 0 & 0 & 0 &\dots \\
   0 & \mathbb{I} & 0 & 0 &\dots\\
   0 & 0 & \mathbb{I} & 0 &\dots\\
   0 & 0 & 0 & \mathbb{I} &\ddots\\
   \vdots & \vdots & \vdots &\vdots
\end{pmatrix}
\left( \int_{\Omega} \chi(x) \W \chi(x)^\top  \right) \boldsymbol{D}^\top=\\
         & \boldsymbol{D} \left( \int_{\Omega} \chi(x) \begin{pmatrix}
   0 & 0 & 0 & 0 &\dots \\
   1 & 0 & 0 & 0 &\dots \\
   0 & 1 & 0 & 0 &\dots\\
   0 & 0 & 1 & 0 &\dots\\
   0 & 0 & 0 & 1 &\ddots\\
   \vdots & \vdots & \vdots &\vdots
\end{pmatrix}
         \W \chi(x)^\top  \right) \boldsymbol{D}^\top \ .
\end{align*}
Consequently, we can obtain immediately the following result.
\begin{theorem}
The relations
 \begin{align*}
  (f',h;\W)&=(f,h;\Lambda^{\top} \W)     &      D G_{\W}&= G_{\Lambda^{\top} \W}\\
  (f,h';\W)&=(f,h; \W \Lambda )     &       G_{\W} D^{\top} &= G_{\W \Lambda}
 \end{align*}
hold.
\end{theorem}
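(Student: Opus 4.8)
The plan is to read the first matrix identity $DG_{\W}=G_{\Lambda^{\top}\W}$ off the computation displayed immediately before the statement, and then to obtain everything else from it. In that display the operator $D$ is pushed through the row of blocks $\boldsymbol{D}=(\mathbb{I},D,D^2,\dots)$: since $D\boldsymbol{D}=(D,D^2,D^3,\dots)$, this is $\boldsymbol{D}$ multiplied on the right by the block lower-shift. The point I would make explicit is that, once this block shift is carried inside $\int_{\Omega}\chi\,\W\,\chi^{\top}$, it acts on the derivative-order index of the measure matrix exactly as the scalar lower-shift $\Lambda^{\top}$, so that $\int_{\Omega}\chi\,\W\,\chi^{\top}$ is replaced by $\int_{\Omega}\chi\,(\Lambda^{\top}\W)\,\chi^{\top}$. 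Comparing with the definition $G_{\W'}=\int_{\Omega}\boldsymbol{\chi}\,\W'\,\boldsymbol{\chi}^{\top}$ of the moment matrix (with $\boldsymbol{\chi}=\boldsymbol{D}\chi$) then yields $DG_{\W}=G_{\Lambda^{\top}\W}$. The cleanest bookkeeping uses the representation \eqref{G_W}: from $G_{\W}=\sum_{l,r}D^{l}g_{l,r}(D^{r})^{\top}$ one gets $DG_{\W}=\sum_{l,r}D^{l+1}g_{l,r}(D^{r})^{\top}$, and reindexing $l\mapsto l+1$ together with $(\Lambda^{\top}\W)_{l+1,r}=\d\mu_{l,r}$ shows the two sums agree term by term. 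The only mild point to check is the boundary index: the absent $D^{0}$-block on the left of $DG_{\W}$ matches the vanishing zeroth row $(\Lambda^{\top}\W)_{0,\cdot}=0$, so no spurious term is produced.

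The bilinear identity $(f',h;\W)=(f,h;\Lambda^{\top}\W)$ I would obtain directly from Definition \ref{def3} by the same reindexing: $(f',h;\W)=\sum_{n,r}\langle f^{(n+1)},h^{(r)}\rangle_{n,r}=\sum_{n,r}\int_{\Omega_{n,r}}f^{(n+1)}h^{(r)}\,\d\mu_{n,r}$, and setting the first derivative order to $n+1$ turns this into $\sum_{n,r}\int f^{(n)}h^{(r)}\,\d\mu_{n-1,r}=(f,h;\Lambda^{\top}\W)$, since the $(n,r)$ entry of $\Lambda^{\top}\W$ is $\d\mu_{n-1,r}$. Equivalently, at the level of coefficient vectors $f'=\boldsymbol{f}D\chi$ gives the coefficient vector $\boldsymbol{f}D$, whence $(f',h;\W)=\boldsymbol{f}DG_{\W}\boldsymbol{h}^{\top}=\boldsymbol{f}G_{\Lambda^{\top}\W}\boldsymbol{h}^{\top}=(f,h;\Lambda^{\top}\W)$; I would record this second route as a consistency check with the matrix identity.

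Finally, the right-hand relations $G_{\W}D^{\top}=G_{\W\Lambda}$ and $(f,h';\W)=(f,h;\W\Lambda)$ follow by transposition. Taking the transpose of $DG_{\W}=G_{\Lambda^{\top}\W}$ and invoking the already-noted identity $G_{\W^{\top}}=G_{\W}^{\top}$ gives $G_{\W^{\top}}D^{\top}=G_{(\Lambda^{\top}\W)^{\top}}=G_{\W^{\top}\Lambda}$; since $\W$ is arbitrary this is $G_{\W}D^{\top}=G_{\W\Lambda}$, and the accompanying bilinear identity then drops out from $\boldsymbol{h}'=\boldsymbol{h}D$ exactly as before. I do not expect a genuine obstacle: the verification is routine, so the only real care needed is the index bookkeeping that identifies the block shift on $\boldsymbol{D}$ with the action of $\Lambda^{\top}$ on the derivative-order index of $\W$, together with the zeroth-row boundary check.
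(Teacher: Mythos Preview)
Your proposal is correct and follows essentially the same approach as the paper: the paper's proof \emph{is} the displayed computation preceding the theorem, and you correctly identify the key step---that pushing $D$ through $\boldsymbol{D}$ produces the block lower-shift, which then acts on the derivative-order index of $\W$ as the scalar $\Lambda^{\top}$. Your alternative bookkeeping via the sum representation \eqref{G_W}, the explicit reindexing for the bilinear identity, and the transposition argument for the right-hand relations are all details the paper leaves implicit under ``Consequently, we can obtain immediately the following result,'' so you are simply filling in what the paper takes for granted.
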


By linearity, we deduce that given any linear differential operator
$\boldsymbol{L}:=\sum_{n,r=0}^{\infty} a_{n,r} x^{n}\frac{\d^r}{\d x^r}$, acting on one of the entries of our inner product,
we can translate its action into a matrix multiplying the initial moment matrix $L:=\sum_{n,r=0}^{\infty} a_{n,r} D^r \Lambda^{n}$ or into a matrix
multiplying the initial measure matrix
$\mathcal{L}=\sum_{n,r=0}^{\infty} a_{n,r}(\Lambda^\top)^{r}\mathcal{X}^n$.


The interplay among the three different actions $\boldsymbol{L},L,\mathcal{L}$ is clarified in the next
\begin{pro}\label{pro3} We have
\begin{align*}
 (\boldsymbol{L}_1 [f], \boldsymbol{L}_2 [h];\W)&= (f,h: \mathcal{L}_1\W \mathcal{L}_2^{\top}), &
 L_1 G_{\W} \left(L_2 \right)^{\top}&=G_{\mathcal{L}_1\W\left(\mathcal{L}_2 \right)^{\top}} \ .
\end{align*}
\end{pro}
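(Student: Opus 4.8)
The plan is to verify the identity first for a single monomial operator $x^{n}\frac{\d^{r}}{\d x^{r}}$ and then to recover the general statement by bilinearity, since $\boldsymbol{L}_1,\boldsymbol{L}_2$ are finite linear combinations $\sum_{n,r}a_{n,r}x^{n}\frac{\d^{r}}{\d x^{r}}$, the bilinear form $(*,*;\W)$ is linear in each argument, and $\W\mapsto G_{\W}$ is linear. Thus it suffices to take $\boldsymbol{L}_1[f]=x^{n_1}f^{(r_1)}$ and $\boldsymbol{L}_2[h]=x^{n_2}h^{(r_2)}$ and to show that the resulting matrix factors assemble into $\mathcal{L}_1=(\Lambda^{\top})^{r_1}\mathcal{X}^{n_1}$ acting on the left of $\W$ and $\mathcal{L}_2^{\top}=(\mathcal{X}^{\top})^{n_2}\Lambda^{r_2}$ acting on the right.

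For the left slot I would peel off the outermost operation first. Iterating Theorem \ref{te1} $n_1$ times removes the multiplication,
\begin{align*}
 (x^{n_1}f^{(r_1)},h;\W)=(f^{(r_1)},h;\mathcal{X}^{n_1}\W),
\end{align*}
and iterating the derivative relation $(f',h;\W)=(f,h;\Lambda^{\top}\W)$ of the preceding Theorem $r_1$ times removes the differentiation,
\begin{align*}
 (f^{(r_1)},h;\mathcal{X}^{n_1}\W)=(f,h;(\Lambda^{\top})^{r_1}\mathcal{X}^{n_1}\W).
\end{align*}
Because the differentiation is innermost in $\boldsymbol{L}_1$ it is stripped last and therefore lands to the left of $\mathcal{X}^{n_1}$, reproducing exactly the order fixed in the definition of $\mathcal{L}_1$.

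The right slot is treated identically with the conjugate relations $(f,xh;\W)=(f,h;\W\mathcal{X}^{\top})$ and $(f,h';\W)=(f,h;\W\Lambda)$, yielding $(f,h;\W(\mathcal{X}^{\top})^{n_2}\Lambda^{r_2})$; one then checks the transpose identity $(\mathcal{X}^{\top})^{n_2}\Lambda^{r_2}=\big((\Lambda^{\top})^{r_2}\mathcal{X}^{n_2}\big)^{\top}=\mathcal{L}_2^{\top}$, so that acting on both slots and summing over $n,r$ gives $(\boldsymbol{L}_1[f],\boldsymbol{L}_2[h];\W)=(f,h;\mathcal{L}_1\W\mathcal{L}_2^{\top})$. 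The moment-matrix identity follows by the same scheme using the companion relations $\Lambda G_{\W}=G_{\mathcal{X}\W}$, $DG_{\W}=G_{\Lambda^{\top}\W}$, $G_{\W}\Lambda^{\top}=G_{\W\mathcal{X}^{\top}}$ and $G_{\W}D^{\top}=G_{\W\Lambda}$: these give $L_1G_{\W}=G_{\mathcal{L}_1\W}$ and $G_{\W}L_2^{\top}=G_{\W\mathcal{L}_2^{\top}}$ with $L=\sum a_{n,r}D^{r}\Lambda^{n}$, and applying the first to the measure matrix $\W\mathcal{L}_2^{\top}$ produces $L_1G_{\W}L_2^{\top}=G_{\mathcal{L}_1\W\mathcal{L}_2^{\top}}$.

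The main obstacle is purely bookkeeping arising from non-commutativity: since $\Lambda$, $\mathcal{X}$ and $D$ do not commute, the precise order of the peeled factors must be tracked, and the right slot reverses that order through transposition. The delicate point is to confirm that innermost-first stripping reproduces the left-to-right order $(\Lambda^{\top})^{r}\mathcal{X}^{n}$ of the definition, and that the transpose identity correctly identifies the right factor with $\mathcal{L}_2^{\top}$. As elsewhere in the paper, one must also tacitly assume that $\boldsymbol{L}_1[f]$ and $\boldsymbol{L}_2[h]$ remain in $\A^{\mathcal{N}}_{\W}(\Omega)$ (or argue at the level of formal moment identities), so that each intermediate pairing is finite and the finite sums over $n,r$ may be freely interchanged with the bilinear form.
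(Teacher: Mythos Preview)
Your argument is correct and is exactly the route the paper intends: the paper gives no explicit proof, stating only that the result ``is a direct generalization of Proposition~\ref{pro2}'', i.e.\ one extends the monomial identities of Theorem~\ref{te1} and the derivative Theorem by bilinearity, precisely as you spell out. Your careful tracking of the non-commutativity (innermost differentiation stripped last, transpose on the right slot) is more detailed than anything in the paper and is the only substantive point to check.
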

This is a direct generalization of Proposition \ref{pro2}. Provided both
$G_{\W}$ and $G_{\mathcal{L}_1\W\left(\mathcal{L}_2 \right)^{\top}}$ are $LU$-factorizable, this proposition could allow us, in some
particular cases, to relate the SBPS associated to each of the two moment matrices.

A couple of interesting, nontrivial problems arise from the last discussion.
\begin{itemize}
 \item Determine a pair $(\boldsymbol{L}_{1}, \boldsymbol{L}_{2})$ of linear differential operators  with associated $(\mathcal{L}_{1}, \mathcal{L}_{2})$ such
that $\mathcal{L}_{1}\W \sim \W \mathcal{L}_{2}^{\top}$ (and therefore $L_1 G_{\W}=G_{\W}L_2^{\top}$).
\item Determine a pair of operators $(\boldsymbol{L}_1,\boldsymbol{L}_2)$ with associated $(\mathcal{L}_1,\mathcal{L}_2)$
such that $\mathcal{L}_1\W_1 \mathcal{L}_2^{\top} \sim \W_2$ and $\W_1,\W_2$ have some ``suitable'' properties.
\end{itemize}
An answer to the first problem would ensure that the associated SBPS possess many interesting properties. For instance, the special case where the usual three term recurrence relation
holds is just a particular answer to this question for $\boldsymbol{L}_1=\boldsymbol{L}_2=x$. Another example of this kind was given in
proposition \ref{canana} with the operator $\mathbf{F}$.\\
We will devote the next section to a partial answer to the second problem.

\subsection{Orthogonal polynomials with respect to differential operators}
For the second problem some simple cases can be tackled.
The idea behind it is to start with a simple measure matrix $\W_1$
and deform it by means of differential operators
into a new one $\W_2\sim \mathcal{L}_1 \W_1 (\mathcal{L}_2)^{\top}$ so that we can establish explicit relations between
$G_{\W_1}$ and $G_{\W_2}$. If both moment matrices are $LU$-factorizable, they may lead to relations between their associated SBPS.
For example, one can start with the standard (non Sobolev) matrix $\W_1=E_{00}\omega$. This case deserves special attention since
it connects usual moment matrices with certain Sobolev moment matrices in a direct way. This entails the possibility to relate the
associated OPS and SBPS as well. This section is intimately related to the notion of orthogonality with respect to a differential
operator (OPDO) \cite{OPDO}. Here we start from the standard orthogonality, in order to obtain connections between standard
and Sobolev polynomials. A similar approach could be used in the more general case of a diagonal matrix $\W$.
In that case, we would be able to relate Sobolev orthogonal polynomials associated to different measure matrices.

\begin{pro}{\label{UU}}
 Given two linear differential operators $\boldsymbol{L}_{\alpha}:=\sum_k p_{\alpha,k}(x)\frac{\d^k}{\d x^k}$, $\alpha=1,2$, with
$p_{\alpha,k}(x)$ polynomials of any degree for all $k$, the following relation between the standard inner product involving
these differential operators and a Sobolev bilinear function exists
\begin{align*}
 \langle \boldsymbol{L}_1[f],\boldsymbol{L}_2[h]\rangle_{\mu}&=\left(f,h;\W_{\boldsymbol{L}_{1,2}} \right) \ .
\end{align*}
The relation between the associated Sobolev moment matrix and the standard one reads
\begin{align*}
L_1 g_{\mu} (L_2)^{\top}&=G_{\W_{\boldsymbol{L}_{1,2}}},
\end{align*}
and the measure matrix is
\begin{align*}
 \W_{\boldsymbol{L}_{1,2}}=\begin{pmatrix}
  p_{1,0}p_{2,0} & p_{1,0}p_{2,1} & p_{1,0}p_{2,2} & \dots\\
  p_{1,1}p_{2,0} & p_{1,1}p_{2,1} & p_{1,1}p_{2,2} & \dots\\
  p_{1,2}p_{2,0} & p_{1,2}p_{2,1} & p_{1,2}p_{2,2} & \dots\\
  \vdots               & \vdots               & \vdots               &
 \end{pmatrix}\d \mu(x) \ .
\end{align*}

\end{pro}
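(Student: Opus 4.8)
The plan is to establish the scalar identity by a direct expansion of the two operators and then to read off both the measure matrix and the moment-matrix relation. First I would expand the left-hand side: writing $\boldsymbol{L}_\alpha[f]=\sum_k p_{\alpha,k}(x)f^{(k)}$ and using bilinearity of the integral against $\d\mu$,
\begin{align*}
\langle \boldsymbol{L}_1[f],\boldsymbol{L}_2[h]\rangle_\mu
=\int\Big(\sum_k p_{1,k}f^{(k)}\Big)\Big(\sum_r p_{2,r}h^{(r)}\Big)\d\mu
=\sum_{k,r}\int f^{(k)}h^{(r)}\,p_{1,k}p_{2,r}\,\d\mu .
\end{align*}
Comparing with the definition \eqref{SBF} of the Sobolev bilinear function, the last expression is precisely $(f,h;\W)$ for the measure matrix whose $(k,r)$ entry is $\d\mu_{k,r}:=p_{1,k}(x)p_{2,r}(x)\,\d\mu(x)$, i.e.\ the rank-one (outer-product) matrix $\W_{\boldsymbol{L}_{1,2}}$ displayed in the statement. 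I would note here that $\W_{\boldsymbol{L}_{1,2}}$ is a genuine measure matrix of finite order, the order being controlled by the maximal differentiation orders of $\boldsymbol{L}_1$ and $\boldsymbol{L}_2$, so that for each fixed pair of monomials only finitely many terms occur and the interchange of sum and integral is harmless.

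For the moment-matrix relation $L_1 g_\mu (L_2)^\top=G_{\W_{\boldsymbol{L}_{1,2}}}$ I would invoke the operator--matrix dictionary underlying Theorem \ref{te1} and the companion derivative theorem, namely that $L_\alpha=\sum_{n,r}a^\alpha_{n,r}D^r\Lambda^n$ represents $\boldsymbol{L}_\alpha$ through $L_\alpha\chi(x)=\boldsymbol{L}_\alpha[\chi](x)$ componentwise; equivalently, the $i$-th row of $L_\alpha$ is the coefficient vector of $\boldsymbol{L}_\alpha[x^i]$. Granting this, a one-line entrywise computation gives
\begin{align*}
(L_1 g_\mu L_2^\top)_{i,j}=\sum_{k,l}(L_1)_{i,k}(g_\mu)_{k,l}(L_2)_{j,l}
=\Big\langle \textstyle\sum_k (L_1)_{i,k}x^k,\ \sum_l (L_2)_{j,l}x^l\Big\rangle_\mu
=\langle \boldsymbol{L}_1[x^i],\boldsymbol{L}_2[x^j]\rangle_\mu,
\end{align*}
which by the first part equals $(x^i,x^j;\W_{\boldsymbol{L}_{1,2}})=(G_{\W_{\boldsymbol{L}_{1,2}}})_{i,j}$. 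Alternatively, the same relation drops out of Proposition \ref{pro3} applied to the standard measure matrix $\W=E_{0,0}\,\mu$, for which $G_{E_{0,0}\mu}=g_\mu$; this route requires the extra check that $\mathcal{L}_1(E_{0,0}\mu)\mathcal{L}_2^\top=\W_{\boldsymbol{L}_{1,2}}$, which follows from $\mathcal{X}^n E_{0,0}=x^n E_{0,0}$ and $(\Lambda^\top)^r E_{0,0}=E_{r,0}$.

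I expect no serious obstacle: this is essentially a bookkeeping statement once the operator--matrix correspondence is in place. The only point demanding care is the precise form of that correspondence, in particular the ordering of the factors $D^r$ and $\Lambda^n$ and the fact that $D$ implements differentiation only through its action on $\chi$ rather than on arbitrary vectors of functions; verifying $L_\alpha\chi=\boldsymbol{L}_\alpha[\chi]$ directly on monomials settles this. A secondary point is to record the hypotheses (polynomial coefficients, finiteness of all the moments $(x^i,x^j;\W_{\boldsymbol{L}_{1,2}})$) that guarantee $\W_{\boldsymbol{L}_{1,2}}$ lies in the admissible class. I would present the direct entrywise argument, since it avoids the slightly more laborious verification needed for the route through Proposition \ref{pro3}.
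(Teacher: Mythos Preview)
Your proof is correct. The direct expansion of $\langle \boldsymbol{L}_1[f],\boldsymbol{L}_2[h]\rangle_\mu$ into the double sum $\sum_{k,r}\int f^{(k)}h^{(r)}p_{1,k}p_{2,r}\,\d\mu$ and the entrywise moment-matrix computation via $L_\alpha\chi=\boldsymbol{L}_\alpha[\chi]$ are both valid and cleanly presented.

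The paper takes precisely the alternative route you sketch at the end: it applies Proposition~\ref{pro3} to the standard measure matrix $\W=E_{0,0}\,\d\mu$, obtaining $\langle \boldsymbol{L}_1[f],\boldsymbol{L}_2[h]\rangle_\mu=(f,h;\mathcal{L}_1 E_{0,0}(\mathcal{L}_2)^\top\d\mu)$ and $L_1 g_\mu L_2^\top=G_{\mathcal{L}_1 E_{0,0}\mathcal{L}_2^\top\d\mu}$, and then computes $\mathcal{L}_1 E_{0,0}\mathcal{L}_2^\top$ as the outer product $(p_{1,0},p_{1,1},\dots)^\top(p_{2,0},p_{2,1},\dots)$. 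Your direct argument is more self-contained and avoids unpacking how $\mathcal{L}_\alpha$ acts on $E_{0,0}$; the paper's route, on the other hand, exhibits the proposition as an immediate specialisation of the general $\mathcal{L}$-deformation framework, which is its conceptual point in context. Since you already identified both approaches and the verification $\mathcal{L}_1 E_{0,0}\mathcal{L}_2^\top=\W_{\boldsymbol{L}_{1,2}}$ needed for the second, either presentation is fine.
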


\begin{proof}
Since $g_{\mu}$ is the usual moment matrix associated to the measure $\d \mu(x)$ we have
\begin{align*}
 \langle \boldsymbol{L}_2[f],\boldsymbol{L}_1[h]\rangle_{\mu}&=
 \left(f,h;[\mathcal{L}_1 E_{0,0}(\mathcal{L}_2 )^{\top}\d \mu]\right)
  &
 L_1 g (L_2)^{\top}&=G_{[\mathcal{L}_1 E_{0,0}(\mathcal{L}_2 E_{0,0})^{\top}\d \mu]} \ .
\end{align*}

\noi Note that the shape of $[\mathcal{L}_1 E_{0,0}(\mathcal{L}_2 E_{0,0})^{\top}\d \mu]$ is particularly simple: it is quite straightforward to see that
\begin{align*}
 [\mathcal{L}_1 E_{0,0}(\mathcal{L}_2 E_{0,0})^{\top}\d \mu]&=
 \begin{pmatrix}
  p_{1,0}(x) \\
  p_{1,1}(x) \\
  p_{1,2}(x) \\
  \vdots
 \end{pmatrix} \cdot \begin{pmatrix} p_{2,0}(x) & p_{2,1}(x) & p_{2,2}(x) & \dots \end{pmatrix} \d \mu(x)=
 \begin{pmatrix}
  p_{1,0}p_{2,0} & p_{1,0}p_{2,1} & p_{1,0}p_{2,2} & \dots\\
  p_{1,1}p_{2,0} & p_{1,1}p_{2,1} & p_{1,1}p_{2,2} & \dots\\
  p_{1,2}p_{2,0} & p_{1,2}p_{2,1} & p_{1,2}p_{2,2} & \dots\\
  \vdots               & \vdots               & \vdots               &
 \end{pmatrix}\d \mu(x) \ .
\end{align*}
\end{proof}
\begin{definition}
Given two families of linear differential operators $S=\{\{\boldsymbol{L}_k\},\{\boldsymbol{U}_k\} \}_{k=0}^{\mathcal{N}}$ with
\begin{align*}
 \boldsymbol{L}_k&=\frac{\d^{k}}{\d x^k}+\sum_{j=k+1}l_{jk}(x)\frac{\d^{j}}{\d x^j},  &
 \boldsymbol{U}_k&=\frac{\d^{k}}{\d x^k}+\sum_{j=k+1}u_{kj}(x)\frac{\d^{j}}{\d x^j}
\end{align*}
and a set of measures $\{\d \mu_k(x)\}_{k=0}^{\mathcal{N}}$, we shall call the function
\begin{align*}
 (f,h)_{S}:=\sum_{k=0}^{\mathcal{N}}  \langle \boldsymbol{L}_k[f],\boldsymbol{U}_k[h] \rangle_{\mu_k}
\end{align*}
the generalized diagonal Sobolev bilinear function.
\end{definition}
Shall we had $l_{jk}(x)=0=u_{kj}(x)$ $\forall k,j$ the generalized diagonal Sobolev bilinear function would be
indeed the usual diagonal Sobolev bilinear function.\\

\begin{pro}
Given a $(\mathcal{N}+1) \times (\mathcal{N}+1)$ measure matrix satisfying $\det \W^{[k]}(x)\neq 0$ $\forall x \in \Omega$
and $k=0,1\dots,\mathcal{N}$, then the Sobolev bilinear function $(f,h;\W)$
is equivalent to a generalized diagonal Sobolev bilinear function $(f,h)_S$. The pair
$S=\{\{\boldsymbol{L}_k\},\{\boldsymbol{U}_k\} \}_{k=0}^{\mathcal{N}}$ with
\begin{align*}
 \boldsymbol{L}_k&=\frac{\d^{k}}{\d x^k}+\sum_{j=k+1}l_{jk}(x)\frac{\d^{j}}{\d x^j}  &
 \boldsymbol{U}_k&=\frac{\d^{k}}{\d x^k}+\sum_{j=k+1}u_{kj}(x)\frac{\d^{j}}{\d x^j}
\end{align*}
is determined by the LU factorization of $\W$ by means of the relations
\begin{align*}
 \W(x)&=\begin{pmatrix}
        1         &           &     &        &      &    \\
        l_{10}(x) & 1         &     &        &      &    \\
        l_{20}(x) & l_{21}(x) &  1  &        &      &      \\
        \vdots    &   \vdots  &     & \ddots &      &    \\
                  &           &     &        &      &     \\
        l_{\mathcal{N}0}(x) & l_{\mathcal{N}1}(x) &     &        &      &   1
       \end{pmatrix}
 \begin{pmatrix}
      \d \mu_0(x) &           &      &        &      &    \\
                  &\d \mu_1(x)&      &        &      &    \\
                  &           &\ddots&        &      &      \\
                  &           &      & \ddots &      &    \\
                  &           &      &        &      &     \\
                  &           &      &        &      & \d \mu_{\mathcal{N}}(x)
       \end{pmatrix} \cdot \\
&\cdot  \begin{pmatrix}
      1           & u_{01}(x) &u_{02}(x)& \dots  &      &u_{0\mathcal{N}}(x)    \\
                  &    1      &u_{12}(x)& \dots  &      &u_{1\mathcal{N}}(x)   \\
                  &           &    1    &        &      &      \\
                  &           &         & \ddots &      &    \\
                  &           &         &        &      &u_{\mathcal{N}-1\mathcal{N}}(x)     \\
                  &           &         &        &      & 1
       \end{pmatrix}
\end{align*}
In addition, if each $\d \mu_{k}(x)$ is positive definite and $l_{j,k}(x),u_{k,j}(x)$ are polynomials satisfying the relations
\[
j-\deg[u_{k,j}(x)]>k \qquad \text{and} \qquad j-\deg[l_{j,k}(x)]>k,
\]
then $G_{\W}$ is $LU$-factorizable and therefore has an associated SBPS.
\end{pro}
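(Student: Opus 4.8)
The plan is to prove the statement in two stages, mirroring its two halves.

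\emph{Stage 1: reduction to a generalized diagonal form.} First I would write the Sobolev pairing in the compact matrix shape $(f,h;\W)=\int_\Omega (\partial f)^\top\,\W\,(\partial h)$, where $\partial f:=(f,f',\dots,f^{(\mathcal N)})^\top$; this is immediate from Definition \ref{def3}. The hypothesis $\det\W^{[k]}(x)\neq0$ for every $x\in\Omega$ and each $k$ is exactly the condition for the matrix of weights to admit, pointwise in $x$, a unique $LDU$ factorization $\W(x)=L(x)\,\diag(\d\mu_0,\dots,\d\mu_{\mathcal N})\,U(x)$ with $L\in\mathscr L$, $U^\top\in\mathscr L$, and the measures on the diagonal. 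Plugging this in and using $(\partial f)^\top LDU(\partial h)=(L^\top\partial f)^\top D\,(U\partial h)$, I would read off $(L^\top\partial f)_k=f^{(k)}+\sum_{j>k}l_{jk}(x)f^{(j)}=\boldsymbol L_k[f]$ and $(U\partial h)_k=\boldsymbol U_k[h]$, whence $(f,h;\W)=\sum_k\langle\boldsymbol L_k[f],\boldsymbol U_k[h]\rangle_{\mu_k}=(f,h)_S$. The entries $l_{jk},u_{kj}$ of the factors are precisely the coefficients of the operators, and uniqueness of the $LDU$ factorization fixes the pair $S$. This stage is routine.

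\emph{Stage 2: $LU$-factorizability under the extra hypotheses.} Here I would pass to moment matrices. By Proposition \ref{pro3} each term $\langle\boldsymbol L_k[f],\boldsymbol U_k[h]\rangle_{\mu_k}$ contributes $L_k g_{\mu_k}U_k^\top$ to $G_\W$, where $L_k=D^k+\sum_{j>k}l_{jk}(\Lambda)D^j$ and $U_k=D^k+\sum_{j>k}u_{kj}(\Lambda)D^j$ realize $\boldsymbol L_k,\boldsymbol U_k$, so that $G_\W=\sum_{k=0}^{\mathcal N}L_k\,g_{\mu_k}\,U_k^\top$. The degree conditions $\deg l_{jk}<j-k$ and $\deg u_{kj}<j-k$ are exactly what is needed to factor $L_k=A_kD^k$ and $U_k=B_kD^k$ with $A_k,B_k\in\mathscr L$: indeed $l_{jk}(\Lambda)D^j=\big(l_{jk}(\Lambda)D^{j-k}\big)D^k$ and, since $\deg l_{jk}\le j-k-1$, the factor $l_{jk}(\Lambda)D^{j-k}$ is strictly lower triangular, so $A_k=\mathbb I+\sum_{j>k}l_{jk}(\Lambda)D^{j-k}$ is lower uni-triangular. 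Consequently each $L_k$ is lower triangular with the same leading $k$-th subdiagonal as $D^k$, and truncation yields $G_\W^{[m]}=\sum_k A_k^{[m]}(D^k)^{[m]}g_{\mu_k}^{[m]}\big((D^k)^{[m]}\big)^\top(B_k^{[m]})^\top$.

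The goal is then to compare $G_\W^{[m]}$ with the moment matrix $G^{(0)}:=\sum_k D^k g_{\mu_k}(D^k)^\top$ of the purely diagonal Sobolev form $\sum_k\langle f^{(k)},h^{(k)}\rangle_{\mu_k}$, which—each $\d\mu_k$ being positive definite—was already shown to be positive definite, hence $LU$-factorizable, in the diagonal discussion of Section 3 (around \eqref{G_Wk}). The cleanest self-contained route, valid when the form is self-adjoint ($\boldsymbol L_k=\boldsymbol U_k$), is that the degree conditions force each $\boldsymbol L_k$ to preserve leading degree, so $\boldsymbol L_0$ is a bijection of $\R[x]$ respecting the degree filtration; then $(v,v)_S=\sum_k\int(\boldsymbol L_k[v])^2\,\d\mu_k\ge\int(\boldsymbol L_0[v])^2\,\d\mu_0>0$ for $v\neq0$, so every leading minor of $G_\W$ is positive, $G_\W$ is $LU$-factorizable, and the SBPS exists by the definitions of Section 3.

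The hard part is the general, non-self-adjoint case. Now $(v,v)_S=\sum_k\int\boldsymbol L_k[v]\,\boldsymbol U_k[v]\,\d\mu_k$ is no longer a sum of squares, and although the corrections $l_{jk},u_{kj}$ are strictly lower-order, they genuinely perturb the minors of $G_\W$ away from those of $G^{(0)}$. The main obstacle is therefore to control these non-symmetric off-diagonal contributions $A_k,B_k$ uniformly in $m$ so that no leading minor degenerates; the natural tool is a Cauchy--Binet expansion of the wide factorization $G_\W^{[m]}=\widehat L^{[m]}\,\widehat G\,(\widehat U^{[m]})^\top$ with $\widehat G=\bigoplus_k g_{\mu_k}$ positive definite and $\widehat L=[\,L_0\,|\,L_1\,|\cdots|\,L_{\mathcal N}\,]$, isolating the staircase index sets singled out by the leading subdiagonals. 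This determinantal estimate is the delicate step, and it is here that the standing positive-definiteness assumption of Section 2 effectively enters; absent self-adjointness one should expect to need it, since the lower-order terms can otherwise drive a truncated determinant to zero.
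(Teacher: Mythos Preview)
Your Stage 1 coincides with the paper's argument: the paper decomposes the $LDU$ factorization of $\W$ column-by-column as $\W=\sum_k\mathcal L_k E_{0,0}(\mathcal U_k E_{0,0})^\top\omega_k$ and invokes Proposition~\ref{UU} term by term, which is exactly your substitution of the pointwise $LDU$ factorization into the integral form of the pairing.

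For Stage 2 the paper is considerably terser than you. It observes, as you do, that the degree hypotheses are equivalent to each $L_k,U_k$ having the shape ``$D^k$ plus subdiagonals strictly below the $k$-th'' (equivalently $\chi^{[k]}\in\ker\boldsymbol L_k\cap\ker\boldsymbol U_k$), and then simply asserts that this ``make[s] the reasoning of the positive definiteness of $G_\W$ exactly the same as the one we used for the positive definite diagonal case.'' That diagonal-case reasoning is the quadratic-form argument $\boldsymbol v\,G_\W^{[m]}\boldsymbol v^\top>0$, which is precisely your self-adjoint paragraph. Your caution about the non-self-adjoint case is well placed: the positivity argument the paper invokes needs each summand $L_k g_{\mu_k}U_k^\top$ to be positive semidefinite, hence $L_k=U_k$, and the paper supplies no separate treatment when $L_k\neq U_k$. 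Your Cauchy--Binet sketch and your remark about the standing positive-definiteness hypothesis of Section~2 therefore go beyond what the paper actually writes; you have reproduced the paper's proof and correctly flagged where it is thin. (A minor point: with the paper's convention the matrix of $l_{jk}(x)\frac{\d^j}{\d x^j}$ is $D^j l_{jk}(\Lambda)$ rather than $l_{jk}(\Lambda)D^j$, but this does not affect your shape analysis.)
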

\begin{proof}
The first part of the proposition is an easy generalization of Proposition \ref{UU}, since the LU factorization of $\W$ can be understood
as follows
\begin{align*}
\W=\left[\mathcal{L}_0 E_{0,0} (\mathcal{U}_0 E_{0,0})^{\top} \omega_0\right]+
                     \left[\mathcal{L}_1 E_{0,0} (\mathcal{U}_1 E_{0,0})^{\top} \omega_1\right]+
                     \dots +
                     \left[\mathcal{L}_{\mathcal{N}} E_{0,0} (\mathcal{U}_{\mathcal{N}} E_{0,0})^{\top} \omega_{\mathcal{N}}\right] \ .
\end{align*}
Therefore, we have that $(f,h;\W)=\sum_{k=0}^{\mathcal{N}}  \langle \boldsymbol{L}_k[f],\boldsymbol{U}_k[h] \rangle_{\mu_k}$ or equivalently
$G_{\W}=\sum_{k=0}^{N} L_k g_{\mu_k} (U_k)^{\top}$. This expression, together with the fact that
the condition on the degrees of $u_{k,j}(x)$ and $l_{k,j}(x)$ is equivalent to requiring that $L_k$ and $U_k$ have the shape of
$D^{k}+diagonals\,\,beneath\,\,this\,\,one$ (also equivalent to $\chi^{[k]}\in ker \boldsymbol{U}_k, \chi^{[k]} \in ker \boldsymbol{L}_k$),
make the reasoning of the positive definiteness of $G_{\W}$ exactly the same as the one
we used for the positive definite diagonal case.
\end{proof}

\subsection{Examples where SBPS and OPS can be related in terms of differential operators}
Let us show in more detail some examples where the relation between OPS and SBPS can
be explicitly constructed.
Assume that $\boldsymbol{L}_{\alpha}$ satisfy the two conditions
\begin{itemize}
 \item $\deg [p_{\alpha,k}\leq k]$, $\forall k$. This implies that $L_{\alpha} \in \mathscr L$.
 \item both $\boldsymbol{L}_\alpha$ are invertible operators. 
\end{itemize}
For these cases the LU factorization of $L_1 g (L_2)^{\top}$ is trivial. If $g=S^{-1}h \left(S^{-1}\right)^{\top}$ it is easy to see that
\begin{align*}
L_1 g (L_2)^{\top}= [S (L_1)^{-1}]^{-1} h \left([S (L_2)^{-1}]^{-1}\right)^{\top} \ .
\end{align*}
This means that we can write the SBPS from the OPS. Indeed,
\begin{align*}
 P_1(x)&=S L_1^{-1} \chi(x), &  P_2(x)&= S (L_2)^{-1} \chi(x) \ .
\end{align*}
Let us discuss a couple of examples of this kind.
\begin{enumerate}
\item Consider a $\W$ of the form
\begin{align*}
 \W(x):=\begin{pmatrix}
   1 & -1 & 0 & 0 & \dots\\
   -1 & 1 & 0 & 0 &\dots\\
   0 & 0 & 0 & 0 &\dots\\
   0 & 0 & 0 & 0 &\dots\\
  \vdots&\vdots& \vdots& \vdots & \ddots
\end{pmatrix}\d \mu(x) \ .
\end{align*}
\noi This measure matrix comes from the operator $\boldsymbol{L}_\alpha=1-\frac{\d}{\d x}$, which of course satisfies the two conditions above. The related moment matrix reads
\begin{align*}
 G_{\W}&=
   (\mathbb{I}-D)g(\mathbb{I}-D)^{\top}
& \text{where}
 (\mathbb{I}-D)^{-1}&=\sum_{n=0}^{\infty}D^{n} \ .
\end{align*}
Thus,
\begin{align*}
 G_{\W}&=\left[S(\mathbb{I}-D)^{-1}\right]^{-1}H\left(\left[S(\mathbb{I}-D)^{-1}\right]^{-1}\right)^{\top} \ .
 \end{align*}
We conclude that the SOPS associated with $G_{\W}$ is related to the OPS associated to $\omega$ as follows
 \begin{align*}
 P(x)&=S(\mathbb{I}-D)^{-1}\chi(x)=S (\sum_{n=0}^{\infty}D^{n}) \chi(x)=
 \begin{pmatrix}
   1 & 0 & 0 & 0 & \dots\\
   S_{1,0} & 1 & 0 & 0 &\dots\\
   S_{2,0} & S_{2,1} & 1 & 0&\dots\\
   S_{3,0} & S_{3,1} & S_{3,2} & 1&\dots\\
  \vdots&\vdots& \vdots& \vdots & \ddots
\end{pmatrix}\begin{pmatrix}1 \\ x+1 \\x^2+2x+2 \\x^3+3x^2+6x+6\\ \vdots \end{pmatrix}.
\end{align*}

\item 
We start with a $\W$ of the form
\begin{align*}
 \W(x):=\begin{pmatrix}
   1 & 1 & 1 & 1 & \dots\\
   1 & 1 & 1 & 1 &\dots\\
   1 & 1 & 1 & 1 &\dots\\
   1 & 1 & 1 & 1 &\dots\\
  \vdots&\vdots& \vdots& \vdots & \ddots
\end{pmatrix}\d \mu(x) \ .
\end{align*}
\noi It is not hard to see that the two previous conditions are fulfilled. This allows us to write explicitly
\begin{align*}
 G_{\W}&=\sum_{k=0}^{\infty}D^{k} S^{-1}H (S^{-1})^{\top} (\sum_{k=0}^{\infty}D^{k})^{\top}=
 [S(\mathbb{I}-D)]^{-1}H \left[[S(\mathbb{I}-D)]^{-1}\right]^{\top} \ .
 \end{align*}
Thus, the associated OPS is nothing but
\begin{align*}
 P(x)=S(\mathbb{I}-D)\chi(x)=\begin{pmatrix}
   1 & 0 & 0 & 0 & \dots\\
   S_{1,0} & 1 & 0 & 0 &\dots\\
   S_{2,0} & S_{2,1} & 1 & 0&\dots\\
   S_{3,0} & S_{3,1} & S_{3,2} & 1&\dots\\
  \vdots&\vdots& \vdots& \vdots & \ddots
\end{pmatrix}\begin{pmatrix}1 \\ x-1 \\x^2-2x \\x^3-3x^2\\ \vdots \end{pmatrix} \ .
\end{align*}
\item Now we shall consider a matrix measure of the kind
\begin{align*}
 \W(x):=\begin{pmatrix}
   \frac{a^0}{0!0!} & \frac{a^1}{0!1!} & \frac{a^2}{0!2!} & \frac{a^3}{0!3!} & \dots\\
   \frac{a^1}{1!0!} & \frac{a^2}{1!1!} & \frac{a^3}{1!2!} & \frac{a^4}{1!3!} &\dots\\
   \frac{a^2}{2!0!} & \frac{a^3}{2!1!} & \frac{a^4}{2!2!} & \frac{a^5}{2!3!} &\dots\\
   \frac{a^3}{3!0!} & \frac{a^4}{3!1!} & \frac{a^5}{3!2!} & \frac{a^6}{3!3!} &\dots\\
   \vdots           &            \vdots&            \vdots&           \vdots & \ddots
\end{pmatrix}\d \mu(x) \ .
\end{align*}
Remarkably, $\W(x)\in \W_x$. Its expression corresponds to the one in eq. \eqref{Wx} by choosing $\d \mu_k=\frac{a^k\d \mu}{k!}$.
The previous theory allows us to write
\begin{align*}
 G_{\W}&=\sum_{k=0}^{\infty}\frac{a^kD^{k}}{k!} g \left(\sum_{r=0}^{\infty}\frac{a^rD^{r}}{r!}\right)^{\top}=
 \exp\{aD\}g \exp\{aD^{\top}\}=\left[S \exp\{-aD\} \right]^{-1} H \left[[S \exp\{-aD\}]^{-1} \right]^{\top} \ .
 \end{align*}
 This expression implies that the associated SOPS is nothing but the usual one OPS associated with $\omega$ but with an shift by $a$ in the independent variable, i.e.
 \begin{align*}
  P&=S \exp\{-aD\} \chi(x)= S \chi(y),  &     y&=(x-a) \ .
 \end{align*}
Let us mention here that when $a=1$, this example establishes a connection between ``Hankel transforms'' (as defined in \cite{Layman}) and Sobolev Polynomials to light.
One can show that the matrices that act to the left and right of the initial sequence (the initial moment matrix $g$) are
$\left(\frac{D^k}{k!}\right)_{l,j}={l \choose j}$. In other words, we recover the so called ``Binomial transform'' of the
initial sequence, under which the Hankel transform remains invariant.\\

\end{enumerate}

\appendix

\section{A relation with integrable hierarchies of Toda type}

The purpose of this final section is to clarify the connection of the present theory of Sobolev bi-orthogonal polynomials with the theory of integrable systems.

As usual in this context, one can start from a suitable deformation of the moment matrix with certain appropriate matrices. These matrices involve
the exponential of a linear combination of two set of times and the powers of
the matrices $\Lambda$. Inspired by this approach, we shall generalize to our framework some well-known results. \\
To this aim, let us introduce two different sets of real deformation parameters $t_a=\{t_{a,0}=0,t_{a,1},t_{a,2},\dots\}$ for $a=1,2$, which will allow us
to deform the moment matrix according to the following prescription.
\begin{definition}
We define the time-deformed moment matrix
\begin{align}\label{G(t)}
 G_{\W}(t)&=W_{1,0}(t_1) G_{\W} [W_{2,0}(t_2)]^{-1}
\end{align}
where the deformation matrices $W_{1,0}(t_1)$ and $W_{1,0}(t_2)$ are given by
\begin{align*}
 W_{1,0}(t_1)&=\exp \left( \sum_{j=0}^{\infty}t_{1,j} \Lambda^j \right) &
 W_{2,0}(t_2)&=\exp \left( \sum_{j=0}^{\infty}t_{2,j} \left(\Lambda^{\top}\right)^j \right)
\end{align*}
\end{definition}
As the following result shows, the reason for this deformation of the moment matrix is that it can be directly translated into a deformation of the corresponding measure matrix.

\begin{theorem}
 The deformed moment matrix $ G_{\W}(t)$ can be written as the moment matrix associated to a time dependent measure matrix, this is
 \begin{align*}
  G_{\W}(t)=G_{\W(t)}
 \end{align*}
where the new time dependent measure matrix is given by the following expression
 \begin{align*}
 \W(t):= \left[ \W_{1,0}(t_1,x)\right] \W \left[ \W_{2,0}({t_2,x}) \right]^{-1}=
 \left[ \exp \left( \sum_{j=0}^{\infty}t_{1,j} \mathcal{X}^j \right)\right] \W
 \left[ \exp \left( -\sum_{j=0}^{\infty}t_{2,j} \left(\mathcal{X}^{\top}\right)^j \right)\right]\ .
 \end{align*}

 \end{theorem}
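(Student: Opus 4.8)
The plan is to reduce the statement to the polynomial deformation results already proved, namely Theorem \ref{te1} and Proposition \ref{pro2}, and then to promote them from polynomials to the exponential deformation matrices by reading all objects as power series in the deformation parameters. First I would record the two iterated identities that follow from Theorem \ref{te1} by induction on $j$,
\[
\Lambda^j G_{\W}=G_{\mathcal{X}^j\W},\qquad G_{\W}(\Lambda^\top)^j=G_{\W(\mathcal{X}^\top)^j},
\]
using at each step that $\Lambda\, G_{\mathcal{X}^{j-1}\W}=G_{\mathcal{X}\,\mathcal{X}^{j-1}\W}=G_{\mathcal{X}^j\W}$, and symmetrically on the right. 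Since the assignment $\W\mapsto G_{\W}$ is linear in $\W$, these identities extend at once to arbitrary polynomials in $\Lambda$, which is exactly the content of Proposition \ref{pro2}.

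Next I would pass to the exponentials. Expanding $W_{1,0}(t_1)=\sum_{k\ge0}\tfrac1{k!}(\sum_j t_{1,j}\Lambda^j)^k$ and applying it to $G_{\W}$ term by term, the commutativity of the powers of $\Lambda$ together with the iterated identity gives $(\sum_j t_{1,j}\Lambda^j)^k G_{\W}=G_{(\sum_j t_{1,j}\mathcal{X}^j)^k\W}$, so that summing over $k$ and interchanging the sum with the linear map $\W\mapsto G_{\W}$ yields
\[
W_{1,0}(t_1)\,G_{\W}=G_{\exp(\sum_j t_{1,j}\mathcal{X}^j)\,\W}=G_{\W_{1,0}(t_1,x)\,\W}.
\]
The same computation with $\Lambda^\top$ in place of $\Lambda$, applied to the right-acting series $[W_{2,0}(t_2)]^{-1}=\exp(-\sum_j t_{2,j}(\Lambda^\top)^j)$, gives $G_{\W}[W_{2,0}(t_2)]^{-1}=G_{\W[\W_{2,0}(t_2,x)]^{-1}}$. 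Combining the two sides — equivalently, invoking the two-sided form of Proposition \ref{pro2} with $P(\Lambda)=W_{1,0}(t_1)$ and $Q(\Lambda)=\exp(-\sum_j t_{2,j}\Lambda^j)$, for which $Q(\Lambda)^\top=[W_{2,0}(t_2)]^{-1}$ and $Q(\mathcal{X})^\top=[\W_{2,0}(t_2,x)]^{-1}$ — produces
\[
G_{\W}(t)=W_{1,0}(t_1)\,G_{\W}\,[W_{2,0}(t_2)]^{-1}=G_{\W_{1,0}(t_1,x)\,\W\,[\W_{2,0}(t_2,x)]^{-1}}=G_{\W(t)},
\]
which is the assertion.

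The one point that genuinely needs care, and which I expect to be the main obstacle, is the interchange of the infinite sum over $k$ with the moment-matrix operation: entrywise one finds $(W_{1,0}(t_1)G_{\W})_{n,p}=\sum_{m\ge0}b_m(t_1)(G_{\W})_{n+m,p}$, where the $b_m$ are the complete-Bell-type coefficients of $\exp(\sum_j t_{1,j}z^j)$, and this series does not truncate. I would resolve this by reading the whole construction as an identity of formal power series in the times $t_{a,j}$: at each fixed multidegree in $t$ only finitely many $\Lambda^j$ (respectively $\mathcal{X}^j$) contribute, each such coefficient is an honest polynomial deformation covered by Proposition \ref{pro2}, and the interchange is then legitimate order by order. (Alternatively one could impose growth conditions on $\W$ and on the $t_{a,j}$ forcing entrywise absolute convergence, but the formal reading is enough and is the one relevant for the hierarchy discussed in the sequel.) I would close by noting that $W_{1,0}(t_1)$ is upper uni-triangular and $W_{2,0}(t_2)$ lower uni-triangular, so that $[W_{2,0}(t_2)]^{-1}=\exp(-\sum_j t_{2,j}(\Lambda^\top)^j)$ indeed exists and corresponds to $[\W_{2,0}(t_2,x)]^{-1}$ under the same formal identification, completing the argument.
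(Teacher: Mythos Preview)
The paper actually states this theorem without proof, presenting it as an immediate consequence of the polynomial deformation theory developed earlier (Theorem~\ref{te1} and Proposition~\ref{pro2}). Your argument is correct and is precisely the intended route: iterate Theorem~\ref{te1} to obtain $\Lambda^j G_{\W}=G_{\mathcal{X}^j\W}$ and its transpose analogue, invoke the linearity of $\W\mapsto G_{\W}$ to pass to polynomials (which is Proposition~\ref{pro2}), and then promote to the exponentials by reading everything as formal power series in the deformation times. Your observation that at each fixed multidegree in the $t_{a,j}$ only finitely many powers of $\Lambda$ and $\mathcal{X}$ contribute is exactly the right justification for the interchange, and your remark on the uni-triangular structure of $W_{1,0}$ and $W_{2,0}$ (hence the existence of the formal inverses) matches the paper's own comment immediately following the theorem. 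In short, you have supplied the proof the paper omits, with more care about the formal-series interpretation than the paper itself makes explicit.
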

%
%
\noi It is worth pointing out that $\W_{1,0}(t_1,x)$ is upper triangular while $\W_{2,0}({t_2,x})$ is lower triangular. As an example
\begin{align*}
 \exp(t \mathcal{X})=
 \begin{pmatrix}
  \begin{pmatrix}0\\0\end{pmatrix}t^{0} & \begin{pmatrix}1\\0\end{pmatrix}t^{1}   & \begin{pmatrix}2\\0\end{pmatrix}t^{2}   & \begin{pmatrix}3\\0\end{pmatrix}t^{3}   & \dots \\
                                        & \begin{pmatrix}1\\1\end{pmatrix}t^{1-1} & \begin{pmatrix}2\\1\end{pmatrix}t^{2-1} & \begin{pmatrix}3\\1\end{pmatrix}t^{3-1} &\dots \\
                                        &                                         & \begin{pmatrix}2\\2\end{pmatrix}t^{2-2} & \begin{pmatrix}3\\2\end{pmatrix}t^{3-2} &\dots \\
                                        &                                         &                                         & \begin{pmatrix}3\\3\end{pmatrix}t^{3-3} &\dots \\
                                        &                                         &                                         &                                         &\ddots
 \end{pmatrix}\exp(tx)\ .
\end{align*}

\noi Once the moment matrix is deformed, in case we can still $LU$-factorize it we can write
\begin{align}\label{G(t)LU}
 G_{\W}(t)&=S_1(t)\left(S_{2}(t)\right)^{-1}\ ,
\end{align}
which leads to the time dependent Sobolev orthogonal polynomial sequences. This factorization also is the key for the following
\begin{definition}
The wave semi-infinite matrices are
\begin{align*}
 W_1(t)&:=S_1(t) W_{1,0}(t_1) & W_2(t)&:=S_2(t) W_{2,0}(t_2)\ .
\end{align*}
\end{definition}
These are indeed related to the initial moment matrix.
\begin{pro} \label{GWW} The following relation hold
 \begin{align*}
  G_{\W}=\left(W_1(t)\right)^{-1} W_2(t)
 \end{align*}
\end{pro}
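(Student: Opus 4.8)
The plan is to prove the identity by direct substitution, exploiting the telescoping cancellation of the bare deformation factors $W_{1,0}(t_1)$ and $W_{2,0}(t_2)$ against themselves. The only ingredients are the two characterizations of the deformed moment matrix already at our disposal: the defining prescription \eqref{G(t)}, i.e. $G_{\W}(t)=W_{1,0}(t_1)\,G_{\W}\,[W_{2,0}(t_2)]^{-1}$, and its $LU$-factorization \eqref{G(t)LU}. The wave matrices $W_1(t)=S_1(t)W_{1,0}(t_1)$ and $W_2(t)=S_2(t)W_{2,0}(t_2)$ are designed precisely so that forming $(W_1(t))^{-1}W_2(t)$ first reconstructs $G_{\W}(t)$ from its factors and then, via \eqref{G(t)}, peels off the deformation to leave the undeformed $G_{\W}$.

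First I would expand the right-hand side using the definitions of the wave matrices,
\begin{align*}
 (W_1(t))^{-1}W_2(t)
 &=\big(S_1(t)\,W_{1,0}(t_1)\big)^{-1}\,S_2(t)\,W_{2,0}(t_2)\\
 &=[W_{1,0}(t_1)]^{-1}\,\big(S_1(t)^{-1}S_2(t)\big)\,W_{2,0}(t_2).
\end{align*}
By the factorization \eqref{G(t)LU} the inner product of factorization matrices is exactly the deformed moment matrix, $S_1(t)^{-1}S_2(t)=G_{\W}(t)$; substituting this and then the prescription \eqref{G(t)} gives
\begin{align*}
 (W_1(t))^{-1}W_2(t)
 &=[W_{1,0}(t_1)]^{-1}\,G_{\W}(t)\,W_{2,0}(t_2)\\
 &=[W_{1,0}(t_1)]^{-1}\,W_{1,0}(t_1)\,G_{\W}\,[W_{2,0}(t_2)]^{-1}\,W_{2,0}(t_2)=G_{\W},
\end{align*}
the outer pairs of deformation matrices cancelling.

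Since the whole argument collapses to this one-line manipulation once both descriptions of $G_{\W}(t)$ are in place, there is no genuine analytic difficulty. The single point deserving explicit care — and the step I would write out in full — is the bookkeeping of sides in the factorization convention: one must read \eqref{G(t)LU} so that it is precisely the combination $S_1(t)^{-1}S_2(t)$ that equals $G_{\W}(t)$, guaranteeing that $W_{1,0}(t_1)$ and $W_{2,0}(t_2)$ end up on the correct (left and right) sides to telescope. Conceptually, the proposition records that the $t$-flow acts as a dressing transformation: the wave matrices reabsorb both the factorization data $S_1(t),S_2(t)$ and the bare flows $W_{1,0}(t_1),W_{2,0}(t_2)$ so as to return the $t$-independent invariant $G_{\W}$, which is exactly the relation on which the subsequent derivation of the Toda-type hierarchy rests.
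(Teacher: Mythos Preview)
Your proof is correct and follows essentially the same route as the paper: both combine the defining deformation \eqref{G(t)} with the factorization \eqref{G(t)LU} and let the bare flows $W_{1,0}(t_1)$, $W_{2,0}(t_2)$ cancel. The paper compresses the computation into a single line, starting from $G_{\W}$ and arriving at $(W_1(t))^{-1}W_2(t)$, while you run it in the reverse direction, but the algebra is identical. Your caution about reading \eqref{G(t)LU} so that $S_1(t)^{-1}S_2(t)=G_{\W}(t)$ is well placed: the displayed formula \eqref{G(t)LU} in the paper is written as $S_1(t)(S_2(t))^{-1}$, but the paper's own proof line uses $(S_1(t))^{-1}S_2(t)$, consistent with the convention $G_{\W}=S_1^{-1}H(S_2^{-1})^{\top}$ used throughout the main text.
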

\begin{proof}From eqs. \eqref{G(t)} and \eqref{G(t)LU} we can see that
\begin{align}\label{GW}
 G_{\W}=\left(W_{1,0}(t_1)\right)^{-1}\left(S_1(t)\right)^{-1} S_2(t) W_{2,0}(t_2)=\left(W_1(t)\right)^{-1}W_2(t) \ .
\end{align}
\end{proof}
We shall introduce two operators that will be relevant hereon.
\begin{definition} The Lax operators associated with our moment matrix are
\begin{align*}
 L_1&:=S_1 \Lambda S_1^{-1} &
 L_2&:=S_2 \Lambda^{\top} S_2^{-1} \ .
\end{align*}
\end{definition}
It is important to remark here that in contrast with what happens in the standard theory of deformation of moment matrices, where $L_1=L_2$ (because both coincide
with the tri-diagonal Jacobi matrix responsible for the usual three term recurrence relation), this is no longer the case
in the Sobolev context.  Indeed, $\Lambda G_{\W}\neq G_{\W} \Lambda^{\top}$. Thus $L_1\neq L_2$ and we can only infer that $L_1$ is a
lower triangular matrix with an extra diagonal over the main one, while $L_2$ is an upper triangular matrix with an extra diagonal
beneath the main one.

\begin{pro}
 For $a=1,2$ we have the following differential equations for the wave semi infinite matrices
\begin{align*}
 \frac{\partial W_a}{\partial t_{1,j}} W_{a}^{-1}&=(L_1^j)_{+}  &
 \frac{\partial W_a}{\partial t_{2,j}} W_{a}^{-1}&=(L_2^j)_{-} \ .
\end{align*}
\end{pro}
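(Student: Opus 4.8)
The plan is to exploit the fact that, by Proposition \ref{GWW}, the \emph{undeformed} moment matrix $G_{\W}=\big(W_1(t)\big)^{-1}W_2(t)$ does not depend on any of the times $t_{a,j}$. Differentiating this identity with respect to an arbitrary time $t$ and using $\partial_t(W_1^{-1})=-W_1^{-1}(\partial_tW_1)W_1^{-1}$ gives
\[
0=\partial_t\big(W_1^{-1}W_2\big)=-W_1^{-1}(\partial_tW_1)W_1^{-1}W_2+W_1^{-1}(\partial_tW_2),
\]
and multiplying on the left by $W_1$ and on the right by $W_2^{-1}$ produces the key identity
\[
(\partial_tW_1)W_1^{-1}=(\partial_tW_2)W_2^{-1}=:B_t .
\]
Thus the quantity $B_t:=(\partial_tW_a)W_a^{-1}$ is \emph{independent of} $a$, which is precisely what lets the assertion hold simultaneously for $a=1,2$. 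It then only remains to compute $B_t$ for $t=t_{1,j}$ and $t=t_{2,j}$ and to recognise it as a projection of a power of a Lax operator.

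First I would compute each $B_{t_{a,j}}$ in two ways using $W_a=S_aW_{a,0}$ together with $\partial_{t_{1,j}}W_{1,0}=\Lambda^jW_{1,0}$ and $\partial_{t_{2,j}}W_{2,0}=(\Lambda^\top)^jW_{2,0}$ (the powers of $\Lambda$, resp.\ $\Lambda^\top$, commute, so the exponentials differentiate cleanly), while noting that $W_{2,0}$ is $t_1$-independent and $W_{1,0}$ is $t_2$-independent. Recalling $L_1=S_1\Lambda S_1^{-1}$ and $L_2=S_2\Lambda^\top S_2^{-1}$, this yields
\begin{align*}
(\partial_{t_{1,j}}W_1)W_1^{-1}&=(\partial_{t_{1,j}}S_1)S_1^{-1}+S_1\Lambda^jS_1^{-1}=(\partial_{t_{1,j}}S_1)S_1^{-1}+L_1^{\,j}, &
(\partial_{t_{1,j}}W_2)W_2^{-1}&=(\partial_{t_{1,j}}S_2)S_2^{-1},\\
(\partial_{t_{2,j}}W_2)W_2^{-1}&=(\partial_{t_{2,j}}S_2)S_2^{-1}+L_2^{\,j}, &
(\partial_{t_{2,j}}W_1)W_1^{-1}&=(\partial_{t_{2,j}}S_1)S_1^{-1}.
\end{align*}
Equating the two expressions for each $B_{t_{a,j}}$ via the key identity gives, for the first flow, $(\partial_{t_{1,j}}S_1)S_1^{-1}+L_1^{\,j}=(\partial_{t_{1,j}}S_2)S_2^{-1}$, and for the second, $(\partial_{t_{2,j}}S_1)S_1^{-1}=(\partial_{t_{2,j}}S_2)S_2^{-1}+L_2^{\,j}$.

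The final and most delicate step is the triangular bookkeeping that turns these relations into projections. Since $S_1$ is lower uni-triangular and $S_2$ is upper triangular—consistent with the factorization $G_{\W}(t)=S_1(t)\big(S_2(t)\big)^{-1}$ and with the shapes of $L_1,L_2$ stated before the proposition—the dressing terms $(\partial_{t_{a,j}}S_1)S_1^{-1}$ and $(\partial_{t_{a,j}}S_2)S_2^{-1}$ lie in the two complementary subalgebras of strictly lower and of upper (diagonal included) matrices, which is exactly the splitting defining $(\cdot)_-$ and $(\cdot)_+$. Projecting the first relation onto the $(\cdot)_+$ part therefore identifies $B_{t_{1,j}}=(\partial_{t_{1,j}}S_2)S_2^{-1}=(L_1^{\,j})_+$, while projecting the second onto the $(\cdot)_-$ part gives $B_{t_{2,j}}=(\partial_{t_{2,j}}S_1)S_1^{-1}=(L_2^{\,j})_-$; since $B_{t_{a,j}}$ is $a$-independent, both formulae hold for $W_1$ and $W_2$ alike. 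The hard part will be the treatment of the diagonal: one must verify carefully which of $S_1,S_2$ carries the nontrivial diagonal of the factorization, because this normalization fixes whether the diagonal of $L_a^{\,j}$ is absorbed into $(\cdot)_+$ or $(\cdot)_-$, and hence whether the stated projectors are the correct ones. Once this is pinned down in agreement with the conventions of Proposition \ref{GWW}, both flows close and the proposition follows.
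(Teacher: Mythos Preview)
Your argument is correct and follows essentially the same route as the paper: differentiate the time-independent identity $G_{\W}=(W_1)^{-1}W_2$ to obtain $(\partial_tW_1)W_1^{-1}=(\partial_tW_2)W_2^{-1}$, expand each side via $W_a=S_aW_{a,0}$, and then split into upper and strictly lower parts. Your discussion of the diagonal normalization is more explicit than the paper's, but the resolution is exactly the one you anticipate: since $S_1$ is lower \emph{uni}-triangular, $(\partial_tS_1)S_1^{-1}$ is automatically strictly lower, which fixes the splitting and removes any ambiguity.
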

Here $(A)_{-}$ is the projection of the matrix $A$ onto the space of strictly lower triangular matrices while $(A)_{+}$ is its
projection onto the space of upper triangular matrices.
\begin{proof}
 Deriving eq. \eqref{GW}, on one hand we can obtain that
 \begin{align*}
  \frac{\partial W_1}{\partial t_{a,j}} W_1^{-1}&=\frac{\partial W_2}{\partial t_{a,j}} W_2^{-1} &
  a&=1,2; \,\,\,\,\, j=1,2,3,\dots \ .
 \end{align*}
On the other hand,
\begin{align*}
 \frac{\partial S_1}{\partial t_{1,j}} S_1^{-1}+S_1\Lambda^j S_1^{-1}&=\frac{\partial S_2}{\partial t_{1,j}} S_2^{-1} &
 \frac{\partial S_2}{\partial t_{2,j}} S_2^{-1}+S_2\left(\Lambda^{\top}\right)^{j} S_2^{-1}&=\frac{\partial S_1}{\partial t_{2,j}} S_1^{-1} \ .
\end{align*}
Decomposing them in their upper and strictly lower projections leads to the result of the proposition.
\end{proof}
The results of these proof can also be used to prove the next interesting result.
\begin{pro}
 The following Lax equations hold
 \begin{align*}
  \frac{\partial L_a^j}{\partial t_{b,r}}=\left[(L_b^j)_{(-1)^{b+1}},L_a^j \right]
 \end{align*}
or explicitly
\begin{align*}
 \frac{\partial L_1^j}{\partial t_{1,r}}&=\left[(L_1^j)_{+},L_1^j \right]   &
 \frac{\partial L_1^j}{\partial t_{2,r}}&=\left[(L_2^j)_{-},L_1^j \right]  \\
 \frac{\partial L_2^j}{\partial t_{1,r}}&=\left[(L_1^j)_{+},L_2^j \right]   &
 \frac{\partial L_2^j}{\partial t_{2,r}}&=\left[(L_2^j)_{-},L_2^j \right]
\end{align*}
\end{pro}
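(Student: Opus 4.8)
The plan is to deduce the Lax equations from the Sato-type equations of the previous proposition together with a conjugation representation of the Lax operators in terms of the wave matrices. The first step is to observe that each $W_{a,0}$ commutes with the corresponding shift: $W_{1,0}(t_1)$ is a function of $\Lambda$ and hence commutes with $\Lambda$, while $W_{2,0}(t_2)$ is a function of $\Lambda^\top$ and commutes with $\Lambda^\top$. Consequently $L_1 = S_1\Lambda S_1^{-1} = S_1 W_{1,0}\Lambda W_{1,0}^{-1}S_1^{-1} = W_1 \Lambda W_1^{-1}$, and similarly $L_2 = W_2\Lambda^\top W_2^{-1}$. Raising to the $j$-th power gives the crucial identities $L_1^j = W_1\Lambda^j W_1^{-1}$ and $L_2^j = W_2(\Lambda^\top)^j W_2^{-1}$, in which the inner factors $\Lambda^j$ and $(\Lambda^\top)^j$ are constant in all the times.

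Next I would differentiate these conjugation formulae with respect to $t_{b,r}$. Writing $B_{a} := \partial_{t_{b,r}} W_a \, W_a^{-1}$ and using $\partial_{t_{b,r}}(W_a^{-1}) = -W_a^{-1}(\partial_{t_{b,r}}W_a)W_a^{-1}$, the time-independence of the inner power collapses the derivative into a single commutator,
\begin{align*}
 \frac{\partial L_a^j}{\partial t_{b,r}} = B_a L_a^j - L_a^j B_a = [B_a, L_a^j] \ .
\end{align*}
The final step is to substitute the Sato equations from the previous proposition, namely $\partial_{t_{1,r}}W_a\,W_a^{-1} = (L_1^r)_+$ and $\partial_{t_{2,r}}W_a\,W_a^{-1} = (L_2^r)_-$, which hold for both $a=1,2$. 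This identifies $B_a$ with $(L_b^r)_{(-1)^{b+1}}$ independently of $a$, and yields the asserted relations $\partial_{t_{b,r}}L_a^j = [(L_b^r)_{(-1)^{b+1}}, L_a^j]$, i.e.\ all four explicit equations at once.

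The main point that requires care, and which is the genuine content behind the result, is that the generator $B_a = \partial_{t_{b,r}}W_a\,W_a^{-1}$ is the \emph{same} for $a=1$ and $a=2$; this is precisely what the previous proposition guarantees, and it is the Sobolev analogue of the compatibility $\partial_t W_1\,W_1^{-1} = \partial_t W_2\,W_2^{-1}$ coming from $G_{\W} = W_1^{-1}W_2$ together with the fact that $G_{\W}$ is time-independent (Proposition \ref{GWW}). Everything else is formal manipulation of the projections $(\cdot)_+$, $(\cdot)_-$ and of conjugation. I would also flag the indexing: the flow generator naturally carries the time label $r$ of $t_{b,r}$, so the commutators are built from $(L_b^r)_{\pm}$, while the power $j$ appears only in the operator $L_a^j$ being evolved. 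Finally it is worth checking that the projections entering $B_a$ are consistent with the triangular structure recorded earlier ($L_1$ lower with one superdiagonal, $L_2$ upper with one subdiagonal), so that $(L_1^r)_+$ and $(L_2^r)_-$ are indeed the correct pieces appearing in the Lax pairs.
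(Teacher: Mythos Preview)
Your argument is correct and is precisely the standard derivation the paper points to with its one-line remark: differentiate the conjugation formula $L_a^j = W_a\,\Lambda_{(a)}^j\,W_a^{-1}$ (with $\Lambda_{(1)}=\Lambda$, $\Lambda_{(2)}=\Lambda^\top$) and insert the Sato equations $\partial_{t_{b,r}}W_a\,W_a^{-1}=(L_b^r)_{(-1)^{b+1}}$ from the previous proposition. Your flag on the indexing is also well taken: the generator in the commutator should carry the time label $r$, i.e.\ $(L_b^r)_{\pm}$, not the power $j$ of the evolved operator; the statement as written conflates the two indices.
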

\noi The compatibility equations of these give rise to the classical Zakharov--Shabat equations.

\begin{pro}
  Wave functions evaluated at different times $t$ and $t'$ satisfy the relation
\begin{align*}
  W_1(t)W_1(t')^{-1}=W_2(t)W_2(t')^{-1}.
  \end{align*}
\end{pro}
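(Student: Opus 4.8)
The plan is to obtain this relation as an immediate corollary of Proposition \ref{GWW}, exploiting the fact that the matrix appearing there is the \emph{undeformed} moment matrix $G_{\W}$, which by construction carries no dependence on the deformation parameters. Proposition \ref{GWW} asserts that
\begin{align*}
 G_{\W}=\left(W_1(t)\right)^{-1}W_2(t)
\end{align*}
holds for every admissible choice of the times $t$. Since the left-hand side is the fixed, time-independent moment matrix, the right-hand side must take the same value at any two times $t$ and $t'$.

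First I would write down the identity of Proposition \ref{GWW} twice, once at $t$ and once at $t'$, and equate the two resulting expressions for $G_{\W}$:
\begin{align*}
 \left(W_1(t)\right)^{-1}W_2(t)=\left(W_1(t')\right)^{-1}W_2(t') \ .
\end{align*}
Here I am tacitly using the standing assumption that the deformed moment matrix $G_{\W}(t)$ remains $LU$-factorizable, so that the factors $S_a(t)$, and hence the wave matrices $W_a(t)=S_a(t)W_{a,0}(t_a)$, are well defined and invertible at both times. Multiplying this identity on the left by $W_1(t)$ and on the right by $\left(W_2(t')\right)^{-1}$ then yields
\begin{align*}
 W_1(t)\left(W_1(t')\right)^{-1}=W_2(t)\left(W_2(t')\right)^{-1} \ ,
\end{align*}
which is exactly the claimed relation.

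I do not expect a genuine obstacle here: the statement is the Sobolev analogue of the standard Sato--Wilson factorization identity, and the whole argument reduces to the time-independence of $G_{\W}$ together with an elementary manipulation of invertible semi-infinite matrices. The only point requiring any care is the invertibility of the $W_a$, which is guaranteed precisely by the $LU$-factorizability hypothesis underlying the entire construction; once that is granted, the rearrangement above is purely algebraic. In particular, no use of the Lax equations or of the flow equations for the wave matrices is needed, the claim being a consequence solely of the constancy of $G_{\W}$ along the deformation.
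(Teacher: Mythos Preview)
Your proof is correct and follows essentially the same approach as the paper: invoke Proposition \ref{GWW} at two different times to obtain $(W_1(t))^{-1}W_2(t)=G_{\W}=(W_1(t'))^{-1}W_2(t')$, then rearrange. The paper's proof is just a terser version of what you wrote.
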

\begin{proof}
 From Proposition \ref{GWW} we  derive the equality
  \begin{align*}
    (W_1(t))^{-1}W_2(t)=G=(W_1(t'))^{-1}W_2(t')\ ,
  \end{align*}
from which the result follows immediately.
\end{proof}

\textbf{Acknowledgments}.

The research of G.\,A. and M.\,M. has been supported by the research project [MTM2015-65888-C4-3-P]
\emph{``Ortogonalidad, teor\'ia de la aproximaci\'on y aplicaciones en física matem\'atica"'} MINECO, Spain.
G.\,A. also thanks the Program \emph{``Ayudas para Becas y Contratos Complutenses
Predoctorales en España"' 2011}, Universidad Complutense de Madrid, Spain.\\
The research of P.\,T. has been partly supported by the research project FIS2015-63966, MINECO, Spain, and by the ICMAT Severo
Ochoa project SEV-2015-0554 (MINECO).

\newpage

\end{document}